\newtheorem{proposition}{Proposition}
\newtheorem{theorem}[proposition]{Theorem}
\newtheorem{lemma}[proposition]{Lemma}
\newtheorem{corollary}[proposition]{Corollary}
\theoremstyle{remark}
\newtheorem{remark}[proposition]{Remark}
\theoremstyle{definition}
\numberwithin{equation}{section}
\numberwithin{proposition}{section}
\renewcommand{\le}{\leqslant}
\renewcommand{\ge}{\geqslant}
\renewcommand{\leq}{\leqslant}
\renewcommand{\geq}{\geqslant}
\renewcommand{\subset}{\subseteq}
\newcommand{\mcl}{\mathcal}
\newcommand{\msf}{\mathsf}
\newcommand{\msc}{\mathscr}
\newcommand{\A}{{\mathsf{A}}}
\newcommand{\G}{\mathcal{G}}
\renewcommand{\S}{\mathsf{S}}
\newcommand{\e}{\mathbf{e}}
\newcommand{\E}{\mathbb{E}}
\newcommand{\Er}{\mathbb{E}_{\rho}}
\renewcommand{\Pr}{\mathbb{P}_{\rho}}
\renewcommand{\a}{\mathbf{a}}
\newcommand{\ab}{{\overbracket[1pt][-1pt]{\a}}}
\newcommand{\at}{\tilde{\ab}}
\newcommand{\ahom}{{\overbracket[1pt][-1pt]{\a}}}
\newcommand{\N}{\mathbb{N}}
\newcommand{\Ll}{\left}
\newcommand{\Rr}{\right}
\newcommand{\1}{\mathbf{1}}
\newcommand{\R}{\mathbb{R}}
\newcommand{\Q}{\mathbb{Q}}
\newcommand{\Z}{\mathcal{Z}}
\newcommand{\Zd}{{\mathbb{Z}^d}}
\renewcommand{\P}{\mathbb{P}}
\newcommand{\ov}{\overline}
\renewcommand{\bar}{\overline}
\newcommand{\td}{\widetilde}
\renewcommand{\tilde}{\widetilde}
\newcommand{\al}{\alpha}
\newcommand{\de}{\delta}
\newcommand{\ep}{\varepsilon}
\renewcommand{\d}{{\mathrm{d}}}
\newcommand{\dr}{\partial}
\renewcommand{\epsilon}{\varepsilon}
\newcommand{\cu}{{\scaleobj{1.2}{\square}}}
\renewcommand{\fint}{\strokedint}
\newcommand{\Rd}{{\mathbb{R}^d}}
\newcommand{\n}{\mathbf{n}}
\newcommand{\mmd}{\mathcal{M}_\delta}
\newcommand{\mmb}{\mathcal{M}_\bullet}
\DeclareMathOperator{\dist}{dist}
\DeclareMathOperator{\supp}{supp}
\DeclareMathOperator{\diam}{diam}
\newcommand{\cC}{\msc{C}}   
\newcommand{\cfC}{\mcl{F}\msc{C}} 
\newcommand{\cL}{\msc{L}}   
\newcommand{\cH}{\msc{H}}   
\newcommand{\acL}{\underline{\msc{L}}}
\newcommand{\acH}{\underline{\msc{H}}}
\newcommand{\poi}{\text{Poi}}
\newcommand{\Ind}[1]{\mathbf{1}_{\left\{#1\right\}}}
\newcommand{\id}{\mathsf{Id}}
\newcommand{\norm}[1]{\left\Vert{#1}\right\Vert}
\renewcommand{\n}{\overrightarrow{\mathbf{n}}}
\newcommand{\mres}{\mathbin{\vrule height 1.4ex depth 0pt width
0.13ex\vrule height 0.13ex depth 0pt width 1ex}}
\newcommand{\mressmall}{\mathbin{\vrule height 1ex depth 0pt width
0.1ex\vrule height 0.1ex depth 0pt width 0.7ex}}
\title[Quantitative homogenization of interacting particle systems]{Quantitative homogenization of \\ interacting particle systems}
\author{Arianna Giunti, Chenlin Gu, Jean-Christophe Mourrat}
\address[Arianna Giunti]{Department of Mathematics, Imperial College London, London, United Kingdom}
\address[Chenlin Gu]{DMA, Ecole normale sup\'erieure, PSL University, Paris, France}
\address[Jean-Christophe Mourrat]{Courant Institute of Mathematical Sciences, New York University, New York, New York, USA; CNRS, France}
\begin{document}

\begin{abstract}
For a class of interacting particle systems in continuous space, we show that finite-volume approximations of the bulk diffusion matrix converge at an algebraic rate. The models we consider are reversible with respect to the Poisson measures with constant density, and are of non-gradient type. 
Our approach is inspired by recent progress in the quantitative homogenization of elliptic equations. 
Along the way, we develop suitable modifications of the Caccioppoli and multiscale Poincar\'e inequalities, which are of independent interest. 

\bigskip

\noindent \textsc{MSC 2010:} 82C22, 35B27, 60K35.

\medskip

\noindent \textsc{Keywords:} interacting particle system, hydrodynamic limit, quantitative homogenization.

\end{abstract}
\maketitle


%
%
%
%
%
%
%
%

\section{Introduction}

The goal of this paper is to make progress on the quantitative analysis of interacting particle systems. We consider a class of models in which each particle follows a random evolution on $\R^d$ which is influenced by the configuration of neighboring particles. The models we consider are reversible with respect to the Poisson measures with constant density, uniformly elliptic, and of non-gradient type.
For similar models in this class, the hydrodynamic limit and the equilibrium fluctuations have been identified rigorously. In both these results, the limit object is described in terms of the \emph{bulk diffusion matrix}. The main result of this paper is a proof that finite-volume approximations of this diffusion matrix converge at an algebraic rate.

Our strategy is inspired by recent developments in the quantitative analysis of elliptic equations with random coefficients, and in particular on the renormalization approach developed in \cite{armstrong2016quantitative, armstrong2016lipschitz, armstrong2016mesoscopic, armstrong2017additive, AKMbook,ferg1, ferg2}; see also \cite{informal} for a gentle introduction, and \cite{NS,vardecay, gloria2011optimal,gloria2012optimal,gloria2015quantification,
GO3,gloria2020regularity} for another approach based on concentration inequalities. This renormalization approach has shown its versatility in a number of other settings, covering now the homogenization of parabolic equations \cite{armstrong2018parabolic},  finite-difference equations on percolation clusters \cite{armstrong2018elliptic,dario2018optimal,dario2019quantitative}, differential forms \cite{dario2018differential}, the ``$\nabla \phi$'' interface model \cite{dario2019phi,gradphi2}, and the Villain model \cite{dario2020massless}. 

Here as in the other settings mentioned above, we start from a representation of the finite-volume approximation of the bulk diffusion matrix as a family of variational problems, denoted by $\nu(U,p)$, where $U \subset \Rd$ and $p \in \Rd$ encodes a slope parameter. This quantity is subadditive as a function of the domain $U$. We then identify another subadditive quantity, denoted by $\nu^*(U,q)$, with $U \subset \Rd$ and $q \in \Rd$, such that $\nu^*(U,\cdot)$ is approximately convex dual to $\nu(U,\cdot)$. These quantities $\nu$ and $\nu^*$ provide with finite-volume lower and upper approximations of the limit diffusion matrix. Roughly speaking, the algebraic rate of convergence is obtained by showing that the defect in the convex duality between $\nu$ and $\nu^*$ can be controlled by the variation of $\nu$ and $\nu^*$ between two scales; we refer to \cite[Section~3]{informal} for some intuition as to why a control of this sort is plausible.

Besides the identification of the most appropriate subadditive quantities $\nu$ and~$\nu^*$, one of the main difficulties we encounter relates to the development of certain functional inequalities. As is to be expected, we will make use of Poincar\'e inequalities, which allow to control the $L^2$ oscillation of a function by the $L^2$ norm of its gradient. However, we will need to be more precise than this. Indeed, we want to be able to assert that if the gradient of a function is small in some weaker norm, then we can control the $L^2$ oscillation of the function more tightly. In other words, we need some analogue of the inequality $\|u\|_{L^2} \le C \|\nabla u\|_{H^{-1}}$. Recall that in the current paper, the functions of interest are defined over the space of all possible particle configurations. The precise statement of our ``multiscale Poincar\'e inequality'' is in \Cref{prop:Multi}. 

Another crucial ingredient we need is a version of the Caccioppoli inequality. In the standard setting of elliptic equations, this inequality states that the $L^2$ norm of the gradient of a harmonic function can be controlled by the $L^2$ norm of its oscillation on a larger domain; one can think of this inequality as a ``reverse Poincar\'e inequality'' for harmonic functions. If $u$ denotes the harmonic function, then a standard proof of this inequality consists in testing the equation for $u$ with $u\phi$, where $\phi$ is a smooth cutoff function which is equal to $1$ in the inner domain, and is equal to $0$ outside of the larger domain. 

In our context, we need to ``turn off'' the influence of \emph{any} particle that would come too close to the boundary of the larger domain. In this case, a naive modification of the standard elliptic argument is inapplicable. This comes from the fact that, as the domains become large, there will essentially always be many particles that come dangerously close to the boundary of the larger domain; so the cutoff function $\phi$ would essentially always have to vanish, except on an event of very small probability. We therefore need to identify a different approach. In fact, we settle for a modified form of the Caccioppoli inequality, in which we control the $L^2$ norm of the gradient of a solution by the $L^2$ norm of the solution on a larger domain, plus \emph{a fraction} of the $L^2$ norm of its gradient on the larger domain; see \Cref{prop:Caccioppoli} for the precise statement.

At present, we think that the results presented here should allow to derive a quantitative version of the hydrodynamic limit, as well as to derive ``near-equilibrium'' fluctuation results. To be more precise, for a domain of side length $R$ and an initial density profile varying macroscopically, it should be possible to control the convergence to the hydrodynamic limit at a precision of $R^{-\alpha}$, for some $\al > 0$. Conversely, starting from a density profile that has variations of size bounded by $R^{-\frac d 2 + \alpha}$, it should  be possible to identify the asymptotic fluctuations of the density field. These would represent first steps towards bridging the gap between these two results. 

By analogy with the results obtained for elliptic equations and other contexts, see in particular \cite[Section~3]{informal} and \cite[Chapter~2 and following]{AKMbook}, we hope that the results obtained here will provide the seed for more refined, and hopefully sharp, quantitative results. This will hopefully allow to improve the exponent $\alpha > 0$ appearing in the previous paragraph to some explicit exponent (ideally $\alpha = \frac d 2$), and thereby to bring us closer to a full understanding of non-equilibrium fluctuations.

We now turn to a brief overview of related works on interacting particle systems. The result in the literature that is possibly closest to ours is that of \cite{lov2}. In this work, the authors consider the diffusion matrix associated with the long-time behavior of a tagged particle in the symmetric simple exclusion process, which is called the \emph{self-diffusion matrix}. The main result of \cite{lov2} is a proof that finite-volume approximations of the self-diffusion matrix converge to the correct limit. However, no rate of convergence could be obtained there. The qualitative result of \cite{lov2} was extended to the mean-zero simple exclusion process, and to the asymmetric simple exclusion process in dimension $d \ge 3$, in \cite{jar06}.

An easy consequence of the results of the present paper is that the bulk diffusion matrix is H\"older continuous as a function of the density of particles. However, for related models, it was shown in \cite{var-reg, lov-reg,bernardin,lov-reg2,sued,naga1,naga2,naga3} that the diffusion matrix depends smoothly on the density of particles. The situation seems comparable to that encountered when considering Bernoulli perturbations of the law of the coefficient field for elliptic equations, see \cite{dl-diff,dg1}. Possibly more difficult situations for obtaining regularity results on the homogenized parameters, with less independence built into the nature of the perturbation, include the $\nabla \phi$ model \cite{gradphi2}, and nonlinear elliptic equations \cite{ferg1, ferg2}. 

Two classical approaches to the identification of the hydrodynamic limit have been developed. The first, called the entropy method, was introduced in \cite{gpv}, and extended to certain non-gradient models in \cite{varadhanII, quastel}. The second, called the relative entropy method, was introduced in \cite{yau}, and was extended to a non-gradient model in \cite{fuy}. 

The asymptotic description of the fluctuations of interacting particle systems at equilibrium has been obtained in \cite{broxrost,spo86,mpsw86,cha94,chayau92}. The extension of this result to non-gradient models was obtained in \cite{lu94,cha96,fun96}. 

We are not aware of any results concerning the non-equilibrium fluctuations of a non-gradient model. For gradient models (or small perturbations thereof), we refer in particular to \cite{prespo83,mfl86, fpv88,chayau92, jarmen18}. We also refer to the books \cite{spohn2012large, kipnis1998scaling, komorowski2012fluctuations} for much more thorough expositions on these topics, and reviews of the literature. 

In relation to the purposes of the present paper, several works considered the problem of obtaining a rate of convergence to equilibrium for a system of interacting particles \cite{lig91, deu94, berzeg,jlqy,lanyau,ccr,gu2020decay}. Heat kernel bounds for the tagged particle in a simple exclusion process were obtained in \cite{hk-ips}.

In all likelihood, the results presented here can be extended to other reversible models of non-gradient type, provided that the invariant measures satisfy some mixing condition (an algebraic decay of correlations would suffice, see \cite{armstrong2016lipschitz}). More challenging directions include dynamics that are not uniformly elliptic, such as hard spheres. Extensions to situations in which the noise only acts on the velocity variable are likely to also be very challenging. Even further away are purely deterministic dynamics of hard spheres, as considered for instance in \cite{bgss20}. For any of these models, it would of course also be desirable to make progress on the quantitative analysis of the large-scale behavior of a tagged particle.

The rest of the paper is organized as follows. In \Cref{sec:notation}, we introduce some notation and state the main result precisely, see \Cref{thm:main}. We then prove several functional inequalities in \Cref{sec:func}, including the multiscale Poincar\'e inequality and the modified Caccioppoli inequality. In \Cref{sec:Subadditive}, we define the subadditive quantities, and establish their elementary properties. Finally, in \Cref{subsec:ConvergenceJ} we prove \Cref{thm:main}.

\section{Notation and main result}
\label{sec:notation}

In this section, we introduce some notation and state our main result. 

Let $\mmd(\Rd)$ be the set of $\sigma$-finite measures that are sums of Dirac masses on $\Rd$, which we think of as the space of configurations of particles.  We denote by $\Pr$ the law on $\mmd(\Rd)$ of the Poisson point process of density $\rho \in (0,\infty)$, with $\Er$ the associated expectation. We denote by $\mcl F_U$ the $\sigma$-algebra generated by the mappings $\mu \mapsto \mu(V)$, for all Borel sets $V \subset U$, completed with all the $\Pr$-null sets, and we set $\mcl F := \mcl F_{\Rd}$. We give ourselves a function $\a_\circ : \mmd(\Rd) \to \R^{d\times d}_{\mathrm{sym}}$, where $\R^{d\times d}_{\mathrm{sym}}$ is the set of $d$-by-$d$ symmetric matrices. We assume that this mapping satisfies the following properties:
\begin{itemize}
\item \emph{uniform ellipticity}: there exists $\Lambda < \infty$ such that for every $\mu \in \mmd(\Rd)$,
\begin{equation}
\label{e.unif.ell}
\forall \xi \in \Rd, \quad |\xi|^2 \le \xi \cdot \a_\circ(\mu) \xi \le \Lambda |\xi|^2 \; ;
\end{equation}
\item \emph{finite range of dependence}: denoting by $B_1$ the Euclidean ball of radius $1$ centered at the origin, we assume that $\a_\circ$ is $\mcl F_{B_1}$-measurable. 
\end{itemize}
We denote by $\tau_{-x} \mu$ the translation of the measure $\mu$ by the vector $-x \in \Rd$; explicitly, for every Borel set $U$, we have $(\tau_{-x} \mu)(U) = \mu(x + U)$. We extend $\a_\circ$ by stationarity by setting, for every $\mu \in \mmd(\Rd)$ and $x \in \Rd$, 
\begin{equation*}  
\a(\mu,x) := \a_\circ(\tau_{-x} \mu).
\end{equation*}
While it would be possible to provide with a direct definition of the asymptotic bulk diffusion matrix, see for instance \cite[Chapter~7]{kipnis1998scaling}, our purposes require that we identify suitable finite-volume versions of this quantity. Accordingly, for every bounded open set $U \subset \Rd$, we define the matrix $\ab(U) \in \R^{d\times d}_{\mathrm{sym}}$ to be such that, for every $p \in \Rd$,
\begin{multline}\label{eq:defVariation}
\frac{1}{2} p \cdot \ab(U) p 
\\
=  \inf_{\phi \in \cH^1_0(U)} \Er\Ll[\frac{1}{\rho \vert U \vert} \int_{U} \frac{1}{2} (p + \nabla \phi(\mu, x)) \cdot \a(\mu, x) (p + \nabla \phi(\mu, x)) \, \d \mu(x)\Rr].
\end{multline}
In this expression, the gradient $\nabla \phi(\mu,x)$ is such that, for any sufficiently smooth function $\phi$, $x \in \supp \mu$, and $k \in \{1,\ldots,d\}$,
\begin{equation}  
\label{e.def.deriv}
\e_k \cdot \nabla \phi(\mu, x) = \lim_{h \to 0} \frac{\phi(\mu - \delta_x + \delta_{x + h \e_k}) - \phi(\mu)}{h},
\end{equation}
with $(\e_1,\ldots, \e_d)$ being the canonical basis of $\Rd$. As will be explained in more details below, the space $\cH^1_0(U)$ is a completion of a space of functions that are $\mcl F_K$-measurable for some compact set $K \subset U$. The expectation $\E_\rho$ is taken with respect to the variable $\mu$, a notation we will always use to denote the canonical random variable on $(\mmd(\Rd),\mcl F,\Pr)$ (an explicit writing of $\int_U \cdots \, \d \mu(x)$ would actually involve a summation over every point in the intersection of $U$ and the support of $\mu$). For every $m \in \N$, we let $\cu_m = Q_{3^m}$ denote the cube of side length $3^m$. We define the \emph{bulk diffusion matrix} $\ab$ as
\begin{align}\label{eq:ab}
\ab := \lim_{m \to \infty} \ab(\cu_m).
\end{align}
Although we keep this implicit in the notation, we point out that the matrices~$\ab(U)$ and~$\ab$ depend on the density $\rho$ of particles, which we keep fixed throughout the paper. The fact that this definition of $\ab$ coincides with the more classical definition, which is directly stated in infinite volume, is explained in Appendix~\ref{sec:app.equiv.def} below. Our main result is to obtain an algebraic rate for the convergence in \eqref{eq:ab}.
\begin{theorem}\label{thm:main}
The limit in \eqref{eq:ab} is well-defined. Moreover, there exist an exponent $\alpha(d,\Lambda,\rho) > 0$ and a constant $C(d,\Lambda,\rho) < \infty$ such that for every $m \in \N$,
\begin{align}\label{eq:main}
\Ll| \ab(\cu_m) - \ab \Rr| \leq C 3^{-\alpha m}.
\end{align}
\end{theorem}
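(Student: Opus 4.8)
The plan is to adapt the renormalization (subadditivity) strategy of quantitative homogenization to the space of particle configurations.

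\emph{Subadditive quantities and reduction.} Write $\nu(U,p) := \tfrac12 p\cdot\ab(U)p$ for the quantity appearing in \eqref{eq:defVariation}, and introduce a dual quantity $\nu^*(U,q)$, given by a concave maximization problem over a larger space $\cH^1(U)$ of functions (without the ``$\mcl F_K$-measurable for a compact $K$ strictly inside $U$'' constraint imposed on $\cH^1_0(U)$), designed so that its infinite-volume limit equals $\tfrac12 q\cdot\ab^{-1}q$. Both $\nu(\cdot,p)$ and $\nu^*(\cdot,q)$ are subadditive under the triadic subdivision $\cu_{m+1}=\bigsqcup_z(z+\cu_m)$: for $\nu$ one glues near-minimizers from the subcubes, which extend by zero to $\cH^1_0(\cu_{m+1})$, and for $\nu^*$ one restricts a near-maximizer; with stationarity this gives $\nu(\cu_{m+1},p)\le\nu(\cu_m,p)$ and $\nu^*(\cu_{m+1},q)\le\nu^*(\cu_m,q)$. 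As \eqref{e.unif.ell} bounds both quantities from below, the sequences $m\mapsto\nu(\cu_m,p)$ and $m\mapsto\nu^*(\cu_m,q)$ converge; this already establishes that the limit in \eqref{eq:ab} is well-defined and produces limiting matrices $\ab$ and $\ab_*$ with $\nu^*(\cu_m,q)\downarrow\tfrac12 q\cdot\ab_*^{-1}q$. Setting $J(U,p,q):=\nu(U,p)+\nu^*(U,q)-p\cdot q$, an (approximate) Fenchel--Young inequality gives $J\ge0$, and a direct computation gives $\inf_q J(\cu_m,p,q)=\tfrac12 p\cdot(\ab(\cu_m)-\ab_*(\cu_m))p$ together with the ordering $\ab_*(\cu_m)\le\ab_*\le\ab\le\ab(\cu_m)$. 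Hence $0\le\ab(\cu_m)-\ab\le\ab(\cu_m)-\ab_*(\cu_m)$, and \eqref{eq:main} will follow once we show
\[
G_m:=\sup_{|p|\le1}\,p\cdot\bigl(\ab(\cu_m)-\ab_*(\cu_m)\bigr)\,p\;=\;2\sup_{|p|\le1}\inf_q J(\cu_m,p,q)\;\le\;C\,3^{-\alpha m};
\]
this will also force $\ab_*=\ab$.

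\emph{The iteration.} The decay of $G_m$ rests on two estimates, both quadratic in the optimizers. The first is an energy-difference identity: since the scale-$(m+1)$ minimizer of $\nu$ and maximizer of $\nu^*$ satisfy Euler--Lagrange orthogonality relations, the scale-to-scale drops $\nu(\cu_m,p)-\nu(\cu_{m+1},p)$ and $\nu^*(\cu_m,q)-\nu^*(\cu_{m+1},q)$ are, up to boundary-layer errors, the $\a$-weighted energies of the differences between the pieced-together scale-$m$ optimizers and the scale-$(m+1)$ optimizers; this bounds the decrement of $J$ from below by a quantity $\mathcal{Q}_m(p,q)$ measuring how far the spatial averages over the subcubes of the scale-$m$ flux $\a(p+\nabla v(\cdot,\cu_m,p))$ and gradient $\nabla v(\cdot,\cu_m,p)$ are from $q$ and $0$. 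The second is a matching upper bound $\tfrac12 p\cdot G_m p\le C\,\mathcal{Q}_m(p,\ab_*(\cu_m)p)+C\,3^{-\beta m}$: feeding the scale-$m$ optimizers back into the two variational problems and invoking the multiscale Poincar\'e inequality \Cref{prop:Multi} --- which turns smallness of the spatial averages of $\nabla v(\cdot,\cu_m,p)$ at all intermediate scales into $L^2$-closeness of $v(\cdot,\cu_m,p)$ to an affine field --- closes the duality gap at scale $m$. In both steps the optimizers must be localized near the subcube boundaries; since one cannot multiply by a cutoff that vanishes in a neighbourhood of $\partial(z+\cu_m)$ --- with overwhelming probability a particle lies there and the field would be killed --- this is precisely where the modified Caccioppoli inequality \Cref{prop:Caccioppoli} enters, controlling $\|\nabla(\text{optimizer})\|_{L^2}$ on the interior by the $L^2$ norm of the optimizer on a slightly larger cube plus a small fraction of its gradient there, the fraction reabsorbed through an iteration over a triadic family of boundary shells. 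The errors $3^{-\beta m}$ come from these boundary layers and from the decoupling --- guaranteed by the finite-range-of-dependence assumption --- of optimizers on subcubes more than distance $1$ apart.

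\emph{Conclusion of the iteration.} Combining the two estimates at a unit $p$ with $q=\ab_*(\cu_{m+1})p$ and absorbing the higher-order discrepancy from matching $q$ to scale $m+1$ rather than $m$ yields
\[
\tfrac12 p\cdot G_{m+1}p\;\le\;C\bigl(J(\cu_m,p,q)-J(\cu_{m+1},p,q)\bigr)+C\,3^{-\beta m}\;\le\;C\bigl(\tfrac12 p\cdot G_m p-\tfrac12 p\cdot G_{m+1}p\bigr)+C\,3^{-\beta m},
\]
hence $p\cdot G_{m+1}p\le(1-\theta)\,p\cdot G_m p+C\,3^{-\beta m}$ with $\theta=(1+C)^{-1}\in(0,1)$, uniformly in unit $p$, so $G_{m+1}\le(1-\theta)G_m+C3^{-\beta m}$. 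Iterating from $m=0$ gives $G_m\le C3^{-\alpha m}$ with $\alpha:=\min\{\beta,\log_3\bigl(1/(1-\theta)\bigr)\}$ (reduced by an arbitrarily small amount in the borderline case), and by the reduction above this proves \eqref{eq:main}; the well-definedness of the limit \eqref{eq:ab} has already been obtained from the monotonicity.

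\emph{Main obstacle.} The crux of the difficulty is the unavailability of the classical move ``test the equation with $u\varphi$ for a spatial cutoff $\varphi$'', which in the elliptic setting single-handedly powers the passage between scales: switching off the coefficient near $\partial U$ would here require switching off essentially all of the infinitely many particles near $\partial U$, an event of negligible probability. This forces the weaker ``fraction of the outer gradient'' form of the Caccioppoli inequality and a delicate multi-shell reabsorption, and every point at which the elliptic argument would restrict or glue a solution --- the subadditivity of $\nu$ and $\nu^*$, the energy-difference identities, the application of \Cref{prop:Multi} --- has to be redone with boundary-layer error terms that are then shown to be summable across the triadic shells. Doing this bookkeeping cleanly, in combination with setting up the approximate convex duality $J\ge0$ in a setting where the gradient \eqref{e.def.deriv} acts nonlocally by displacing particles, the natural integration is against $\d\mu$, and no exact Helmholtz decomposition is available, is where the real work lies.
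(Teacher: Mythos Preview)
Your high-level strategy is the paper's: introduce the dual subadditive quantity $\nu^*$, set $J=\nu+\nu^*-p\cdot q$, and prove algebraic decay of the gap $\ab(\cu_m)-\ab_*(\cu_m)$ via an iteration powered by the multiscale Poincar\'e and modified Caccioppoli inequalities. The reduction paragraph is correct.

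Two points in the middle do not match what the argument actually delivers, and the first is a genuine gap. You claim a one-step contraction
\[
\tfrac12 p\cdot G_{m+1}p \;\le\; C\bigl(J(\cu_m,p,q)-J(\cu_{m+1},p,q)\bigr)+C\,3^{-\beta m},
\]
i.e.\ $F_{m+1}\le C\tau_m + C3^{-\beta m}$ in the paper's notation. The flatness estimate you can actually prove via multiscale Poincar\'e does not give this: the inequality \cref{prop:Multi} produces a sum over \emph{all} intermediate scales $k\le m$, and the $k$-th term is controlled (via the two-scale comparison Lemma~\ref{lem:JTwoScale} and a separate variance estimate exploiting finite range) by quantities involving $\tau_k,\tau_{k+1},\ldots,\tau_m$. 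The output is therefore
\[
J(\cu_m,p,\ab_m p)\;\le\;C\Big(3^{-\beta m}+\sum_{n=0}^{m}3^{-\beta(m-n)}\tau_n\Big),
\]
a convolution of past increments with an exponential kernel, not a single $\tau_m$. This does \emph{not} yield $G_{m+1}\le(1-\theta)G_m+C3^{-\beta m}$ directly; closing the loop requires an additional argument on the weighted sums $\tilde F_m:=\sum_{n\le m}3^{-\beta(m-n)/2}F_n$, showing $\tilde F_{m+1}\le C(\tilde F_m-\tilde F_{m+1})+C3^{-\beta m/2}$ and hence exponential decay of $\tilde F_m$. Your ``Conclusion of the iteration'' step, as written, would not go through.

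Second, the role of the modified Caccioppoli inequality is not ``localizing optimizers near subcube boundaries'' during gluing. It is the bridge from the energy quantity $J$ to the $\cL^2$ oscillation of the optimizer: one applies \Cref{prop:Caccioppoli} to the $\a$-harmonic function $v(\cdot,\cu_{m+1},p,\ab_m p)$ to obtain
\[
J(\cu_m,p,\ab_m p)\;\le\;C\tau_m+\frac{C}{3^{2m}\rho|\cu_{m+1}|}\|v(\cdot,\cu_{m+1},p,\ab_m p)\|_{\cL^2}^2,
\]
after absorbing the ``fraction $\theta$ of the outer gradient'' into the left side. Only \emph{then} does multiscale Poincar\'e enter, to bound this $\cL^2$ norm. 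The subadditivity of $\nu$ and $\nu^*$ themselves (the gluing/restriction arguments) require no Caccioppoli at all; they follow directly from the definitions of $\cH^1_0$ and $\cH^1$.
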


In the remainder of this section, we clarify some of the definitions appearing earlier, and introduce some more useful notation. 

\subsection{Continuum configuration space}
For the purposes of the present paper, we will not need to construct the stochastic process of interacting particles whose large-scale behavior is captured by the bulk diffusion matrix $\ahom$, so we contend ourselves with brief remarks here. Intuitively, the dynamics is a cloud of particles, which we can denote by 
\begin{equation*}  
\mu(t) = \sum_{i = 1}^\infty \de_{X_i(t)} \in \mmd(\Rd), \qquad t \ge 0,
\end{equation*}
and each coordinate $(X_i(t))_{t \ge 0}$ performs a diffusion with local diffusivity matrix given by $\a(\mu(t), X_i(t))$. 
General properties of diffusions on the space $\mmd(\Rd)$ have been studied using Dirichlet forms in \cite{albeverio1996canonical, albeverio1996differential, albeverio1998analysis, albeverio1998analysis2}; see also the survey \cite{rockner1998stochastic}. In our current setup, for a finite $N$ number of particles, the diffusion process can be defined in the standard way (say, using De Giorgi-Nash regularity results on the heat kernel, and Kolmogorov's theorems) as a diffusion on $(\R^d)^N$. For $\Pr$-almost every $\mu \in \mmd(\Rd)$, one can then define the dynamics of the entire cloud of particles using finite-volume approximations. 

Although we have defined $\a(\mu,x)$ for every $x \in \Rd$, we will in fact only need to appeal to this quantity in the case when $x$ is in the support of $\mu$. One possible example of local diffusivity function is $\a_\circ(\mu) := (1 + \1_{\{ \mu(B_1) = 1 \}}) \id$. For this example, a particle at position $x \in \Rd$ follows a Brownian motion with variance $2$ whenever there are no other particles in the unit ball centered at $x$, while it follows a Brownian motion with unit variance whenever there is at least one additional particle in this ball (there are also reflection effects at the transition between these two situations).

For every Borel set $U \subset \Rd$, we denote by $\mcl B_U$ the set of Borel subsets of $U$. For every $\mu \in \mmd(\Rd)$, we denote by $\supp \mu$ the support of $\mu$, and by $\mu \mres U \in \mmd(\Rd)$ the measure such that, for every Borel set $V \subset \Rd$,
\begin{equation*}  
(\mu \mres U)(V) = \mu(U \cap V). 
\end{equation*}
We will often use the following ``disintegration'' lemma for functions defined on~$\mmd(\Rd)$. For definiteness, we state it for functions taking values in $\R$, but this plays no particular role. Its proof is deferred to Appendix~\ref{sec:Appendix}. Whenever $U \subset \Rd$, we write $U^c$ to denote the complement of $U$ in $\Rd$. 
\begin{lemma}[Canonical projection]
\label{lem:Projection}
Let $f : \mmd(\Rd) \to \R$ be a function, and for every Borel set $U$, measure $\mu \in \mmd(\Rd)$, and $n \in \N$, let $f_n(\cdot, \mu \mres U^c)$ denote the (permutation-invariant) function
\begin{equation*}  
f_n(\cdot, \mu \mres U^c) :
\Ll\{
\begin{array}{rcl}  
U^n  & \to & \R \\
(x_1, \ldots, x_n) & \mapsto & f \Ll( \sum_{i = 1}^n \de_{x_i} + \mu \mres U^c \Rr). 
\end{array}
\Rr.
\end{equation*}

The following statements are equivalent. 

(1) The function $f$ is $\mcl F$-measurable. 

(2) For every $n \in \N$, the function $f_n$ is $\mcl B_U^{\otimes n} \otimes \mcl F_{U^c}$-measurable. 

\end{lemma}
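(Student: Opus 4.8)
The plan is to treat \Cref{lem:Projection} as a purely measure-theoretic statement, with probability entering only through the $\Pr$-completion of the various $\sigma$-algebras, and to work with $U$ bounded — the only case we use — so that $\mu\mres U$ is $\Pr$-a.s.\ a finite sum of Dirac masses. Throughout I would use the ``splicing'' map
\[
\Phi_n : U^n \times \mmd(\Rd) \to \mmd(\Rd), \qquad \Phi_n\big((x_1,\dots,x_n),\mu\big) := \sum_{i=1}^n \de_{x_i} + \mu\mres U^c,
\]
for which $f_n = f\circ\Phi_n$ by definition.

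For $(1)\Rightarrow(2)$, I would first check that $\Phi_n$ is measurable from $\mcl{B}_U^{\otimes n}\otimes\mcl{F}_{U^c}$ to $\mcl{F}$. Since $\mcl F$ is generated by the evaluations $\mu\mapsto\mu(V)$ over Borel $V\subset\Rd$, this reduces to observing that for each such $V$,
\[
\big((x_1,\dots,x_n),\mu\big) \longmapsto \sum_{i=1}^n \1_{\{x_i\in V\cap U\}} + \mu(V\cap U^c)
\]
is $\mcl{B}_U^{\otimes n}\otimes\mcl{F}_{U^c}$-measurable: the sum because $V\cap U\in\mcl B_U$, the remaining term because $V\cap U^c$ is a Borel subset of $U^c$. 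Composing with $f$ yields $f_n$ measurable; the completions by $\Pr$-null sets are handled in the standard way, applying the above to an uncompleted-measurable version of $f$ and tracking the exceptional set through $\Phi_n$, using that the push-forward under $\Phi_n$ of the relevant measure on $U^n\times\mmd(\Rd)$ is absolutely continuous with respect to $\Pr$.

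The substance is in $(2)\Rightarrow(1)$, where I would construct a measurable right inverse of $\Phi_n$ over the event $A_n := \{\mu : \mu(U)=n\}$. Each $A_n$ ($n\ge0$), and also $A_\infty:=\{\mu:\mu(U)=\infty\}$, lies in $\mcl F_U\subset\mcl F$ (take $V=U$), and $\Pr(A_\infty)=0$ because $U$ is bounded; so by completeness of $\mcl F$ it suffices to show $f\1_{A_n}$ is $\mcl F$-measurable for every finite $n$. Fix a Borel isomorphism $\beta:\Rd\to\R$ and order $\Rd$ by $x\preceq y\iff\beta(x)\le\beta(y)$; on $A_n$, let $x_1(\mu)\preceq\dots\preceq x_n(\mu)$ be the atoms of $\mu\mres U$ listed with multiplicity in nondecreasing order, equivalently
\[
\beta(x_k(\mu)) = \inf\big\{t\in\R : \mu\big(U\cap\beta^{-1}((-\infty,t])\big)\ge k\big\}.
\]
Because $\mu\mapsto\mu\big(U\cap\beta^{-1}((-\infty,t])\big)$ is $\mcl F_U$-measurable for each $t$, each $\mu\mapsto\beta(x_k(\mu))$ is $\mcl F_U$-measurable on $A_n$, hence so is $\mu\mapsto x_k(\mu)=\beta^{-1}(\beta(x_k(\mu)))$; thus $\sigma_n:\mu\mapsto\big((x_1(\mu),\dots,x_n(\mu)),\mu\big)$ is measurable from $\mcl F|_{A_n}$ to $\mcl B_U^{\otimes n}\otimes\mcl F_{U^c}$. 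Since $\beta$ is injective, $\sum_{i=1}^n\de_{x_i(\mu)}=\mu\mres U$ on $A_n$, so $\Phi_n\circ\sigma_n=\mathrm{id}$ on $A_n$, hence $f=f_n\circ\sigma_n$ there; by the hypothesis on $f_n$, $f\1_{A_n}=(f_n\circ\sigma_n)\1_{A_n}$ is $\mcl F$-measurable. Summing over $n\ge0$ presents $f$ as $\Pr$-a.e.\ equal to an $\mcl F$-measurable function, and completeness concludes.

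I expect the only non-formal point to be the measurable enumeration of the atoms of $\mu\mres U$ behind $\sigma_n$: it rests on $\Rd$ being Borel-isomorphic to $\R$ (equivalently, one may invoke that $\mmd(\Rd)$ is a standard Borel space and that the finite-to-one ``summation'' map $U^n\to\{\nu\in\mmd(\Rd):\supp\nu\subset U,\ \nu(U)=n\}$ admits a Borel section). A concrete lexicographic order on $\Rd$ can be used instead, but then a little care is needed, since its order intervals do not generate $\mcl B_{\Rd}$ on the nose. The rest — in particular the $\Pr$-completions — is routine bookkeeping with generating classes and the $\pi$--$\lambda$ theorem.
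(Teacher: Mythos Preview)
Your argument is correct and takes a genuinely different route from the paper's.

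For $(1)\Rightarrow(2)$, the paper checks the claim on a generating class of indicator functions of the form $\1_{\{\mu(V_1)=n_1\}}\1_{\{\mu(V_2)=n_2\}}\1_{\{\mu(U)=n\}}$ with $V_1\subset U$, $V_2\subset U^c$, writing out $\{f_n=1\}$ explicitly as a union over permutations. Your approach is equivalent but more streamlined: you factor $f_n=f\circ\Phi_n$ and verify measurability of the splicing map $\Phi_n$ once and for all on the same generators. For $(2)\Rightarrow(1)$, the approaches diverge substantially. The paper works combinatorially: it takes symmetrized indicators $\sum_{\sigma\in S_n}\prod_i\1_{\{x_{\sigma(i)}\in V_i\}}$, uses an inclusion--exclusion decomposition to reduce to the case where the $V_i$ are pairwise disjoint or equal, and in that case recognizes the symmetrized product as $\prod_j\1_{\{\mu(\tilde V_j)=n_j\}}$, which is visibly $\mcl F$-measurable. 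You instead build a Borel section $\sigma_n$ of $\Phi_n$ on $\{\mu(U)=n\}$ by ordering the atoms of $\mu\mres U$ via a Borel isomorphism $\Rd\to\R$, so that $f=f_n\circ\sigma_n$ on that event. Your argument is cleaner and makes the structure (quotient by $S_n$, measurable selection) transparent; it also generalizes immediately to any standard Borel space in place of $\Rd$. The paper's argument is more elementary---no Borel isomorphism theorem---but pays for it with the inclusion--exclusion bookkeeping. Your explicit restriction to bounded $U$ (so that $\mu(U)<\infty$ $\Pr$-a.s.) is appropriate and matches the paper's implicit use of the partition into $\{\mu(U)=n\}$.
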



\subsection{Lebesgue and Sobolev function spaces}
We define $\cL^2$ to be the space of $\mcl F$-measurable functions $f$ such that $\Er[f^2]$ is finite. 

Recall that for sufficiently smooth $f : \mmd(\Rd) \to \R$, $\mu \in \mmd(\Rd)$ and $x \in \supp \mu$, we define $\nabla f(\mu,x)$ according to the formula in \eqref{e.def.deriv}. We write $\nabla f = (\dr_1 f, \ldots, \dr_d f)$.

For every open set $U \subset \Rd$, we define the sets of smooth functions $\cC^\infty(U)$ and $\cC^\infty_c(U)$ in the following way. We have that $f \in \cC^{\infty}(U)$ if and only if $f$ is an $\mcl F$-measurable function, and for every bounded open set $V \subset U$, $\mu \in \mmd(\Rd)$ and $n \in \N$, the function $f_n(\cdot, \mu \mres V^c)$ appearing in Lemma~\ref{lem:Projection} is infinitely differentiable on $V^n$. 
The space $\cC^{\infty}_c(U)$ is the subspace of $\cC^{\infty}(U)$ of functions that are $\mcl F_K$-measurable for some compact set $K \subseteq U$.

We now define $\cH^1(U)$, an infinite dimensional analogue of the classical  Sobolev space $H^1$. For every $f \in \cC^\infty(U)$, we set
\begin{align*}
\norm{f}_{\cH^1(U)} = \Ll(\Er[f^2(\mu)] + \Er\Ll[\int_U \vert \nabla f(\mu, x) \vert^2 \, \d \mu(x) \Rr]\Rr)^{\frac{1}{2}}.
\end{align*}
The space $\cH^1(U)$ is the completion, with respect to this norm, of the space of functions $f \in \cC^\infty(U)$ such that $\norm{f}_{\cH^1(U)}$ is finite (elements in this function space that coincide $\Pr$-almost surely are identified). As in classical Sobolev spaces, for every $f \in \cH^1(U)$, we can interpret $\nabla f(\mu,x)$, with $x \in U$, in some weak sense. We stress that \emph{functions in $\cH^1(U)$ need not be $\mcl F_U$-measurable}. Indeed, the function $f$ can depend on $\mu \mres U^c$ in a relatively arbitrary (measurable) way, as long as $f \in \cL^2$. If $V \subset U$ is another open set, then $\cH^1(U) \subset \cH^1(V)$. 

We also define the space $\cH^1_0(U)$ as the closure in $\cH^1(U)$ of the space of functions $f \in \cC^\infty_c(U)$   such that $\norm{f}_{\cH^1(U)}$ is finite. Notice in particular that, in stark contrast with functions in $\cH^1(U)$, a function in $\cH^1_0(U)$ does not depend on $\mu \mres U^c$. In the notation of Lemma~\ref{lem:Projection}, when $f \in \cH^1_0(U)$, certain compatibility conditions between the functions $(f_n)_{n \in \N}$ also have to be satisfied. If $V \subset U$ is another open set, we have that $\cH^1_0(V) \subset \cH^1_0(U)$ (notice that the inclusion is in the opposite direction to that for $\cH^1$ spaces). We also have the following result.
\begin{lemma}\label{lem:Integral0}
For every bounded open set $U \subset \Rd$ with Lipschitz boundary and $f \in \cH^1_0(U)$, we have 
\begin{align}\label{eq:Integral0}
\Er\Ll[\int_U \nabla f(\mu, x) \, \d \mu(x)\Rr] = 0.
\end{align}
\end{lemma}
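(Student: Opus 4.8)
The plan is to prove the identity first for smooth compactly supported functions $f \in \cC^\infty_c(U)$ and then pass to the limit using the definition of $\cH^1_0(U)$ as a closure. So suppose $f$ is $\mcl F_K$-measurable for some compact $K \subset U$. The key point is that, for such an $f$, the expression $\Er\left[\int_U \nabla f(\mu,x)\,\d\mu(x)\right]$ can be rewritten, using the disintegration of $\Pr$ into a Poissonian sum over configurations, as an integral over $(\R^d)^n$ (weighted by the Poisson weights in $n$) of a sum of partial derivatives of $f_n(\cdot,\mu\mres U^c)$ against Lebesgue measure $\d x_1 \cdots \d x_n$. Concretely, conditionally on there being $n$ points in a large ball containing $K$ with some buffer, the points are i.i.d. uniform, and $\int_U \nabla f(\mu,x)\,\d\mu(x) = \sum_{i=1}^n \nabla_{x_i} f_n(x_1,\dots,x_n)$ where $\nabla_{x_i}$ is the ordinary Euclidean gradient in the $i$-th variable (this is exactly the content of the definition \eqref{e.def.deriv}, since moving the point $x_i$ infinitesimally is the same as the Euclidean derivative). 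Because $f$ is $\mcl F_K$-measurable with $K$ compactly contained in $U$, each $f_n(\cdot,\mu\mres U^c)$ is, as a function of $x_i$, supported in $K$ and smooth, so $\int_{\R^d} \nabla_{x_i} f_n \, \d x_i = 0$ by the fundamental theorem of calculus (the function vanishes near $\partial U$, in particular outside a compact subset of $\R^d$). Summing over $i$ and integrating over the remaining variables and over $n$ against the Poisson weights gives \eqref{eq:Integral0} for $f \in \cC^\infty_c(U)$.

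Next I would pass to the limit. Both sides of \eqref{eq:Integral0} are continuous with respect to the $\cH^1(U)$-norm: the map $f \mapsto \Er\left[\int_U \nabla f(\mu,x)\,\d\mu(x)\right]$ is a linear functional bounded by $\left(\Er\left[\int_U |\nabla f(\mu,x)|^2\,\d\mu(x)\right]\right)^{1/2} \left(\Er[\mu(U)]\right)^{1/2}$, and $\Er[\mu(U)] = \rho|U| < \infty$ because $U$ is bounded; hence this functional is $\cH^1(U)$-continuous. Since $\cH^1_0(U)$ is by definition the $\cH^1(U)$-closure of $\{f \in \cC^\infty_c(U) : \norm{f}_{\cH^1(U)} < \infty\}$, and the functional vanishes on that dense subset, it vanishes on all of $\cH^1_0(U)$. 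This completes the proof.

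The main obstacle, and the step deserving the most care, is the first one: making rigorous the identification of $\int_U \nabla f(\mu,x)\,\d\mu(x)$ with the sum of Euclidean partial derivatives $\sum_i \nabla_{x_i} f_n$ and the accompanying change-of-variables/integration-by-parts under the Poisson measure. One has to be careful about which points of the configuration are ``free'' (those in a bounded region containing $K$) versus the conditioning on $\mu\mres U^c$ coming from the disintegration in Lemma \ref{lem:Projection}, and to check that the integrand genuinely has compact support in each free variable so that no boundary term survives — this is where the compact-support hypothesis $f \in \cC^\infty_c(U)$, and the Lipschitz-boundary assumption ensuring $U$ is well-behaved, enter. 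Everything else (linearity, the $L^2$ bound, the density argument) is routine.
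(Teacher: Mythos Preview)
Your approach --- density in $\cC^\infty_c(U)$ plus integration by parts in each particle coordinate, followed by a continuity argument --- is the same as the paper's. There is one slip worth correcting: for $f$ that is $\mcl F_K$-measurable, the projection $f_n$ is \emph{not} supported in $K$ as a function of $x_i$, and it does \emph{not} vanish near $\partial U$. What is true is that $f_n$ is \emph{constant} in $x_i$ on $U\setminus K$ (equal to $f_{n-1}$ evaluated at the remaining points), so that $\nabla_{x_i} f_n$ is supported in $K$. The integral $\int_U \nabla_{x_i} f_n\,\d x_i$ still vanishes, either by extending $f_n$ by that constant value to all of $\R^d$ and applying the fundamental theorem of calculus, or --- as the paper does --- by Green's formula on $U$: the boundary term is $c\int_{\partial U} \e_k\cdot\mathbf n\,\d S = 0$ since the outer normal integrates to zero over a closed Lipschitz surface. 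This is exactly the place where the paper's $\cH^1_0(U)$ departs from a classical $H^1_0$ space, so the distinction matters; once corrected, your argument is complete.
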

\begin{proof}
By density, we can assume that $f \in \cC^{\infty}_c(U)$. We use the functions $(f_n)_{n \in \N}$ appearing in Lemma~\ref{lem:Projection}; moreover, since $f(\mu)$ does not depend on $\mu \mres U^c$, we simply write $f_n(x_1,\ldots, x_n)$ in place of $f_n(x_1,\ldots, x_n, \mu \mres U^c)$. For every $k \in \{1,\ldots, d\}$, we have
\begin{align*}
\Er\Ll[\int_U \partial_k f(\mu, x) \, \d \mu(x)\Rr] = \sum_{n=1}^{\infty}\Pr[\mu(U) = n] \sum_{i=1}^n \fint_{U^n} \e_k \cdot \nabla_{x_i} f_n(x_1, \cdots, x_n) \, \d x_1 \cdots \d x_n.
\end{align*}
We use Green's formula for the integral $\e_k \cdot \nabla_{x_i} f(x_1, \cdots, x_n)$ with respect to $x_i$
\begin{align*}
\int_{U} \e_k \cdot \nabla_{x_i} f_n(x_1, \cdots, x_n) \, \d x_i = \int_{\partial U} f_n(x_1, \cdots, x_n) \e_k \cdot \mathbf{n}(x_i) \, \d x_i,
\end{align*}
where $\mathbf{n}(x_i)$ is the unit outer normal. Since $f \in \cC^{\infty}_c(U)$, the quantity $f_n(x_1,\ldots, x_n)$ remains constant when $x_i$ moves along the boundary $\partial U$. Denoting this constant (which depends on $(x_j)_{j \neq i}$) by $c$, we apply once again Green's formula to get
\begin{align*}
\int_{U} \e_k \cdot \nabla_{x_i} f_n(x_1, \cdots, x_n) \, \d x_i =\int_{\partial U} c \e_k \cdot \mathbf{n}(x_i) \, \d x_i = \int_{U} \e_k \cdot \nabla_{x_i} c \, \d x_i = 0. 
\end{align*}
This proves the desired result.
\end{proof}

\subsection{Localization operators}
\label{subsubsec:LocReg}
We now introduce families of operators that allow to localize a function defined on $\mmd(\Rd)$. We state some properties of these operators without proof, and refer to \cite[Section 4.1]{gu2020decay} for more details. 

Recall that for every $s > 0$, we write by $Q_s := \Ll(-\frac{s}{2}, \frac{s}{2}\Rr)^d$. We denote the closure of the cube $Q_s$ by $\bar Q_s$, and define $\A_s f := \Er[f \, \vert \, \mcl F_{\bar Q_s}]$. For any ${f \in \cL^2}$, the process $(\A_s f)_{s \geq 0}$ is a c\`adl\`ag $\cL^2$-martingale with respect to $(\mmd(\Rd),(\mcl F_{\bar Q_s})_{s \geq 0}, \Pr)$. We denote the jump at time $s$ by
\begin{equation*}  
\Delta_s(\A f) := \A_{s}f - \A_{s-}f = \A_{s}f - \lim_{t < s, t \to s} \A_{t}f .
\end{equation*}
We can have $\Delta_s(\A f) \neq 0$ only on the event where the support of the measure $\mu$ intersects the boundary~$\dr Q_s$. The bracket process $([\A f]_s)_{s \geq 0}$ is defined by 
\begin{align}\label{eq:Bracket}
[\A f]_s := \sum_{0\leq \tau \leq s} \Delta_\tau (\A f).
\end{align}
We have that $((\A_s f)^2 - [\A f]_s)_{s \geq 0}$ is a martingale with respect to $(\mcl F_{\bar Q_s})_{s \geq 0}$. 

Notice that the operator $\A_s$ can be interpreted as an averaging of the variable $\mu \mres \ov{Q}_s^c$, keeping $\mu \mres \bar Q_s$ fixed. As a consequence, for every open set $Q_s \subset U$, if $f \in \cH^1(U)$ and $x \in Q_s \cap \supp(\mu)$, there is no ambiguity in considering the quantity $\A_s(\nabla f)(\mu,x)$. Moreover,
\begin{align}\label{eq:Commute}
    \nabla \A_s f(\mu, x) = \A_s (\nabla f)(\mu, x),
\end{align}
and $\A_s f$ belongs to $\cH^1(Q_s)$, by Jensen's inequality; see \Cref{prop:AppendixLocalization} for details. However, in general, this function does not belong to $\cH^1_0(\Rd)$, or any other $\cH^1_0$ space. This comes from the fact that the function $\A_s f$ may be discontinuous as a particle enters or leave $\bar Q_s$. To solve this problem, we regularize this conditional expectation in the following way. For any $s, \epsilon > 0$, we define 
\begin{align}
\label{eq:defA}
\A_{s,\epsilon} f := \frac{1}{\epsilon} \int_{0}^{\epsilon} \A_{s + t} f \, \d t.
\end{align}
As above, for every open set $U$ containing $Q_{s+\ep}$, $f \in \cH^1(U)$, and ${x \in Q_{s+\ep} \cap \supp(\mu)}$, the quantity $\A_{s,\ep}(\nabla f)(\mu,x)$ is well-defined. Irrespectively of the position of the point $x \in \supp(\mu)$, the gradient of $\A_{s,\ep} f$ can be calculated explicitly. Indeed, writing ${\tau(x) := \inf\{r \in \R \, : \ x \in Q_r\}}$, and $\n(x)$ for the outer unit normal to $Q_{\tau(x)}$ at the point $x$, we have
\begin{align}
\label{eq:ADerivative}
\nabla \A_{s,\epsilon} f(\mu, x) = \left\{ 
	\begin{array}{ll}
         \A_{s,\epsilon} \Ll(\nabla f\Rr)(\mu, x) & \text{if } x \in Q_{s};\\
         \frac{1}{\epsilon} \int_{\tau(x)-s}^{\epsilon} \A_{s+t} \Ll(\nabla f(\mu, x)\Rr)\, \d t - \frac{\n(x)}{\epsilon}\Delta_{\tau(x)} (\A f)  & \text{if } x \in \Ll( {Q}_{s+\epsilon} \backslash Q_{s} \Rr);\\
         0 & \text{if } x \in Q_{s+\epsilon}^c.
         \end{array} \right. 
\end{align}
Recalling that $Q_{s+\epsilon} \subset U$, one can check that $\A_{s,\epsilon} f \in \cH^1_0(U)$. Similarly, one can define another regularized localization operator $\tilde{\A}_{s,\epsilon}$ 
\begin{align}\label{eq:Atilde}
\tilde{\A}_{s,\epsilon} f :=  \frac{2}{\epsilon^2} \int_{0}^{\epsilon} (\epsilon - t) \A_{s+ t} f \, \d t,
\end{align}
which can be obtained by applying $\A_{s,\ep}$ twice: $\tilde{\A}_{s,\epsilon} = \A_{s,\epsilon} \circ \A_{s,\epsilon}$. We have the identity 
\begin{align}\label{eq:AtildeIdentity}
\Er[(\A_{s,\epsilon} f)^2] = \Er[f (\tilde{\A}_{s,\epsilon} f)] = \Er\Ll[\frac{2}{\epsilon^2} \int_{0}^{\epsilon} (\epsilon - t) (\A_{s+ t} f)^2 \, \d t\Rr].
\end{align}
The operator $\tilde{\A}_{s,\epsilon}$ satisfies properties similar to those of $\A_{s,\epsilon}$,  and we have 
\begin{multline}
\label{eq:AtildeDerivative}
\nabla \tilde{\A}_{s,\epsilon}f(\mu, x) =
\\
\left\{ \begin{array}{ll}
         \tilde{\A}_{s,\epsilon} \Ll(\nabla f(\mu, x)\Rr) & x \in Q_{s};\\
         \frac{2}{\epsilon^2} \Ll(\int_{\tau(x)-s}^{\epsilon}(\epsilon - t)\A_{s+t} \Ll(\nabla f(\mu, x)\Rr)\, \d t - (s+ \epsilon  - \tau(x))\Delta_{\tau(x)} (\A f) \n(x)\Rr) & x \in \Ll( {Q}_{s+\epsilon} \backslash Q_{s} \Rr);\\
         0 & x \in Q_{s+\epsilon}^c.\end{array} \right. 
\end{multline}

\section{Functional inequalities}
\label{sec:func}

The goal of this section is to derive functional inequalities that will be fundamental to the proof of our main result. The first crucial estimate is a multiscale Poincar\'e inequality, see Proposition~\ref{prop:Multi}. This inequality is an improvement over the standard Poincar\'e inequality that substitutes the $L^2$ norm of the gradient of the function of interest by a weighted sum of spatial averages of this gradient. It has a structure comparable to that of $\|u\|_{L^2} \lesssim \|\nabla u\|_{H^{-1}}$, where we moreover decompose the $H^{-1}$ norm into a series a scales, in analogy with the standard definition of Besov spaces, or the equivalent definition of $H^{-1}$ norm in terms of spatial averages, see for instance \cite[Appendix~D]{AKMbook}. The proof of this estimate is based on an $H^2$ estimate for solutions of ``$-\Delta u = f$'', with ``$\Delta$'' being the relevant Laplacian adapted to our setting; see Proposition~\ref{prop:H2}. 

The second crucial functional inequality derived here is a Caccioppoli inequality, see Proposition~\ref{prop:Caccioppoli}. In the standard elliptic setting, the Caccioppoli inequality allows to control the $L^2$ norm of the gradient of a solution by the $L^2$ norm of the function itself, on a larger domain; it can thus be thought of as a reverse Poincar\'e inequality for solutions. In our context, we are not able to prove such a strong estimate, but prove instead a weaker version of this inequality that allows to control the $\cL^2$ norm of the gradient of a solution by the $\cL^2$ norm of the function itself, plus \emph{a fraction} of the $\cL^2$ norm of the gradient on a larger domain. 

For every $k \le n \in \N$, we define $\Z_{n,k} := 3^k \Zd \cap \cu_n$. Up to a set of null measure, the family $(z + \cu_k)_{z \in \Z_{n,k}}$ forms a partition of $\cu_n$. For any $y \in \Rd$, we write $\cu_n(y)$ to denote the unique cube containing $y$ that can be written in the form $z + \cu_n$ for some $z \in 3^n \Zd$. This is well-defined except for some $y$'s in a set of null measure; we can decide on an arbitrary convention for these remaining cases. We also write $\Z_{n,k}(y) := 3^k \Zd \cap \cu_n(y)$. 

The following ``multiscale spatial filtration'' will be useful in the rest of the paper: for every $n,k \in \N$ with $k \le n$, and $y \in \Rd$, we define the $\sigma$-algebra $\G_{n,k}^y$ by
\begin{align}\label{eq:FilG}
\G_{n,k}^y := \sigma\Ll( \{\mu(z + \cu_k)\}_{z \in \Z_{n,k}(y)} ,\mu \mres (\Rd \backslash \cu_n(y)) \Rr).
\end{align}
We use the shorthand $\G_{n,k} := \G_{n,k}^0$ and $\G_n := \G_{n,n}$. One can verify that, for every $n,n',k,k' \in \N$ and $y,y' \in \Rd$, 
\begin{align}\label{eq:FilGInclusion}
n \leq n', k \leq k' \text{ and } \cu_n(y) \subset \cu_{n'}(y')  \quad \implies \quad \G^{y'}_{n',k'} \subset \G^y_{n,k}.
\end{align}
We also define the analogue of $\G_n$ for a general Borel set $U \subset \Rd$ as
\begin{align}\label{eq:FilGU}
\G_{U} := \sigma\Ll( \mu(U) ,\mu \mres (\Rd \backslash U) \Rr).
\end{align}
The condition $\Er[f \vert \G_U] = 0$ will appear many times in this paper, usually in the context of centering a function in $\cH^1(U)$. Using the functions $(f_n)$ defined defined in \Cref{lem:Projection}, we can rewrite the condition $\Er[f \vert \G_U] = 0$ as: for every $n \in \N$ and $\Pr$-almost every $\mu \in \mmd(\Rd)$,
\begin{align}\label{eq:ConditionGU}
\int_{U^n} f_n(x_1, \cdots, x_n, \mu \mres U^c) \, \d x_1 \cdots \, \d x_n =0.   
\end{align}

\subsection{Poincar\'e inequality}\label{subsec:Poincare}
We present two types of Poincar\'e inequalities: one for the space $\cH^1_0(U)$, and one for the space $\cH^1(U)$. We first state an elementary version for product spaces and functions in the standard Sobolev $H^1$ space. The proof is classical and can be found for instance in \cite[Theorem~13.36 and Proposition~13.34]{firstcourse}. For any bounded Borel set $U \subset \Rd$, we write $\diam(U)$ to denote the diameter of $U$, and for every $f \in L^1(U)$, we denote the Lebesgue integral of $f$, normalized by the Lebesgue measure of $U$, by
\begin{equation*}  
\fint_U  f := |U|^{-1} \int_U f.
\end{equation*}

\begin{proposition}[Poincar\'e inequality in classical Sobolev spaces]
There exists a constant $C(d) < \infty$ such that for every bounded convex open set $U \subset \Rd$, $n \in \N$, and $f \in H^1(U^n)$, we have
\label{prop:Poincare0}
\begin{align}
\fint_{U^n} \Ll(f - \Ll(\fint_{U^n} f \Rr)\Rr)^2 \leq C \diam(U)^2 \sum_{i=1}^n \fint_{U^n} \vert \nabla_{x_i} f \vert^2 .
\end{align}
\end{proposition}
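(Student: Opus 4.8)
The plan is to recognize that the asserted inequality is, up to the value of the constant, the Poincar\'e--Wirtinger inequality on the convex open set $U^n \subset \R^{dn}$; the catch is that applying that inequality directly would produce a constant proportional to $\diam(U^n)^2 = n\,\diam(U)^2$, which is not permitted since we want $C = C(d)$ independent of $n$. I would therefore \emph{tensorize}: establish the single-factor case --- which is exactly the classical Poincar\'e--Wirtinger inequality on a bounded convex set $U \subset \Rd$, valid with constant $C(d)\diam(U)^2$ (for instance via Payne--Weinberger) --- and then iterate it over the $n$ coordinates in a way that keeps the constant uniform in $n$. All identities below can be read either for the weak gradient directly or, equivalently, after a routine smooth approximation, since $U$ (being bounded, convex, hence Lipschitz) is an $H^1$-extension domain.

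The key step is an orthogonal decomposition of $f$. For $1 \le i \le n$ let $P_i$ be the operator that averages a function over the variable $x_i \in U$; the $P_i$ commute, $P_i f$ is independent of $x_i$, and $\nabla_{x_i} P_i = 0$. Set $Q_{n+1} := \id$ and $Q_i := P_i P_{i+1}\cdots P_n$ for $i \le n$, so that $Q_i f$ depends only on $(x_1,\ldots,x_{i-1})$ and $Q_1 f = \fint_{U^n} f$ is constant. Telescoping and using $Q_i = P_i Q_{i+1}$ gives
\begin{equation*}
f - \fint_{U^n} f \;=\; \sum_{i=1}^n g_i, \qquad g_i := (\id - P_i)\, Q_{i+1} f,
\end{equation*}
where each $g_i$ depends only on $(x_1,\ldots,x_i)$ and satisfies $\fint_U g_i \, \d x_i = 0$. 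For $i < k$ the function $g_i$ does not depend on $x_k$ while $g_k$ has zero $x_k$-average, so integrating the product first in $x_k$ gives $\int_{U^n} g_i g_k = 0$; hence
\begin{equation*}
\fint_{U^n} \Ll(f - \fint_{U^n} f \Rr)^2 \;=\; \sum_{i=1}^n \fint_{U^n} g_i^2 .
\end{equation*}

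To estimate each term, I would apply the single-factor Poincar\'e--Wirtinger inequality on the convex set $U$ in the variable $x_i$, with the remaining variables frozen, using $\fint_U g_i\, \d x_i = 0$, and then integrate over the other variables:
\begin{equation*}
\fint_{U^n} g_i^2 \;\le\; C(d)\diam(U)^2 \fint_{U^n} |\nabla_{x_i} g_i|^2 .
\end{equation*}
Since $P_i Q_{i+1} f$ is $x_i$-independent, $\nabla_{x_i} g_i = \nabla_{x_i} Q_{i+1} f$, and because $Q_{i+1}$ only averages over $x_{i+1},\ldots,x_n$ it commutes with $\nabla_{x_i}$, so $\nabla_{x_i} g_i = Q_{i+1}(\nabla_{x_i} f)$; Jensen's inequality then yields $\fint_{U^n}|\nabla_{x_i} g_i|^2 \le \fint_{U^n}|\nabla_{x_i} f|^2$. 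Summing over $i$ gives the claim.

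There is no serious obstacle here: the only point requiring care is precisely the one flagged at the outset --- one must peel off a single coordinate at a time through the orthogonal decomposition rather than invoking Poincar\'e on the whole product $U^n$ in one stroke, so that the constant does not pick up the spurious factor $n = \diam(U^n)^2/\diam(U)^2$. Both inputs used --- the Poincar\'e--Wirtinger inequality on a bounded convex set with constant controlled by the squared diameter, and the reduction to smooth functions --- are classical.
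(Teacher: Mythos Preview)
Your proof is correct. The paper does not actually give a proof of this proposition; it only writes ``The proof is classical and can be found for instance in \cite[Theorem~13.36 and Proposition~13.34]{firstcourse}.'' Your tensorization argument --- the martingale-type orthogonal decomposition $f - \fint f = \sum_i (\id - P_i)Q_{i+1}f$, followed by the single-factor Poincar\'e--Wirtinger inequality on the convex set $U$ in each coordinate and Jensen for the averaging operators --- is a standard and clean way to obtain the $n$-independent constant, and all the steps check out.
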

A direct application of \Cref{prop:Poincare0} gives the following proposition.
\begin{proposition}[Poincar\'e inequality in $\cH^1(U)$]
\label{prop:Poincare2}
There exists a constant ${C(d) < \infty}$ such that for every bounded convex open set and $f \in \cH^1(U)$, we have
\begin{align}\label{eq:Poincare2}
\Er\Ll[(f - \Er[f \, \vert \, \G_U])^2\Rr] \leq  C \diam(U)^2 \Er\Ll[\int_{U} \vert\nabla f \vert^2 \, \d \mu\Rr].
\end{align}
\end{proposition}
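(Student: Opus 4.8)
The plan is to condition on the $\sigma$-algebra $\G_U$ and to reduce \eqref{eq:Poincare2}, fibre by fibre, to the classical Poincaré inequality of \Cref{prop:Poincare0}. First I would reduce to the case where $f \in \cC^\infty(U)$ and $\norm{f}_{\cH^1(U)} < \infty$: such functions are dense in $\cH^1(U)$ by definition of that space, and both sides of \eqref{eq:Poincare2} are continuous along $\cH^1(U)$-convergent sequences --- the left-hand side because $f \mapsto f - \Er[f \, \vert \, \G_U]$ is a contraction of $\cL^2$, and the right-hand side because it equals $\norm{f}_{\cH^1(U)}^2 - \Er[f^2]$.

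For such an $f$, I would introduce the functions $(f_n)_{n \in \N}$ associated with $f$ and $U$ by \Cref{lem:Projection}. Since $U$ is bounded and $f$ is smooth, each $f_n(\cdot, \mu \mres U^c)$ is infinitely differentiable on $U^n$, and since moreover $\norm{f}_{\cH^1(U)} < \infty$, the disintegration in \Cref{lem:Projection} shows that $f_n(\cdot, \mu \mres U^c) \in H^1(U^n)$ for every $n \in \N$ and $\Pr$-almost every $\mu$. The probabilistic input is the conditional uniformity of the Poisson point process: conditionally on $\mcl F_{U^c}$ and on the event $\{\mu(U) = n\}$, the $n$ points of $\mu \mres U$ are independent and uniformly distributed on $U$. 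Hence, for any nonnegative $\mcl F$-measurable $g$ with associated functions $(g_n)$, one has, on $\{\mu(U) = n\}$,
\begin{align*}
\Er\Ll[g \, \big\vert \, \G_U\Rr] = \fint_{U^n} g_n(x_1, \ldots, x_n, \mu \mres U^c) \, \d x_1 \cdots \d x_n,
\end{align*}
and the right-hand side is $\G_U$-measurable, as it depends only on $\mu(U)$ and $\mu \mres U^c$. Applying this with $g = f$ gives $\Er[f \, \vert \, \G_U] = \fint_{U^n} f_n(\cdot, \mu \mres U^c)$ on $\{\mu(U) = n\}$; applying it next with $g = (f - \Er[f\,\vert\,\G_U])^2$, and then with $g = \int_U \vert \nabla f \vert^2 \, \d \mu$ --- for which $g_n(\cdot, \mu \mres U^c) = \sum_{i=1}^n \vert \nabla_{x_i} f_n(\cdot, \mu \mres U^c) \vert^2$, because on $\{\mu(U) = n\}$ the integral $\int_U \vert\nabla f\vert^2 \, \d\mu$ runs over exactly the $n$ points of $\mu$ lying in $U$ and $\nabla f(\mu, x_i)$ is $\nabla_{x_i}$ applied to $f_n$ --- yields, writing $f_n := f_n(\cdot, \mu \mres U^c)$ and working on $\{\mu(U) = n\}$,
\begin{align*}
\Er\Ll[(f - \Er[f\,\vert\,\G_U])^2 \, \big\vert \, \G_U\Rr] &= \fint_{U^n} \Ll( f_n - \fint_{U^n} f_n \Rr)^2 , \\
\Er\Ll[\int_U \vert \nabla f \vert^2 \, \d\mu \, \Big\vert \, \G_U\Rr] &= \sum_{i=1}^n \fint_{U^n} \vert \nabla_{x_i} f_n \vert^2 .
\end{align*}

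The last step is to apply \Cref{prop:Poincare0} to $f_n(\cdot, \mu \mres U^c) \in H^1(U^n)$, for $\Pr$-almost every $\mu$, which bounds the first right-hand side above by $C \diam(U)^2$ times the second. Combining the two displays gives, $\Pr$-almost surely, $\Er[(f - \Er[f\,\vert\,\G_U])^2 \, \vert \, \G_U] \le C \diam(U)^2 \, \Er[\int_U \vert\nabla f\vert^2 \, \d\mu \, \vert \, \G_U]$, and taking $\Er[\cdot]$ of both sides (tower property) establishes \eqref{eq:Poincare2} with the same constant $C(d)$ as in \Cref{prop:Poincare0}. The only point I expect to require genuine care is the conditional-expectation identity above --- namely, verifying that conditioning on $\G_U$ disintegrates $\Pr$ into the normalized Lebesgue measures on the fibres $U^n$ --- together with the attendant measurability bookkeeping supplied by \Cref{lem:Projection}; once that is in place, the estimate is an immediate consequence, on each fibre, of the classical Poincaré inequality.
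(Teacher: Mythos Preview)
Your proof is correct and follows essentially the same approach as the paper: disintegrate via \Cref{lem:Projection}, apply the classical Poincar\'e inequality of \Cref{prop:Poincare0} on each fibre $U^n$, and then take expectations. The paper's version is slightly more terse --- it first reduces to the centered case $\Er[f \,\vert\, \G_U] = 0$ (so that each $f_n$ has zero mean on $U^n$, cf.\ \cref{eq:ConditionGU}) and then sums the fibrewise inequalities directly --- but the substance is identical to your conditional-expectation formulation.
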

\begin{proof}
Without loss of generality, we may assume that $\Er[f \vert \G_U] = 0$; subtracting $\Er[f \vert \G_U]$ from $f$ does not change the right side of \cref{eq:Poincare2}. We use the functions $(f_n)$ from \Cref{lem:Projection},  and recall that since ${\Er[f \vert \G_U] = 0}$, we have that every function $f_n$ is centered; see \cref{eq:ConditionGU}. We can apply \Cref{prop:Poincare0} to every $f_n$: for a constant $C < \infty$ independent of $n$, we have
\begin{multline*}
\fint_{U^n} \vert f_n(x_1, \cdots, x_n, \mu \mres U^c)\vert^2 \, \d x_1 \cdots \d x_n \\
\leq C \diam(U)^2 \sum_{i=1}^n \fint_{U^n} \vert \nabla_{x_i} f_n(x_1, \cdots, x_n,\mu \mres U^c) \vert^2  \, \d x_1 \cdots \d x_n.
\end{multline*}
We then sum over $n$ and take the expectation to obtain the result.
\end{proof}

Functions in the space $\cH^1_0(U)$ enjoy certain continuity properties as particles enter and leave the domain $U$. For this reason, it suffices to center the function by its mean value to have a Poincar\'e inequality. 
\begin{proposition}[Poincar\'e inequality in $\cH^1_0(U)$]
There exists a constant $C(d) < \infty$ such that for every bounded open set $U\subset \R^d$, and every $f \in \cH^1_0(U)$, 
\label{prop:Poincare1}
\begin{equation}\label{eq:Poincare1}
\Er\Ll[(f - \Er[f])^2\Rr] \le C \,\diam(U)^2 \, \Er\Ll[ \int_{U} |\nabla f|^2 \, \d \mu\Rr].
\end{equation}
\end{proposition}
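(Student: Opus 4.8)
The plan is to reduce, by density, to a function $f \in \cC^\infty_c(U)$ with $\norm{f}_{\cH^1(U)} < \infty$, so that $f$ is $\mcl F_K$-measurable for some compact set $K \subset U$; subtracting a constant, we may also take $\Er[f]=0$. Both sides of \eqref{eq:Poincare1} are continuous along convergent sequences in $\cH^1(U)$, so the general case follows. The structural feature to exploit is that, in contrast with a generic element of $\cH^1(U)$, such an $f$ does not feel any particle lying outside the fixed compact set $K \subset U$.

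I would then invoke the classical Poincar\'e inequality for functionals of a Poisson process: for every $F \in \cL^2$,
\begin{equation*}
\Er\Ll[(F - \Er[F])^2\Rr] \le \rho \int_{\Rd} \Er\Ll[(D_x F)^2\Rr] \, \d x, \qquad D_x F := F(\mu + \delta_x) - F(\mu),
\end{equation*}
which follows from the chaos (Fock space) decomposition of $\cL^2$ (the right-hand side is $\sum_{n \ge 1} n \cdot n!\,\norm{f_n}^2 \ge \sum_{n \ge 1} n!\,\norm{f_n}^2 = \var_\rho(F)$) and can also be read off from the c\`adl\`ag martingale $(\A_s f)_{s \ge 0}$ of \Cref{subsubsec:LocReg}. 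Applied to $F = f$, this reduces \eqref{eq:Poincare1} to an estimate on $\rho \int_{\Rd} \Er[(D_x f)^2]\,\d x$.

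The key observation is that, for $\Pr$-almost every fixed $\mu$, the map $x \mapsto D_x f(\mu)$ is a \emph{smooth} function of $x \in \Rd$ that is \emph{supported in $K$} --- because $f(\mu + \delta_x) = f(\mu)$ whenever $x \notin K$, by $\mcl F_K$-measurability --- whose classical spatial gradient is, directly from the definition \eqref{e.def.deriv}, $\nabla_x\bigl(D_x f(\mu)\bigr) = \nabla f(\mu + \delta_x, x)$ (which is likewise supported in $K$). Since $D_\cdot f(\mu)$ is smooth with support of diameter at most $\diam(K) \le \diam(U)$, the Poincar\'e inequality for compactly supported functions gives a constant $C(d) < \infty$, independent of $\mu$, with
\begin{equation*}
\int_{\Rd} \bigl(D_x f(\mu)\bigr)^2 \, \d x \le C(d)\, \diam(U)^2 \int_{\Rd} \bigl|\nabla f(\mu + \delta_x, x)\bigr|^2 \, \d x .
\end{equation*}
Taking $\Er$, multiplying by $\rho$, and using Mecke's formula in the form $\rho \int_{\Rd} \Er\bigl[|\nabla f(\mu + \delta_x, x)|^2\bigr] \, \d x = \Er\bigl[\sum_{x \in \supp \mu} |\nabla f(\mu, x)|^2\bigr] = \Er\bigl[\int_U |\nabla f|^2 \, \d\mu\bigr]$ --- the last two equalities using that $\nabla f(\mu, \cdot)$ vanishes off $K$ and $K \subset U$ --- yields \eqref{eq:Poincare1}, and a final density argument extends it to all $f \in \cH^1_0(U)$.

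The step I expect to demand the most care is checking that the constant produced by the $H^1_0$-type Poincar\'e inequality depends only on $d$ and $\diam(U)$: this is precisely where the definition of $\cH^1_0(U)$ is decisive, since it forces $D_\cdot f(\mu)$ to be compactly supported in a set of diameter at most $\diam(U)$, whereas for a general element of $\cH^1(U)$ the add-a-point increment need not be compactly supported --- which is why \Cref{prop:Poincare2} must center by $\Er[f \mid \G_U]$ rather than by $\Er[f]$. An alternative route, avoiding the Poisson--Malliavin inequality above, would be: fix a cube $Q' \supset K$ with $|Q' \setminus K| \ge \tfrac12 |Q'|$ and $\diam(Q') \le C(d)\diam(U)$, split $f - \Er[f] = \bigl(f - \Er[f \mid \G_{Q'}]\bigr) + \bigl(\Er[f \mid \G_{Q'}] - \Er[f]\bigr)$, bound the first (orthogonal) term by \Cref{prop:Poincare2}, and bound the second --- a function of the Poisson variable $\mu(Q')$ --- by the Poincar\'e inequality for the Poisson law on $\N$, using that $m \mapsto \Er[f \mid \mu(Q') = m]$ has increments equal to the mean over $Q'$ of the same compactly supported function $x \mapsto f(\mu + \delta_x) - f(\mu)$, to which the classical Poincar\'e inequality again applies.
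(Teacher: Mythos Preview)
Your proof is correct and follows the same route as the paper's: both first apply the Poisson Poincar\'e inequality $\Er[(f-\Er f)^2] \le \rho \int_{\Rd}\Er[(D_x f)^2]\,\d x$ (the paper quotes it from Last--Penrose), and then, for each fixed configuration, apply the classical $H^1_0$ Poincar\'e inequality to the compactly supported smooth function $x \mapsto D_x f(\mu)$, using that $f \in \cC^\infty_c(U)$ forces $D_x f(\mu)=0$ for $x\notin K$. The only cosmetic differences are that the paper spells out the layer-by-layer computation via the canonical projection $f_n$ where you argue directly with $\mu$ fixed, and that the paper carries out by hand the index relabelling $n\leadsto n+1$ in the Poisson weights where you invoke Mecke's formula.
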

\begin{proof}
Without loss of generality, we assume that $\Er [f]=0$. By density, we may restrict to $f\in \cC^\infty_c(U)$.
Applying \cite[Theorem 18.7]{bookPoisson} to $f$, we have that
\begin{align*}
\Er \Ll[ f^2 \Rr] \leq  \rho \int_{\R^d} \Er \Ll[ (f(\mu + \delta_x ) - f( \mu) )^2 \Rr] \d x.
\end{align*}
By the Fubini-Tonelli theorem, and since $f$ is $\mathcal{F}_U$-measurable, this reduces to
\begin{align*}
\Er \Ll[ f^2 \Rr] \leq  \rho  \Er \Ll[ \int_{U} (f(\mu + \delta_x) - f(\mu) )^2 \d x \Rr].
\end{align*}
To establish Proposition \ref{prop:Poincare1}, it thus only remains to show that 
\begin{align}\label{Poi.1}
 \Er \Ll[ \int_{U} (f(\mu + \delta_x) - f(\mu) )^2 \d x \Rr] \leq \frac{C(d)}{\rho} \, \Er \Ll[ \int_U |\nabla f|^2 \d\mu \Rr].
\end{align}
We recall that
\begin{multline}
\label{Poi.2}
\int_{U} \Er \Ll[ (f(\cdot + \delta_x) - f(\cdot) )^2 \Rr] \d x \\
=\sum_{n\in \N} \P(\mu(U)=n) \fint_{U^n} \Ll(\int_U |f_{n+1}(x_1, \cdots, x_n, x) - f_{n}(x_1, \cdots, x_n)|^2 \d x \Rr)  \d x_1\cdots \d x_n,
\end{multline}
where we used the notation (similar but simpler than in \Cref{lem:Projection})
\begin{equation}\label{n.dimensional}  
f_n(x_1,\ldots, x_n) := f \Ll( \sum_{k = 1}^n \de_{x_k} \Rr), \ \ \ \ x_1, \cdots, x_n \in U.
\end{equation}
Let $n\in \N$ be fixed. Since $f\in \cC^\infty_c(U)$, for every $\bar{x} \in \partial U$ we have that
$$
f_n( x_1, \cdots, x_n) = f_{n+1}(x_1, \cdots, x_n, \bar x).
$$
That is, for every $x_1, \cdots, x_n \in U^n$, the (smooth) function 
$$
G: U \to \R, \ \ \ \ \ \ G(\cdot) := f_{n+1}(x_1, \cdots, x_n,  \cdot ) - f_{n}(x_1, \cdots, x_n)
$$
belongs to the (standard) Sobolev space $H^1_0(U)$. We may thus apply the standard Poincar\'e inequality for functions in $H^1_0(U)$ to infer that 
\begin{align*}
\int_U |f_{n+1}(x_1, \cdots, x_n, x) -& f_{n}(x_1, \cdots, x_n)|^2 \d x \\
& \leq C(d) \diam (U)^2 \int_U |\nabla_{x} f_{n+1}(x_1, \cdots, x_n, x)|^2 \d x.
\end{align*}
Inserting this into \eqref{Poi.2}, using that $\P(\mu(U)=n)= e^{-\rho |U|} \frac{(\rho |U|)^n}{n!}$ and relabelling $n+1$ as $n$, yields that
\begin{align*}
\int_{U}& \Er \Ll[ (f(\cdot + \delta_x) - f(\cdot) )^2 \Rr] \d x\\
& \leq \frac{C(d)}{\rho} \diam(U)^2 \sum_{n\in \N} \P(\mu(U)=n) n \fint_{U^{n}} |\nabla_{x_n} f_{n}(x_1, \cdots, x_n) |^2 \d x_1\cdots \,  \d x_n.
\end{align*}
To establish \eqref{Poi.1} from this, it only remains to observe that, by definition \eqref{n.dimensional} each function $f_{n}$ is invariant under permutations, we have
\begin{align*}
\fint_{U^{n}} |\nabla_{x_1} f_{n}|^2 = \fint_{U^{n}} |\nabla_{x_i} f_{n} |^2  \ \ \ \text{for all $i= 1, \cdots, n$.}
\end{align*}
This concludes the proof of \eqref{Poi.1} and establishes Proposition \ref{prop:Poincare1}.
\end{proof}

\subsection{\texorpdfstring{$\cH^2$}{H2} estimate for the homogeneous equation}

When the diffusion matrix $\a$ is a constant, the solutions to the corresponding equation have a better regularity than otherwise, and in particular,  the following $\cH^2$ estimate holds. One can define the function with higher derivative iteratively: for $x, y \in \supp(\mu), x \neq y$ 
\begin{align*}
\partial_j \partial_k f(\mu, x, y) := \lim_{h \to 0} \frac{\partial_k f(\mu - \delta_y + \delta_{y + h \e_j}, x) - \partial_k f(\mu, x)}{h},
\end{align*}
and for the case $x=y$, it makes sense as 
\begin{align*}
\partial_j \partial_k f(\mu, x, x) := \lim_{h \to 0} \frac{\partial_k f(\mu - \delta_x + \delta_{x + h \e_j}, x + h \e_j) - \partial_k f(\mu, x)}{h}.
\end{align*}
We also denote by $\nabla^2 f(\mu, x, y)$ the matrix $\{\partial_j \partial_k f(\mu, x, y)\}_{1 \leq j,k \leq d}$, and its norm is defined as 
\begin{align*}
    \vert \nabla^2 f(\mu, x, y) \vert^2 := \sum_{1 \leq j, k \leq d} \vert \partial_j \partial_k f(\mu, x, y)\vert^2.
\end{align*}
\begin{proposition}[$\cH^2$ estimate]\label{prop:H2}
Let $f \in \cL^2$, and let $u \in \cH^1(Q_r)$ solve ``$- \Delta u = f$'' in the sense that for any $v \in \cH^1(Q_r)$,
\begin{align}\label{eq:H2Laplace}
\Er\Ll[\int_{Q_r} \nabla u(\mu, x) \cdot \nabla v(\mu, x) \, d\mu \Rr] = \Er[f v].
\end{align}
We have the $\cH^2(Q_r)$ estimate 
\begin{align}\label{eq:H2}
\Er\Ll[\int_{(Q_r)^2}   \vert \nabla^2 u(\mu,x,y)\vert^2 \, \d \mu(x) \d \mu(y) \Rr] \leq \Er[f^2].
\end{align}
\end{proposition}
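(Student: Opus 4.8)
The plan is to reduce this infinite-dimensional estimate, via the canonical projection of \Cref{lem:Projection}, to the classical $H^2$ bound for the Neumann Laplacian on a Euclidean cube, carried out separately in each sector of the particle number in $Q_r$.

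First I would fix a realization of $\mu \mres Q_r^c$ and an integer $n$, and write $u_n = u_n(\cdot,\mu\mres Q_r^c)$ and $f_n = f_n(\cdot,\mu\mres Q_r^c)$ for the finite-dimensional functions on $(Q_r)^n$ provided by \Cref{lem:Projection}. Unwinding the definition of $\nabla^2$ — the case $x\neq y$ moves one particle and reads off the gradient at another, the case $x = y$ moves a particle and reads off the gradient at its new position — one sees that on the event $\{\mu(Q_r) = n\}$ the quantity $\int_{(Q_r)^2}|\nabla^2 u(\mu,x,y)|^2\,\d\mu(x)\,\d\mu(y)$ is exactly $|D^2 u_n|^2$, the squared Euclidean Hessian norm of $u_n$ viewed as a function on the cube $(Q_r)^n\subset\R^{dn}$ (off-diagonal particle pairs give the mixed blocks, diagonal pairs the within-particle blocks). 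Conditioning on $\{\mu(Q_r)=n\}$, under which the $n$ points of $\mu$ in $Q_r$ are i.i.d.\ uniform and independent of $\mu\mres Q_r^c$, both sides of \eqref{eq:H2} then decompose into sums over $n$ with Poisson weights (the $\cH^1(Q_r)$-norm being finite forces $u_n\in H^1((Q_r)^n)$ for a.e.\ outside configuration), and the statement reduces to the finite-dimensional inequality $\fint_{(Q_r)^n}|D^2 u_n|^2 \le \fint_{(Q_r)^n} f_n^2$, for $\Pr$-a.e.\ $\mu\mres Q_r^c$ and every $n$.

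Next I would identify the equation solved by $u_n$. Testing \eqref{eq:H2Laplace} against functions $v\in\cH^1(Q_r)$ of the form $v = \1_{\{\mu(Q_r)=n\}}\,\psi$, with $\psi$ a smooth permutation-symmetric function of the points of $\mu$ in $Q_r$ (allowed also to depend measurably on $\mu\mres Q_r^c$) — such $v$ are admissible because membership in $\cC^\infty(Q_r)$ imposes no compatibility requirement across particle-number sectors — and then using Fubini, a countable dense family of test functions $\psi$, a measurable selection, and the permutation symmetry of $u_n$ and $f_n$, one concludes that for a.e.\ $\mu\mres Q_r^c$ and every $n$ the function $u_n\in H^1((Q_r)^n)$ is a weak solution of $-\Delta u_n = f_n$ on $(Q_r)^n$ with homogeneous Neumann boundary conditions. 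The final step is the classical $H^2$ estimate on a box: since $(Q_r)^n$ is a convex polytope one has $u_n\in H^2((Q_r)^n)$, and the cleanest self-contained argument is iterated even reflection across the $dn$ pairs of faces — the Neumann condition turns each reflection into an $H^2$ gluing (normal second derivatives even and continuous, mixed ones odd and vanishing on the face) and simultaneously upgrades interior $H^1$ regularity to $H^2$ near faces, edges and corners — producing a periodic $H^2$ function $w$ on a torus with $-\Delta w$ equal to the reflected $f_n$; Plancherel then gives $\int|D^2 w|^2 = \int(\Delta w)^2$ (the Fourier multipliers agree, both $|k|^4$), and normalizing by the number of copies yields $\fint_{(Q_r)^n}|D^2 u_n|^2 = \fint_{(Q_r)^n}f_n^2$. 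Summing over $n$ against the Poisson weights then produces \eqref{eq:H2}, in fact with equality.

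I expect the main obstacles to be, first, making the disintegration of \eqref{eq:H2Laplace} fully rigorous — verifying that the sector-indicator test functions genuinely lie in $\cH^1(Q_r)$ and running the measurable-selection step that passes from an identity in expectation to a pointwise-in-$\mu\mres Q_r^c$ statement — and second, the up-to-the-boundary $H^2$ regularity and integration-by-parts identity on the box $(Q_r)^n$ including its lower-dimensional faces, which the iterated-reflection argument disposes of precisely because a cube is a reflection domain and the Neumann condition makes each reflection an $H^2$ gluing. The structurally crucial point throughout is that the constant in the finite-dimensional estimate equals $1$, uniformly in $n$ and hence in the dimension $dn$, which is exactly what makes the summation over particle-number sectors lossless.
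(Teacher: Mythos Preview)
Your proposal is correct and takes essentially the same approach as the paper: reduce via the canonical projection of \Cref{lem:Projection} to the classical $H^2$ estimate for the Neumann Laplacian on the cube $(Q_r)^n$ with constant $1$ independent of $n$, then sum over particle-number sectors. The paper simply cites \cite[Lemma~B.19]{AKMbook} for the finite-dimensional step, whereas you spell out the even-reflection-to-torus Plancherel argument (and correctly observe that it gives equality rather than just $\leq$); the disintegration step is handled in the paper by testing against $v\,\Ind{\mu(Q_r)=n}\,\Ind{(\mu\mres Q_r^c)(V)=m}$, which is the same idea as your sector-indicator test functions.
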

\begin{remark}
By testing \cref{eq:H2Laplace} with $v = \Ind{\mu(Q_r) = n, \mu \mressmall Q_r^c (V) = m}$, we see that $f$ has to satisfy $\Er[f \, \vert \,  \G_{Q_r}] = 0$ as a condition of compatibility. 
\end{remark}
\begin{proof}[Proof of Proposition~\ref{prop:H2}]
Although this is not really part of the statement, we start by showing that for every $f \in \cL^2$ satisfying the compatibility condition $\E_\rho [f \, \vert \, \G_{Q_r}] = 0$, there exists a solution $u$ to \cref{eq:H2Laplace}, and we will show its link with the classical elliptic equation. At first, we notice that the problem can be studied on the space of functions 
\begin{align*}
W = \{g \in \cH^1(Q_r) : \Er[g \, \vert \,  \G_{Q_r}] = 0\}.
\end{align*}
Because for a general function $v \in \cH^1(Q_r)$, $\Er[v \vert \G_{Q_r}]$ can be seen as a constant in \cref{eq:H2Laplace}: its derivative is $0$ so the left-hand side of \cref{eq:H2Laplace} is $0$. For the right-hand side, we have 
\begin{align*}
    \Er[f \Er[v \vert \G_{Q_r}]] = \Er\Ll[\Er[f \vert \G_{Q_r}] \Er[v \vert \G_{Q_r}]\Rr] = 0.
\end{align*}
Thus when applying the operation $v \mapsto v - \Er[v \vert \G_{Q_r}]$, we do not change \cref{eq:H2Laplace} and we can restrict the Laplace equation on $W$. Moreover, with the notation in \Cref{lem:Projection}, $\Er[v \vert \G_{Q_r}]=0$ implies every $v_n$ is centered; see \cref{eq:ConditionGU}.

Secondly, we test \cref{eq:H2Laplace} with $v \Ind{\mu(Q_r)=n}\Ind{(\mu \mres Q_r^c)(V)=m}$, which is conditioning the number of particles $\mu(Q_r)$, and also $(\mu \mres Q_r^c)(V)$ for some bounded Borel set $V$ as an environment outside $Q_r$. Then for arbitrary choices of $n, m, V$, in fact we have a classical elliptic equation using the canonical projection \Cref{lem:Projection} 
\begin{multline}\label{eq:H2LaplaceRNd}
\int_{(Q_r)^n} \sum_{k=1}^n \nabla_{x_k}u_n(x_1, \cdots, x_n, \mu\mres Q_r^c) \cdot \nabla_{x_k}v_n(x_1, \cdots, x_n, \mu\mres Q_r^c) \, \d x_1 \cdots \d x_n \\
= \int_{(Q_r)^n} f_n(x_1, \cdots, x_n, \mu\mres Q_r^c)  v_n(x_1, \cdots, x_n, \mu\mres Q_r^c) \, \d x_1 \cdots \d x_n.
\end{multline}
Thus the solution $u$ can be described as follows: we sample the environment outside $Q_r$ and fix the number of particle $\mu(Q_r) = n$ at first, then solve the classical elliptic equation in $H^1(\R^{nd})$ with mean zero. Finally we combine all the $u_n$ and this gives the solution of \cref{eq:H2Laplace}. In other words, the statement of \cref{eq:H2Laplace} can be reinforced as   
\begin{align*}
\forall v \in W , \qquad \Er\Ll[\int_{Q_r} \nabla u(\mu, x) \cdot \nabla v(\mu, x) \, d\mu  \ \Bigg \vert \  \G_{Q_r} \Rr] = \Er\Ll[f v \, \big\vert \,  \G_{Q_r}\Rr].
\end{align*}
We now turn to study the $\cH^2$ estimate. We apply the classical $H^2(\R^{nd})$ estimate for \cref{eq:H2LaplaceRNd} (see for instance \cite[Lemma~B.19]{AKMbook} and its proof) 
\begin{multline}\label{eq:H2RNd}
\int_{(Q_r)^n} \sum_{1 \leq i , j \leq n} \vert \nabla_{x_i} \nabla_{x_j} u_n\vert^2(x_1, \cdots, x_n, \mu \mres Q_r^c) \, \d x_1 \cdots \d x_n \\ \leq  \int_{(Q_r)^n} \vert f_n \vert^2 (x_1, \cdots, x_n, \mu \mres Q_r^c) \, \d x_1 \cdots \d x_n,
\end{multline}
Taking the expectation of \cref{eq:H2RNd} then gives the result. 
\end{proof}

\subsection{Multiscale Poincar\'e inequality}
For cubes of size $3^n$, the Poincar\'e inequalities derived in the previous subsection (say with $k = n$ in Proposition \ref{prop:Poincare2}) have a right-hand side that scales like $3^{2n}$. In this subsection, we derive a multiscale version of the Poincar\'e inequality, that aims to improve upon this scaling, provided that some local average of the gradient of the function is not too large. 
We recall that the multiscale spatial filtration $\G_{n,k}^y$ is defined in \cref{eq:FilG}. For every $k \le n \in \N$, $x, y \in \Rd$ such that $x \in \cu_n(y)$, open set $U$ containing $\cu_k(x)$, and $f \in \cH^1(U)$, the following quantity is well defined
\begin{align}\label{eq:defSnk}
(\S^y_{n,k} \nabla f)(\mu,x) := \Er\Ll[\fint_{\cu_k(x)} \nabla f \, \d\mu \ \Bigg\vert \ \G_{n,k}^y  \Rr],
\end{align}
where we use the notation, for every Borel set $V$ such that $\mu(V) \in (0, \infty)$ and function $g$ defined on $\supp(\mu) \cap V$,
\begin{equation}  
\label{eq:defAverageInt}
\fint_V g \, \d \mu := 
\frac{1}{\mu(V)} \int_V g \, \d \mu ,
\end{equation}
and for definiteness, we also set $\fint_V g \, \d \mu = 0$ if $\mu(V) = 0$. 
We use the shorthand notation $\S_{n,k} := \S^0_{n,k}$ and $\S_n := \S_{n,n}$. This operator has a convenient spatial martingale structure, as displayed in the following lemma.

\begin{lemma}
[Martingale structure for $\S_{n,k}$]
For every ${n, n', k, k' \in \mathbb{N}}$, $y, y' \in \Rd$ satisfying
\begin{align*}
n \leq n', \quad k\leq k',  \quad \cu_n(y) \subset \cu_{n'}(y'), 
\end{align*}
every $x \in \cu_{k'}(y')$, and $f \in \cH^1(\cu_{n'}(y'))$, we have
\begin{align}\label{eq:SnkMt}
\S_{n',k'}^{y'} \nabla f(\mu, x) = \Er\Ll[\fint_{\cu_{k'}(x)} (\S_{n,k}^y \nabla f) \, \d\mu \ \Bigg\vert \ \G_{n',k'}^{y'}  \Rr]. 
\end{align}
\end{lemma}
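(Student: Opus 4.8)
The plan is to derive \eqref{eq:SnkMt} from three ingredients: the inclusion of $\sigma$-algebras $\G^{y'}_{n',k'}\subset\G^y_{n,k}$, supplied by \eqref{eq:FilGInclusion} under the stated hypotheses $n\le n'$, $k\le k'$ and $\cu_n(y)\subset\cu_{n'}(y')$; the locality of the spatial averaging operators; and the tower property of conditional expectation. The one genuinely structural point is the observation that, for fixed $\mu$ and for $z$ ranging over a single grid cube $z_0+\cu_k$ with $z_0\in 3^k\Zd$, the random variable $\fint_{\cu_k(z)}\nabla f\,\d\mu$ does not depend on $z$, because $\cu_k(z)=z_0+\cu_k$ throughout that cube; hence $(\S^y_{n,k}\nabla f)(\mu,z)=\Er[\fint_{z_0+\cu_k}\nabla f\,\d\mu\mid\G^y_{n,k}]$ is constant in $z$ over $z_0+\cu_k$.

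First I would partition $\cu_{k'}(x)$, a grid cube of side $3^{k'}$, into the $3^{d(k'-k)}$ grid cubes $\{C_j\}$ of side $3^k$ that it contains (a partition up to a null set, which carries no mass of $\mu$), and compute, using the constancy observation on each $C_j$,
\begin{align*}
\int_{\cu_{k'}(x)}(\S^y_{n,k}\nabla f)\,\d\mu
&=\sum_j \mu(C_j)\,\Er\Bigl[\fint_{C_j}\nabla f\,\d\mu\ \Big|\ \G^y_{n,k}\Bigr]\\
&=\Er\Bigl[\int_{\cu_{k'}(x)}\nabla f\,\d\mu\ \Big|\ \G^y_{n,k}\Bigr],
\end{align*}
where the second equality uses that each count $\mu(C_j)$ — and hence $\mu(\cu_{k'}(x))=\sum_j\mu(C_j)$ — is $\G^y_{n,k}$-measurable, so it may be pulled inside the conditional expectation before reassembling the integral. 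Then, dividing by $\mu(\cu_{k'}(x))$ on the event $\{\mu(\cu_{k'}(x))>0\}$ (and noting that on its complement both sides of \eqref{eq:SnkMt} vanish by the averaging convention following \eqref{eq:defAverageInt}), I obtain $\fint_{\cu_{k'}(x)}(\S^y_{n,k}\nabla f)\,\d\mu=\Er[\fint_{\cu_{k'}(x)}\nabla f\,\d\mu\mid\G^y_{n,k}]$. Finally, applying $\Er[\,\cdot\mid\G^{y'}_{n',k'}]$ to both sides and invoking the tower property with $\G^{y'}_{n',k'}\subset\G^y_{n,k}$, the right-hand side collapses to $\Er[\fint_{\cu_{k'}(x)}\nabla f\,\d\mu\mid\G^{y'}_{n',k'}]$, which is precisely $(\S^{y'}_{n',k'}\nabla f)(\mu,x)$ by \eqref{eq:defSnk}; this yields \eqref{eq:SnkMt}.

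The computation itself is essentially bookkeeping; I expect the two points requiring care to be the constancy of $(\S^y_{n,k}\nabla f)(\mu,\cdot)$ on grid cubes of side $3^k$ — this is exactly what makes the coarse-scale average over $\cu_{k'}(x)$ interact cleanly with the finer-scale conditioning — and the $\G^y_{n,k}$-measurability of the cube counts $\mu(C_j)$. For the latter one uses that $k\le n$, so that no grid cube of side $3^k$ straddles the boundary of $\cu_n(y)$: each $C_j$ is either a grid cube $z+\cu_k$ with $z\in\Z_{n,k}(y)$, so that $\mu(C_j)$ is one of the generators of $\G^y_{n,k}$, or it is disjoint from $\cu_n(y)$, so that $\mu(C_j)$ is determined by $\mu\mres(\Rd\backslash\cu_n(y))$; in either case $\mu(C_j)$ is $\G^y_{n,k}$-measurable. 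Treating the degenerate event $\{\mu(\cu_{k'}(x))=0\}$ via the averaging convention is the remaining routine detail.
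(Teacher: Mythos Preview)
Your proposal is correct and follows essentially the same approach as the paper: partition $\cu_{k'}(x)$ into grid subcubes of side $3^k$, use that the subcube counts $\mu(C_j)$ are $\G^y_{n,k}$-measurable to pull them through the inner conditional expectation, and then apply the tower property via \eqref{eq:FilGInclusion}. If anything, your write-up is slightly more careful than the paper's, since you explicitly handle the possibility that some $C_j$ lie outside $\cu_n(y)$ and treat the degenerate event $\{\mu(\cu_{k'}(x))=0\}$.
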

\begin{proof} 
The key observation is \cref{eq:FilGInclusion}, stating that $\G_{n,k}^{y}$ is a finer $\sigma$-algebra than $\G_{n',k'}^{y'}$, so that
\begin{align*}
& \S_{n',k'}^{y'} \nabla f (\mu,x) 
\\
& \qquad = \Er\Ll[\frac{1}{\mu(\cu_{k'}(x))}  \int_{\cu_{k'}(x)} \nabla f \, \d\mu \ \Bigg\vert \ \G_{n',k'}^{y'}  \Rr]\\
& \qquad = \Er\Ll[\sum_{z \in \Z_{n,k} \cap \cu_{k'}(x)}\frac{\mu(z+\cu_k)}{\mu(\cu_{k'}(x))}  \Er\Ll[\frac{1}{\mu(z+\cu_k)}\int_{\cu_{k}(z)} \nabla f \, \d\mu \ \Bigg\vert \ \G_{n,k}^y \Rr]\ \Bigg\vert \ \G_{n',k'}^{y'}  \Rr].
\end{align*}
By the definition of $\S_{n,k}^y \nabla f(\mu, z)$, we obtain
\begin{align*}
\S_{n',k'}^{y'} \nabla f (\mu,x) &= \Er\Ll[\sum_{z \in \Z_{n,k} \cap \cu_{k'}(x)}\frac{\mu(z+\cu_k)}{\mu(\cu_{k'}(x))} (\S_{n,k}^y\nabla f)(\mu, z) \ \Bigg\vert \ \G_{n',k'}^{y'}  \Rr] \\
&= \Er\Ll[\frac{1}{\mu(\cu_{k'}(x))} \int_{\cu_{k'}(x)} \S_{n,k}^y \nabla f \, \d \mu \ \Bigg\vert \ \G_{n',k'}^{y'}  \Rr]. 
\end{align*}
This is \cref{eq:SnkMt}. 
\begin{figure}
\includegraphics[scale=0.5]{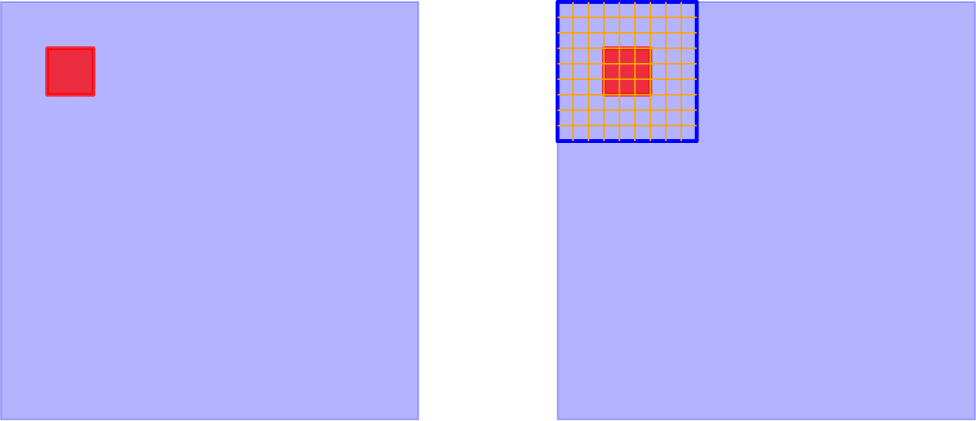}
\caption{The largest cube on this figure is $\square_{n'}(y')$. The operator $\S_{n',k'}^{y'}$ computes the spatial average in every subcubeof size $3^{k'}$, for example the cube in red in the image. We can apply at first the operator $\S_{n,k}^{y}$, which works on the finer scales $3^k$ and $3^n$, represented by the cubes with orange and blue boundaries respectively.}
\end{figure}
\end{proof}

To prepare further for the  multiscale Poincar\'e inequality, we also give the following explicit expression for $\S_{n,k}^{y} \nabla f$. We use the notation 
\begin{align*}
\fint_{(z_i + \cu_k)_{1 \leq i \leq N}} := \fint_{z_i + \cu_k} \cdots \fint_{z_N + \cu_k}.
\end{align*}
\begin{lemma}
Using the notation of \Cref{lem:Projection} with ${\mu \mres \cu_n(y) = \sum_{i=1}^N \delta_{x_i}}$, for any $x \in \cu_n(y)$ and any $f \in \cH^1(\cu_n(y))$, we have
\begin{multline}\label{eq:SnkExplicit}
    (\S_{n,k}^{y} \nabla f)(\mu, x) \prod_{z \in \Z_{n,k}(y)}\Ind{\mu(z + \cu_k) = N_z} \\
= \frac{1}{\mu(\cu_k(x))} \sum_{j : x_j \in \cu_k(x)} \fint_{(z_i + \cu_k)_{1 \leq i \leq N}} \nabla_{x_j} f_N(\cdot, \mu \mres \cu_n^c) \prod_{z \in \Z_{n,k}(y)}\Ind{\mu(z + \cu_k) = N_z} ,
\end{multline}
with $N = \sum_{z \in \Z_{n,k}(y)} N_z$ and $\{z_i\}_{1 \leq i \leq N}$ any fixed sequence such that
\begin{align}
\label{eq:cond_zi}
\forall z \in \Z_{n,k}(y), \qquad    \vert \{i \in \{1,\ldots, N\} \ : \  z_i = z\}\vert = N_z.
\end{align}
Moreover, for every $j,j'$ such that $x_j, x_{j'} \in \cu_k(x)$, we have 
\begin{align}\label{eq:SnkSymmetry}
     \fint_{(z_i + \cu_k)_{1 \leq i \leq N}} \nabla_{x_j} f_N(\cdot , \mu \mres \cu_n^c)  = \fint_{(z_i + \cu_k)_{1 \leq i \leq N}} \nabla_{x_{j'}} f_N(\cdot, \mu \mres \cu_n^c).
\end{align}
\end{lemma}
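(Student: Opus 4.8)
The plan is to reduce both identities to one fact about the Poisson point process --- the explicit form of its conditional law given the particle counts in a partition --- together with the permutation invariance of the projected functions $f_N$ of \Cref{lem:Projection}. Since $f \mapsto \S^y_{n,k}\nabla f$ is linear and the two sides of \eqref{eq:SnkExplicit} and \eqref{eq:SnkSymmetry} depend continuously (in $\cL^2$, respectively in $L^2$ over the relevant product of subcubes) on $f \in \cH^1(\cu_n(y))$, I would first reduce by density to the case $f \in \cC^\infty(\cu_n(y))$.

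The key preliminary step is the following description of the conditional law. Conditionally on $\G^y_{n,k}$ and on the event $\bigcap_{z \in \Z_{n,k}(y)}\{\mu(z+\cu_k) = N_z\}$, the restriction $\mu \mres \cu_n(y)^c$ is frozen --- it is $\G^y_{n,k}$-measurable --- while $\mu \mres \cu_n(y)$ is distributed as $N_z$ independent points uniformly distributed in $z + \cu_k$, independently across $z \in \Z_{n,k}(y)$. This is the standard property that a Poisson point process conditioned on its counts in disjoint sets consists of independent uniform samples in those sets. Fixing a labelling sequence $(z_i)_{1 \le i \le N}$ with $N = \sum_z N_z$ satisfying \eqref{eq:cond_zi}, I can then realize the points of $\mu \mres \cu_n(y)$ as $(x_1, \ldots, x_N)$ with each $x_i$ an independent uniform point of $z_i + \cu_k$, and \Cref{lem:Projection} identifies $f(\mu)$ with $f_N(x_1,\ldots,x_N,\mu\mres\cu_n(y)^c)$ on this event.

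For \eqref{eq:SnkExplicit} I would expand $\fint_{\cu_k(x)}\nabla f\,\d\mu$: by \eqref{e.def.deriv} and \Cref{lem:Projection} one has $\nabla f(\mu,x_j) = \nabla_{x_j}f_N(x_1,\ldots,x_N,\mu\mres\cu_n(y)^c)$ for each index $j$, hence
\[
\fint_{\cu_k(x)}\nabla f\,\d\mu = \frac{1}{\mu(\cu_k(x))}\sum_{j\,:\,x_j\in\cu_k(x)}\nabla_{x_j}f_N(x_1,\ldots,x_N,\mu\mres\cu_n(y)^c).
\]
Taking the conditional expectation with respect to $\G^y_{n,k}$ then amounts, on the count event, to integrating each variable $x_i$ uniformly and independently over $z_i+\cu_k$, that is, to applying $\fint_{(z_i+\cu_k)_{1\le i\le N}}$ to each $\nabla_{x_j}f_N$; this gives \eqref{eq:SnkExplicit}. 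For \eqref{eq:SnkSymmetry}, let $j,j'$ be such that $x_j,x_{j'}\in\cu_k(x)$, so that $z_j+\cu_k = z_{j'}+\cu_k$, and let $\sigma$ be the transposition exchanging $j$ and $j'$. Permutation invariance gives $f_N(x_1,\ldots,x_N) = f_N(x_{\sigma(1)},\ldots,x_{\sigma(N)})$; differentiating this identity in the variable $x_j$ yields $\nabla_{x_j}f_N(x) = (\nabla_{x_{j'}}f_N)(x\circ\sigma)$, where $x\circ\sigma$ denotes the tuple with entries $j$ and $j'$ swapped. Since those two slots of the product domain $\prod_{1\le i\le N}(z_i+\cu_k)$ are the same cube, $x\mapsto x\circ\sigma$ is a measure-preserving bijection of this domain, so averaging the last identity over it yields \eqref{eq:SnkSymmetry}. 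As a byproduct, this symmetry shows that the right-hand side of \eqref{eq:SnkExplicit} is independent of the chosen labelling $(z_i)$, so the statement is well posed.

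I expect the only genuinely delicate point to be the preliminary step: correctly identifying the conditional law of $\mu$ given $\G^y_{n,k}$ and matching the labelling convention of \Cref{lem:Projection} with the product of subcubes over which $\fint_{(z_i+\cu_k)_{1\le i\le N}}$ integrates. Once that bookkeeping is in place, \eqref{eq:SnkExplicit} is immediate and \eqref{eq:SnkSymmetry} follows from the elementary change of variables above.
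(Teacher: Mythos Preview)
Your proposal is correct and follows essentially the same approach as the paper: both identify the conditional law of the Poisson process given $\G^y_{n,k}$ on the count event as independent uniforms in the subcubes, use the projection $f_N$ to rewrite $\fint_{\cu_k(x)}\nabla f\,\d\mu$ as a sum of partial derivatives, and prove \eqref{eq:SnkSymmetry} via the transposition $(j\ j')$ combined with $z_j=z_{j'}$. The paper unfolds the conditional-law identification more explicitly by writing $\prod_z\Ind{\mu(z+\cu_k)=N_z}=\sum_{\sigma\in S_N}\prod_i\Ind{x_{\sigma(i)}\in z_i+\cu_k}$ and checking that every permutation contributes equally, whereas you invoke the standard fact directly; this is a stylistic rather than substantive difference.
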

\begin{proof}
Without loss of generality, we set $y = 0$. Then let $N = \sum_{z \in \Z_{n,k}} N_z$ and we use the canonical projection 
\begin{align*}
&(\S_{n,k} \nabla f)(\mu, x)\prod_{z \in \Z_{n,k}(y)}\Ind{\mu(z + \cu_k) = N_z} \\
= & \frac{1}{\mu(\cu_k(x))}\Er\Ll[ \sum_{x_j \in \cu_k(x)} \nabla_{x_j} f_N(\cdot, \mu \mres \cu_n^c)  \prod_{z \in \Z_{n,k}}\Ind{\mu(z + \cu_k) = N_z} \Bigg\vert \G_{n,k}\Rr].
\end{align*}
The key point is to write $ \prod_{z \in \Z_{n,k}}\Ind{\mu(z + \cu_k) = N_z}$ with respect to $\{x_i\}_{1 \leq i \leq N}$ such that ${\mu \mres \cu_n = \sum_{i=1}^N \delta_{x_i}}$. Let $\{z_i\}_{1 \leq x_i \leq N}$ be any fixed sequence so that every $z$ in $\Z_{n,k}$ appears exactly $N_z$ times, as displayed in \cref{eq:cond_zi}. We have
\begin{align*}
\prod_{z \in \Z_{n,k}}\Ind{\mu(z + \cu_k) = N_z} = \sum_{\sigma \in S_N} \prod_{i=1}^N\Ind{x_{\sigma(i)} \in z_i + \cu_k},
\end{align*}
where $S_N$ is the symmetric group. Moreover, under $\G_{n,k}$ every permutation has equal probability, and then each $x_i$ is uniformly distributed in the associated cube $z_{\sigma(i)} + \cu_k$. Thus, we have 
\begin{align*}
&(\S_{n,k} \nabla f)(\mu, x)\prod_{z \in \Z_{n,k}}\Ind{\mu(z + \cu_k) = N_z} \\
= & \frac{1}{\mu(\cu_k(x))} \frac{1}{\vert S_N \vert}\sum_{\sigma \in S_N} \fint_{(z_{\sigma(i)} + \cu_k)_{1 \leq i \leq N}} \sum_{x_j \in \cu_k(x)}\nabla_{x_j} f_N(\cdot, \mu \mres \cu_n^c)\prod_{i=1}^N\Ind{x_i \in z_{\sigma(i)} + \cu_k}. 
\end{align*}
Notice that for every $1 \leq i \leq N$, $x_i \in z_{\sigma(i)} + \cu_k$ means $x_{\sigma^{-1}(i)} \in z_{i} + \cu_k$, and $\sum_{x_j \in \cu_k(x)}\nabla_{x_j} f_N(\cdot, \mu \mres \cu_n^c)$ is permutation-invariant. So we have 
\begin{align*}
&\sum_{x_j \in \cu_k(x)}\nabla_{x_j} f_N(x_1, \cdots, x_N, \mu \mres \cu_n^c)\prod_{i=1}^N\Ind{x_i \in z_{\sigma(i)} + \cu_k}\\
=&\sum_{x_j \in \cu_k(x)}\nabla_{x_j} f_N(x_1, \cdots, x_N, \mu \mres \cu_n^c)\prod_{i=1}^N\Ind{x_{\sigma^{-1}(i)} \in z_i + \cu_k} \\
=&\sum_{x_{\sigma^{-1}(j)} \in \cu_k(x)}\nabla_{x_{\sigma^{-1}(j)}} f_N(x_{\sigma^{-1}(1)}, \cdots, x_{\sigma^{-1}(N)}, \mu \mres \cu_n^c)\prod_{i=1}^N\Ind{x_{\sigma^{-1}(i)} \in z_i + \cu_k} \\
= &\sum_{x_{j} \in \cu_k(x)}\nabla_{x_{j}} f_N(x_{1}, \cdots, x_{N}, \mu \mres \cu_n^c)\prod_{i=1}^N\Ind{x_i \in z_i + \cu_k}.
\end{align*}
Therefore, the term for each permutation has the same contribution, and we thus obtain \cref{eq:SnkExplicit}. 

Then we prove \cref{eq:SnkSymmetry}. To avoid possible confusion in the notation, we let $y_j, y_{j'}$ be the $j$-th and $j'$-th coordinates, then we exchange them and use the invariance under permutation of $f_N$, 
\begin{equation}\label{eq:SnkSymmetryStep1}
\begin{split}
\e_k \cdot \nabla_{x_j}f_N(\cdots, y_j, \cdots y_{j'}, \cdots) &= \lim_{h \to 0} \frac{f_N(\cdots, y_j + h \e_k, \cdots y_{j'}, \cdots) - f_N(\cdots, y_j, \cdots y_{j'}, \cdots)}{h} \\
&= \lim_{h \to 0} \frac{f_N(\cdots, y_{j'}, \cdots y_{j}+ h \e_k, \cdots) - f_N(\cdots, y_{j'}, \cdots y_{j}, \cdots)}{h} \\
&= \e_k \cdot \nabla_{x_{j'}}f_N(\cdots, y_{j'}, \cdots y_{j}, \cdots).
\end{split}
\end{equation}
Moreover, the condition $x_j, x_{j'} \in \cu_k(x)$ implies that $z_j = z_{j'}$ and 
\begin{align}\label{eq:SnkSymmetryStep2}
\Ind{y_j \in z_j + \cu_k}\Ind{y_{j'} \in z_{j'} + \cu_k}
 = \Ind{y_{j'} \in z_{j} + \cu_k}\Ind{y_j \in z_{j'} + \cu_k}.
\end{align}
We combine \cref{eq:SnkSymmetryStep1} and \cref{eq:SnkSymmetryStep2} to conclude \cref{eq:SnkSymmetry}.
\end{proof}

We now use the operators $\S_{n,k}^y$ as our locally averaged gradient to obtain the following multiscale Poincar\'e inequality. Notice in particular the factor of $3^k$ inside the sum on the right side of \cref{eq:Multi}, which we aim to leverage upon later by combining this with information on the smallness of $\S_{n,k} \nabla u$ for $k$ close to $n$.
\begin{proposition}[Multiscale Poincar\'e inequality]
There exists a constant ${C(d) < \infty}$ such that for every function $u \in \cH^1(\cu_n)$ satisfying $\Er[u \, \vert \, \G_n] = 0$, we have
\label{prop:Multi}
\begin{align}\label{eq:Multi}
\norm{u}_{\cL^2} \leq C \Ll(\Er\Ll[\int_{\cu_n} \vert\nabla u \vert^2 \, \d \mu \Rr]\Rr)^{\frac{1}{2}} 
 + C \sum_{k=0}^n 3^k \Ll(\Er\Ll[ \int_{\cu_n} \vert \S_{n,k} \nabla u\vert^2 
 \, \d \mu \Rr]\Rr)^{\frac{1}{2}}.
\end{align}
\end{proposition}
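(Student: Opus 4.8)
The plan is to iterate the single-scale Poincaré inequality (Proposition~\ref{prop:Poincare2}) across dyadic-type scales $3^0, 3^1, \ldots, 3^n$, using the spatial martingale structure from \cref{eq:SnkMt} to telescope the successive corrections. Concretely, define for each $k \in \{0, \ldots, n\}$ the "coarsened" function $u_k := \Er[u \mid \G_{n,k}]$, so that $u_n = u$ (since $\Er[u \mid \G_n] = 0$ one should be slightly careful: actually $u_n = \Er[u\mid \G_{n,n}]$ which need not equal $u$; the right object to track is $\Er[u \mid \G_{n,k}]$ compared across $k$, with the extra gradient term absorbing the difference between $u$ and $\Er[u\mid\G_n]$). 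The key telescoping identity is
\begin{equation*}
u - \Er[u\mid \G_{n,0}] = \sum_{k=1}^{n} \bigl(\Er[u\mid\G_{n,k}] - \Er[u\mid\G_{n,k-1}]\bigr) + \bigl(u - \Er[u\mid\G_{n,n}]\bigr),
\end{equation*}
and since $\Er[u\mid\G_n] = 0$ we also have $\Er[\,u - \Er[u\mid\G_{n,0}]\,] = 0$ together with $\norm{\Er[u\mid\G_{n,0}]}_{\cL^2}$ controllable because conditioning on $\G_{n,0}$ only averages over a very fine partition — here I would argue $\Er[u\mid\G_{n,0}]$ is, up to the $\cu_0$-scale gradient term, comparable to $\Er[u\mid \G_n]=0$ by applying Proposition~\ref{prop:Poincare2} on each cube of size $3^0$ inside $\cu_n$. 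Each summand is then estimated as follows: the increment $\Er[u\mid\G_{n,k}] - \Er[u\mid\G_{n,k-1}]$ is, on each subcube $z + \cu_k$ of $\cu_n$, a function with vanishing $\G_{z+\cu_k,\ldots}$-average, so Proposition~\ref{prop:Poincare2} (applied on the convex set $z+\cu_k$) bounds its $\cL^2$ norm by $C 3^k$ times the $\cL^2$ norm of its gradient, and the orthogonality of martingale increments lets us sum these cube-by-cube contributions in $\ell^2$.

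The crux is then to identify the gradient of the increment $\Er[u \mid \G_{n,k}] - \Er[u\mid\G_{n,k-1}]$ with (a piece of) $\S_{n,k}\nabla u$. Here is where the explicit formula \cref{eq:SnkExplicit} and the martingale relation \cref{eq:SnkMt} do the work: conditioning $u$ on $\G_{n,k}$ replaces the particle positions inside each subcube $z+\cu_k$ by uniform averages, and differentiating the resulting averaged function in a particle coordinate reproduces exactly the quantity $\S_{n,k}\nabla u$ — more precisely, $\nabla \Er[u\mid\G_{n,k}](\mu,x) = (\S_{n,k}\nabla u)(\mu,x)$ for $x$ in the support of $\mu$ (this is the analogue of \cref{eq:Commute}, now with a cube-averaging conditional expectation). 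Granting this, the gradient of the $k$-th increment is $\S_{n,k}\nabla u - \S_{n,k-1}\nabla u$ (interpreting $\S_{n,k-1}$ via \cref{eq:SnkMt} as a further coarsening of $\S_{n,k}$), whose $\cL^2$ norm is bounded by $2\bigl(\Er[\int_{\cu_n}|\S_{n,k}\nabla u|^2\,\d\mu]\bigr)^{1/2}$ (plus the $k-1$ term, which gets absorbed upon summation by a geometric/Cauchy–Schwarz argument since the extra factor is $3^{k}$ not $3^{k-1}$, or simply by reindexing). Combining with the cube-wise Poincaré bound gives the $3^k$-weighted sum on the right of \cref{eq:Multi}, and the leftover term $u - \Er[u\mid\G_{n,n}]$ is handled by Proposition~\ref{prop:Poincare2} on cubes of size $3^n = 3^n$... actually on $\cu_n$ itself, contributing the first term $C\bigl(\Er[\int_{\cu_n}|\nabla u|^2\,\d\mu]\bigr)^{1/2}$.

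The main obstacle I anticipate is the rigorous justification of the commutation identity $\nabla\Er[u\mid\G_{n,k}] = \S_{n,k}\nabla u$ together with the correct bookkeeping of the $\d\mu$-integration: the conditional expectation $\Er[\cdot\mid\G_{n,k}]$ fixes the \emph{number} of particles in each subcube but randomizes their positions, so one must verify (via the disintegration of Lemma~\ref{lem:Projection} and the explicit symmetrization carried out in the proof of \cref{eq:SnkExplicit}–\cref{eq:SnkSymmetry}) that differentiating commutes with this averaging and that the resulting object integrates against $\d\mu$ in the way the statement requires. A secondary technical point is ensuring the martingale-increment orthogonality is applied in the right space — one wants $\Er\bigl[\int_{\cu_n}|\nabla(\text{increment}_k)|^2\,\d\mu\bigr]$ to be summable against $\sum_k 3^{2k}(\cdots)$ without cross terms, which follows from the nested structure \cref{eq:FilGInclusion} but should be stated carefully. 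Once these are in place, the rest is a clean telescoping plus triangle inequality, with the constant depending only on $d$.
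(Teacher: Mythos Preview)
Your approach has a genuine gap at its very core: the claimed commutation identity $\nabla \Er[u\mid\G_{n,k}] = \S_{n,k}\nabla u$ is false. The $\sigma$-algebra $\G_{n,k}$ is generated by the particle \emph{counts} $\{\mu(z+\cu_k)\}_{z\in\Z_{n,k}}$ together with $\mu\mres\cu_n^c$, so any $\G_{n,k}$-measurable function is insensitive to moving a particle within its subcube $z+\cu_k$. Consequently, for every $x\in\supp(\mu)\cap\cu_n$ in the interior of its subcube, $\nabla\Er[u\mid\G_{n,k}](\mu,x)=0$. This is in direct contrast with $\A_s$ in \cref{eq:Commute}, which averages over particles \emph{outside} $\bar Q_s$ while keeping the ones inside fixed --- that is why the gradient commutes there but not here. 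The operator $\S_{n,k}$ averages the \emph{gradient} $\nabla u$ over subcubes and then conditions on $\G_{n,k}$; it is not the gradient of any coarsened version of $u$. So your telescoping decomposition produces increments whose gradients vanish $\mu$-a.e., and the cube-wise Poincar\'e inequality yields no useful information (indeed, $\Er[u\mid\G_{n,k}]$ is not even in $\cH^1(\cu_n)$ in general, because of jump discontinuities as particles cross subcube boundaries).

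The paper bypasses this obstruction by a duality argument: it solves $-\Delta w=u$ in $\cH^1(\cu_n)$, writes $\Er[u^2]=\Er\bigl[\int_{\cu_n}\nabla w\cdot\nabla u\,\d\mu\bigr]$, and telescopes $\nabla w$ (not $u$) along the operators $\S_{n,k}$. The resulting cross-terms are split by Cauchy--Schwarz; the $\nabla u$-side gives exactly the $\S_{n,k}\nabla u$ terms in the statement, while the $\nabla w$-side produces quantities like $\Er\bigl[\int_{\cu_n}|\nabla w-\S_{n,k+1}\nabla w|^2\,\d\mu\bigr]$, which are controlled by the Poincar\'e inequality at scale $3^k$ \emph{applied to $\nabla w$} together with the $\cH^2$ estimate $\Er\bigl[\int_{\cu_n^2}|\nabla^2 w|^2\,\d\mu^{\otimes 2}\bigr]\le\Er[u^2]$ from Proposition~\ref{prop:H2}. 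This is the missing idea: the passage from scale $k$ to $k+1$ costs a Poincar\'e factor on a function whose gradient is $\nabla^2 w$, and the $\cH^2$ bound closes the loop by returning $\Er[u^2]$, allowing one to divide through by $\norm{u}_{\cL^2}$ at the end.
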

\begin{proof}
Let $w \in \cH^1(\cu_n)$ be such that $\Er[w \, \vert \, \G_n] = 0$ and that solves ``$-\Delta w = u$'', in the sense that
\begin{align}\label{eq:MultiLaplace}
\forall v \in \cH^1(\cu_n), \qquad \Er\Ll[\int_{\cu_n} \nabla w \cdot \nabla v \, \d \mu\Rr] = \Er[uv],
\end{align}
and this relation also holds conditionally on $\G_n$:
\begin{align}\label{eq:MultiLaplace2}
\forall v \in \cH^1(\cu_n), \qquad \Er\Ll[\int_{\cu_n} \nabla w \cdot \nabla v \, \d \mu \ \Bigg \vert \ \G_n \Rr] = \Er[uv \, \vert \, \G_n].
\end{align}
Thanks to the condition $\Er[u \, \vert \, \G_n] = 0$, these equations are well-defined; see the proof of \Cref{prop:H2} for a detailed discussion. This proposition asserts that
\begin{align}\label{eq:MultiH2}
\Er\Ll[\int_{(\cu_n)^2}   \vert \nabla^2 w(\mu,x,y)\vert^2 \, \d \mu(x) \d \mu(y) \Rr] \leq  \Er[u^2].
\end{align}
We test \cref{eq:MultiLaplace} with $u$ and write a telescopic sum with $(\S_{n,k}\nabla w)_{0 \leq k \leq n}$ to get
\begin{equation}\label{eq:MultiDecom}
\begin{split}
\Er[u^2] &= \Er\Ll[\int_{\cu_n} \nabla w \cdot \nabla u \, \d \mu\Rr] = \text{\cref{eq:MultiDecom}-a} + \text{\cref{eq:MultiDecom}-b} + \text{\cref{eq:MultiDecom}-c}, \\
\text{\cref{eq:MultiDecom}-a} &= \Er\Ll[\int_{\cu_n} (\nabla w - \S_{n,0}\nabla w) \cdot \nabla u \, \d \mu\Rr], \\
\text{\cref{eq:MultiDecom}-b} &= \sum_{k=0}^{n-1} \Er\Ll[\int_{\cu_n}  (\S_{n,k}\nabla w - \S_{n, k+1}\nabla w) \cdot \nabla u \, \d \mu\Rr], \\
\text{\cref{eq:MultiDecom}-c} &= \Er\Ll[\int_{\cu_n} (\S_{n,n} \nabla w) \cdot \nabla u \, \d \mu\Rr].
\end{split}
\end{equation}
We treat each of these three terms in turn. For \cref{eq:MultiDecom}-a, we use the Cauchy-Schwarz inequality to write
\begin{align*}
\text{\cref{eq:MultiDecom}-a} \leq \Ll(\Er\Ll[\int_{\cu_n} \vert\nabla w - \S_{n,0}\nabla w\vert^2 \, \d \mu\Rr]\Rr)^{\frac{1}{2}} \Ll(\Er\Ll[\int_{\cu_n} \vert \nabla u \vert^2 \, \d \mu\Rr]\Rr)^{\frac{1}{2}}.
\end{align*}
The first term on the right side above can be rewritten as
\begin{align}\label{eq:MultiDecomAPoicare}
\Er\Ll[\int_{\cu_n} \vert\nabla w - \S_{n,0}\nabla w\vert^2 \, \d \mu\Rr] = \Er\Ll[ \sum_{z \in \Z_{n,0}}  \Er\Ll[\int_{z + \cu_0} \vert\nabla w - \S_{n,0}\nabla w\vert^2 \, \d \mu \ \Bigg \vert \ \G_{n,0}\Rr]\Rr].
\end{align}
We use the canonical projection \Cref{lem:Projection} for $w$ with $\mu \mres \cu_n = \sum_{i=1}^N \delta_{x_i}$, and the do the decomposition conditioned on $\G_{n,0}$ that 
\begin{align*}
w(\mu) = \sum_{N=0}^{\infty} \sum_{\substack{\sum_{z \in \Z_{n,0}}N_z = N}}  w_N(x_1, \cdots, x_N, \mu \mres \cu_n^c) \prod_{z \in \Z_{n,0}}\Ind{\mu(z + \cu_0) = N_z}.
\end{align*}
It suffices to study one term $w_N(x_1, \cdots, x_N, \mu \mres \cu_n^c) \prod_{z \in \Z_{n,0}}\Ind{\mu(z + \cu_0) = N_z}$. We can apply \cref{eq:SnkExplicit}: let $\{z_i\}_{1\leq i \leq N}$ be a fixed sequence such that \cref{eq:cond_zi} holds (with $y = 0$ there). For any $x \in \cu_n$ we have
\begin{multline}\label{eq:MultiProjection}
\S_{n,0}\nabla w(\mu, x) \prod_{z \in \Z_{n,0}}\Ind{\mu(z + \cu_0) = N_z} \\ 
= \frac{1}{\mu(\cu_0(x))} \sum_{x_j \in \cu_0(x)} \fint_{(z_i + \cu_0)_{1 \leq i \leq N}} \nabla_{x_j} w_N(\cdot, \mu \mres \cu_n^c) \prod_{z \in \Z_{n,0}}\Ind{\mu(z + \cu_0) = N_z}.
\end{multline}
We apply \cref{eq:MultiProjection} in \cref{eq:MultiDecomAPoicare} and just study the sum over one $z'$ in $\Z_{n,0}$
\begin{align*}
&\Er\Ll[\int_{z' + \cu_0} \vert\nabla w - \S_{n,0}\nabla w\vert^2 \, \d \mu  \prod_{z \in \Z_{n,0}}\Ind{\mu(z + \cu_0) = N_z} \ \Bigg \vert \ \G_{n,0}\Rr] \\
=&\sum_{x_j \in z' + \cu_0} \fint_{(z_i + \cu_0)_{1 \leq i \leq N}}  \Ll\vert \nabla_{x_j} w_N(\cdot, \mu \mres \cu_n^c) - \frac{1}{\mu(z'+ \cu_0)}\sum_{x_{j'} \in z' + \cu_0} \fint_{(z_i + \cu_0)_{1 \leq i \leq N}} \nabla_{x_{j'}} w_N(\cdot, \mu \mres \cu_n^c)\Rr\vert^2 \\
& \qquad \times \prod_{z \in \Z_{n,0}}\Ind{\mu(z + \cu_0) = N_z}.
\end{align*} 
Then we use the symmetry proved in \cref{eq:SnkSymmetry}, that in fact every $\nabla_{x_j} w_N$ has the same contribution for all $x_j \in z' + \cu_0$,
\begin{align*}
&\Er\Ll[\int_{z' + \cu_0} \vert\nabla w - \S_{n,0}\nabla w\vert^2 \, \d \mu  \prod_{z \in \Z_{n,0}}\Ind{\mu(z + \cu_0) = N_z} \ \Bigg \vert \ \G_{n,0}\Rr] \\
=&\sum_{x_j \in z' + \cu_0} \fint_{(z_i + \cu_0)_{1 \leq i \leq N}}  \Ll\vert \nabla_{x_j} w_N(\cdot, \mu \mres \cu_n^c) -  \fint_{(z_i + \cu_0)_{1 \leq i \leq N}} \nabla_{x_{j}} w_N(\cdot, \mu \mres \cu_n^c)\Rr\vert^2  \prod_{z \in \Z_{n,0}}\Ind{\mu(z + \cu_0) = N_z}.
\end{align*}
For the equation above, we can use the Poincar\'e inequality \Cref{prop:Poincare0} because it is centered and every $x_i$ lives uniformly in its associated small cube $z_i + \cu_0$. We remark that the  constant $C$ here is independent of $N$
\begin{align*}
&\Er\Ll[\int_{z' + \cu_0} \vert\nabla w - \S_{n,0}\nabla w\vert^2 \, \d \mu  \prod_{z \in \Z_{n,0}}\Ind{\mu(z + \cu_0) = N_z} \ \Bigg \vert \ \G_{n,0}\Rr] \\
& \qquad \leq  C\sum_{1 \leq i \leq N}\sum_{x_j \in z' + \cu_0} \fint_{(z_i + \cu_0)_{1 \leq i \leq N}}  \Ll\vert \nabla_{x_i} \nabla_{x_j} w_N(\cdot, \mu \mres \cu_n^c) \Rr\vert^2  \prod_{z \in \Z_{n,0}}\Ind{\mu(z + \cu_0) = N_z}.
\end{align*} 
We put this estimate back to \cref{eq:MultiDecomAPoicare}, do the sum over all $z' \in \Z_{n,0}$
\begin{align*}
&\sum_{z' \in \Z_{n,0}}  \Er\Ll[\int_{z' + \cu_0} \vert\nabla w - \S_{n,0}\nabla w\vert^2 \, \d \mu \prod_{z \in \Z_{n,0}}\Ind{\mu(z + \cu_0) = N_z} \ \Bigg \vert \ \G_{n,0}\Rr]\\
& \qquad \leq  C\sum_{1 \leq i,j \leq N} \fint_{(z_i + \cu_0)_{1 \leq i \leq N}}  \Ll\vert \nabla_{x_i} \nabla_{x_j} w_N(\cdot, \mu \mres \cu_n^c) \Rr\vert^2  \prod_{z \in \Z_{n,0}}\Ind{\mu(z + \cu_0) = N_z} \\
& \qquad =C  \Er\Ll[\int_{(\cu_n)^2} \Ll\vert\nabla^2  w(\mu, x, y) \Rr\vert^2 \, \d \mu(x) \d \mu(y) \prod_{z \in \Z_{n,0}}\Ind{\mu(z + \cu_0) = N_z} \ \Bigg \vert \ \G_{n,0}\Rr].
\end{align*}
Finally, we do the expectation and the sum over all ${\prod_{z \in \Z_{n,0}}\Ind{\mu(z + \cu_0) = N_z}}$, and use the $\cH^2$-estimate \cref{eq:MultiH2} to obtain that 
\begin{equation}\label{eq:MultiScale0}
\begin{split}
&\Er\Ll[\int_{\cu_n} \vert\nabla w - \S_{n,0}\nabla w\vert^2 \, \d \mu\Rr] \\
& \qquad \leq  C  \Er\Ll[\int_{(\cu_n)^2} \Ll\vert\nabla^2  w(\mu, x, y) \Rr\vert^2 \, \d \mu(x) \d \mu(y) \Rr] \\
& \qquad \leq  C \Er[u^2],
\end{split}
\end{equation}
and this concludes that 
\begin{align}\label{eq:MultiDecomABound}
\text{\cref{eq:MultiDecom}-a} \leq C (\Er[u^2])^{\frac{1}{2}} \Ll(\Er\Ll[\int_{\cu_n} \vert \nabla u \vert^2 \, \d \mu\Rr]\Rr)^{\frac{1}{2}}.
\end{align}

The term \cref{eq:MultiDecom}-b can be treated similarly. For every $k$, we apply at first the conditional expectation with respect to $\G_{n,k}$ 
\begin{align*}
&\Er\Ll[\int_{\cu_n}  (\S_{n,k}\nabla w - \S_{n, k+1}\nabla w) \cdot \nabla u \, \d \mu\Rr] \\
& \qquad = \sum_{z \in \Z_{n,k}} \Er\Ll[ \Er\Ll[\int_{z+\cu_k}  (\S_{n,k}\nabla w - \S_{n, k+1}\nabla w) \cdot \nabla u \, \d \mu  \ \Bigg \vert \  \G_{n,k}\Rr]\Rr]\\
& \qquad =  \Er\Ll[ \sum_{z \in \Z_{n,k}} \int_{z+\cu_k}  (\S_{n,k}\nabla w - \S_{n, k+1}\nabla w) \cdot (\S_{n,k}\nabla u) \, \d \mu \Rr].
\end{align*}
Then we use the Cauchy-Schwarz inequality to obtain that 
\begin{multline*}
\Er\Ll[\int_{\cu_n}  (\S_{n,k}\nabla w - \S_{n, k+1}\nabla w) \cdot \nabla u \, \d \mu\Rr] \\
\leq  \Ll(\Er\Ll[ \sum_{z \in \Z_{n,k}} \int_{z+\cu_k}  \vert\S_{n,k}\nabla w - \S_{n, k+1}\nabla w \vert^2 \, \d \mu \Rr]\Rr)^{\frac{1}{2}} \Ll(\Er\Ll[\sum_{z \in \Z_{n,k}} \int_{z+\cu_k} \vert \S_{n,k}\nabla u \vert^2 \, \d \mu \Rr]\Rr)^{\frac{1}{2}}.
\end{multline*}
We use the definition in \cref{eq:defSnk} and Jensen's inequality for $\vert \S_{n,k}\nabla w - \S_{n, k+1}\nabla w \vert^2$. For every $z \in \Z_{n,k}$, since $(\S_{n, k+1}\nabla w)(\mu,z)$ is $\G_{n,k}$-measurable, 
\begin{equation}\label{eq:SnkTwoScale}
\begin{split}
\vert \S_{n,k}\nabla w - \S_{n, k+1}\nabla w \vert^2(\mu,z)
& = \Ll(\Er\Ll[ \fint_{z+\cu_k} (\nabla w - \S_{n, k+1}\nabla w) \, \d\mu \ \Bigg\vert \ \G_{n,k} \Rr]\Rr)^2 \\
& \leq  \Er\Ll[ \fint_{z+\cu_k} \vert\nabla w - \S_{n, k+1}\nabla w \vert^2 \, \d\mu \ \Bigg\vert \ \G_{n,k} \Rr].
\end{split}
\end{equation}
Then we sum over all $z \in \Z_{n,k}$, and we can treat it like \cref{eq:MultiDecom}-a and \cref{eq:MultiScale0} with the Poincar\'e inequality in the scale $3^k$ and the $\cH^2$-estimate \cref{eq:MultiH2}, yielding
\begin{align*}
\Er\Ll[ \sum_{z \in \Z_{n,k}} \int_{z+\cu_k}  \vert\S_{n,k}\nabla w - \S_{n, k+1}\nabla w \vert^2 \, \d \mu \Rr] &\leq \Er\Ll[ \int_{\cu_n}  \vert \nabla w - \S_{n, k+1}\nabla w \vert^2 \, \d \mu \Rr] \\
&\leq C3^{2k}\Er[u^2].
\end{align*}
We have thus shown that
\begin{align}\label{eq:MultiDecomBBound}
\text{\cref{eq:MultiDecom}-b} \leq C (\Er[u^2])^{\frac{1}{2}} \Ll(\sum_{k=0}^{n-1} 3^k \Ll(\Er\Ll[\int_{\cu_n} \vert\S_{n,k}\nabla u \vert^2 \, \d \mu \Rr]\Rr)^{\frac{1}{2}}\Rr).
\end{align}

For \cref{eq:MultiDecom}-c, we use \cref{eq:defSnk} and the Cauchy-Schwarz inequality to get that 
\begin{align*}
\text{\cref{eq:MultiDecom}-c} &= \Er\Ll[\int_{\cu_n} (\S_{n,n} \nabla w) \cdot (\S_{n,n} \nabla u) \, \d \mu\Rr] \\
&\leq \Ll(\Er\Ll[\int_{\cu_n} \vert\S_{n,n} \nabla w\vert^2 \, \d \mu\Rr]\Rr)^{\frac{1}{2}} \Ll(\Er\Ll[\int_{\cu_n} \vert \S_{n,n} \nabla u\vert^2 \, \d \mu\Rr]\Rr)^{\frac{1}{2}}.  
\end{align*}
To treat the term $\Er\Ll[\int_{\cu_n} \vert\S_{n,n} \nabla w\vert^2 \, \d \mu\Rr]$, we define the random affine function 
\begin{align}\label{eq:MultiVector}
\msf{p} := \frac{(\S_{n,n} \nabla w)(\mu, 0)}{\vert (\S_{n,n} \nabla w)(\mu, 0)\vert}, \qquad \ell_{\msf{p}, \cu_n} := \int_{\cu_n}  \msf{p} \cdot x \, \d \mu(x).
\end{align}
Notice that here $\msf{p}$ is random, but when the particles in $\cu_n$ move within $\cu_n$, it does mot change the value; more precisely, the slope $\msf p$ is $\G_{n,n}$-measureable. We test $\ell_{\msf{p}, \cu_n}$ with \cref{eq:MultiLaplace2},  
\begin{align*}
\Er\Ll[u \ell_{\msf{p}, \cu_n} \, \big \vert \, \G_{n,n}\Rr] &= \Er\Ll[\int_{\cu_n} \nabla w \cdot \msf{p} \, \d \mu  \ \Bigg \vert \ \G_{n,n} \Rr]\\
&= \Er\Ll[\int_{\cu_n} \nabla w \, \d \mu \ \Bigg \vert \ \G_{n,n} \Rr]\cdot \msf{p} \\
&= \int_{\cu_n}(\S_{n,n} \nabla w)\cdot \msf{p}  \, \d \mu .
\end{align*}
Recalling the definition in \cref{eq:MultiVector}, we obtain that 
\begin{align*}
\int_{\cu_n} \vert \S_{n,n} \nabla w\vert  \, \d \mu  &= \Er\Ll[u \ell_{\msf{p}, \cu_n} \, \big \vert \, \G_{n,n}\Rr] \\
&\leq  \Ll(\Er\Ll[u^2 \, \big \vert \, \G_{n,n}\Rr]\Rr)^\frac{1}{2}
\Ll(\Er\Ll[ \ell^2_{\msf{p}, \cu_n} \, \big \vert \, \G_{n,n}\Rr]\Rr)^{\frac{1}{2}}\\
&\leq C\sqrt{\mu(\cu_n)}3^n \Ll(\Er\Ll[u^2 \, \big \vert \, \G_{n,n}\Rr]\Rr)^\frac{1}{2},
\end{align*} 
where in the last step, we use a direct calculation of $\Ll(\Er\Ll[ \ell^2_{\msf{p}, \cu_n} \, \big \vert \, \G_{n,n}\Rr]\Rr)^{\frac{1}{2}}$, and where the constant $C$ may depend on $d$. Since  $\S_{n,n} \nabla w$ is constant for every point in $\cu_n$, we have shown that
\begin{equation*}  
\sqrt{\mu(\cu_n)} \vert \S_{n,n} \nabla w\vert(\mu, 0) \leq C3^n \Ll(\Er\Ll[u^2 \, \big \vert \, \G_{n,n}\Rr]\Rr)^\frac{1}{2}.
\end{equation*}
We thus obtain that
\begin{align*}
\Er\Ll[\int_{\cu_n} \vert\S_{n,n} \nabla w\vert^2 \, \d \mu\Rr] &= \Er\Ll[\mu(\cu_n) \vert\S_{n,n} \nabla w\vert^2(\mu, 0) \Rr]\\
&\leq C 3^{2n} \Er\Ll[\Er\Ll[u^2 \, \big \vert \, \G_{n,n}\Rr]\Rr]\\
&=C 3^{2n} \Er[u^2], 
\end{align*}
and therefore
\begin{align}\label{eq:MultiDecomCBound}
\text{\cref{eq:MultiDecom}-c} \leq C 3^n (\Er[u^2])^{\frac{1}{2}}  \Ll(\Er\Ll[\int_{\cu_n} \vert \S_{n,n} \nabla u\vert^2 \, \d \mu\Rr]\Rr)^{\frac{1}{2}}.
\end{align}
We now combine \cref{eq:MultiDecom}, \eqref{eq:MultiDecomABound}, \eqref{eq:MultiDecomBBound}, and \eqref{eq:MultiDecomCBound}, to obtain  \cref{eq:Multi}.
\end{proof}

\subsection{Caccioppoli inequality}
For every bounded open set $U \subset \Rd$, we define the space of $\a$-harmonic functions on $\mmd(\Rd)$ by
\begin{align}
\mcl A(U) := \Ll\{u \in \cH^1(U) : \  \forall \varphi \in \cH^1_0(U), \ \Er\Ll[\int_U \nabla u \cdot \a \nabla \varphi \, \d \mu \Rr] = 0\Rr\}.
\end{align}
Recalling that, for any two bounded open sets $V \subset U$, we have $\cH^1(U) \subset \cH^1(V)$ and $\cH^1_0(V) \subset \cH^1_0(U)$, so we see that $\mcl A(U) \subset \mcl A (V)$. 
For the classical Caccioppoli inequality, a standard proof is as follows: we multiply the harmonic function by a cutoff function, and then use this as a test function against the harmonic function itself. Adapting this argument to our space of particle configurations is not immediate. A naive approach would be to introduce a cutoff that brings the value of the function to zero whenever a particle approaches the boundary of the domain. But proceeding in this way is a very bad idea, since as we increase the size of the domain, there will essentially always be some particles near the boundary. We will instead rely on a suitable averaging procedure for particles that fall outside of a given region, using the localization operators defined in Subsection \ref{subsubsec:LocReg}. Notice that our goal thus is not to bring the function to zero as a particle approaches the boundary of the box. Rather, it is only to produce a function that stops depending on the position of a particle that progressively approaches the boundary of the domain, in agreement with our definition of the space $\cH^1_0(U)$ (and departing from the traditional definition of the Sobolev $H^1_0$ spaces). 

\begin{proposition}[Modified Caccioppoli inequality]
There exist ${\theta(d, \Lambda) \in (0,1)}$, ${C(d, \Lambda) < \infty}$, and ${R_0(d, \Lambda) < \infty}$ such that for every $r \ge R_0$ and ${u \in \mcl A(Q_{3r})}$, we have
\label{prop:Caccioppoli}
\begin{multline}
\label{eq:Caccioppoli}
\Er\Ll[\frac{1}{\rho \vert Q_r \vert}\int_{Q_r} \nabla (\A_{r+2} u) \cdot \a \nabla (\A_{r+2} u) \, \d \mu\Rr] \\
\leq  \frac{C}{r^2 \rho \vert Q_{3r} \vert} \Er[u^2] + \theta \Er\Ll[\frac{1}{\rho \vert Q_{3r} \vert}\int_{Q_{3r}} \nabla u \cdot \a \nabla u \, \d \mu\Rr].
\end{multline}
\end{proposition}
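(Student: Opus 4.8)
\emph{Strategy.} The plan is to adapt the classical proof of the Caccioppoli inequality—test the equation for the $\a$-harmonic function $u$ against a function that "looks like $u$" near $Q_r$—but to replace the usual multiplication by a spatial cutoff (useless here, as explained in the introduction) by the localization operators of \Cref{subsubsec:LocReg}, which merely freeze the dependence on particles escaping a growing box. Concretely, for each scale $s \in (r+2, 2r)$ and a small parameter $\epsilon > 0$ with $s+\epsilon < 3r$, the function $\psi_s := \A_{s,\epsilon} u$ belongs to $\cH^1_0(Q_{s+\epsilon}) \subseteq \cH^1_0(Q_{3r})$, so it is an admissible test function against $u$, and its gradient is given explicitly by \eqref{eq:ADerivative}. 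I would test the $\a$-harmonicity of $u$ with each $\psi_s$ separately, deduce a bound on the left-hand side of \eqref{eq:Caccioppoli} for every such $s$, and then \emph{average over $s$}; this averaging over $\Theta(r)$ scales is what ultimately produces a factor $r^{-1}$ and hence a constant $\theta<1$ once $r$ exceeds a threshold $R_0(d,\Lambda)$.

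\emph{Step 1: recovering the left-hand side.} Since $Q_r \subseteq Q_s$, \eqref{eq:ADerivative} and \eqref{eq:Commute} give $\nabla \psi_s = \A_{s,\epsilon}(\nabla u)$ on $Q_r$. By the finite range of dependence, $\a(\mu,x)$ is $\mcl F_{\bar Q_{r+2}}$-measurable for every $x \in Q_r$ (its support lies in $B_1(x)\subseteq Q_{r+2}$), hence also $\mcl F_{\bar Q_s}$-measurable. As $\nabla u(\cdot,x)-\A_s(\nabla u)(\cdot,x)$ has vanishing conditional expectation given $\mcl F_{\bar Q_s}$, and $\A_s=\Er[\,\cdot\,|\mcl F_{\bar Q_s}]$ commutes with $\int_{Q_r}\cdot\,\d\mu$ under $\Er$ (the particles of $Q_r$ are frozen by the conditioning), the cross term drops as $\epsilon\to0$:
\begin{align*}
\Er\Ll[\int_{Q_r}\nabla u\cdot\a\nabla\psi_s\,\d\mu\Rr]\xrightarrow[\epsilon\to0]{}\Er\Ll[\int_{Q_r}\A_s(\nabla u)\cdot\a\,\A_s(\nabla u)\,\d\mu\Rr].
\end{align*}
A further use of Jensen's inequality—via $\A_{r+2}(\nabla u)=\Er[\A_s(\nabla u)\,|\,\mcl F_{\bar Q_{r+2}}]$, the $\mcl F_{\bar Q_{r+2}}$-measurability of $\a$ on $Q_r$, and $\A_{r+2}(\nabla u)=\nabla(\A_{r+2}u)$ there—bounds the right-hand side from below by $\Er[\int_{Q_r}\nabla(\A_{r+2}u)\cdot\a\nabla(\A_{r+2}u)\,\d\mu]=\rho|Q_r|$ times the left-hand side of \eqref{eq:Caccioppoli}.

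\emph{Step 2: harmonicity and cancellation on the bulk of the annulus, then averaging.} Since $u\in\mcl A(Q_{3r})$ and $\psi_s\in\cH^1_0(Q_{3r})$, we have $\Er[\int_{Q_{3r}}\nabla u\cdot\a\nabla\psi_s\,\d\mu]=0$, and as $\nabla\psi_s$ vanishes outside $Q_{s+\epsilon}$,
\begin{align*}
\Er\Ll[\int_{Q_r}\nabla u\cdot\a\nabla\psi_s\,\d\mu\Rr]=-\Er\Ll[\int_{Q_{s+\epsilon}\setminus Q_{s-2}}\nabla u\cdot\a\nabla\psi_s\,\d\mu\Rr]-\Er\Ll[\int_{Q_{s-2}\setminus Q_r}\nabla u\cdot\a\nabla\psi_s\,\d\mu\Rr].
\end{align*}
On $Q_{s-2}\setminus Q_r$ one has $\nabla\psi_s=\A_{s,\epsilon}(\nabla u)$ and $\a$ is $\mcl F_{\bar Q_s}$-measurable, so the same cancellation as in Step 1 turns the last term, as $\epsilon\to0$, into $-\Er[\int_{Q_{s-2}\setminus Q_r}\A_s(\nabla u)\cdot\a\,\A_s(\nabla u)\,\d\mu]\le0$; discarding it only enlarges the right-hand side. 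Combining with Step 1 reduces everything to the \emph{thin} shell $Q_{s+\epsilon}\setminus Q_{s-2}$ (of width $\approx2$) around $\partial Q_s$: on it, \eqref{eq:ADerivative} writes $\nabla\psi_s$ as a regular piece (bounded, via Jensen, by conditional averages of $|\nabla u|$) plus a jump piece $\epsilon^{-1}\n(x)\Delta_{\tau(x)}(\A u)$ on $Q_{s+\epsilon}\setminus Q_s$. Averaging over $s\in(r+2,2r)$ and exchanging integrals, each particle lies in only $O(1)$ of these shells, so the regular piece contributes $\lesssim r^{-1}\Er[\int_{Q_{2r}\setminus Q_r}|\nabla u|^2\,\d\mu]\le r^{-1}\Er[\int_{Q_{3r}}\nabla u\cdot\a\nabla u\,\d\mu]$, which after dividing by $\rho|Q_r|$ is $\le\theta\,\Er[(\rho|Q_{3r}|)^{-1}\int_{Q_{3r}}\nabla u\cdot\a\nabla u\,\d\mu]$ once $r\ge R_0$. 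The jump piece: the $\epsilon^{-1}$ is compensated by the $\epsilon$-width of $Q_{s+\epsilon}\setminus Q_s$ swept as $s$ varies, leaving $\lesssim r^{-1}\Er[\int_{Q_{2r}\setminus Q_r}\nabla u\cdot\a\,\n(x)\Delta_{\tau(x)}(\A u)\,\d\mu(x)]$; by Cauchy--Schwarz, the identity $\int_{Q_{2r}\setminus Q_r}|\Delta_{\tau(x)}(\A u)|^2\,\d\mu(x)=[\A u]_{2r}-[\A u]_r$, and $\Er[[\A u]_\infty]\le\Er[u^2]$, this is $\lesssim r^{-1}\Er[\int_{Q_{3r}}\nabla u\cdot\a\nabla u\,\d\mu]^{1/2}\Er[u^2]^{1/2}$, which Young's inequality converts into $\theta'\,\Er[\int_{Q_{3r}}\nabla u\cdot\a\nabla u\,\d\mu]+Cr^{-2}\Er[u^2]$. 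Fixing the Young parameter, then $\theta,\theta'$, then $R_0$ and $C$, and letting $\epsilon\to0$, yields \eqref{eq:Caccioppoli}.

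\emph{Main obstacle.} The delicate point is that, unlike in the elliptic setting, the test function cannot be made to vanish near $\partial Q_{3r}$, so a priori its gradient on the boundary layer is as large as that of $u$; the estimate survives only because (i) the finite range of $\a$ together with the martingale structure of $s\mapsto\A_s(\nabla u)$ kills the cross term on the \emph{bulk} of the annulus $Q_s\setminus Q_r$, leaving just an $O(1)$-thick shell around $\partial Q_s$, and (ii) averaging over $\Theta(r)$ scales $s$ then supplies the factor $r^{-1}$ that makes this shell a genuinely small fraction of the total energy. Carefully tracking which conditional expectations commute with the various $\d\mu$-integrals, and making the $\epsilon\to0$ passages rigorous (ideally keeping $\epsilon$ fixed throughout and passing to the limit only at the end), is the remaining technical work.
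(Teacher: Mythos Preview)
Your approach is correct and takes a genuinely different route from the paper's. The paper first proves a \emph{weak} Caccioppoli inequality (Lemma~3.8) by testing $u\in\mcl A(Q_{s+\epsilon})$ against $\tilde{\A}_{s,\epsilon}u$ and applying the \emph{hole-filling} trick: the boundary gradient term $2\Lambda\,\Er[\int_{Q_{s+\epsilon}\setminus Q_r}\nabla u\cdot\a\nabla u\,\d\mu]$ is absorbed by adding $2\Lambda$ times the left-hand side to both sides, producing the factor $\theta'=\tfrac{2\Lambda}{2\Lambda+1}$. This weak inequality is then \emph{iterated} $\sim d\Lambda$ times over a geometric sequence of radii $r_n=(1+2\delta)^n r$ with $\delta$ small enough that $(1+2\delta)^d\theta'<1$; the choice $\epsilon=\delta r$ makes $\epsilon^{-2}\sim r^{-2}$, and the martingale increments $\Er[(\A_{s+\epsilon}u)^2]-\Er[(\A_s u)^2]$ telescope over the iteration so that the $\cL^2$ terms sum to at most $\Er[u^2]$.

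Your argument replaces hole-filling + iteration by a single \emph{average over the intermediate scale $s$}: once the bulk-annulus contribution on $Q_{s-2}\setminus Q_r$ is discarded by its sign (the paper does this too, implicitly, when passing from $\int_{Q_{s-2}}$ to $\int_{Q_r}$), the remaining shell has width $O(1)$, and averaging over $s\in(r+2,2r)$ directly produces the factor $r^{-1}$ on the regular piece. For the jump piece your Young step $r^{-1}\sqrt{A}\sqrt{B}\le\tfrac{\gamma}{2}A+\tfrac{1}{2\gamma r^2}B$ is exactly what converts the $r^{-1}$ prefactor into the required $r^{-2}$ on $\Er[u^2]$, while keeping a fixed $\tfrac{\gamma}{2}$ on the gradient; choosing $\gamma$ small (e.g.\ $\gamma\sim 3^{-d}$) and then $R_0$ large makes the total gradient coefficient $<1$. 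What the paper buys is more explicit constants (given at the end of its proof) and an intermediate lemma of possible independent use; what your approach buys is brevity and the avoidance of the iteration bookkeeping. Both rest on the same two structural facts: the $\mcl F_{\bar Q_s}$-measurability of $\a(\cdot,x)$ for $x\in Q_{s-2}$, and the bracket identity $\Er[\int_{Q_b\setminus Q_a}|\Delta_{\tau(x)}(\A u)|^2\,\d\mu]=\Er[(\A_b u)^2]-\Er[(\A_a u)^2]$.
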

\begin{remark}
Inequality \cref{eq:Caccioppoli} controls the norm of the gradient of a harmonic function in the small cube $Q_r$ by a sum of terms involving the norm of the gradient in the larger cube $Q_{3r}$. This does not seem to be useful at first glance. However, the key point is that the multiplicative factor $\theta$ is smaller than one.
\end{remark}
The proof of Proposition~\ref{prop:Caccioppoli} will be divided into two steps. In the first step, provided by the lemma below, we prove a weaker Caccioppoli inequality,  without the normalization of the volume. In the second step we use an iterative argument to improve the result and obtain Proposition~\ref{prop:Caccioppoli}.

Recall that $\A_{s,\epsilon}$ is the regularized localization operator defined in \cref{eq:defA}.
\begin{lemma}[Weak Caccioppoli inequality]\label{lem:WeakCaccioppoli}
Fix $\theta'(\Lambda) := \frac{2\Lambda}{2\Lambda + 1} \in (0,1)$. For every $r > 0$, $s \geq r+2, \epsilon > 0$ and ${u \in \mcl A(Q_{s+\epsilon})}$, we have 
\begin{multline}\label{eq:WeakCaccioppoli}
\frac{\theta'}{2 \epsilon^2}\Er[(\A_{s} u)^2] + \Er\Ll[\int_{Q_r} \nabla(\A_{s, \epsilon} u) \cdot \a \nabla(\A_{s, \epsilon} u) \, \d \mu\Rr] \\\leq \theta' \Ll(\frac{1}{2 \epsilon^2}\Er[(\A_{s+\epsilon}u)^2] + \Er\Ll[\int_{Q_{s+\epsilon}} \nabla u \cdot \a \nabla u \, \d \mu\Rr] \Rr).
\end{multline}
\end{lemma}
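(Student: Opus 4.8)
The plan is to imitate the classical proof of the Caccioppoli inequality, using the regularized localization operator $\A_{s,\epsilon}$ in place of a cutoff function. \emph{Testing.} Since $u\in\mcl A(Q_{s+\epsilon})\subset\cH^1(Q_{s+\epsilon})$, the function $\A_{s,\epsilon}u$ lies in $\cH^1_0(Q_{s+\epsilon})$ (it is obtained by applying $\A_{s,\epsilon}$ to an element of $\cH^1(Q_{s+\epsilon})$, with $U=Q_{s+\epsilon}$), hence is an admissible test function in the definition of $\mcl A(Q_{s+\epsilon})$; this gives $\Er[\int_{Q_{s+\epsilon}}\nabla u\cdot\a\,\nabla(\A_{s,\epsilon}u)\,\d\mu]=0$. \emph{Splitting.} I would split $Q_{s+\epsilon}$ into the bulk $Q_{s-2}$, the intermediate shell $Q_s\setminus Q_{s-2}$, and the outer shell $Q_{s+\epsilon}\setminus Q_s$, writing the identity as $0=\mathrm{I}+\mathrm{II}+\mathrm{III}$, and insert the explicit formula \eqref{eq:ADerivative} for $\nabla(\A_{s,\epsilon}u)$: on $Q_s$ it equals $\A_{s,\epsilon}(\nabla u)$, while on the outer shell it is a sum of a regularized averaged gradient and a jump correction $-\epsilon^{-1}\n(x)\Delta_{\tau(x)}(\A u)$.

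\emph{The bulk term.} Because $s\ge r+2$, the coefficient $\a(\cdot,x)$ is $\mcl F_{\ov Q_s}$-measurable for every $x\in Q_{s-2}$, hence commutes with each conditional expectation $\A_{s+t}$ ($t\ge0$); since moreover these operators fix the restriction of $\mu$ to $\ov Q_{s-2}$ (and thus of $\d\mu$ to $Q_{s-2}$), a conditioning argument together with the identity \eqref{eq:Commute} and the fact that $\A_{s+t}$ is a projection yields, for each $t\ge0$, the identity $\Er[\int_{Q_{s-2}}\nabla u\cdot\a\,\A_{s+t}(\nabla u)\,\d\mu]=\Er[\int_{Q_{s-2}}\nabla(\A_{s+t}u)\cdot\a\,\nabla(\A_{s+t}u)\,\d\mu]$. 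Averaging over $t\in(0,\epsilon)$ and using convexity of the Dirichlet integrand in $t$ together with monotonicity in the domain gives $\mathrm{I}\ge\Er[\int_{Q_r}\nabla(\A_{s,\epsilon}u)\cdot\a\,\nabla(\A_{s,\epsilon}u)\,\d\mu]$, which is precisely the term we want on the left of \eqref{eq:WeakCaccioppoli}. Combined with $0=\mathrm{I}+\mathrm{II}+\mathrm{III}$, it then remains to bound $-\mathrm{II}-\mathrm{III}$ from above.

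\emph{The shell terms.} On the outer shell I would plug in the explicit gradient formula. The jump correction, paired against $\nabla u\cdot\a$, is handled by the pointwise Young estimate $\epsilon^{-1}|\nabla u\cdot\a\,\n(x)|\,|\Delta_{\tau(x)}(\A u)|\le\epsilon^{-1}\sqrt{\nabla u\cdot\a\nabla u}\,\sqrt{\n(x)\cdot\a\,\n(x)}\,|\Delta_{\tau(x)}(\A u)|\le\tfrac{\delta}{2}\,\nabla u\cdot\a\nabla u+\tfrac{\Lambda}{2\delta\epsilon^2}|\Delta_{\tau(x)}(\A u)|^2$, with a free parameter $\delta>0$; integrating over the outer shell, using that exactly one particle sits on $\partial Q_\tau$ at each jump time $\tau$ so that $\int_{Q_{s+\epsilon}\setminus Q_s}|\Delta_{\tau(x)}(\A u)|^2\,\d\mu(x)=[\A u]_{s+\epsilon}-[\A u]_s$, and invoking the martingale property of $((\A_\sigma u)^2-[\A u]_\sigma)_\sigma$ (whence $\Er[[\A u]_{s+\epsilon}-[\A u]_s]=\Er[(\A_{s+\epsilon}u)^2]-\Er[(\A_s u)^2]$), converts this contribution into $\tfrac{\delta}{2}\Er[\int_{Q_{s+\epsilon}}\nabla u\cdot\a\nabla u\,\d\mu]+\tfrac{\Lambda}{2\delta\epsilon^2}(\Er[(\A_{s+\epsilon}u)^2]-\Er[(\A_s u)^2])$. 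The remaining pieces — the averaged-gradient part of $\mathrm{III}$ together with all of $\mathrm{II}$, i.e.\ the integrals over the intermediate shell — are controlled by Cauchy--Schwarz in the $\a$-inner product, Jensen's inequality for the averaging operators, and the bounds $\id\le\a\le\Lambda\id$. One then optimizes $\delta$ (and the constants in these estimates), moves the $\Er[(\A_s u)^2]$-term to the left, and rearranges; this is where the value $\theta'=\frac{2\Lambda}{2\Lambda+1}$ emerges, as the common multiplicative constant of the Dirichlet-energy term and the $\epsilon^{-2}$-term on the right of \eqref{eq:WeakCaccioppoli}.

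\emph{Main obstacle.} The delicate part is the treatment of the shells. On the intermediate shell $Q_s\setminus Q_{s-2}$ the coefficient $\a$ is \emph{not} $\mcl F_{\ov Q_s}$-measurable, so the clean self-adjointness identity used in the bulk is unavailable there; and on the outer shell one must unravel carefully the explicit formula \eqref{eq:ADerivative}, which mixes an averaged gradient with a boundary jump term. Carrying out these estimates while keeping every constant sharp enough that the Dirichlet-energy term and the $\epsilon^{-2}$-term on the right of \eqref{eq:WeakCaccioppoli} both end up with the \emph{same} multiplicative constant $\theta'<1$ — rather than a constant growing with $\Lambda$ that could exceed $1$ — is the crux of the argument, and is what dictates the particular shape of $\theta'(\Lambda)$.
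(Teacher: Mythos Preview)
Your overall strategy matches the paper's: test the harmonic function against a regularized localization, split $Q_{s+\epsilon}$ into bulk plus two shells, exploit the $\mcl F_{\ov Q_s}$-measurability of $\a$ on the bulk, and control the shells by Young together with the martingale identity for the bracket process. One difference is that the paper tests with $\tilde{\A}_{s,\epsilon}u=\A_{s,\epsilon}\circ\A_{s,\epsilon}u$ rather than $\A_{s,\epsilon}u$. The point of the doubled operator is the self-adjointness identity $\Er[f(\tilde\A_{s,\epsilon}f)]=\Er[(\A_{s,\epsilon}f)^2]$ of \eqref{eq:AtildeIdentity}, which makes the bulk term \emph{equal} to $\Er[\int_{Q_{s-2}}\nabla(\A_{s,\epsilon}u)\cdot\a\nabla(\A_{s,\epsilon}u)\,\d\mu]$ without any Jensen loss; more importantly, the jump term in $\nabla\tilde\A_{s,\epsilon}u$ carries the weight $2\epsilon^{-2}(s+\epsilon-\tau(x))$ (see \eqref{eq:AtildeDerivative}), and after a Fubini manipulation with the bracket process this weight supplies an extra factor of~$\epsilon$, which (with the Young parameter $\beta=\epsilon$) lands on the $L^2$ coefficient $\Lambda\epsilon^{-2}$. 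Your unweighted $\epsilon^{-1}$ jump from $\A_{s,\epsilon}$ gives a smaller $L^2$ coefficient; you would end up with $\tfrac{1}{4(2\Lambda+1)\epsilon^2}$ in front of the $(\A_s u)^2$ terms instead of $\tfrac{\theta'}{2\epsilon^2}$ --- a weaker inequality, still adequate for the iteration in Proposition~\ref{prop:Caccioppoli}, but not the lemma as stated.

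The genuine gap is the final ``rearranging''. After the shell estimates, the constant in front of $\Er[\int_{\text{shell}}\nabla u\cdot\a\nabla u\,\d\mu]$ is of order $2\Lambda\ge 2$; no choice of the Young parameter~$\delta$ can bring it below~$1$, so the sharpness concern you flag in the last paragraph is misplaced. The paper closes the argument by \emph{hole-filling}: add $2\Lambda\,\Er[\int_{Q_r}\nabla(\A_{s,\epsilon}u)\cdot\a\nabla(\A_{s,\epsilon}u)\,\d\mu]$ to both sides, bound it on the right by $2\Lambda\,\Er[\int_{Q_r}\nabla u\cdot\a\nabla u\,\d\mu]$ via Jensen (legitimate because $\a$ is $\mcl F_{\ov Q_s}$-measurable on $Q_r\subset Q_{s-2}$), and divide through by $2\Lambda+1$. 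This step, not constant-chasing in the shell bounds, is what produces $\theta'=\tfrac{2\Lambda}{2\Lambda+1}$ and resolves the obstacle you left open.
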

\begin{proof}
The proof of this lemma borrows some elements from \cite[Lemma 4.8]{gu2020decay}; in both settings, the main point is to construct and analyze an appropriate ``cut-off'' version of the function $u$. We use the function ${\tilde{\A}_{s,\epsilon}u \in \cH^1_0(Q_{s+\epsilon})}$ defined in \cref{eq:Atilde} as a cut-off of the function $u$ and test it against $u \in \mcl A(Q_{s+\ep})$ to get
\begin{align}\label{eq:CaccioTest}
\Er\Ll[\int_{Q_{s+\epsilon}} \nabla(\tilde{\A}_{s,\epsilon}u) \cdot \a \nabla u \, \d\mu\Rr] = 0.
\end{align}
Combining this with the decomposition
\begin{equation}\label{eq:PerThreeTerms}
\begin{split}
\Er\Ll[\int_{Q_{s+\epsilon}} \nabla(\tilde{\A}_{s,\epsilon}u) \cdot \a \nabla u \, \d\mu\Rr] & =  \underbrace{\Er\Ll[ \int_{Q_{s-2}} \nabla (\tilde{\A}_{s,\epsilon} u) \cdot \a \nabla u \, \d\mu\Rr]}_{\text{\cref{eq:PerThreeTerms}-a}} \\
& \qquad  +  \underbrace{\Er\Ll[ \int_{Q_{s} \backslash Q_{s-2}}  \nabla (\tilde{\A}_{s,\epsilon} u) \cdot \a \nabla u   \, \d\mu\Rr]}_{\text{\cref{eq:PerThreeTerms}-b}} \\
& \qquad + \underbrace{\Er\Ll[ \int_{Q_{s+\epsilon} \backslash Q_{s}}  \nabla (\tilde{\A}_{s,\epsilon} u) \cdot \a \nabla u   \, \d\mu\Rr]}_{\text{\cref{eq:PerThreeTerms}-c}},
\end{split}
\end{equation}
we obtain that
\begin{align}\label{eq:CaccioTestABC}
\text{\cref{eq:PerThreeTerms}-a} \leq \vert\text{\cref{eq:PerThreeTerms}-b} \vert + \vert \text{\cref{eq:PerThreeTerms}-c}\vert.
\end{align}
We now study each of these three terms. For the first term \cref{eq:PerThreeTerms}-a, since $x \in Q_{s-2}$, the coefficient $\a$ is $\mcl F_{Q_{s}}$-measurable. We can thus use \cref{eq:Commute}, \cref{eq:Atilde}, \eqref{eq:AtildeDerivative} and \eqref{eq:AtildeIdentity} to get 
\begin{align*}
\text{\cref{eq:PerThreeTerms}-a} &= \frac{2}{\epsilon^2}\Er\Ll[ \int_{Q_{s-2}}   \int_{0}^{\epsilon} (\epsilon - t)  \A_{s + t} (\nabla u)  \cdot \a \nabla u \, \d t  \, \d\mu\Rr] \\
&= \frac{2}{\epsilon^2}\Er\Ll[ \int_{Q_{s-2}}   \int_{0}^{\epsilon} (\epsilon - t) \Er\Ll[\A_{s + t} (\nabla u) \cdot \a \A_{s + t} (\nabla u) \, \vert \mcl{F}_{\bar Q_{s+t}}\Rr] \, \d t  \, \d\mu\Rr] \\
&= \Er\Ll[ \int_{Q_{s-2}}  \nabla (\A_{s, \epsilon}u) \cdot \a  \nabla (\A_{s, \epsilon}u)   \, \d\mu\Rr].
\end{align*}
We then apply \cref{eq:AtildeDerivative} for the second term \cref{eq:PerThreeTerms}-b. We notice that in $Q_{s} \backslash Q_{s-2}$, $\a$ is no longer $\mcl{F}_{Q_{s}}$-measurable. So we use Young's inequality and the bound $\a \leq \Lambda \id $
\begin{align*}
\vert \text{\cref{eq:PerThreeTerms}-b}\vert &= \frac{2}{\epsilon^2}\Er\Ll[ \int_{Q_{s} \backslash Q_{s-2}}   \int_{0}^{\epsilon} (\epsilon - t)  \A_{s + t} (\nabla u)  \cdot \a \nabla u \, \d t  \, \d\mu\Rr] \\
&\leq \frac{\Lambda}{\epsilon^2}\Er\Ll[ \int_{Q_{s} \backslash Q_{s-2}}   \int_{0}^{\epsilon} (\epsilon - t)  \Ll( \Ll\vert\A_{s + t} (\nabla u)\Rr\vert^2 +  \Ll\vert\nabla u \Rr\vert^2 \Rr) \, \d t  \, \d\mu\Rr].
\end{align*}
For the part with conditional expectation, we use Jensen's inequality and the uniform bound $\id \leq \a \leq \Lambda \id$
\begin{align*}
\frac{\Lambda}{\epsilon^2}\Er\Ll[ \int_{Q_{s} \backslash Q_{s-2}}   \int_{0}^{\epsilon} (\epsilon - t) \Ll\vert\A_{s + t} (\nabla u)\Rr\vert^2  \, \d t  \, \d\mu\Rr] & \leq \frac{\Lambda}{2}\Er\Ll[ \int_{Q_{s} \backslash Q_{s-2}}  \vert \nabla u \vert^2  \, \d\mu\Rr]  \\
& \leq \frac{\Lambda}{2}\Er\Ll[ \int_{Q_{s} \backslash Q_{s-2}}  \nabla u \cdot \a  \nabla u  \, \d\mu\Rr]. 
\end{align*}
This concludes that $|\text{\cref{eq:PerThreeTerms}-b}| \leq \Lambda \Er\Ll[ \int_{Q_{s} \backslash Q_{s-2}}  \nabla u \cdot \a  \nabla u  \, \d\mu\Rr]$.

For the third term \cref{eq:PerThreeTerms}-c, we use \cref{eq:AtildeDerivative} and obtain 
\begin{align*}
\vert \text{\cref{eq:PerThreeTerms}-c} \vert &\leq \text{\cref{eq:PerThreeTerms}-c1} + \text{\cref{eq:PerThreeTerms}-c2} \\
\text{\cref{eq:PerThreeTerms}-c1} &= \frac{2}{\epsilon^2}\Ll\vert\Er\Ll[ \int_{Q_{s+\epsilon} \backslash Q_{s}}   \int_{\tau(x)-s}^{\epsilon} (\epsilon - t) \A_{s+ t} (\nabla u)  \cdot \a \nabla u \, \d t  \, \d\mu\Rr] \Rr\vert\\
\text{\cref{eq:PerThreeTerms}-c2} &= \frac{2}{\epsilon^2}\Ll\vert\Er\Ll[\int_{Q_{s+\epsilon} \backslash Q_{s}}  (s+\epsilon - \tau(x))\Delta_{\tau(x)} (\A u) \n(x) \cdot \a \nabla u \, \d\mu\Rr] \Rr\vert.
\end{align*}
The part of \cref{eq:PerThreeTerms}-c1 can be treated as that of \cref{eq:PerThreeTerms}-b, so that
\begin{align*}
\text{\cref{eq:PerThreeTerms}-c1} \leq \Lambda \Er\Ll[ \int_{Q_{s+\epsilon} \backslash Q_{s}}  \nabla u \cdot \a  \nabla u  \, \d\mu\Rr].
\end{align*}
We study the part \cref{eq:PerThreeTerms}-c2 using Young's  inequality with a parameter $\beta > 0$ to be fixed later:
\begin{equation}\label{eq:PerThreeTermsC2}
\begin{split}
&\frac{2}{\epsilon^2}\Ll\vert\Er\Ll[\int_{Q_{s+\epsilon} \backslash Q_{s}}  (s+\epsilon - \tau(x))\Delta_{\tau(x)} (\A u) \n(x) \cdot \a \nabla u \, \d\mu \Rr] \Rr\vert \\
\leq & \frac{\Lambda}{\beta \epsilon^2}\Er\Ll[\int_{ Q_{s+\epsilon} \backslash Q_{s}}   (s+\epsilon - \tau(x))\vert \Delta_{\tau(x)} (\A u) \vert^2 \, \d\mu\Rr] + \frac{\beta\Lambda}{\epsilon^2}\Er\Ll[\int_{Q_{s+\epsilon} \backslash Q_{s}}   (s+\epsilon - \tau(x)) \vert \nabla u \vert^2 \, \d\mu \Rr] \\
\leq & \frac{\Lambda}{\beta \epsilon^2}\Er\Ll[\int_{Q_{s+\epsilon} \backslash Q_{s}}   (s+\epsilon - \tau(x))\vert \Delta_{\tau(x)} (\A u) \vert^2 \, \d\mu\Rr] + \frac{\beta\Lambda}{\epsilon}\Er\Ll[\int_{ Q_{s+\epsilon} \backslash Q_{s} } \nabla u \cdot \a  \nabla u \, \d\mu \Rr].
\end{split}
\end{equation}
The first term above will be responsible for producing the $\cL^2$ term on the right side of \cref{eq:WeakCaccioppoli}. We start by writing
\begin{multline*}
\frac{\Lambda}{\beta \epsilon^2}\Er\Ll[\int_{Q_{s+\epsilon} \backslash Q_{s} }   (s+\epsilon - \tau(x))\vert \Delta_{\tau(x)} (\A u) \vert^2 \, \d\mu\Rr] 
\\
=  \frac{\Lambda}{\beta \epsilon^2}\Er\Ll[\sum_{s \leq \tau \leq s+\epsilon}  (s+\epsilon - \tau) \vert \Delta_\tau (\A u) \vert^2 \Rr],
\end{multline*}
where on the right side, the sum is over all $\tau$'s that are jump discontinuites for $(\A_s u)_{s \geq 0}$. Recalling the definition of the bracket process $([\A u]_s)_{s \geq 0}$ defined in \cref{eq:Bracket}, we use Fubini's lemma and the $\cL^2$ isometry ${\Er\Ll[\Ll[\A u \Rr]_{s}\Rr] = \Er\Ll[(\A_s u)^2\Rr]}$:
\begin{align*}
\frac{\Lambda}{\beta \epsilon^2}\Er\Ll[\sum_{s \leq \tau \leq s+\epsilon}  (s+\epsilon - \tau) \vert \Delta_\tau (\A u) \vert^2 \Rr] &= \frac{\Lambda}{\beta \epsilon^2}\Er\Ll[\sum_{s \leq \tau \leq s+\epsilon}  \int_{s}^{s+\epsilon} \Ind{\tau \leq t \leq s+\epsilon} \, \d t \vert \Delta_\tau (\A u) \vert^2 \Rr]\\
&= \frac{\Lambda}{\beta \epsilon^2}\Er\Ll[\int_{s}^{s+\epsilon} \sum_{s \leq \tau \leq t}   \vert \Delta_\tau (\A u) \vert^2 \, \d t\Rr]\\
&= \frac{\Lambda}{\beta \epsilon^2}\Er\Ll[\int_{s}^{s+\epsilon} \Ll(\Ll[\A u\Rr]_{t} - \Ll[\A u \Rr]_{s}\Rr) \, \d t\Rr] \\
&= \frac{\Lambda}{\beta \epsilon^2}\int_{s}^{s+\epsilon} \Ll(\Er\Ll[(\A_{t} u)^2\Rr] - \Er\Ll[(\A_{s} u)^2\Rr] \Rr)\, \d t\\
&\leq \frac{\Lambda}{\beta \epsilon} \Ll(\Er\Ll[(\A_{s+\epsilon} u)^2\Rr] - \Er\Ll[(\A_{s} u)^2\Rr]\Rr).
\end{align*}
Putting this estimate back into \cref{eq:PerThreeTermsC2}, we conclude the estimating of the term \cref{eq:PerThreeTerms}-c2, obtaining
\begin{align*}
\text{\cref{eq:PerThreeTerms}-c2} \leq \frac{\Lambda}{\beta \epsilon} \Ll(\Er\Ll[(\A_{s+\epsilon} u)^2\Rr] - \Er\Ll[(\A_{s} u)^2\Rr]\Rr) + \frac{\beta\Lambda}{\epsilon}\Er\Ll[\int_{ Q_{s+\epsilon} \backslash Q_{s} } \nabla u \cdot \a  \nabla u \, \d\mu \Rr].
\end{align*} 
By choosing $\beta = \epsilon$, recalling \cref{eq:CaccioTestABC}, and that $r \le s-2$, we can combine this estimate with those of \cref{eq:PerThreeTerms}-a, \cref{eq:PerThreeTerms}-b, and \cref{eq:PerThreeTerms}-c1 to get
\begin{multline*}
\frac{\Lambda}{\epsilon^2} \Er\Ll[(\A_{s} u)^2\Rr] + \Er\Ll[ \int_{Q_{r}}  \nabla (\A_{s, \epsilon}u) \cdot \a  \nabla (\A_{s, \epsilon}u)   \, \d\mu\Rr] \\\
\leq \frac{\Lambda}{\epsilon^2} \Er\Ll[(\A_{s+\epsilon} u)^2\Rr] + 2\Lambda \Er\Ll[\int_{ Q_{s+\epsilon} \backslash {Q_{r}} } \nabla u \cdot \a  \nabla u \, \d\mu \Rr]. 
\end{multline*}
We now proceed with a hole-filling argument: adding $2\Lambda \Er\Ll[ \int_{Q_{r}}  \nabla \A_{s, \epsilon}u \cdot \a  \nabla \A_{s, \epsilon}u   \, \d\mu\Rr]$ to both sides of the equation above, and using Jensen's inequality, we obtain 
\begin{multline*}
\frac{\Lambda}{\epsilon^2} \Er\Ll[(\A_{s} u)^2\Rr] + (2\Lambda + 1)\Er\Ll[ \int_{Q_{r}}  \nabla (\A_{s, \epsilon}u) \cdot \a  \nabla (\A_{s, \epsilon}u)   \, \d\mu\Rr] \\
 \leq \frac{\Lambda}{\epsilon^2} \Er\Ll[(\A_{s+\epsilon} u)^2\Rr] + 2\Lambda \Er\Ll[\int_{ Q_{s+\epsilon}} \nabla u \cdot \a  \nabla u \, \d\mu \Rr]. 
\end{multline*}
Dividing both sides by $(2\Lambda + 1)$, and setting $\theta' := \frac{2\Lambda}{2\Lambda+1}$, we obtain the desired inequality \cref{eq:WeakCaccioppoli}.
\end{proof}

We remark that \cref{eq:WeakCaccioppoli} does not imply directly \cref{eq:Caccioppoli}. For example, let $r > 2$ and we choose $s=2r$ and $\epsilon=r$ in \cref{eq:WeakCaccioppoli}, then with a normalization of volume we get 
\begin{multline*}
\frac{\theta'}{2 r^2 \rho \vert Q_r \vert}\Er[(\A_{2r} u)^2] + \Er\Ll[\frac{1}{\rho \vert Q_r \vert}\int_{Q_r} \nabla(\A_{2r, r} u) \cdot \a \nabla(\A_{2r, r} u) \, \d \mu\Rr] \\ \leq 3^d\theta' \Ll(\frac{1}{2 r^2 \rho\vert Q_{3r} \vert}\Er[(\A_{3r}u)^2] + \Er\Ll[\frac{1}{\rho\vert Q_{3r} \vert}\int_{Q_{3r}} \nabla u \cdot \a \nabla u \, \d \mu\Rr] \Rr).
\end{multline*}
Then another factor $3^d$ will be added, and we typically do not have $3^d \theta' \in (0,1)$, since we recall that $\theta' = \frac{2\Lambda}{2\Lambda + 1}$.

\begin{proof}[Proof of \Cref{prop:Caccioppoli}]  
We apply Lemma~\ref{lem:WeakCaccioppoli} iteratively, with very small increments of the volume. Let $\delta > 0$ to be fixed later, and choose ${s = (1+\delta)r}, \epsilon = \delta r$. For convenience, we assume that $r$ is sufficiently large that 
\begin{align}\label{eq:CaccioppoliCondition1}
s = (1+\delta)r \geq r+2, \quad \text{that is} \quad r \ge 2\delta^{-1}.
\end{align}
Equation \eqref{eq:WeakCaccioppoli} and Jensen's inequality give us that, provided $(1+2\de)r \le 3r$,
\begin{multline}\label{eq:CaccioppoliBootstrap}
\Er\Ll[\frac{1}{\rho \vert Q_r \vert}\int_{Q_r} \nabla(\A_{(1+\delta)r, \delta r} u) \cdot \a \nabla(\A_{(1+\delta)r, \delta r} u) \, \d \mu\Rr] \\ \leq \tilde{\theta} \Ll(\frac{1}{2 (\de r)^2 \rho\vert Q_{(1+2\delta)r} \vert}\Er[u^2] + \Er\Ll[\frac{1}{\rho\vert Q_{(1+2\delta)r} \vert}\int_{Q_{(1+2\delta)r}} \nabla u \cdot \a \nabla u \, \d \mu\Rr] \Rr),
\end{multline}
with $\tilde{\theta} = (1+2\delta)^d \theta'$. We choose the constant $\delta > 0$ sufficiently small that $\tilde{\theta} < 1$. In order to obtain \cref{eq:Caccioppoli}, we will now apply \cref{eq:CaccioppoliBootstrap} iteratively, from the cube $Q_r$ to the larger cube $Q_{3r}$. 

We give the details for this argument---see also \Cref{fig:bootstrap} for an illustration. We plan to use \cref{eq:CaccioppoliBootstrap} $(N+1)$ times, and  and let $\delta \in (0,1)$, $N \in \mathbb{N}$ satisfy
\begin{align}\label{eq:CaccioppoliCondition2}
\tilde{\theta} = (1+2\delta)^d \theta' < 1, \qquad (1+2\delta)^{N+1} = 3.
\end{align}
Then we set the scale and the $\a$-harmonic functions in every scale
\begin{align}\label{eq:CaccioppoliIteration}
\left\{
	\begin{array}{ll}
	r_n = (1+2\delta)^n r & 0 \leq n \leq N+1, \\
	u_{N+1} = u &, \\
	u_n = \A_{(1+\delta)r_n, \delta r_n} u_{n+1} & 0 \leq n \leq N.
	\end{array}
\right.
\end{align}
We can prove by induction that $u_n \in \mcl A(Q_{r_n})$ under the condition \cref{eq:CaccioppoliCondition1}. Then for every $0 \leq n \leq N$, we apply \cref{eq:CaccioppoliBootstrap} from $u_n$ on $Q_{r_n}$ to $u_{n+1}$ on $Q_{r_{n+1}}$ 
\begin{multline}\label{eq:CaccioppoliBootstrap2}
\Er\Ll[\frac{1}{\rho \vert Q_{r_n} \vert}\int_{Q_{r_n}} \nabla u_n \cdot \a \nabla u_n \, \d \mu\Rr] \\ \leq \tilde{\theta} \Ll(\frac{1}{2 (\delta r_n)^2 \rho\vert Q_{(1+2\delta)r_n} \vert}\Er[(u_{n+1})^2] + \Er\Ll[\frac{1}{\rho\vert Q_{r_{n+1}} \vert}\int_{Q_{r_{n+1}}} \nabla u_{n+1} \cdot \a \nabla u_{n+1} \, \d \mu\Rr] \Rr).
\end{multline}
Iterating on \cref{eq:CaccioppoliBootstrap2} until $u_{N+1} = u$ on $Q_{3r}$, we get 
\begin{multline*}
\Er\Ll[\frac{1}{\rho \vert Q_r \vert}\int_{Q_r} \nabla u_0 \cdot \a \nabla u_0 \, \d \mu\Rr] \\
\leq  \Ll(\frac{3^d}{2}\sum_{n=0}^N(1+2\delta)^{-2n}\Rr) \frac{1}{(\de r)^2 \rho \vert Q_{3r} \vert} \Er[u^2] + (\tilde{\theta})^{N+1} \Er\Ll[\frac{1}{\rho \vert Q_{3r} \vert}\int_{Q_{3r}} \nabla u \cdot \a \nabla u \, \d \mu\Rr].
\end{multline*}
We notice that $u_0$ can be seen as as a weighted sum of $\A_{s'} u$, for scales $s'$ satisfying ${s' \geq (1+\delta)r \geq r+2}$, by \cref{eq:CaccioppoliCondition1}. So we apply once Jensen's inequality for $u_0$ and obtain \cref{eq:Caccioppoli} by setting 
\begin{equation*}
C(d,\Lambda) := \frac{3^d}{2\delta^2}\sum_{n=0}^N(1+2\delta)^{-2n}, \qquad \theta := (\tilde{\theta})^{N+1}. \qedhere
\end{equation*}
Although we will not use this later, we now give more explicit estimates for the choice of the parameters in the proof above, resulting from the conditions listed in \cref{eq:CaccioppoliCondition1} and \cref{eq:CaccioppoliCondition2}. It suffices to pick an integer $N$ larger than $\Ll\lfloor \frac{d \log 3}{\log \Ll(1 + \frac{1}{2\Lambda}\Rr)}\Rr\rfloor$, and then in \cref{eq:CaccioppoliCondition2} use $\delta := \frac 1 2 (3^{\frac{1}{N+1}} - 1)$ to fix $\delta$, and in \cref{eq:CaccioppoliCondition1} we  require $r \ge 2\de^{-1}$, which gives the condition for the minimal scale $R_0$. A possible choice is the following 
\begin{equation*}
\begin{split}
&N := 2\Ll\lfloor \frac{d \log 3}{\log \Ll(1 + \frac{1}{2\Lambda}\Rr)}\Rr\rfloor+1,\quad \delta := \frac 1 2 (3^{\frac{1}{N+1}} - 1) \simeq \frac{1}{8d\Lambda}, \quad R_0 := 2\delta^{-1} \simeq  16d\Lambda,\\
&\tilde{\theta} := \theta'(1+2\delta)^d \simeq \Ll(1+\frac{1}{2\Lambda}\Rr)^{-\frac{1}{2}}, \ \ \theta := (\tilde{\theta})^{N+1} \simeq 3^{-d}, \ \ C:= \frac{3^d}{2\delta^2}\sum_{n=0}^N(1+2\delta)^{-2n} \simeq 2^8  3^d d^3 \Lambda^3. 
\end{split}
\end{equation*}
\end{proof}

\begin{figure}[ht]
\centering
\includegraphics[scale=0.5]{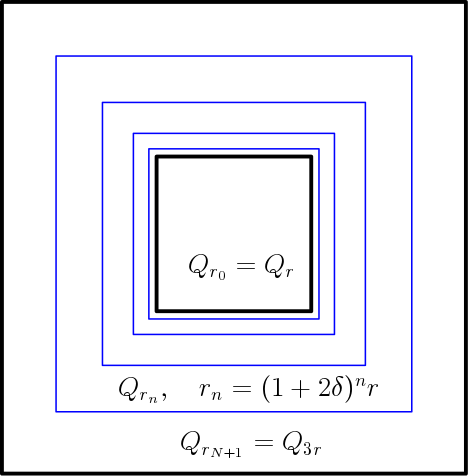}
\caption{An illustration of the iterative argument for the proof of Proposition~\ref{prop:Caccioppoli}. Since \Cref{lem:WeakCaccioppoli} can imply \Cref{prop:Caccioppoli} only for a comparison from scale $r$ to $(1+2\delta)r$ with $\delta$ very small, we add many intermediate scales $r_n = (1+2\delta)^n r$ between $r$ and $3r$. 
}
\label{fig:bootstrap}
\end{figure}

%
%
%
%
%
%
%
%

\section{Subadditive quantities}\label{sec:Subadditive}

We aim to adapt the strategy in \cite[Chapter 2]{AKMbook} for our model in continuum configuration space. In this section, we define several subadditive quantities, denoted by $\nu, \nu^*, J$, and develop their elementary properties. We then we use them and a renormalization argument to obtain a quantitative rate of convergence for $\ab$ in \Cref{subsec:ConvergenceJ}. 
\subsection{Subadditive quantities \texorpdfstring{$\nu$ and $\nu^*$}{nu and nu*}}
For every bounded domain $U \subset \Rd$ and $p,q \in \Rd$, we define the affine function in $U$ with slope~$p$ by
\begin{align}\label{eq:defAffine}
\ell_{p,U}(\mu) := \int_{U} p \cdot x \, \d \mu(x),
\end{align} 
and introduce the subadditive quantities
\begin{equation}\label{eq:defNu}
\begin{split}
\nu(U,p) &:= \inf_{v \in \ell_{p,U} + \cH^1_0(U)}\Er \Ll[ \frac{1}{\rho \vert U \vert} \int_{U} \frac{1}{2} \nabla v \cdot \a \nabla v \, \d \mu \Rr], \\
\nu^*(U,q) &:=  \sup_{u \in \cH^1(U)} \Er \Ll[\frac{1}{\rho \vert U \vert}\int_{U} \Ll( -\frac 1 2 \nabla u \cdot \a \nabla u + q \cdot \nabla u \Rr) \, \d \mu \Rr].
\end{split}
\end{equation}
The quantity $\nu$ can be thought of as the average energy per unit volume of the solution which matches with the behavior of the affine function $\ell_{p,U}$ when a particle leaves the domain $U$. The quantity $\nu^*$ is analogous to a Neumann problem with prescribed average flux of $q$. As will be seen below, the quantities $\nu$ and $\nu^*$ are approximately dual to one another; the quality of this approximation as the domain $U$ grows to $\Rd$ will be central to the proof of Theorem~\ref{thm:main}. If the matrix $\a$ were constant, then by \cref{eq:Integral0} the minimizer for $\nu(U,p)$ would be $\ell_{p,U}$, and we would have $\nu(U,p) = \frac 1 2 p \cdot \a p$; and similarly, were $\a$ constant, we would have $\nu^*(U,q) = \frac 1 2 q \cdot \a^{-1} q$.

We start by recording elementary properties satisfied by $\nu$ and $\nu^*$. We recall that $\G_U = \sigma(\mu(U), \mu \mres (\Rd \backslash U))$. For every $r > 0$, we denote by $B_r(U)$ the $r$-enlargement of $U$, that is, $B_r(U) := \{x \in \Rd: \dist(x, U) < r\}$. 
\begin{proposition}[Elementary properties of $\nu$ and $\nu^*$]\label{prop:NuBase}
The following properties hold for every bounded domain $U \subset \Rd$ with Lipschitz boundary and $p,q,p',q' \in \Rd$.

(1) There exists a unique solution for the optimization problem in the definition of $\nu(U,p)$ that satisfies ${\Er[v - \ell_{p,U}] = 0}$; we denote it by $v(\cdot,U,p)$. For the optimization problem in the definition of $\nu^*(U,q)$, there exists a maximizer $u(\cdot, U, q)$ that is $\mcl F_{B_1(U)}$-measurable and such that $\Er[u \, \vert \, \mcl \G_U] = 0$. They are $\a$-harmonic functions on $U$, i.e. $v(\cdot, U, p),  u(\cdot, U, q) \in \mcl{A}(U)$.

(2) There exist two $d \times d$ symmetric matrices $\ab(U)$ and $\ab_*(U)$ such that 
\begin{equation}\label{eq:NuMatrix}
\nu(U,p) = \frac{1}{2} p \cdot \ab(U) p, \qquad \nu^*(U,q) = \frac{1}{2} q \cdot \ab_*^{-1}(U) q,
\end{equation}
and these matrices satisfy ${\id \leq \ab(U) \leq \Lambda \id}$ and ${\id \leq \ab_*(U) \leq \Lambda \id}$.
Moreover,
\begin{align}
p' \cdot \ab(U) p &= \Er\Ll[\frac{1}{\rho \vert U\vert}\int_{U} p' \cdot \a(\mu, x) \nabla v(\mu, x, U, p) \, \d \mu(x)\Rr], \label{eq:flux1} \\
q' \cdot \ab_*^{-1}(U) q &= \Er\Ll[\frac{1}{\rho \vert U\vert} \int_{U}  q' \cdot \nabla u(\mu, x, U, q) \, \d \mu(x)\Rr] \label{eq:flux2}.
\end{align}

(3) Slope: $v(\mu,U,p)$ satisfies
\begin{equation}\label{eq:Slope1}
\Er\Ll[\fint_{U} \nabla v(\mu, x, U, p) \, \d \mu(x) \ \Bigg\vert \ \G_U\Rr] = \Er\Ll[\frac{1}{\rho \vert U\vert}\int_{U} \nabla v(\mu, x, U, p) \, \d \mu(x) \Rr] = p.
\end{equation}
For the function $u(\cdot,U,q)$, there exists a $d \times d$ symmetric matrix ${\id \leq \a_*(U; \G_U) \leq \Lambda \id}$ such that 
\begin{equation}\label{eq:Slope2}
\Er\Ll[\fint_{U} \nabla u(\mu, x, U, q) \, \d \mu(x) \ \Bigg\vert \ \mcl \G_U \Rr] = \a_*^{-1}(U; \G_U)q,
\end{equation}
and $\ab_*^{-1}(U) = \frac{1}{\rho \vert U \vert}\Er[\a_*^{-1}(U; \G_U)\mu(U)]$, so that
\begin{align}\label{eq:Slope3}
\Er\Ll[\frac{1}{\rho \vert U\vert}\int_{U} \nabla u(\mu, x, U, q) \, \d \mu(x) \Rr] = \ab_*^{-1}(U)q.
\end{align}

(4) Quadratic response: for every $v' \in \ell_{p,U} + \cH^1_0(U)$, we have  
\begin{multline}\label{eq:QuadraRep1}
\Er \Ll[\frac{1}{\rho \vert U \vert}\int_{U} \frac{1}{2} \nabla(v' - v(\mu,U,p)) \cdot \a \nabla(v' - v(\mu,U,p)) \, \d \mu \Rr] \\
= \Er \Ll[\frac{1}{\rho \vert U \vert}\int_{U} \frac{1}{2} \nabla v' \cdot \a \nabla v' \, \d \mu \Rr] - \nu(U,p).
\end{multline}
Similarly, for every $u' \in \cH^1(U)$, we have 
\begin{multline}\label{eq:QuadraRep2}
\Er \Ll[\frac{1}{\rho \vert U \vert}\int_{U} \frac{1}{2} \nabla(u' - u(\mu,U,q)) \cdot \a \nabla(u' - u(\mu,U,q)) \, \d \mu \Rr] \\
= \nu^*(U,q) - \Er \Ll[\frac{1}{\rho \vert U \vert}\int_{U}  \Ll( -\frac 1 2 \nabla u' \cdot \a \nabla u' + q \cdot \nabla u' \Rr) \, \d \mu \Rr].
\end{multline}

(5) The quantities $\nu$ and $\nu^*$ are subadditive: for every $n \in \N$,
\begin{equation}\label{eq:Subadditive}
\nu(\cu_{n+1}, p) \leq \nu(\cu_{n}, p), \qquad \nu^*(\cu_{n+1}, q) \leq \nu^*(\cu_{n}, q).
\end{equation}
\end{proposition}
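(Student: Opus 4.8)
The plan is to adapt the classical energy-method arguments for subadditive quantities (cf.\ \cite[Chapter~2]{AKMbook}) to the configuration-space setting, the genuinely new ingredients being the conditioning manipulations with respect to $\mcl F_{B_1(U)}$ and $\G_U$. For part~(1), I would obtain $v(\cdot,U,p)$ by the direct method of the calculus of variations: the map $v\mapsto\Er[\frac{1}{\rho\vert U\vert}\int_U\frac12\nabla v\cdot\a\nabla v\,\d\mu]$ is strictly convex in the gradient and, by $\a\ge\id$ together with the Poincar\'e inequality \Cref{prop:Poincare1}, coercive on $\ell_{p,U}+\cH^1_0(U)$ modulo additive constants; uniqueness is then forced by the normalization $\Er[v-\ell_{p,U}]=0$ (constants lie in $\cH^1_0(U)$), and the first variation against $\varphi\in\cH^1_0(U)$ gives the Euler--Lagrange identity $\Er[\int_U\nabla v\cdot\a\nabla\varphi\,\d\mu]=0$, i.e.\ $v(\cdot,U,p)\in\mcl A(U)$. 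For $\nu^*$, the pointwise bound $-\frac12 w\cdot\a w+q\cdot w\le\frac12\vert q\vert^2$ makes the supremum finite, and a maximizing sequence has gradients bounded in $\cL^2$, so weak lower semicontinuity of the quadratic part produces a maximizer $\tilde u$. Since the functional sees $\tilde u$ only through $(\nabla\tilde u(\mu,x))_{x\in U}$ while $\a(\mu,x)$ is $\mcl F_{B_1(U)}$-measurable for $x\in U$, I would replace $\tilde u$ first by $\Er[\tilde u\mid\mcl F_{B_1(U)}]$ (this leaves the linear term unchanged and, by Jensen, does not decrease the objective) and then subtract $\Er[\,\cdot\mid\G_U]$, which is still $\mcl F_{B_1(U)}$-measurable (it depends only on $\mu(U)$ and $\mu\mres(B_1(U)\setminus U)$) and has vanishing gradient on $U$; this yields the canonical maximizer $u(\cdot,U,q)$, unique by strict concavity plus these two normalizations and linear in $q$. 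Testing the first variation of $\nu^*$ against $\varphi\in\cH^1_0(U)\subset\cH^1(U)$ and discarding $\Er[\int_U q\cdot\nabla\varphi\,\d\mu]=0$ by \Cref{lem:Integral0} gives $u(\cdot,U,q)\in\mcl A(U)$.

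For parts~(2) and~(3): linearity of $p\mapsto v(\cdot,U,p)$ and $q\mapsto u(\cdot,U,q)$ makes $\nu(U,\cdot)$ and $\nu^*(U,\cdot)$ quadratic forms, with the matrices $\ab(U)$ and $\ab_*(U)$ symmetric by polarization. Since $\nabla\ell_{p',U}=p'$ on $U$ and $v(\cdot,U,p')-\ell_{p',U}\in\cH^1_0(U)$, the Euler--Lagrange identity for $v(\cdot,U,p)$ turns $\Er[\frac{1}{\rho\vert U\vert}\int_U p'\cdot\a\nabla v(\cdot,U,p)\,\d\mu]$ into $\Er[\frac{1}{\rho\vert U\vert}\int_U\nabla v(\cdot,U,p')\cdot\a\nabla v(\cdot,U,p)\,\d\mu]=p'\cdot\ab(U)p$, which is \eqref{eq:flux1}; \eqref{eq:flux2} is the analogous computation with the first variation of $\nu^*$ tested against $u(\cdot,U,q')$. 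The slope identity \eqref{eq:Slope1} follows from $\nabla v(\cdot,U,p)=p+\nabla\varphi$ with $\varphi\in\cH^1_0(U)$: conditionally on $\G_U$ the particles in $U$ are i.i.d.\ uniform, so the divergence-theorem argument from the proof of \Cref{lem:Integral0} gives $\Er[\fint_U\nabla\varphi\,\d\mu\mid\G_U]=0$, and the unconditional statement follows by taking $\Er[\,\cdot\,]$ and using $\Er[\mu(U)]=\rho\vert U\vert$. Conditioning $\nu^*$ on $\G_U$ decouples it into finite-dimensional problems whose joint maximizer is $u(\cdot,U,q)$; testing the unconditional first variation against $\mathbf{1}_A u'$ for $A\in\G_U$ yields the conditional Euler--Lagrange relation, and the identity $\Er[\frac{1}{\mu(U)}\int_U q\cdot\nabla u(\cdot,U,q')\,\d\mu\mid\G_U]=\Er[\frac{1}{\mu(U)}\int_U\nabla u(\cdot,U,q)\cdot\a\nabla u(\cdot,U,q')\,\d\mu\mid\G_U]$ is symmetric in $(q,q')$, which defines $\a_*^{-1}(U;\G_U)$ and gives \eqref{eq:Slope2}; taking expectations yields $\ab_*^{-1}(U)=\frac{1}{\rho\vert U\vert}\Er[\mu(U)\a_*^{-1}(U;\G_U)]$ and hence \eqref{eq:Slope3}. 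The ellipticity bounds come from testing $\nu(U,p)$ with $\ell_{p,U}$ (giving $\ab(U)\le\Lambda\id$) and combining \eqref{eq:Slope1} with Cauchy--Schwarz (giving $\ab(U)\ge\id$); conditionally, Cauchy--Schwarz gives $\a_*(U;\G_U)\ge\id$, while testing the $\G_U$-conditional $\nu^*$ with the affine competitor $\ell_{p,U}$ and optimizing the resulting quadratic in $p$ gives $q\cdot\a_*^{-1}(U;\G_U)q\ge\Lambda^{-1}\vert q\vert^2$, i.e.\ $\a_*(U;\G_U)\le\Lambda\id$; averaging transfers these to $\ab_*(U)$.

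Part~(4) is a completion of the square: writing $v'=v(\cdot,U,p)+w$ with $w\in\cH^1_0(U)$ and expanding, the cross term vanishes by the Euler--Lagrange identity, which is \eqref{eq:QuadraRep1}; the same argument with $u'=u(\cdot,U,q)+w$, $w\in\cH^1(U)$, and the first variation of $\nu^*$ gives \eqref{eq:QuadraRep2}. For part~(5), I would partition $\cu_{n+1}$, up to a null set, into the $3^d$ translates $z_i+\cu_n$, $z_i\in 3^n\Zd$. For $\nu$, glue the subcube minimizers: $v':=\ell_{p,\cu_{n+1}}+\sum_i\big(v(\cdot,z_i+\cu_n,p)-\ell_{p,z_i+\cu_n}\big)$ lies in $\ell_{p,\cu_{n+1}}+\cH^1_0(\cu_{n+1})$ (using $\cH^1_0(z_i+\cu_n)\subset\cH^1_0(\cu_{n+1})$), and since each summand depends only on $\mu$ inside its own subcube, $\nabla v'=\nabla v(\cdot,z_i+\cu_n,p)$ on each $z_i+\cu_n$, so the energy of $v'$ equals $\frac{1}{3^d}\sum_i\nu(z_i+\cu_n,p)$, whence $\nu(\cu_{n+1},p)\le\frac{1}{3^d}\sum_i\nu(z_i+\cu_n,p)$. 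For $\nu^*$, go the other way: restrict $u(\cdot,\cu_{n+1},q)\in\cH^1(\cu_{n+1})\subset\cH^1(z_i+\cu_n)$ and use it as a competitor in each $\nu^*(z_i+\cu_n,q)$, giving $\frac{1}{3^d}\sum_i\nu^*(z_i+\cu_n,q)\ge\nu^*(\cu_{n+1},q)$. In both cases stationarity of $\Pr$ and of $\a$ gives $\nu(z_i+\cu_n,\cdot)=\nu(\cu_n,\cdot)$ and $\nu^*(z_i+\cu_n,\cdot)=\nu^*(\cu_n,\cdot)$, so \eqref{eq:Subadditive} follows.

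The step I expect to be most delicate is part~(3): establishing that the $\G_U$-conditional averaged gradient of $u(\cdot,U,q)$ is linear \emph{and} symmetric in $q$, so that $\a_*(U;\G_U)$ is genuinely a well-defined symmetric matrix, and proving the upper bound $\a_*(U;\G_U)\le\Lambda\id$, which---unlike the lower bound---does not follow from Cauchy--Schwarz and requires the conditional comparison with the affine competitor. The careful construction in part~(1) of a maximizer for $\nu^*$ that is simultaneously $\mcl F_{B_1(U)}$-measurable and centered under $\G_U$ is the other point that needs attention, since the two normalizations interact through the $B_1$-enlargement.
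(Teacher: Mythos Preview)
Your proposal is correct and follows essentially the same route as the paper. The only cosmetic differences are that the paper obtains the optimizers via Lax--Milgram on the centered spaces $V=\{f\in\cH^1_0(U):\Er[f]=0\}$ and $W=\{f\in\cH^1(U):\Er[f\mid\G_U]=0\}$ rather than the direct method, and it derives the ellipticity bounds by sandwiching $\nu$ and $\nu^*$ between the constant-coefficient problems with $\a\equiv\id$ and $\a\equiv\Lambda\id$ (whose optimizers are the affine functions $\ell_{p,U}$, $\ell_{q,U}$, $\ell_{q/\Lambda,U}$) instead of your Cauchy--Schwarz/affine-competitor argument; your treatment of the conditional matrix $\a_*(U;\G_U)$ is in fact more explicit than the paper's, which simply says to ``repeat the argument'' for \eqref{eq:EllipNuDualSmall}.
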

\begin{proof}
We prove each of these points in turn.

(1)We study at first the maximizer for the problem $\nu^*(U,q)$. A first observation is that the maximizer can be found in  $\mcl F_{B_1(U)}$-measurable functions. Because for any $u \in \cH^1(U)$, its conditional expectation $\Er[u \, \vert \, \mcl F_{B_1(U)}]$ reaches a larger value for the functional in $\nu^*(U,q)$. We use Jensen's inequality that 
\begin{align*}
&\Er \Ll[\int_{U} \Ll( -\frac 1 2 \nabla  \Er[u \, \vert \, \mcl F_{B_1(U)}] \cdot \a \nabla \Er[u \, \vert \, \mcl F_{B_1(U)}] + q \cdot \nabla \Er[u \, \vert \, \mcl F_{B_1(U)}] \Rr) \, \d \mu \Rr] \\
= &\Er\Ll[\Er \Ll[\int_{U} \Ll( -\frac 1 2  \Er[\nabla u \, \vert \, \mcl F_{B_1(U)}] \cdot \a  \Er[\nabla u \, \vert \, \mcl F_{B_1(U)}] + q \cdot  \Er[\nabla u \, \vert \, \mcl F_{B_1(U)}] \Rr) \, \d \mu \, \Big\vert \, \mcl F_{B_1(U)} \Rr]\Rr] \\
\geq & \Er \Ll[\int_{U} \Ll( -\frac 1 2 \nabla u \cdot \a \nabla u + q \cdot \nabla u \Rr) \, \d \mu \Rr].
\end{align*}
By a variational calculus, we know the characterization of a maximizer with elliptic equation that for any $\phi \in \cH^1(U)$
\begin{align}\label{eq:EllipNuDual}
\Er \Ll[\int_{U}  \nabla u \cdot \a \nabla \phi  \, \d \mu \Rr] = \Er \Ll[\int_U q \cdot \nabla \phi \, \d \mu\Rr].
\end{align}
Similarly to the discussion in the proof of \Cref{prop:H2}, we know that a solution for this problem also satisfies the more precise equation
\begin{align}\label{eq:EllipNuDualSmall}
\Er \Ll[\int_{U}  \nabla u \cdot \a \nabla \phi  \, \d \mu \, \Big \vert \, \mcl \G_U\Rr] = \Er \Ll[\int_U q \cdot \nabla \phi \, \d \mu \, \Big \vert \,  \G_U\Rr],
\end{align}
and we can define its solution in the space 
\begin{align*}
W = \{f \in \cH^1(U) : \Er[f \, \vert \,  \mcl \G_U] = 0\}.
\end{align*}
In this space, we have  
\begin{align*}
\Er[f^2 \, \vert \, \mcl \G_U] \leq C \diam(U)^2\Er\Ll[\int_{U} \vert \nabla f \vert^2 \, \d \mu \, \big\vert \, \G_U\Rr],
\end{align*}
by the Poincar\'e inequality \Cref{prop:Poincare2}. Then the coercivity on left hand side in \cref{eq:EllipNuDualSmall} is ensured and we can apply the Lax-Milgram theorem. We call this maximizer $u(\mu, U, q)$. Testing \cref{eq:EllipNuDual} with $\phi \in \cH^{1}_0(U)$, \cref{eq:Integral0} implies that its right hand side is $0$, so we have $u(\mu,U,q) \in \mcl A(U)$.

Then we turn to $\nu(U,p)$. By a first order variation calculus, we know that a minimizer $v$ for $\nu(U,p)$ is characterized by an elliptic equation that for any $\phi \in \cH^1_0(U)$ 
\begin{align}\label{eq:EllipNu}
\Er\Ll[\int_{U} \nabla (v - \ell_{p,U}) \cdot \a \nabla \phi \, \d \mu\Rr] = \Er\Ll[\int_U -p \cdot \a \nabla \phi \, \d \mu\Rr].
\end{align}
We remark that one cannot treat this equation as \cref{eq:EllipNuDual}, because $\Er[v \vert \G_U]$ is not an element in $\cH^1_0(U)$ and we cannot subtract it. On the other hand, we can apply the Lax-Milgram theorem on the space  
\begin{align*}
V = \{f \in \cH^1_0(U): \Er[f] = 0\},
\end{align*}
to define the unique solution $v - \ell_{p,U} \in V$. We notice that the right hand side of \cref{eq:EllipNu} is clearly a bounded linear functional, and the coercivity of the left hand side of \cref{eq:EllipNu} is ensured by the Poincar\'e inequality \Cref{prop:Poincare1} on $V$.  We denote this minimizer by $v(\mu, U, p)$, and \cref{eq:EllipNu} implies that $v(\mu, U, p) \in \mcl A(U)$.

\medskip

(2) We test at first \cref{eq:EllipNu} with ${v(\mu, U, p') - \ell_{p',U} \in \cH_{0}^1(U)}$ and obtain that
\begin{multline}\label{eq:vari1TestAffine}
\Er\Ll[\int_{U} \nabla v(\mu,x,U,p) \cdot \a(\mu, x) \nabla v(\mu,x,U,p') \, \d \mu(x)\Rr] \\= \Er\Ll[\int_{U} \nabla v(\mu, x, U,p) \cdot \a(\mu, x) p'  \, \d \mu(x) \Rr],
\end{multline}
and this implies ${(p,p') \mapsto \Er\Ll[\frac{1}{\rho \vert U \vert}\int_{U} \nabla v(\mu,x,U,p) \cdot \a(\mu, x) \nabla v(\mu,x,U,p') \, \d \mu(x)\Rr]}$ is a bilinear map $p \cdot \ab(U) p'$. This definition with \cref{eq:vari1TestAffine}
proves \cref{eq:flux1}. We let $p = p'$ and obtain that $\nu(U, p) = \frac{1}{2}p \cdot \ab(U) p$. To obtain the bound of $\ab(U)$, we use the bound of $\a$ and the definition of \cref{eq:defNu}
\begin{multline*}
\inf_{v \in \ell_{p,U} + \cH^1_0(U)} \Er \Ll[\frac{1}{\rho \vert U \vert}\int_{U} \frac{1}{2} \vert \nabla v \vert^2 \, \d \mu \Rr] \leq \nu(U, p) = \frac{1}{2}p \cdot \ab(U) p  
\\ \leq \inf_{v \in \ell_{p,U} + \cH^1_0(U)} \Er \Ll[\frac{1}{\rho \vert U \vert}\int_{U} \frac{\Lambda}{2} \vert \nabla v \vert^2 \, \d \mu \Rr].
\end{multline*}
We can check that $\ell_{p,U}$ is the minimizer for $\inf_{v \in \ell_{p,U} + \cH^1_0(U)} \Er \Ll[\int_{\Rd} \frac{\Lambda}{2} \vert \nabla v \vert^2 \, \d \mu \Rr],$
then it concludes the proof of the bound ${\id \leq \ab(U) \leq \Lambda \id}$.

The same argument works for $\nu^*(U,q)$. We test \cref{eq:EllipNuDual} with $u(\mu, U, q')$ and obtain that
\begin{multline}\label{eq:vari2TestAffine}
\Er \Ll[\int_{U}  \nabla u(\mu, x, U, q) \cdot \a(\mu, x) \nabla u(\mu, x, U, q') \, \d \mu (x)\Rr] \\
= \Er \Ll[ \int_U q \cdot \nabla u(\mu, x, U, q') \, \d \mu(x) \Rr].
\end{multline}
This proves that $(q,q') \mapsto \Er \Ll[\frac{1}{\rho \vert U \vert}\int_{U}  \nabla u(\mu, x, U, q) \cdot \a(\mu, x) \nabla u(\mu, x, U, q') \Rr]$ is also bilinear and we denote it by $q \cdot \ab_*^{-1}(U) q'$, and this also concludes \cref{eq:flux2}. Then we put $q' = q$ and \cref{eq:vari2TestAffine} in the definition of \cref{eq:defNu} that
\begin{align*}
& \nu^*(U,q) 
\\&  =  \Er \Ll[\frac{1}{\rho \vert U \vert}\int_{U} \Ll( -\frac 1 2 \nabla u(\mu, x, U, q) \cdot \a(\mu, x) \nabla u(\mu, x, U, q) + q \cdot \nabla u(\mu, x, U, q) \Rr) \, \d \mu(x) \Rr] \\
& =\Er \Ll[\frac{1}{\rho \vert U \vert}\int_{U}  \frac 1 2 \nabla u(\mu, x, U, q) \cdot \a(\mu, x) \nabla u(\mu, x, U, q)  \, \d \mu(x) \Rr] \\
& = \frac{1}{2} q \cdot \ab_*^{-1}(U) q.
\end{align*}
This proves the bilinear map expression for $\nu^*(U, q)$. Concerning the bound for the matrix $\ab_*^{-1}(U)$, we use the bound for $\a$ and the equations above to obtain that 
\begin{multline}
\sup_{u \in \cH^1(U)} \Er \Ll[\frac{1}{\rho \vert U \vert}\int_{U} \Ll( -\frac \Lambda 2  \vert \nabla u \vert^2 + q\cdot \nabla u \Rr) \, \d \mu \Rr] \leq  \nu^*(U,q) \\
\leq \sup_{u \in \cH^1(U)} \Er \Ll[\frac{1}{\rho \vert U \vert}\int_{U} \Ll( -\frac 1 2 \vert \nabla u \vert^2 + q \cdot \nabla u \Rr) \, \d \mu \Rr].
\end{multline}
One can check for the lower bound, $\ell_{\frac{q}{\Lambda},U}$ attains the maximum and for the upper bound it is $\ell_{q,U}$ that attains the maximum. Then we put the expression $\nu^*(U, q) = \frac{1}{2} q \cdot \ab_*^{-1}(U) q$ and obtain that 
\begin{align*}
\frac{\Lambda^{-1}}{2}\vert q \vert^2 \leq \nu^*(U, q) = \frac{1}{2} q \cdot \ab_*^{-1}(U) q \leq \frac{1}{2} \vert q \vert^2,
\end{align*}
which implies the bound for $\ab_*(U)$.

\medskip 

(3) The slope identity \cref{eq:Slope1} for $v(\mu,U,p)$ is directly the result from \cref{eq:Integral0} that 
\begin{align*}
\Er\Ll[\fint_{U} \nabla v(\mu, x, U, p) \, \d \mu(x) \ \Bigg\vert \ \mu(U)\Rr] = \Er\Ll[\fint_{U} p \, \d \mu \ \Bigg\vert \ \mu(U)\Rr] = p.
\end{align*}
For the function $u(\mu, U, q)$, the identity \cref{eq:Slope3} comes directly from \cref{eq:flux2}, but conditioned $\G_U$, the averaged slope is not $\ab_*^{-1}(U)q$. In fact, we recall that $u(\mu, U, q)$ is also the conditioned maximizer for \cref{eq:EllipNuDualSmall}, so we can define the matrix ${\a_*^{-1}(U; \G_U)}$, the quenched slope \cref{eq:Slope2}. The estimate for this matrix is then obtained by repeating the argument in \cref{eq:NuMatrix} for \cref{eq:EllipNuDualSmall}.

\medskip

(4) We test \cref{eq:EllipNu} with $(v' - \ell_{p,U})$ and put it in the left hand side of \cref{eq:QuadraRep1}
\begin{equation}\label{eq:QuadraRep1Step1}
\begin{split}
& \Er \Ll[\frac{1}{\rho \vert U \vert}\int_{U} \frac{1}{2} \nabla(v' - v(\cdot,U,p)) \cdot \a \nabla(v' - v(\cdot,U,p)) \, \d \mu \Rr] \\
=& \Er \Ll[\frac{1}{\rho \vert U \vert}\int_{U} \frac{1}{2} \nabla v' \cdot \a \nabla v' \, \d \mu \Rr] + \Er \Ll[\frac{1}{\rho \vert U \vert}\int_{U} \frac{1}{2} \nabla  v(\cdot,U,p) \cdot \a \nabla  v(\cdot,U,p) \, \d \mu \Rr] \\
& \qquad - \Er \Ll[\frac{1}{\rho \vert U \vert}\int_{U}  \nabla v' \cdot \a \nabla  v(\cdot,U,p) \, \d \mu \Rr]\\
=& \Er \Ll[\frac{1}{\rho \vert U \vert}\int_{U} \frac{1}{2} \nabla v' \cdot \a \nabla v' \, \d \mu \Rr] + \Er \Ll[\frac{1}{\rho \vert U \vert}\int_{U} \frac{1}{2} \nabla  v(\cdot,U,p) \cdot \a \nabla  v(\cdot,U,p) \, \d \mu \Rr] \\
& \qquad - \Er \Ll[\frac{1}{\rho \vert U \vert}\int_{U}  p \cdot \a \nabla  v(\cdot,U,p) \, \d \mu \Rr].
\end{split}
\end{equation}
The term $ \Er \Ll[\frac{1}{\rho \vert U \vert}\int_{U}  p \cdot \a \nabla  v(\cdot,U,p) \, \d \mu \Rr]$ also appears on the right side of \cref{eq:flux1} with $p = p'$, thus we obtain that 
\begin{align*}
\Er \Ll[\frac{1}{\rho \vert U \vert}\int_{U}  p \cdot \a \nabla  v(\cdot,U,p) \, \d \mu \Rr] = p \cdot \ab(U) p = \Er \Ll[\frac{1}{\rho \vert U \vert}\int_{U}  \nabla  v(\cdot,U,p) \cdot \a \nabla  v(\cdot,U,p) \, \d \mu \Rr],
\end{align*}
and we put it back to \cref{eq:QuadraRep1Step1} to conclude for the validity of \cref{eq:QuadraRep1}.

Similarly, we develop the left hand side of \cref{eq:QuadraRep2} as \cref{eq:QuadraRep1Step1}, and use \cref{eq:EllipNuDual} with $ \phi = u'$ to treat the inner product term of $u'$ and $u(\cdot, U, q)$. 
\begin{align*}
\Er \Ll[\frac{1}{\rho \vert U \vert}\int_{U}  \nabla u' \cdot \a \nabla  u(\cdot,U,q) \, \d \mu \Rr] &=  \Er \Ll[\frac{1}{\rho \vert U \vert}\int_{U}  \nabla u' \cdot q   \, \d \mu \Rr]. 
\end{align*}
We put this  term in the left hand side of \cref{eq:QuadraRep2} and use the bilinear map expression of $\nu^*(U, q)$ to obtain that 
\begin{align*}
& \Er \Ll[\frac{1}{\rho \vert U \vert}\int_{U} \frac{1}{2} \nabla(u' - u(\cdot,U,q)) \cdot \a \nabla(u' - u(\cdot,U,q)) \, \d \mu \Rr] \\
=& \Er \Ll[\frac{1}{\rho \vert U \vert}\int_{U} \frac{1}{2} \nabla u' \cdot \a \nabla u' \, \d \mu \Rr] + \Er \Ll[\frac{1}{\rho \vert U \vert}\int_{U} \frac{1}{2} \nabla  u(\cdot,U,q) \cdot \a \nabla  u(\cdot,U,q) \, \d \mu \Rr] \\
& \qquad - \Er \Ll[\frac{1}{\rho \vert U \vert}\int_{U}  \nabla u' \cdot \a \nabla  u(\cdot,U,q) \, \d \mu \Rr]\\
=&  \Er \Ll[\frac{1}{\rho \vert U \vert}\int_{U} \frac{1}{2} \nabla u' \cdot \a \nabla u' \, \d \mu \Rr] + \nu^*(U,q) -   \Er \Ll[\frac{1}{\rho \vert U \vert}\int_{U}  \nabla u' \cdot q   \, \d \mu \Rr].
\end{align*}
This concludes the proof of \cref{eq:QuadraRep2}.

\medskip

(5) For $\nu(\cu_{n+1}, p)$, we test the associated variational problem with the candidate $v' = \sum_{z \in \Z_{n+1,n}}v(\cdot, z + \cu_{n}, p)$, which is an element of $\ell_{p, \cu_{n+1}} + \cH^1_0(\cu_{n+1})$, so that
\begin{align*}
\nu(\cu_{n+1}, p) &\leq \Er \Ll[\frac{1}{\rho \vert \cu_{n+1} \vert}\int_{\cu_{n+1}}  \nabla v' \cdot \a \nabla  v' \, \d \mu \Rr] \\
&= 3^{-d} \sum_{z \in \Z_{n+1,n} }  \Er \Ll[\frac{1}{\rho \vert\cu_n \vert}\int_{z + \cu_n}  \nabla v(\cdot, z + \cu_{n}, p)  \cdot \a  \nabla  v(\cdot, z + \cu_{n}, p)  \, \d \mu \Rr] \\
&= \nu(\cu_n, p).
\end{align*}
In the last step, we also use the stationarity of the coefficient field	 $\a$. 

For $\nu^{*}(\cu_{n+1}, p)$, we also use that, for every $z \in \Z_{n+1, n}$, we have the inclusion ${\cH^1(\cu_{n+1}) \subset \cH^1(z + \cu_n)}$,  so its unit energy on every small cube $z + \cu_n$ is less than the maximum $\nu^{*}(z + \cu_n, p)$, thus 
\begin{align*}
& \nu^*(\cu_{n+1}, q)\\
= & 3^{-d} \sum_{z \in \Z_{n+1,n}} \Er \Ll[\frac{1}{\rho \vert \cu_{n}\vert}\int_{z + \cu_n}  - \frac{1}{2} \nabla u(\cdot, \cu_{n+1}, q) \cdot \a \nabla  u(\cdot, \cu_{n+1}, q) + q \cdot \nabla u(\cdot, \cu_{n+1}, q)\, \d \mu \Rr] \\
\leq & 3^{-d} \sum_{z \in \Z_{n+1,n}} \nu^*(z + \cu_n, q)\\
= & \nu^*(\cu_n, q). \qedhere
\end{align*}
\end{proof}

\subsection{Subadditive quantity \texorpdfstring{$J$}{J}}

We now study the quantity $J$ defined by \begin{equation}\label{eq:defJ}
\begin{split}
J(U,p,q) :&= \nu(U,p) + \nu^*(U,q) - p \cdot q \\
    & = \frac{1}{2} p \cdot \ab(U)p + \frac{1}{2} q \cdot \ab_*^{-1}(U) q - p \cdot q.
\end{split}
\end{equation}
By the properties of $\nu$ and $\nu^*$, the quantity $J$ is also subadditive. We briefly explain why this quantity will be convenient for our purposes. If the functions $\nu(U,\cdot)$ and $\nu^*(U,\cdot)$ were exactly convex dual of one another, then we would have that $J \ge 0$ and that for every $p \in \Rd$, the infimum of $J(U,p,\cdot)$ is zero. This would correspond to the situation in which $\ab(U)$ and $\ab_*(U)$ are equal, and for every $p \in \Rd$, we would in fact have that $J(U,p,\ab(U)p) = 0$. Instead, we will show below that, for any symmetric matrix $\id \leq \tilde{\a} \leq \Lambda \id$, we have  
\begin{align*}
\vert \tilde{\a} - \ab(U)\vert + \vert \tilde{\a} - \a_{*}(U)\vert \leq \sup_{p \in B_1} C (J(U, p, \tilde{\a}p))^{\frac{1}{2}}.   
\end{align*}
The right side of the inequality above can be thought of as a measure of the defect in the convex duality relationship between $\nu$ and $\nu^*$. For $U = \cu_m$ and using $\tilde{\a} = \ab_*(\cu_m)$, we obtain that 
\begin{align*}
\vert \ab_{*}(\cu_m) - \ab(\cu_m)\vert \leq \sup_{p \in B_1}  C (J(U, p, \ab_*(\cu_m)p))^{\frac{1}{2}}.   
\end{align*}
Since we know that $\{\ab(\cu_m) \}_{m \geq 0}$ is a decreasing sequence while $\{\ab_{*}(\cu_m)\}_{m \geq 0}$ is a increasing sequence from \cref{eq:NuMatrix} and \cref{eq:Subadditive}, each sequence has a limit. 
Therefore, once we prove a rate of convergence to zero for $J(U, p, \ab_*(\cu_m)p)$, we get that the two limits coincide, and also a rate for the convergence of $\{\ab(\cu_m) \}_{m \geq 0}$.

The rest of this section will present this strategy in details. We establish at first a variational description for the quantity $J$ and the properties mentioned above.

\begin{lemma}
(1) For every $p,q \in \Rd$, we have the variational representation
\begin{equation}\label{eq:defJ2}
J(U,p,q) = \sup_{w \in \mcl{A}(U)} \Er \Ll[\frac{1}{\rho \vert U \vert}\int_{U} \Ll( -\frac 1 2 \nabla w \cdot \a \nabla w - p \cdot \a \nabla w + q \cdot \nabla w \Rr) \, \d \mu \Rr].
\end{equation}

(2) We have that $J(U, p ,q) \geq 0$ and $\ab(U) \geq \ab_*(U)$.

(3) There exists a constant $C(d,\Lambda) < \infty$ such that and for every symmetric matrix~$\tilde{\a}$ satisfying ${\id \leq \tilde{\a} \leq \Lambda \id}$, we have
\begin{align}\label{eq:JBoundError}
\vert \tilde{\a} - \ab(U)\vert + \vert \tilde{\a} - \ab_{*}(U)\vert \leq C \sup_{p \in B_1} (J(U, p, \tilde{\a}p))^{\frac{1}{2}}.
\end{align}
\end{lemma}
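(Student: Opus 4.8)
The plan is to treat the three points in order, since each feeds into the next. For \textbf{(1)}, I would start from the definitions \eqref{eq:defNu} and \eqref{eq:defJ}, expand $\nu(U,p)$ and $\nu^*(U,q)$ as infimum and supremum, and combine them into a single variational problem. The natural candidate is to write $\nu(U,p)$ using the minimizer $v(\cdot,U,p) = \ell_{p,U} + \phi$ with $\phi \in \cH^1_0(U)$, and $\nu^*(U,q)$ using a test function $u \in \cH^1(U)$; the key algebraic manipulation is to substitute $w := u - v(\cdot,U,p)$ (or a suitable shift) and check that $w$ ranges over all of $\mcl{A}(U)$ as $u$ ranges over $\cH^1(U)$, using that $v(\cdot,U,p) \in \mcl A(U)$ by \Cref{prop:NuBase}(1) and the quadratic response identities \eqref{eq:QuadraRep1}--\eqref{eq:QuadraRep2}. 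Expanding the quadratic form $-\tfrac12 \nabla w \cdot \a \nabla w$ and collecting the cross terms, with the $\a$-harmonicity of $v$ killing the term $\Er[\int_U \nabla v \cdot \a \nabla \phi\,\d\mu]$, should produce exactly \eqref{eq:defJ2} after using $\nu(U,p)+\nu^*(U,q) - p\cdot q$; the $-p\cdot q$ term is recovered from the linear contribution $\Er[\int_U q \cdot \nabla v\,\d\mu] = \rho|U| \ab_*^{-1}(U)$-type slope identities in \eqref{eq:Slope1}, \eqref{eq:Slope3}.

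For \textbf{(2)}, nonnegativity of $J$ is immediate from \eqref{eq:defJ2}: take $w = 0$ as a test function, which gives $J(U,p,q) \ge 0$. To deduce $\ab(U) \ge \ab_*(U)$, I would use the bilinear-form expressions \eqref{eq:NuMatrix}: $J(U,p,q) = \tfrac12 p \cdot \ab(U) p + \tfrac12 q \cdot \ab_*^{-1}(U) q - p\cdot q \ge 0$ for all $p,q$. Optimizing over $q$ (the minimum over $q$ of $\tfrac12 q \cdot \ab_*^{-1}(U) q - p \cdot q$ is attained at $q = \ab_*(U) p$ and equals $-\tfrac12 p \cdot \ab_*(U) p$) yields $\tfrac12 p \cdot \ab(U) p - \tfrac12 p \cdot \ab_*(U) p \ge 0$ for all $p$, i.e.\ $\ab(U) \ge \ab_*(U)$ as symmetric matrices.

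For \textbf{(3)}, which I expect to be the main obstacle, the idea is the standard convex-duality-defect estimate: given a symmetric $\tilde\a$ with $\id \le \tilde\a \le \Lambda\id$, set $q = \tilde\a p$ in the identity $J(U,p,\tilde\a p) = \tfrac12 p\cdot\ab(U)p + \tfrac12 (\tilde\a p)\cdot \ab_*^{-1}(U)(\tilde\a p) - p\cdot\tilde\a p$. Writing $\ab(U) = \tilde\a + E_1$ and $\ab_*(U) = \tilde\a + E_2$ (or working with $\ab_*^{-1}(U)$ directly), I would Taylor-expand: the leading terms cancel by the algebraic identity $\tfrac12 \tilde\a + \tfrac12 \tilde\a^{-1}\tilde\a^2 - \tilde\a = 0$, leaving $J(U,p,\tilde\a p)$ equal to a nonnegative quadratic form in $E_1 p$ and $E_2 p$, controlled below by $c(|E_1 p|^2 + |E_2 p|^2)$ using the uniform ellipticity bounds $\id \le \ab(U), \ab_*(U) \le \Lambda\id$ (which bound $\ab_*^{-1}(U)$ from above and below). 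Taking the supremum over $p \in B_1$ and using that $|E_i| \le C \sup_{p \in B_1} |E_i p|$ for symmetric matrices (and a care in passing between $\ab_*(U)$ and $\ab_*^{-1}(U)$, where the ellipticity bounds make the inversion Lipschitz) then gives \eqref{eq:JBoundError}. The delicate point is making the quadratic lower bound clean despite $J$ mixing $\ab(U)$ with $\ab_*^{-1}(U)$ rather than $\ab_*(U)$; I would handle this by completing the square in $q$ around $q = \ab(U) p$, which rewrites $J(U,p,\tilde\a p) = \tfrac12 (\tilde\a p - \ab(U) p) \cdot \ab_*^{-1}(U) (\tilde\a p - \ab(U) p) + \tfrac12 p \cdot (\ab(U) - \ab_*(U)) p$, exhibiting both $|(\tilde\a - \ab(U))p|^2$ and $p\cdot(\ab(U)-\ab_*(U))p \ge 0$ as controlled quantities, and then a triangle inequality recovers $|\tilde\a - \ab_*(U)|$ from $|\tilde\a - \ab(U)|$ and $|\ab(U) - \ab_*(U)|$.
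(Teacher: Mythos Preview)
Your plan for parts (1) and (2) matches the paper's proof almost exactly: the substitution $w = u - v(\cdot,U,p)$ together with $\a$-harmonicity and \eqref{eq:Integral0} is precisely how the paper derives \eqref{eq:defJ2}, and your choice $w=0$ in (2) is a clean variant of the paper's argument (which instead tests $\nu^*(U,q)$ with the minimizer $v(\cdot,U,p)$); the matrix ordering via $q = \ab_*(U)p$ is identical.

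For part (3) there is a small computational slip worth flagging. Your claimed identity
\[
J(U,p,\tilde\a p) = \tfrac12 (\tilde\a p - \ab(U) p) \cdot \ab_*^{-1}(U) (\tilde\a p - \ab(U) p) + \tfrac12 p \cdot (\ab(U) - \ab_*(U)) p
\]
is not correct: the quadratic $q \mapsto \tfrac12 q\cdot \ab_*^{-1}(U) q - p\cdot q$ is minimized at $q = \ab_*(U)p$, not $\ab(U)p$, so the exact completed square reads
\[
J(U,p,q) = \tfrac12 (q - \ab_*(U) p) \cdot \ab_*^{-1}(U) (q - \ab_*(U) p) + \tfrac12 p \cdot (\ab(U) - \ab_*(U)) p.
\]
With this correction your strategy still works: the first term controls $|(\tilde\a - \ab_*(U))p|^2$, the second gives $|\ab(U)-\ab_*(U)|$ (using positive semidefiniteness and boundedness to pass from the quadratic form to the norm at the cost of a square root), and the triangle inequality recovers $|\tilde\a - \ab(U)|$. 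The paper organizes this slightly differently and perhaps more symmetrically: rather than one exact identity, it uses the ordering $\ab_*^{-1}(U) \le \ab^{-1}(U)$ from part (2) to obtain the \emph{inequality}
\[
J(U,p,q) \ge \tfrac12 (\ab(U)p - q)\cdot \ab^{-1}(U)(\ab(U)p - q),
\]
which directly bounds $|\tilde\a - \ab(U)|$, and then argues ``similarly'' (using $\ab(U) \ge \ab_*(U)$) for $|\tilde\a - \ab_*(U)|$. Both routes are short; the paper's avoids the triangle-inequality step at the price of invoking the matrix ordering twice.
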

\begin{proof}
(1) We start by rewriting the expression of $J(U, p, q)$ using the definition of $\nu^*(U,q)$ and the quadratic expression of $\nu(U,p)$. Noting also that the maximizer of $\nu^*(U,q)$ belongs to $\mcl{A}(U)$, we can write
\begin{multline}\label{eq:JOneFree}
J(U,p,q) = \Er \Ll[\frac{1}{\rho \vert U \vert}\int_{U}  \frac 1 2 \nabla v(\cdot, U, p) \cdot \a \nabla v(\cdot, U, p)  \, \d \mu \Rr] \\
+ \sup_{u \in \mcl{A}(U)} \Er \Ll[\frac{1}{\rho \vert U \vert}\int_{U} \Ll( -\frac 1 2 \nabla u \cdot \a \nabla u  + q \cdot \nabla u \Rr) \, \d \mu \Rr] - p \cdot q.
\end{multline}
We claim that for any ${u \in  \mcl{A}(U)}$, with ${w : = u -  v(\cdot, U, p)}$, we have 
\begin{multline}\label{eq:Jidentity}
\Er \Ll[\frac{1}{\rho \vert U \vert}\int_{U} \Ll( \frac 1 2 \nabla v(\cdot, U, p) \cdot \a \nabla v(\cdot, U, p)  -\frac 1 2 \nabla u \cdot \a \nabla u  + q \cdot \nabla u \Rr) \, \d \mu \Rr] - p \cdot q\\
 = \Er \Ll[\frac{1}{\rho \vert U \vert}\int_{U} \Ll( -\frac 1 2 \nabla w \cdot \a \nabla w - p \cdot \a \nabla w + q \cdot \nabla w \Rr) \, \d \mu \Rr].
\end{multline}
To prove it, we can develop the right hand side of \cref{eq:Jidentity}
\begin{equation}\label{eq:JDifference}
\begin{split}
&\Er \Ll[\frac{1}{\rho \vert U \vert}\int_{U} \Ll( -\frac 1 2 \nabla w \cdot \a \nabla w - p \cdot \a \nabla w + q \cdot \nabla w \Rr) \, \d \mu \Rr] \\
= & \Er \Ll[\frac{1}{\rho \vert U \vert}\int_{U} \Ll( \frac 1 2 \nabla v(\cdot, U, p) \cdot \a \nabla v(\cdot, U, p)  -\frac 1 2 \nabla u \cdot \a \nabla u  + q \cdot \nabla u \Rr) \, \d \mu \Rr] - p \cdot q \\
& \qquad + \Er \Ll[\frac{1}{\rho \vert U \vert}\int_{U}  \nabla \Ll(v(\cdot, U, p) - \ell_{p,U} \Rr) \cdot \Ll(\a \nabla u  -  \a \nabla v(\cdot, U, p) - q \ \Rr) \, \d \mu \Rr]. \\
\end{split}
\end{equation}
Because $\Ll(v(\cdot, U, p) - \ell_{p,U}\Rr) \in \cH^1_0(U)$, we apply $u, v(\cdot, U, p) \in \mcl{A}(U)$ and \cref{eq:Integral0},  the last line of \cref{eq:JDifference} is $0$ and we prove \cref{eq:Jidentity}. Then we take the maximum as \cref{eq:JOneFree} and obtain the definition \cref{eq:defJ2}.

(2) The properties that $J(U, p, q) \geq 0$ comes from the definition of $\nu^*(U, q)$: we test the functional in the definition of $\nu^*(U,q)$ with the minimizer $v(\cdot, U, p)$ of $\nu(U, p)$ and obtain that 
\begin{align*}
& \nu^*(U, q) 
\\& \quad  \geq \Er \Ll[\frac{1}{\rho \vert U \vert}\int_{U} \Ll( -\frac 1 2 \nabla v(\mu, x, U, p) \cdot \a(\mu, x) \nabla v(\mu,x, U, p) + q \cdot \nabla v(\mu,x, U, p) \Rr) \, \d \mu \Rr] 
\\
& \quad=  p \cdot q - \nu(U, p),
\end{align*}
so that 
\begin{equation*}  
J(U, p, q) = \nu(U, p) + \nu^*(U, q) - p\cdot q \geq 0.
\end{equation*}
Then we test $J(U, p, q) \geq 0$ with that $q = \ab_*(U)p$ and obtain that 
\begin{align*}
0 \leq J(U, p, \ab_*(U)p) &= \frac{1}{2} p \cdot \ab(U)p + \frac{1}{2} (\ab_*(U)p) \cdot \ab_*^{-1}(U) (\ab_*(U)p) - p \cdot \ab_*(U)p,
\end{align*}
and therefore $\ab(U) \geq \ab_*(U)$.

(3) 
Using this property, we have 
\begin{align*}
J(U, p, q) &= \frac{1}{2} p \cdot \ab(U)p + \frac{1}{2} q \cdot \ab_*^{-1}(U) q - p \cdot q \\
& \geq \frac{1}{2} p \cdot \ab(U)p + \frac{1}{2} q \cdot \ab^{-1}(U) q - p \cdot q \\
& = \frac{1}{2}(\ab(U)p - q) \ab^{-1}(U) \cdot (\ab(U)p - q).
\end{align*}
We put $q = \tilde{\a} p$ and obtain $\vert \ab(U) - \tilde{\a}\vert \leq C \sup_{p \in B_1}  (J(U, p, \tilde{\a}p))^{\frac{1}{2}}$. The proof of the statement concerning $\vert \ab_*(U) - \tilde{\a}\vert$ is similar. 
\end{proof}

In view of the definition of $J$, this functional enjoys properties similar to those described in \Cref{prop:NuBase} for $\nu$ and $\nu^*$.
\begin{proposition}[Elementary properties of $J$]  For every bounded domain $U \subset \Rd$ with Lipschitz boundary and $p,q \in \Rd$, the quantity $J(U, p, q)$ defined in \cref{eq:defJ} satisfies the following properties:

(1) Characterization of optimizer: the optimization problem in \cref{eq:defJ2} admits a unique solution $v(\cdot, U, p, q) \in \cH^1(U)$ such that $\Er[v(\cdot, U, p, q) \, \vert \, \G_{U}] = 0$. This solution is such that for every $w \in \mcl A (U)$,
\begin{align}\label{eq:JVariation}
\Er\Ll[ \int_U \nabla v(\cdot, U, p, q) \cdot \a  \nabla w \, \d \mu \Rr] = \Er\Ll[ \int_U \Ll(-p \cdot \a \nabla w + q \cdot \nabla w\Rr)  \, \d \mu \Rr],
\end{align}
and $(p,q) \mapsto v(\cdot, U, p, q)$ a linear map. The function $v(\cdot, U, p, q)$ can be expressed in terms of the optimizers in \cref{eq:defNu} as
\begin{align}\label{eq:JVExpression}
v(\mu,U,p,q) = u(\mu,U,q) - v(\mu,U,p) - \Er[u(\mu,U,q) - v(\mu,U,p) \, \vert \, \G_U].
\end{align}
We have the quadratic expression
\begin{align}\label{eq:JEnergy}
J(U, p, q) = \Er\Ll[\frac{1}{\rho \vert U \vert} \int_U \frac{1}{2} \nabla v(\cdot, U, p, q) \cdot \a  \nabla v(\cdot, U, p, q) \, \d \mu \Rr].
\end{align}

(2) Slope: $v(\cdot, U, p , q)$ satisfies
\begin{equation}\label{eq:JSlope}
\begin{split}
\Er\Ll[\fint_U \nabla v(\cdot, U, p, q) \, \d \mu  \ \Bigg \vert \ \G_U\Rr] &= \a^{-1}_*(U; \G_U)q - p, \\
\Er\Ll[\frac{1}{\rho \vert U \vert} \int_U \nabla v(\cdot, U, p, q) \, \d \mu \Rr] &= \ab_*^{-1}(U)q - p,
\end{split}
\end{equation}
where the matrix $\a^{-1}_*(U; \G_U)$ is defined in \cref{eq:Slope2}.

(3) Quadratic response: for every $w \in \mcl{A}(U)$, we have 
\begin{multline}\label{eq:JQuadra}
\Er\Ll[\frac{1}{\rho \vert U \vert} \int_U \Ll(\frac{1}{2} \nabla(w - v(\cdot, U, p, q)) \cdot \a  \nabla (w - v(\cdot, U, p, q))\Rr) \, \d \mu \Rr] \\
= J(U, p, q) - \Er \Ll[\frac{1}{\rho \vert U \vert}\int_{U} \Ll( -\frac 1 2 \nabla w \cdot \a \nabla w - p \cdot \a \nabla w + q \cdot \nabla w \Rr) \, \d \mu \Rr].
\end{multline}

(4) Subadditivity: for every $n \in \N$,  we have
\begin{align}\label{eq:JSubadditive}
J(\cu_{n+1}, p, q) \leq J(\cu_{n}, p, q).
\end{align}
\end{proposition}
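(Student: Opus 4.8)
The plan is to derive the four parts in order, relying on the variational formula \eqref{eq:defJ2} for $J$ and on \Cref{prop:NuBase}.

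\textbf{Part (1).} Denote by $\mcl{J}_{p,q}(w)$ the functional maximized in \eqref{eq:defJ2}. Since it depends on $w$ only through $\nabla w$, it is unchanged under adding to $w$ any function with vanishing gradient on $U$, i.e.\ any $\G_U$-measurable function; so the supremum may be restricted to the closed linear subspace $W_{\mcl{A}} := \{w \in \mcl{A}(U) : \Er[w \, \vert \, \G_U] = 0\}$. On $W_{\mcl{A}}$, the Poincar\'e inequality \eqref{eq:Poincare2} shows that $\Er[\int_U |\nabla w|^2 \, \d\mu]$ controls $\norm{w}_{\cH^1(U)}^2$, so by uniform ellipticity $\mcl{J}_{p,q}$ is strictly concave, coercive and weakly upper semicontinuous on $W_{\mcl{A}}$, hence admits a unique maximizer $v(\cdot,U,p,q)$. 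As $\mcl{A}(U)$ is linear, the first variation of $\mcl{J}_{p,q}$ at this maximizer along any $w \in \mcl{A}(U)$ vanishes, which gives \eqref{eq:JVariation}; linearity of $(p,q) \mapsto v(\cdot,U,p,q)$ then follows from uniqueness and the linearity of the right side of \eqref{eq:JVariation}. To obtain \eqref{eq:JVExpression}, set $\tilde v := u(\cdot,U,q) - v(\cdot,U,p) \in \mcl{A}(U)$ (using \Cref{prop:NuBase}(1)). For $w \in \mcl{A}(U)$, equation \eqref{eq:EllipNuDual} tested with $w$ gives $\Er[\int_U \nabla u(\cdot,U,q) \cdot \a \nabla w \, \d\mu] = \Er[\int_U q \cdot \nabla w \, \d\mu]$; and since $v(\cdot,U,p) - \ell_{p,U} \in \cH^1_0(U)$, the $\a$-harmonicity of $w$ together with the symmetry of $\a$ yields $\Er[\int_U \nabla v(\cdot,U,p) \cdot \a \nabla w \, \d\mu] = \Er[\int_U \nabla \ell_{p,U} \cdot \a \nabla w \, \d\mu] = \Er[\int_U p \cdot \a \nabla w \, \d\mu]$. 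Subtracting shows that $\tilde v$ (hence $\tilde v - \Er[\tilde v \, \vert \, \G_U] \in W_{\mcl{A}}$) satisfies \eqref{eq:JVariation}, so uniqueness gives \eqref{eq:JVExpression}. Finally, testing \eqref{eq:JVariation} with $w = v(\cdot,U,p,q)$ shows $\Er[\int_U \nabla v \cdot \a \nabla v \, \d\mu] = \Er[\int_U (-p \cdot \a \nabla v + q \cdot \nabla v) \, \d\mu]$; substituting this into $\mcl{J}_{p,q}(v(\cdot,U,p,q)) = J(U,p,q)$ yields \eqref{eq:JEnergy}.

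\textbf{Parts (2) and (3).} By \eqref{eq:JVExpression} and $\nabla \Er[\tilde v \, \vert \, \G_U] = 0$, we have $\nabla v(\cdot,U,p,q) = \nabla u(\cdot,U,q) - \nabla v(\cdot,U,p)$ on $U$; applying $\Er[\fint_U \cdot \, \d\mu \, \vert \, \G_U]$ and invoking the slope identities \eqref{eq:Slope2} and \eqref{eq:Slope1} gives the first line of \eqref{eq:JSlope}. The second line follows by multiplying through by $\mu(U)/(\rho|U|)$, conditioning on $\G_U$ (under which $\mu(U)$ is measurable), using $\Er[\mu(U)] = \rho|U|$ and the identity $\ab_*^{-1}(U) = \frac{1}{\rho|U|}\Er[\a_*^{-1}(U;\G_U)\mu(U)]$ from \Cref{prop:NuBase}(3) (alternatively, one may use \eqref{eq:flux2} and \eqref{eq:Slope1} directly). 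For part (3), given $w \in \mcl{A}(U)$ one expands $\frac12 \nabla(w - v) \cdot \a \nabla(w - v)$, rewrites the cross term $\Er[\int_U \nabla w \cdot \a \nabla v \, \d\mu]$ via the symmetry of $\a$ and \eqref{eq:JVariation}, and replaces $\Er[\frac{1}{\rho|U|}\int_U \frac12 \nabla v \cdot \a \nabla v \, \d\mu]$ by $J(U,p,q)$ using \eqref{eq:JEnergy}; collecting the terms gives \eqref{eq:JQuadra}.

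\textbf{Part (4).} From the definition \eqref{eq:defJ}, $J(\cu_{n+1},p,q) = \nu(\cu_{n+1},p) + \nu^*(\cu_{n+1},q) - p \cdot q$, and the subadditivity \eqref{eq:Subadditive} bounds $\nu(\cu_{n+1},p) \le \nu(\cu_n,p)$ and $\nu^*(\cu_{n+1},q) \le \nu^*(\cu_n,q)$, which immediately yields \eqref{eq:JSubadditive}.

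The main obstacle is \textbf{part (1)}: setting up the constrained variational problem correctly --- in particular realizing that $\mcl{J}_{p,q}$ is invariant under adding $\G_U$-measurable functions, so that the Poincar\'e inequality \eqref{eq:Poincare2} can be used on $W_{\mcl{A}}$ to get coercivity --- and carrying out the identification \eqref{eq:JVExpression}, whose crux is the observation that an $\a$-harmonic test function "sees" the minimizer $v(\cdot,U,p)$ exactly as it sees the affine function $\ell_{p,U}$. Once part (1) is in place, parts (2)--(4) are routine.
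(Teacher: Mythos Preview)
Your argument is correct and follows essentially the same path as the paper: existence/uniqueness via coercivity on the $\G_U$-centered subspace, the first-variation equation~\eqref{eq:JVariation}, linearity by uniqueness, the identification~\eqref{eq:JVExpression}, and then~\eqref{eq:JEnergy}, \eqref{eq:JSlope}, \eqref{eq:JSubadditive} exactly as in the text. The only real deviation is in part~(3): the paper reduces~\eqref{eq:JQuadra} to the quadratic response~\eqref{eq:QuadraRep2} for $\nu^*$ via the substitution $w = u' - v(\cdot,U,p)$ (mirroring the proof of~\eqref{eq:defJ2}), whereas you expand the square directly and eliminate the cross term using~\eqref{eq:JVariation} and~\eqref{eq:JEnergy}. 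Your route is slightly more self-contained; the paper's makes the parallel with $\nu^*$ explicit. Both are short and equivalent.
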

\begin{proof}
(1) The equation \cref{eq:JVariation} comes directly from the first order variation calculus. The proof of the existence and uniqueness of the solution $v(\cdot, U, p ,q)$ is similar as the one for $\nu^*(U,q)$. \Cref{eq:JVariation} also implies that the map $(p,q) \mapsto v(\cdot, U, p ,q)$ is linear because for any $p_1, p_2, q_1, q_2 \in \Rd$, and any $w \in \mcl{A}(U)$ we have 
\begin{align*}
& \Er\Ll[ \int_U \nabla v(\cdot, U, p_1 + p_2, q_1 + q_2) \cdot \a  \nabla w \, \d \mu \Rr] 
\\
& \qquad = \Er\Ll[ \int_U -(p_1 + p_2)\cdot \a \nabla w + (q_1 + q_2) \cdot \nabla w  \, \d \mu \Rr] \\
& \qquad = \Er\Ll[ \int_U \nabla (v(\cdot, U, p_1, q_1) + v(\cdot, U, p_2, q_2)) \cdot \a  \nabla w \, \d \mu \Rr].
\end{align*}
Then $(v(\cdot, U, p_1, q_1) + v(\cdot, U, p_2, q_2))$ is also a solution for the problem \cref{eq:JVariation} with parameter $(p_1 + p_2, q_1 + q_2)$. Notice that we have 
\begin{align*}
\Er[(v(\cdot, U, p_1, q_1) + v(\cdot, U, p_2, q_2)) \, \vert \, \G_U] = 0,
\end{align*}
it implies $v(\mu, U, p_1 + p_2, q_1 + q_2) = v(\mu, U, p_1, q_1) + v(\mu, U, p_2, q_2)$ and the linearity of the map.

The exact expression of $v(\mu, U, P, q)$ comes from the equivalent definition \cref{eq:defJ2} of $J(U,p,q)$ and its proof. We put $v(\mu, U, p, q)$ in the first order variation \cref{eq:JVariation}
\begin{multline*}
\Er \Ll[\frac{1}{\rho \vert U \vert}\int_{U} \Ll( - p \cdot \a \nabla v(\cdot, U, p, q) + q \cdot \nabla v(\cdot, U, p, q) \Rr) \, \d \mu \Rr] \\
= \Er \Ll[\frac{1}{\rho \vert U \vert}\int_{U}  \nabla v(\cdot,U,p,q) \cdot \a \nabla v(\cdot,U,p,q)  \, \d \mu \Rr].
\end{multline*}
Then we put this equation into \cref{eq:defJ2} to get \cref{eq:JEnergy}.

(2) The slope identity \cref{eq:JSlope} comes from \cref{eq:JVExpression},  \eqref{eq:Slope1}, \eqref{eq:Slope2}, and \eqref{eq:Slope3}.

(3) We use the expression in \cref{eq:JVExpression} with $w := u'-v(\cdot, U, p)$, then we use the quadratic response for $\nu^*(U,q)$ \cref{eq:QuadraRep2} that 
\begin{align*}
&\Er\Ll[\frac{1}{\rho \vert U \vert} \int_U \Ll(\frac{1}{2} \nabla(w - v(\cdot, U, p, q)) \cdot \a  \nabla (w - v(\cdot, U, p, q))\Rr) \, \d \mu \Rr] \\
& \qquad = \Er\Ll[\frac{1}{\rho \vert U \vert} \int_U \Ll(\frac{1}{2} \nabla(u' - u(\cdot, U, q)) \cdot \a  \nabla (u' - u(\cdot, U, q))\Rr) \, \d \mu \Rr]\\
& \qquad =  \nu^*(U, q) - \Er\Ll[\frac{1}{\rho \vert U \vert} \int_U \Ll(- \frac{1}{2} \nabla u' \cdot \a  \nabla u' + q \cdot \nabla u' \Rr) \, \d \mu \Rr].
\end{align*}
Then we add back the term $\nu(U,p)$ and it gives the desired result
\begin{align*}
&\Er\Ll[\frac{1}{\rho \vert U \vert} \int_U \Ll(\frac{1}{2} \nabla(w - v(\cdot, U, p, q)) \cdot \a  \nabla (w - v(\cdot, U, p, q))\Rr) \, \d \mu \Rr] \\
& \qquad = J(U,p,q) - \Ll(\nu(U,p) + \Er\Ll[\frac{1}{\rho \vert U \vert} \int_U \Ll(- \frac{1}{2} \nabla u' \cdot \a  \nabla u' + q \cdot \nabla u' \Rr) \, \d \mu \Rr] - p \cdot q \Rr) \\
& \qquad = J(U,p,q) - \Er \Ll[\frac{1}{\rho \vert U \vert}\int_{U} \Ll( -\frac 1 2 \nabla w \cdot \a \nabla w - p \cdot \a \nabla w + q \cdot \nabla w \Rr) \, \d \mu \Rr].
\end{align*}

(4) \Cref{eq:JSubadditive} is a consequence of \cref{eq:Subadditive} and \cref{eq:defJ}.
\end{proof}

We conclude this section with the following lemma.
\begin{lemma}[Comparison between two scales]\label{lem:JTwoScale}
For every $n,k \in \mathbb{N}$ with $k \leq n$, and $p,q \in \Rd$, writing $v(U)$ as shorthand for $v(\cdot,U, p, q)$, we have
\begin{multline}\label{eq:JTwoScale}
\frac{1}{\vert \Z_{n,k} \vert} \sum_{z \in \Z_{n,k}} \Er\Ll[\frac{1}{\rho \vert \cu_k \vert} \int_{z + \cu_k} \frac{1}{2} \vert \nabla v(\cu_n) - \nabla v(z+\cu_k) \vert^2 \, \d \mu \Rr]
\\
\leq J(\cu_k, p, q) - J(\cu_n, p, q).
\end{multline}
\end{lemma}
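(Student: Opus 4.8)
The plan is to use the quadratic-response identity \eqref{eq:JQuadra} on each subcube $z+\cu_k$, taking as competitor the restriction of the global optimizer $v(\cu_n):=v(\cdot,\cu_n,p,q)$, and then to average over $z\in\Z_{n,k}$. The first thing to check is that $v(\cu_n)$ is an admissible test function on $z+\cu_k$, i.e. $v(\cu_n)\in\mcl A(z+\cu_k)$. This follows from the representation \eqref{eq:JVExpression}: modulo an additive $\G_{\cu_n}$-measurable term whose gradient on $\cu_n$ vanishes, $v(\cu_n)$ is the difference $u(\cdot,\cu_n,q)-v(\cdot,\cu_n,p)$ of two functions lying in $\mcl A(\cu_n)$ by \Cref{prop:NuBase}(1), so $v(\cu_n)\in\mcl A(\cu_n)$, and since $z+\cu_k\subset\cu_n$ we have $\mcl A(\cu_n)\subset\mcl A(z+\cu_k)$. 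By stationarity of $\a$ one also has $J(z+\cu_k,p,q)=J(\cu_k,p,q)$ for every $z\in\Z_{n,k}$.

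Applying \eqref{eq:JQuadra} on the domain $z+\cu_k$ with $w=v(\cu_n)$, and writing $v(z+\cu_k):=v(\cdot,z+\cu_k,p,q)$, we get
\begin{multline*}
\Er\Ll[\frac{1}{\rho\vert\cu_k\vert}\int_{z+\cu_k}\frac12\nabla\Ll(v(\cu_n)-v(z+\cu_k)\Rr)\cdot\a\,\nabla\Ll(v(\cu_n)-v(z+\cu_k)\Rr)\,\d\mu\Rr]\\
=J(\cu_k,p,q)-\Er\Ll[\frac{1}{\rho\vert\cu_k\vert}\int_{z+\cu_k}\Ll(-\tfrac12\nabla v(\cu_n)\cdot\a\,\nabla v(\cu_n)-p\cdot\a\,\nabla v(\cu_n)+q\cdot\nabla v(\cu_n)\Rr)\,\d\mu\Rr].
\end{multline*}
On the left-hand side we bound $\vert\xi\vert^2\le\xi\cdot\a\xi$ using the uniform ellipticity \eqref{e.unif.ell}, so that the left-hand side dominates the $z$-term of the left-hand side of \eqref{eq:JTwoScale}. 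We then sum over $z\in\Z_{n,k}$ and divide by $\vert\Z_{n,k}\vert=3^{(n-k)d}$. Since $\vert\cu_n\vert=\vert\cu_k\vert\,\vert\Z_{n,k}\vert$ and the cubes $(z+\cu_k)_{z\in\Z_{n,k}}$ partition $\cu_n$ up to a null set, for any integrable $g$ one has $\frac{1}{\vert\Z_{n,k}\vert}\sum_{z}\Er\Ll[\frac{1}{\rho\vert\cu_k\vert}\int_{z+\cu_k}g\,\d\mu\Rr]=\Er\Ll[\frac{1}{\rho\vert\cu_n\vert}\int_{\cu_n}g\,\d\mu\Rr]$, while $\frac{1}{\vert\Z_{n,k}\vert}\sum_z J(\cu_k,p,q)=J(\cu_k,p,q)$.

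Putting these together, the averaged right-hand side becomes
\[
J(\cu_k,p,q)-\Er\Ll[\frac{1}{\rho\vert\cu_n\vert}\int_{\cu_n}\Ll(-\tfrac12\nabla v(\cu_n)\cdot\a\,\nabla v(\cu_n)-p\cdot\a\,\nabla v(\cu_n)+q\cdot\nabla v(\cu_n)\Rr)\,\d\mu\Rr].
\]
Recalling that $v(\cu_n)=v(\cdot,\cu_n,p,q)$ is precisely the maximizer in the variational representation \eqref{eq:defJ2} of $J(\cu_n,p,q)$, the expectation above equals $J(\cu_n,p,q)$, and the averaged right-hand side collapses to $J(\cu_k,p,q)-J(\cu_n,p,q)$. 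This is exactly \eqref{eq:JTwoScale}. No serious obstacle arises here; the only points that require a little care are the admissibility of the restriction of the global optimizer on each subcube (handled via $\mcl A(\cu_n)\subset\mcl A(z+\cu_k)$ and \eqref{eq:JVExpression}), the passage from the $\a$-weighted to the Euclidean quadratic form through uniform ellipticity, and the bookkeeping of the volume normalizations together with the stationarity of $\a$.
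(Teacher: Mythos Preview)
Your proof is correct and follows essentially the same approach as the paper: use uniform ellipticity to pass to the $\a$-weighted quadratic form, apply the quadratic response \eqref{eq:JQuadra} on each subcube with $w=v(\cu_n)$, average over $z\in\Z_{n,k}$, and identify the resulting energy expression via \eqref{eq:defJ2} and stationarity. If anything, you give more justification than the paper does for the admissibility $v(\cu_n)\in\mcl A(z+\cu_k)$, which the paper simply asserts.
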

\begin{proof}
For any $z \in \Z_{n,k}$, since $v(\cu_n) \in \mcl A(z+\cu_k)$, we use the quadratic response \cref{eq:JQuadra} for $J(z+\cu_k, p, q)$ that 
\begin{align*}
&\Er\Ll[\frac{1}{\rho \vert \cu_k \vert} \int_{z + \cu_k} \frac{1}{2} \vert \nabla v(\cu_n) - \nabla v(z+\cu_k) \vert^2 \, \d \mu \Rr] \\
& \qquad \leq \Er\Ll[\frac{1}{\rho \vert \cu_k \vert} \int_{z + \cu_k} \frac{1}{2} (\nabla v(\cu_n) - \nabla v(z+\cu_k)) \cdot \a (\nabla v(\cu_n) - \nabla v(z+\cu_k)) \, \d \mu \Rr] \\
& \qquad =  J(z+\cu_k, p, q) 
\\ & \qquad \qquad - \Er \Ll[\frac{1}{\rho \vert \cu_k \vert}\int_{z+\cu_k} \Ll( -\frac 1 2 \nabla v(\cu_n) \cdot \a \nabla v(\cu_n) - p \cdot \a \nabla v(\cu_n) + q \cdot \nabla v(\cu_n) \Rr) \, \d \mu \Rr].
\end{align*}
We sum this expression over all $z \in \Z_{n,k}$ to obtain that 
\begin{align*}
&\frac{1}{\vert \Z_{n,k} \vert} \sum_{z \in \Z_{n,k}} \Er\Ll[\frac{1}{\rho \vert \cu_k \vert} \int_{z + \cu_k} \frac{1}{2} \vert \nabla v(\cu_n) - \nabla v(z+\cu_k) \vert^2 \, \d \mu \Rr] \\
& \quad \leq  \frac{1}{\vert \Z_{n,k} \vert} \sum_{z \in \Z_{n,k}} \bigg( J(z+\cu_k, p, q) 
\\
& \quad  \qquad - \Er \Ll[\frac{1}{\rho \vert \cu_k \vert}\int_{z+\cu_k} \Ll( -\frac 1 2 \nabla v(\cu_n) \cdot \a \nabla v(\cu_n) - p \cdot \a \nabla v(\cu_n) + q \cdot \nabla v(\cu_n) \Rr) \, \d \mu \Rr]\bigg)\\
& \quad =  J(\cu_k, p ,q) - J(\cu_n, p ,q).
\end{align*}
In the last step, we use the stationarity of $J$ and also \cref{eq:defJ2} for $v(\cu_n)$.
\end{proof}

%
%
%
%
%
%
%
%

\section{Quantitative rate of convergence}
\label{subsec:ConvergenceJ}
We are now ready to prove \Cref{thm:main}. We decompose the argument into a series of four steps.

\subsection{Step 1: setup}
We use the shorthand $\ab_n := \ab_*(\cu_n)$, so that by \cref{eq:JSlope}, the average slope of the function $v(\cdot, \cu_n, p, q)$ is $\ab_n^{-1} q - p$, in the sense that
\begin{align}\label{eq:SlopABN}
\Er\Ll[\frac{1}{\rho \vert \cu_n \vert} \int_U \nabla v(\cdot, \cu_n, p, q) \, \d \mu \Rr] &= \ab_n^{-1} q - p.
\end{align}
We let $\tau_n$ denote a measure of the defect in the subadditivity of $J$, precisely,
\begin{equation}\label{eq:defErrorJ}
\begin{split}
\tau_n :=& \sup_{p,q \in B_1} (J(\cu_n, p ,q) - J(\cu_{n+1}, p , q)) \\
=& \sup_{p \in B_1}(\nu(\cu_n, p) - \nu(\cu_{n+1},p)) + \sup_{q \in B_1}(\nu^*(\cu_n, q) - \nu^*(\cu_{n+1},q)).
\end{split}
\end{equation} 
A direct corollary from \cref{eq:defErrorJ} is that for any integers $n < m$,
\begin{align}\label{eq:ErrorABN}
\vert \ab_n^{-1} - \ab_m^{-1} \vert = \sup_{q \in B_1} q \cdot (\ab_n^{-1} - \ab_m^{-1}) q = \sup_{q \in B_1} \Ll(\nu^*(\cu_{n},q) - \nu^*(\cu_{m},q)\Rr) \leq C \sum_{k=n}^{m-1} \tau_k. 
\end{align}
We recall that $\{\ab(\cu_m)\}_{m \geq 0}$ is decreasing and $\{\ab_*(\cu_m)\}_{m \geq 0}$ is increasing, with the comparison $\ab_*(\cu_m) \leq \ab(\cu_m)$.  From \cref{eq:JBoundError}, we know that 
\begin{align*}
\vert \ab(\cu_m) - \ab\vert \le \vert \ab(\cu_m) - \ab_*(\cu_m)\vert \leq C\sup_{p \in B_1} (J(\cu_m, p, \ab_m p))^{\frac{1}{2}}.
\end{align*}
From now on, we thus fix $p \in B_1$, and focus on estimating $J(\cu_m, p, \ab_m p)$. We also assume without further notification that $m$ is sufficiently large that $3^m \ge R_0$, for the constant $R_0$ appearing in \Cref{prop:Caccioppoli}. We use ${\A_{3^m+2} v(\cdot, \cu_{m+1}, p, \ab_m p)}$ to compare with \cref{eq:JEnergy} and apply the quadratic response \cref{eq:JQuadra}. In the rest of \text{Step 1}, we write $v(U)$ as a shorthand for $v(\cdot, U, p, \ab_m p)$, and decompose
\begin{equation}\label{eq:JamDecom}
\begin{split}
(J(\cu_m, p, \ab_m p))^{\frac{1}{2}} &=  \Ll(\Er\Ll[\frac{1}{\rho \vert \cu_m \vert} \int_{\cu_m} \frac{1}{2} \nabla v(\cu_m) \cdot \a  \nabla v(\cu_m) \, \d \mu \Rr]\Rr)^{\frac{1}{2}} \\
&\leq \text{\cref{eq:JamDecom}-a} + \text{\cref{eq:JamDecom}-b},
\end{split}
\end{equation}
with
\begin{multline*}  
\text{\cref{eq:JamDecom}-a} 
\\
= \Ll(\Er\Ll[\frac{1}{\rho \vert \cu_m \vert} \int_{\cu_m} \frac{1}{2} (\nabla v(\cu_m) - \nabla \A_{3^m+2} v(\cu_{m+1})) \cdot \a   (\nabla v(\cu_m) - \nabla \A_{3^m+2} v(\cu_{m+1})) \, \d \mu \Rr]\Rr)^{\frac{1}{2}},
\end{multline*}
and
\begin{equation*}  
\text{\cref{eq:JamDecom}-b} = \Ll(\Er\Ll[\frac{1}{\rho \vert \cu_m \vert} \int_{\cu_m} \frac{1}{2}  \nabla \A_{3^m+2} v(\cu_{m+1}) \cdot \a   \nabla \A_{3^m+2} v(\cu_{m+1}) \, \d \mu \Rr]\Rr)^{\frac{1}{2}}.
\end{equation*}
We treat the two terms separately. For \cref{eq:JamDecom}-a, since ${\A_{3^m+2} v(\cu_{m+1}) \in \mcl A(\cu_m)}$ (see \Cref{prop:AppendixLocalization} for details),  we use \cref{eq:JQuadra} to get
\begin{align*}
&  \vert \text{\cref{eq:JamDecom}-a} \vert^2 
 \\
 & \qquad  = J(\cu_m, p, \ab_m p) 
- \Er \Ll[\frac{1}{\rho \vert \cu_m \vert}\int_{\cu_m} \Ll( -\frac 1 2 \nabla \A_{3^m+2} v(\cu_{m+1}) \cdot \a \nabla \A_{3^m+2} v(\cu_{m+1}) \Rr) \, \d \mu \Rr] 
 \\
 & \qquad  \qquad - \Er \Ll[\frac{1}{\rho \vert \cu_m \vert}\int_{\cu_m} \Ll( - p \cdot \a \nabla \A_{3^m+2} v(\cu_{m+1}) + \ab_m p \cdot \nabla \A_{3^m+2} v(\cu_{m+1}) \Rr) \, \d \mu \Rr].
\end{align*}
Using Jensen's inequality, we have 
\begin{multline*}
\Er \Ll[\int_{\cu_m} \Ll( \frac 1 2 \nabla \A_{3^m+2} v(\cu_{m+1}) \cdot \a \nabla \A_{3^m+2} v(\cu_{m+1}) \Rr) \, \d \mu \Rr] 
\\
\leq \Er \Ll[\int_{\cu_m} \Ll( \frac 1 2 \nabla  v(\cu_{m+1}) \cdot \a \nabla v(\cu_{m+1}) \Rr) \, \d \mu \Rr],
\end{multline*}
and the conditional expectation also implies that 
\begin{multline*}
\Er \Ll[\int_{\cu_m} \Ll( - p \cdot \a \nabla \A_{3^m+2} v(\cu_{m+1}) + \ab_m p \cdot \nabla \A_{3^m+2} v(\cu_{m+1}) \Rr) \, \d \mu \Rr] \\
= \Er \Ll[\int_{\cu_m} \Ll( - p \cdot \a \nabla v(\cu_{m+1}) + \ab_m p \cdot \nabla  v(\cu_{m+1}) \Rr) \, \d \mu \Rr].
\end{multline*}
Thus we combine these terms with the quadratic response \cref{eq:JQuadra} to obtain 
\begin{align*}
\vert \text{\cref{eq:JamDecom}-a} \vert^2 &\leq J(\cu_m, p, \ab_m p) - \Er \Ll[\frac{1}{\rho \vert \cu_m \vert}\int_{\cu_m} \Ll( -\frac 1 2 \nabla  v(\cu_{m+1}) \cdot \a \nabla  v(\cu_{m+1}) \Rr) \, \d \mu \Rr] \\
& \qquad - \Er \Ll[\frac{1}{\rho \vert \cu_m \vert}\int_{\cu_m} \Ll( - p \cdot \a \nabla  v(\cu_{m+1}) + \ab_m p \cdot \nabla  v(\cu_{m+1}) \Rr) \, \d \mu \Rr]\\
& = \Er \Ll[\frac{1}{\rho \vert \cu_m \vert}\int_{\cu_m} \Ll( \frac 1 2 \nabla  (v(\cu_{m+1}) - v(\cu_{m})) \cdot \a \nabla  (v(\cu_{m+1}) - v(\cu_{m})) \Rr) \, \d \mu \Rr],
\end{align*}
and we use \Cref{lem:JTwoScale} between $\cu_m$ and $\cu_{m+1}$ to get
\begin{align}\label{eq:JamDecomBoundA}
\vert \text{\cref{eq:JamDecom}-a} \vert^2 &\leq 3^d(J(\cu_m, p, \ab_m p) - J(\cu_{m+1}, p, \ab_m p))  \leq C(d,\Lambda) \tau_{m},
\end{align}
where the quantity $\tau_m$ is defined in \cref{eq:defErrorJ}.

For the term \cref{eq:JamDecom}-b, we can apply the modified Caccioppoli inequality \cref{eq:Caccioppoli}: there exist two finite positive constants $C(d,\Lambda)$ and $\theta(d, \Lambda) \in (0,1)$ such that  
\begin{multline}\label{eq:JamDecomB}
\Er\Ll[\frac{1}{\rho \vert \cu_m \vert}\int_{\cu_m} \nabla (\A_{3^m+2} v(\cu_{m+1})) \cdot \a \nabla (\A_{3^m+2} v(\cu_{m+1})) \, \d \mu\Rr] \\
\leq  \frac{C}{3^{2m} \rho \vert \cu_{m+1} \vert} \Er[(v(\cu_{m+1}))^2] + \theta \Er\Ll[\frac{1}{\rho \vert \cu_{m+1} \vert}\int_{\cu_{m+1}} \nabla v(\cu_{m+1}) \cdot \a \nabla v(\cu_{m+1}) \, \d \mu\Rr].
\end{multline}
Using \cref{eq:JEnergy}, we see that the averaged gradient term on the right side of \cref{eq:JamDecomB} is $J(\cu_{m+1}, p, \ab_m p)$, and \cref{eq:JSubadditive} asserts that ${J(\cu_{m+1}, p, \ab_m p) \leq J(\cu_{m}, p, \ab_m p)}$. Therefore, we get the bound for \cref{eq:JamDecom}-b
\begin{align}\label{eq:JamDecomBoundB}
\vert \text{\cref{eq:JamDecom}-b} \vert^2 \leq \frac{C}{3^{2m} \rho \vert \cu_{m+1} \vert} \Er[(v(\cu_{m+1}))^2] + \theta J(\cu_{m}, p, \ab_m p).
\end{align}
We put \cref{eq:JamDecomBoundA} and \cref{eq:JamDecomBoundB} back to \cref{eq:JamDecom}, obtaining
\begin{align*}
(J(\cu_{m}, p, \ab_m p))^{\frac{1}{2}} & \leq C \tau_m^{\frac{1}{2}} + \Ll( \frac{C}{3^{2m} \rho \vert \cu_{m+1} \vert} \norm{v(\cu_{m+1})}^2_{\cL^2} + \theta J(\cu_{m}, p, \ab_m p)\Rr)^{\frac{1}{2}}\\
&\leq C \tau_m^{\frac{1}{2}} + \frac{C}{3^{m} (\rho \vert \cu_{m+1} \vert)^{\frac{1}{2}}}\norm{v(\cu_{m+1})}_{\cL^2} + \theta^{\frac{1}{2}}(J(\cu_{m}, p, \ab_m p))^{\frac{1}{2}}.
\end{align*}
Since $\theta < 1$, this gives
\begin{align}\label{eq:JamL2}
J(\cu_{m}, p, \ab_m p) \leq C\Ll(\tau_m + \frac{1}{3^{2m} \rho \vert \cu_{m+1} \vert}\norm{v(\mu, \cu_{m+1}, p, \ab_m p)}^2_{\cL^2}\Rr).
\end{align}

\subsection{Step 2: flatness estimate}
In this step, we estimate the $\cL^2$-flatness of optimizers of $J$. Notice that, using the result of Lemma~\ref{eq:mainFlatness} with $v(\cdot, \cu_{m+1}, p, \ab_m p)$, the corresponding affine function is $0$ and we obtain from \cref{eq:JamL2} that 
\begin{align}\label{eq:JamTau}
J(\cu_{m}, p, \ab_m p) \leq C \Ll(3^{-\beta m} +  \sum_{n=0}^m3^{-\beta(m-n)}\tau_n\Rr).
\end{align}

\begin{lemma}[$\cL^2$-flatness estimate]
There exist $\beta(d) > 0$ and $C(d,\Lambda, \rho) < \infty$  such that for every $p,q \in B_1$ and $m \in \mathbb{N}$,
\begin{align}\label{eq:mainFlatness}
\frac{1}{\rho \vert \cu_{m+1}\vert} \norm{ v(\cdot, \cu_{m+1}, p, q) - \ell_{\bar \a_m^{-1}q - p, \cu_{m+1}}}^2_{\cL^2} \leq C 3^{2m} \Ll(3^{-\beta m} +  \sum_{n=0}^m 3^{-\beta(m-n)}\tau_n\Rr).
\end{align}
\end{lemma}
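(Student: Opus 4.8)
The plan is to center $v := v(\cdot,\cu_{m+1},p,q)$ by the affine function carrying its expected slope and then invoke the multiscale Poincar\'e inequality. Put $p_*:=\bar\a_m^{-1}q-p$ with $\bar\a_m:=\bar\a_*(\cu_m)$ and $w:=v-\ell_{p_*,\cu_{m+1}}$; since $\cu_{m+1}$ is centered at the origin one has $\Er[\ell_{p_*,\cu_{m+1}}\,\vert\,\G_{m+1}]=0$, and as $\Er[v\,\vert\,\G_{m+1}]=0$ this gives $\Er[w\,\vert\,\G_{m+1}]=0$ with $w\in\cH^1(\cu_{m+1})$. Applying \Cref{prop:Multi} to $w$ and using $\nabla w=\nabla v-p_*$ together with the linearity of $\S_{m+1,k}$ and the fact that it fixes constants,
\begin{equation*}
\norm{w}_{\cL^2}\lesssim\Ll(\Er\Ll[\int_{\cu_{m+1}}\vert\nabla v-p_*\vert^2\,\d\mu\Rr]\Rr)^{\frac12}+\sum_{k=0}^{m+1}3^k\Ll(\Er\Ll[\int_{\cu_{m+1}}\vert\S_{m+1,k}\nabla v-p_*\vert^2\,\d\mu\Rr]\Rr)^{\frac12}.
\end{equation*}
The first summand is $\lesssim(\rho\vert\cu_{m+1}\vert)^{1/2}$: by \cref{eq:JEnergy}, \cref{eq:JSubadditive} and the ellipticity bounds of \Cref{prop:NuBase}, $\Er[\tfrac1{\rho\vert\cu_{m+1}\vert}\int_{\cu_{m+1}}\nabla v\cdot\a\nabla v\,\d\mu]=J(\cu_{m+1},p,q)\le J(\cu_0,p,q)\le C$, and $\vert p_*\vert\le C$; hence this summand contributes at most $C\le C3^{2m}3^{-\beta m}$ to $\tfrac1{\rho\vert\cu_{m+1}\vert}\norm{w}_{\cL^2}^2$ for any $\beta\le2$.

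So the work is in the scale-$k$ terms. Fix $k\le m+1$ and write $v_z:=v(\cdot,z+\cu_k,p,q)$, $\bar\a_k:=\bar\a_*(\cu_k)$. On a subcube $z+\cu_k$ ($z\in\Z_{m+1,k}$), $\S_{m+1,k}\nabla v$ equals the $\G_{m+1,k}$-conditional expectation of $\fint_{z+\cu_k}\nabla v\,\d\mu$, and I would write $\S_{m+1,k}\nabla v-p_*$ as the sum of (i) the $\G_{m+1,k}$-conditional expectation of $\fint_{z+\cu_k}(\nabla v-\nabla v_z)\,\d\mu$; (ii) the $\G_{m+1,k}$-conditional expectation of $\fint_{z+\cu_k}\nabla v_z\,\d\mu-(\bar\a_k^{-1}q-p)$; and (iii) the deterministic vector $(\bar\a_k^{-1}-\bar\a_m^{-1})q$. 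Weighting by $\mu(z+\cu_k)$, which is $\G_{m+1,k}$-measurable, summing over $z$ and taking $\Er$: by conditional Jensen the (i)-contribution is at most $\sum_z\Er[\int_{z+\cu_k}\vert\nabla v-\nabla v_z\vert^2\,\d\mu]$, which by \Cref{lem:JTwoScale} is $\lesssim\rho\vert\cu_{m+1}\vert\,(J(\cu_k,p,q)-J(\cu_{m+1},p,q))\le\rho\vert\cu_{m+1}\vert\sum_{j=k}^m\tau_j$ (by \cref{eq:defErrorJ}); the (iii)-contribution is $\rho\vert\cu_{m+1}\vert\,\vert(\bar\a_k^{-1}-\bar\a_m^{-1})q\vert^2\lesssim\rho\vert\cu_{m+1}\vert(\sum_{j=k}^{m-1}\tau_j)^2$ (by \cref{eq:ErrorABN}); and the (ii)-contribution is, again by conditional Jensen and stationarity, $\rho\vert\cu_{m+1}\vert$ times the normalized $\mu$-weighted variance of $\fint_{\cu_k}\nabla v(\cdot,\cu_k,p,q)\,\d\mu$ about its $\mu$-weighted mean, the latter being exactly $\bar\a_k^{-1}q-p$ by \cref{eq:JSlope}. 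For this last quantity I would establish an algebraic bound $\lesssim3^{-\delta k}$ for some $\delta(d)>0$; decomposing $\nabla v(\cdot,\cu_k,p,q)=\nabla u(\cdot,\cu_k,q)-\nabla v(\cdot,\cu_k,p)$ via \cref{eq:JVExpression} reduces it to the fluctuation of the quenched matrix $\a_*^{-1}(\cu_k;\G_{\cu_k})$ about $\bar\a_k^{-1}$ (see \cref{eq:Slope2}) plus within-box fluctuations of the $\cH^1_0$-minimizer (see \cref{eq:Slope1}), and both are controlled by the Efron--Stein inequality, using that the Poisson process restricted to disjoint unit cells of $\cu_k$ is independent and that resampling one cell moves these normalized quantities by $O(3^{-kd})$. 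Altogether
\begin{equation*}
3^k\Ll(\Er\Ll[\int_{\cu_{m+1}}\vert\S_{m+1,k}\nabla v-p_*\vert^2\,\d\mu\Rr]\Rr)^{\frac12}\lesssim3^k(\rho\vert\cu_{m+1}\vert)^{\frac12}\Ll(\sum_{j=k}^m\tau_j+3^{-\delta k}\Rr)^{\frac12}.
\end{equation*}

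Summing this over $k$: Cauchy--Schwarz in $k$ and then exchanging the order of summation give
\begin{equation*}
\sum_{k=0}^{m+1}3^k\Ll(\sum_{j=k}^m\tau_j\Rr)^{\frac12}\le\Ll(\sum_{k=0}^{m+1}3^k\Rr)^{\frac12}\Ll(\sum_{k=0}^{m+1}3^k\sum_{j\ge k}\tau_j\Rr)^{\frac12}\lesssim3^{\frac m2}\Ll(\sum_{j=0}^m3^j\tau_j\Rr)^{\frac12}=3^m\Ll(\sum_{j=0}^m3^{-(m-j)}\tau_j\Rr)^{\frac12},
\end{equation*}
while $\sum_{k=0}^{m+1}3^{k(1-\delta/2)}\lesssim3^m3^{-\beta m/2}$ with $\beta:=\min(\delta,2)>0$. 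Combining with the crude first term we get $\norm{w}_{\cL^2}\lesssim(\rho\vert\cu_{m+1}\vert)^{1/2}3^m(3^{-\beta m}+\sum_{n=0}^m3^{-(m-n)}\tau_n)^{1/2}$; squaring, dividing by $\rho\vert\cu_{m+1}\vert$, and lowering $\beta$ to at most $1$ so that $3^{-(m-n)}\le3^{-\beta(m-n)}$, yields \cref{eq:mainFlatness}. The main obstacle is term (ii): the algebraic variance bound for the spatial average of the optimizer's gradient is the only genuinely probabilistic ingredient, and it relies on the independence of the Poisson point process; along the way one must also handle the mismatch between $\fint_{z+\cu_k}=\mu(z+\cu_k)^{-1}\int_{z+\cu_k}$ and $(\rho\vert\cu_k\vert)^{-1}\int_{z+\cu_k}$, most cleanly by isolating the exponentially unlikely event that $\mu(z+\cu_k)$ is far from $\rho\vert\cu_k\vert$ and using the crude energy bound there.
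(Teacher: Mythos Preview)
Your overall structure coincides with the paper's: center by the affine function, apply the multiscale Poincar\'e inequality (\Cref{prop:Multi}), and at each scale $k$ split $\S_{m+1,k}\nabla v - p_*$ into the two-scale error (your (i), bounded via \Cref{lem:JTwoScale}), the matrix drift (your (iii), bounded via \cref{eq:ErrorABN}), and the fluctuation of the local optimizer's average slope about its mean (your (ii)). Your final Cauchy--Schwarz summation is also the paper's.

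The gap is in (ii). You propose an algebraic bound $\lesssim 3^{-\delta k}$ via Efron--Stein, resting on the claim that resampling one unit cell moves $\a_*^{-1}(\cu_k;\G_{\cu_k})$ by $O(3^{-kd})$. This sensitivity estimate is not established, and it is the heart of the matter: the quenched matrix is defined through a global variational problem on $\cH^1(\cu_k)$, and controlling its response to a local perturbation of the coefficient field would require regularity input (a Meyers-type estimate) that is not available here---indeed, avoiding such sensitivity estimates is precisely the point of the renormalization approach. The paper instead isolates this as a separate lemma (\Cref{l:mainVariance}) and proves it by a \emph{recursive} argument: compare $\S_n\nabla v(\cu_n)$ with $\{\S_{n,n-2}\nabla v(z+\cu_{n-2})\}_{z\in\Z_{n,n-2}}$ at cost $(\tau_{n-2}+\tau_{n-1})^{1/2}$ via \Cref{lem:JTwoScale}, then exploit the finite-range independence of the $v(z+\cu_{n-2})$ for well-separated $z$ to obtain a contraction $(V_n)^{1/2}\lesssim (5/9)^{d/2}(V_{n-2})^{1/2}+(\tau_{n-2}+\tau_{n-1})^{1/2}+3^{-dn}$. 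You correctly anticipate that the random weights $\mu(z+\cu_{n-2})$ must be handled by restricting to the good event that all subcube counts are near their mean---the paper does exactly this---but the independence is exploited at the mesoscopic scale $3^{n-2}$, not the unit scale, and only after the two-scale comparison has reduced the problem to centered, nearly independent blocks with a deterministically bounded envelope (see \cref{eq:XmTrivialBound}). Iterating the recursion yields \cref{eq:mainVariance}, which then feeds back into the flatness estimate exactly as in your final paragraph.
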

\begin{proof}
In the rest of the proof, we write $v(U) := v(\cdot, U, p, q)$ as we will not change $p,q$ in the proof. Since $\Er\Ll[v(\cu_{m+1}) - \ell_{\bar \a_m^{-1}q - p, \cu_{m+1}} \vert \G_{m+1}\Rr] = 0$, we can use the multiscale Poincar\'e inequality \cref{eq:Multi} 
\begin{equation}\label{eq:mainFlatnessDecom}
\begin{split}
& \frac{1}{(\rho \vert \cu_{m+1}\vert)^{\frac{1}{2}}} \norm{ v(\cu_{m+1}) - \ell_{\bar \a_m^{-1}q - p, \cu_{m+1}}}_{\cL^2} \\
& \leq  C \Ll(\Er\Ll[\frac{1}{\rho \vert\cu_{m+1}\vert} \int_{\cu_{m+1}} \vert\nabla v(\cu_{m+1}) - (\ab_m^{-1}q - p) \vert^2 \, \d \mu \Rr]\Rr)^{\frac{1}{2}} \\
&\qquad   + C \sum_{n=0}^{m+1} 3^n \Ll(\Er\Ll[\frac{1}{\rho \vert\cu_{m+1}\vert} \int_{\cu_{m+1}} \vert \S_{m+1,n} \nabla v(\cu_{m+1}) - (\ab_m^{-1}q - p)\vert^2 
 \, \d \mu \Rr]\Rr)^{\frac{1}{2}}.
\end{split}
\end{equation}
The first term on the right side above is of constant order, by \cref{eq:JEnergy}. For the second term, we use a two-scale comparison for every $0 \leq n \leq m+1$ that 
\begin{equation}\label{eq:mainFlatnessDecom2}
\begin{split}
&\Ll(\Er\Ll[\frac{1}{\rho \vert\cu_{m+1}\vert} \int_{\cu_{m+1}} \vert \S_{m+1,n} \nabla v(\cu_{m+1}) - (\ab_m^{-1}q - p)\vert^2 
 \, \d \mu \Rr]\Rr)^{\frac{1}{2}} \\
& \leq \vert \ab_m^{-1} - \ab_n^{-1} \vert + 
\Ll(\Er\Ll[\frac{1}{\rho \vert\cu_{m+1}\vert} \sum_{z \in \Z_{m+1,n}}\int_{z+\cu_n} \vert \S_{m+1,n} \nabla v(\cu_{m+1}) - \S_{m+1,n} \nabla v(z+\cu_n)\vert^2 
 \, \d \mu \Rr]\Rr)^{\frac{1}{2}} \\
& \qquad + \Ll(\Er\Ll[\frac{1}{\rho \vert\cu_{m+1}\vert} \sum_{z \in \Z_{m+1,n}}\int_{z+\cu_n} \vert \S_{m+1,n} \nabla v(z+\cu_n) - (\ab_n^{-1}q - p)\vert^2 
 \, \d \mu \Rr]\Rr)^{\frac{1}{2}}.
\end{split}
\end{equation}
For the first term $\vert \ab_m^{-1} - \ab_n^{-1} \vert$ we have 
\begin{align*}
\vert \ab_m^{-1} - \ab_n^{-1} \vert^2 \leq C(d,\Lambda)\vert \ab_m^{-1} - \ab_n^{-1} \vert \leq \sum_{k=n}^{m-1} \tau_k.
\end{align*}
For the second term in \cref{eq:mainFlatnessDecom2}, recalling \cref{eq:defSnk}, we use Jensen's inequality 
and \cref{eq:JTwoScale} to get 
\begin{align*}
&\Er\Ll[\frac{1}{\rho \vert\cu_{m+1}\vert} \sum_{z \in \Z_{m+1,n}}\int_{z+\cu_n} \vert \S_{m+1,n} \nabla v(\cu_{m+1}) - \S_{m+1,n} \nabla v(z+\cu_n)\vert^2 
 \, \d \mu \Rr] \\
& \quad \leq  \Er\Ll[\frac{1}{\rho \vert\cu_{m+1}\vert} \sum_{z \in \Z_{m+1,n}}\int_{z+\cu_n} \vert \nabla v(\cu_{m+1}) - \nabla v(z+\cu_n)\vert^2 
 \, \d \mu \Rr] \\
& \quad \leq  \sum_{k=n}^m \tau_k.
\end{align*}
For the third term \cref{eq:mainFlatnessDecom2}, we use \cref{eq:SnkMt}, Jensen's inequality,
and stationarity. Here we remark that the operator $\S_{n,n}^z$ is a conditional expectation with more information than $\S_{m+1, n}$.
\begin{align*}
&\Er\Ll[\frac{1}{\rho \vert\cu_{m+1}\vert} \sum_{z \in \Z_{m+1,n}}\int_{z+\cu_n} \vert \S_{m+1,n} \nabla v(z+\cu_n) - (\ab_n^{-1}q - p)\vert^2 
 \, \d \mu \Rr] \\
& \quad \leq \Er\Ll[\frac{1}{\rho \vert\cu_{m+1}\vert} \sum_{z \in \Z_{m+1,n}}\int_{z+\cu_n} \vert \S_{n,n}^z \nabla v(z+\cu_n) - (\ab_n^{-1}q - p)\vert^2 
 \, \d \mu \Rr] \\
& \quad =  \Er\Ll[\frac{1}{\rho \vert\cu_{n}\vert} \int_{\cu_{n}} \vert \S_{n} \nabla v(\cu_{n}) - (\ab_n^{-1}q - p)\vert^2 
 \, \d \mu \Rr].
\end{align*}
The estimation of this term is postponed to the next step. We will prove in \Cref{l:mainVariance} below that
\begin{align*}
\Er\Ll[\frac{1}{\rho \vert\cu_{n}\vert} \int_{\cu_{n}} \vert \S_{n} \nabla v(\cu_{n}) - (\ab_n^{-1}q - p)\vert^2 
 \, \d \mu \Rr] \leq  C 3^{-\beta n} + \sum_{k=0}^{n-1}3^{-\beta(n-k)}\tau_k. 
\end{align*}
We put these estimates back to \cref{eq:mainFlatnessDecom} and obtain that 
\begin{align*}
\frac{1}{(\rho \vert \cu_{m+1}\vert)^{\frac{1}{2}}} \norm{ v(\cu_{m+1}) - \ell_{\ab_m^{-1}q - p, \cu_{m+1}}}_{\cL^2} \leq C \sum_{n=0}^{m} 3^n \Ll(3^{-\beta n} + \sum_{k=0}^{n-1}3^{-\beta(n-k)}\tau_k + \sum_{k=n}^m \tau_k\Rr)^\frac{1}{2}.
\end{align*}
We square the two sides and use the Cauchy-Schwarz inequality to obtain
\begin{align*}
& \frac{1}{\rho \vert \cu_{m+1}\vert}\norm{ v(\cu_{m+1}) - \ell_{\ab_m^{-1}q - p, \cu_{m+1}}}_{\cL^2}^2 
\\
& \qquad \leq C \Ll(\sum_{n=0}^{m} 3^n\Rr) \Ll(\sum_{n=0}^{m} 3^n \Ll(3^{-\beta n} + \sum_{k=0}^{n-1}3^{-\beta(n-k)}\tau_k + \sum_{k=n}^m \tau_k\Rr)\Rr)
\\
& \qquad \leq  C 3^{2m} \Ll(3^{-\beta m} +  \sum_{n=0}^m 3^{-\beta(m-n)}\tau_n\Rr),
\end{align*}
as announced.
\end{proof}

\subsection{Step 3: variance estimate} In this part, we prove the following variance estimate, which was used in Step 2.
\begin{lemma}[Variance estimate]
\label{l:mainVariance}
There exist $\beta(d) > 0$ and $C(d,\Lambda, \rho) < \infty$ such that for every $p,q \in B_1$ and $n \in \mathbb{N}$,
\begin{align}
\label{eq:mainVariance}
\Er\Ll[\frac{1}{\rho \vert \cu_n \vert} \int_{\cu_n} \vert\S_{n} \nabla v(\mu, \cu_n, p, q) - (\ab_n^{-1}q - p)\vert^2 \, \d \mu\Rr] \leq C 3^{-\beta n} + \sum_{k=0}^{n-1}3^{-\beta(n-k)}\tau_k.
\end{align}
\end{lemma}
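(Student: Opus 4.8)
The plan is to reduce \eqref{eq:mainVariance} to a variance bound for a volume-normalized spatial average of $\nabla v(\cdot,\cu_n,p,q)$, and then to run a telescoping over the nested cubes $\cu_0(x)\subset\cu_1(x)\subset\cdots\subset\cu_n$ combined with subadditivity and finite range of dependence. Write $v(U):=v(\cdot,U,p,q)$. By \eqref{eq:defSnk} and \eqref{eq:JSlope}, $\S_n\nabla v(\cu_n)(\mu,x)=\Er\!\left[\fint_{\cu_n}\nabla v(\cu_n)\,\d\mu\mid\G_n\right]=\a_*^{-1}(\cu_n;\G_n)q-p$ is independent of $x\in\cu_n$ and bounded in norm by $2$, so the left side of \eqref{eq:mainVariance} equals $\Er\!\left[\tfrac{\mu(\cu_n)}{\rho\vert\cu_n\vert}\,\vert\S_n\nabla v(\cu_n)-(\ab_n^{-1}q-p)\vert^2\right]$; using the Poisson tail bound for $\mu(\cu_n)$ around $\rho\vert\cu_n\vert$ one may replace the weight $\tfrac{\mu(\cu_n)}{\rho\vert\cu_n\vert}$ by $1$ at the cost of $Ce^{-c\rho 3^{dn}}$. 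Since $\mu(\cu_n)$ is $\G_n$-measurable, $\S_n\nabla v(\cu_n)=\tfrac1{\mu(\cu_n)}\Er[\int_{\cu_n}\nabla v(\cu_n)\,\d\mu\mid\G_n]$; because this conditional integral has norm at most $2\mu(\cu_n)$ and $\Er[(1-\tfrac{\mu(\cu_n)}{\rho\vert\cu_n\vert})^2]=(\rho 3^{dn})^{-1}$, it differs in $\cL^2$ from $\Er[\tfrac1{\rho\vert\cu_n\vert}\int_{\cu_n}\nabla v(\cu_n)\,\d\mu\mid\G_n]$ by at most $C3^{-dn/2}$. A final application of conditional Jensen shows that it suffices to prove $\big\Vert\tfrac1{\rho\vert\cu_n\vert}\int_{\cu_n}\nabla v(\cu_n)\,\d\mu-(\ab_n^{-1}q-p)\big\Vert_{\cL^2}^2\le C3^{-\beta n}+C\sum_{k=0}^{n-1}3^{-\beta(n-k)}\tau_k$.

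For this, for $x\in\cu_n$ write $\nabla v(\cu_n)(\mu,x)=\sum_{j=0}^{n-1}\big(\nabla v(\cu_{j+1}(x))-\nabla v(\cu_j(x))\big)+\nabla v(\cu_0(x))$, average against $\tfrac1{\rho\vert\cu_n\vert}\,\d\mu$ over $\cu_n$, and subtract the matching telescoping $\ab_n^{-1}q-p=\sum_{j=0}^{n-1}(\ab_{j+1}^{-1}-\ab_j^{-1})q+(\ab_0^{-1}q-p)$ (recall $\ab_m:=\ab_*(\cu_m)$). Letting $\mathcal S_{j+1}$ denote the set of scale-$(j+1)$ subcubes of $\cu_n$ and, for $S\in\mathcal S_{j+1}$, $Y_S:=\tfrac1{\rho\vert\cu_{j+1}\vert}\int_S(\nabla v(S)-\nabla v(\cu_j(\cdot)))\,\d\mu$, this yields $\tfrac1{\rho\vert\cu_n\vert}\int_{\cu_n}\nabla v(\cu_n)\,\d\mu-(\ab_n^{-1}q-p)=\sum_{j=0}^{n-1}R_j+R_{-1}$ with $R_j:=\tfrac1{\vert\mathcal S_{j+1}\vert}\sum_{S\in\mathcal S_{j+1}}\big(Y_S-(\ab_{j+1}^{-1}-\ab_j^{-1})q\big)$ and $R_{-1}$ the analogous quantity built from $\nabla v(\cu_0(\cdot))$. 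The key observation is that, by stationarity and the slope identity \eqref{eq:JSlope}, $\Er[Y_S]=(\ab_{j+1}^{-1}-\ab_j^{-1})q$ \emph{exactly}, and similarly $\Er[\tfrac1{\rho\vert\cu_0\vert}\int_{\cu_0}\nabla v(\cu_0)\,\d\mu]=\ab_0^{-1}q-p$, so each $R_j$ and $R_{-1}$ is centered; this is what makes spatial averaging contract the $\cL^2$ norm and produces the geometric weights.

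To estimate $R_j$, note that by \Cref{prop:NuBase}(1) and \eqref{eq:JVExpression} the gradients $\nabla v(S)$ and $\nabla v(\cu_j(\cdot))$ are $\mcl F_{B_1(S)}$-measurable on $S$, so $Y_S$ is $\mcl F_{B_1(S)}$-measurable and the family $(Y_S)_{S\in\mathcal S_{j+1}}$ has finite range of dependence; hence $\var(R_j)\le C3^{-d(n-j-1)}\max_S\var(Y_S)\le C3^{-d(n-j-1)}\max_S\Er[Y_S^2]$. One then bounds $\Er[Y_S^2]$ via $\Er[(\int_S W\,\d\mu)^2]\le\Er[\mu(S)\int_S\vert W\vert^2\,\d\mu]$ with $W:=\nabla v(S)-\nabla v(\cu_j(\cdot))$, the two-scale comparison \Cref{lem:JTwoScale} between scales $j$ and $j+1$ (which gives $\Er[\int_S\vert W\vert^2\,\d\mu]\le C\rho\vert\cu_{j+1}\vert(J(\cu_j,p,q)-J(\cu_{j+1},p,q))\le C\rho\vert\cu_{j+1}\vert\,\tau_j$), and the concentration of $\mu(S)$; this produces $\Er[Y_S^2]\le C\tau_j+Ce^{-c\rho 3^{d(j+1)}}$, whence $\Vert R_j\Vert_{\cL^2}^2=\var(R_j)\le C3^{-d(n-j)}\big(\tau_j+e^{-c\rho 3^{d(j+1)}}\big)$. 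The base term is treated identically, using in place of the two-scale comparison the energy bound $\Er[\tfrac1{\rho\vert\cu_0\vert}\int_{\cu_0}\vert\nabla v(\cu_0)\vert^2\,\d\mu]\le 2\Lambda J(\cu_0,p,q)\le C$ from \eqref{eq:JEnergy}, giving $\Vert R_{-1}\Vert_{\cL^2}^2\le C3^{-dn}$. Summing by the triangle inequality in $\cL^2$ and then Cauchy--Schwarz over $j$ (using $\sum_j 3^{-d(n-j)/2}\le C$ and $\sum_j 3^{-d(n-j)/2}e^{-c\rho 3^{d(j+1)}/2}\le C3^{-dn/2}$, the latter since the summand is maximized at $j=O(1)$) gives the claim with, say, $\beta:=d/2$.

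I expect the main obstacle to be the per-cube estimate $\Er[Y_S^2]\le C\tau_j+(\text{negligible})$, that is, obtaining a dependence on $J(\cu_j,p,q)-J(\cu_{j+1},p,q)$ that is essentially multiplicative rather than additive: the two-scale comparison \Cref{lem:JTwoScale} supplies this on the event where $\mu(S)$ is of the typical size $\rho\vert\cu_{j+1}\vert$, while on the exponentially rare event that $\mu(S)$ is much larger one must fall back on crude a priori integrability of $\nabla v$ coming from uniform ellipticity — this contribution enters with a super-exponentially small prefactor and is washed out after summing over scales, but making it precise is the one point requiring care. Everything else (the telescoping, the exact centering, and the finite-range-of-dependence bookkeeping) is routine.
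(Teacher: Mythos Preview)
Your reduction (first paragraph) and the telescoping set-up are fine, and the centering $\Er[Y_S]=(\ab_{j+1}^{-1}-\ab_j^{-1})q$ together with the finite range of dependence are correct. The gap is exactly where you flag it: the bound $\Er[Y_S^2]\le C\tau_j+Ce^{-c\rho 3^{d(j+1)}}$ on the rare event $\{\mu(S)\gg\rho|S|\}$ cannot be closed with ``crude a priori integrability''. Uniform ellipticity gives only the first moment $\Er\big[\int_S|W|^2\,\d\mu\big]\le C\rho|S|$, not a second moment or a bound conditional on $\mu(S)$. Because $\mu(S)$ and $\int_S|W|^2\,\d\mu$ are \emph{correlated} (both are functions of the same Poisson cloud), the estimate $\Er\big[\mathbf 1_{G^c}\,\mu(S)\int_S|W|^2\,\d\mu\big]\le e^{-c\rho|S|}$ would require at least $\Er\big[(\int_S|W|^2\,\d\mu)^2\big]<\infty$, and no such bound is available: the Dirichlet-type minimizer $v(\cdot,U,p)$ in the decomposition \eqref{eq:JVExpression} is defined by a variational problem over $\cH^1_0(U)$, a space that couples different values of $\mu(U)$, so one cannot decouple and obtain energy control conditional on $\G_U$ (by contrast, one does have $\Er[\int_U|\nabla u(\cdot,U,q)|^2\,\d\mu\mid\G_U]\le|q|^2\mu(U)$ for the $\nu^*$-maximizer, but this covers only half of $\nabla v(\cdot,U,p,q)$).

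The paper sidesteps this by a recursive argument comparing $V_n$ to $V_{n-2}$, and the crucial difference is that its building blocks $X_z:=\S_{n,n-2}\nabla v(z+\cu_{n-2})-(\ab_{n-2}^{-1}q-p)$ are \emph{pointwise bounded}: by the conditional slope identity \eqref{eq:Slope2}--\eqref{eq:JSlope}, $X_z=\Er[\a_*^{-1}(z+\cu_{n-2};\G_{z+\cu_{n-2}})-\ab_{n-2}^{-1}\mid\G_{n,n-2}]\,q$, hence $|X_z|\le\Lambda$ deterministically. This is what makes the bad event harmless. Even on the good event the random weights $m_z=\mu(z+\cu_{n-2})$ obstruct a direct use of independence; the paper handles this with a positivity trick: after splitting $|\sum_z m_zX_z|^2$ into near and far pairs and bounding the near pairs via Young's inequality (replacing one factor $m_z$ by its deterministic upper bound on the good event), the resulting expression is still $\ge|\sum_z m_zX_z|^2\ge0$, so the good-event indicator can be \emph{dropped}; only then does the expectation of the far-pair sum vanish by independence. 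Your $Y_S$'s admit no analogous pointwise bound, so this route is closed to you as stated; to salvage the telescoping approach you would have to replace $Y_S$ by a conditionally averaged version, which brings you essentially back to the paper's scheme.
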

\begin{proof}
In the rest of the proof, we write $v(U) := v(\cdot, U, p, q)$, as we will not change $p,q$ in the proof. From \cref{eq:SlopABN}, we know that the average slope of $v(\cu_n)$ is $(\ab_n^{-1}q - p)$, and notice that $v(\cu_n)$ is $\mcl F_{B_1(\cu_n)}$-measurable. Thus the idea is to use $\{v(z+\cu_k)\}_{z \in \Z_{n,k}}$ to approximate  \cref{eq:mainVariance} in scale $3^k$ with some error, and then apply the independence for $v(z+\cu_k)$ and $v(z'+\cu_k)$ for $\dist(z, z')$ large. However, different from the standard elliptic setting, here we will see a renormalization with random weights.

We start by relaxing \cref{eq:mainVariance} to $\G_{n,n-2}$. We observe that in fact $\S_{n}\nabla v(\cu_n)$ is constant in $\cu_n$, so
\begin{align*}
\int_{\cu_n} \vert\S_{n} \nabla v(\cu_n) - (\ab_n^{-1}q - p)\vert^2 \, \d \mu = \frac{1}{\mu(\cu_n)} \Ll\vert\int_{\cu_n}\Ll( \S_{n} \nabla v(\cu_n) - (\ab_n^{-1}q - p) \Rr)\, \d \mu \Rr\vert^2.
\end{align*}
We denote by $V_n$ the left hand side of \cref{eq:mainVariance}. By triangle inequality, we have 
\begin{align}\label{eq:mainVarianceDecom}
(V_n)^{\frac{1}{2}} \leq \text{\cref{eq:mainVarianceDecom}-a} + \text{\cref{eq:mainVarianceDecom}-b} + \text{\cref{eq:mainVarianceDecom}-c},
\end{align}
with
\begin{align*}
\text{\cref{eq:mainVarianceDecom}-a} &= \vert\ab_n^{-1} - \ab_{n-2}^{-1}\vert, \\
\text{\cref{eq:mainVarianceDecom}-b} &=  \Ll(\Er\Ll[\frac{1}{\rho \vert \cu_n \vert}\frac{1}{\mu(\cu_{n})} \Ll\vert\sum_{z \in \Z_{n,n-2}}\int_{z+\cu_{n-2}} \Ll(\S_{n,n} \nabla v(\cu_n) - \S_{n,n-2} \nabla v(z+\cu_{n-2}) \Rr)\, \d \mu \Rr\vert^2 \Rr]\Rr)^{\frac{1}{2}},\\
\text{\cref{eq:mainVarianceDecom}-c} &=  \Ll(\Er\Ll[\frac{1}{\rho \vert \cu_n \vert}\frac{1}{\mu(\cu_{n})} \Ll\vert\sum_{z \in \Z_{n,n-2}}\int_{z+\cu_{n-2}} \Ll(\S_{n,n-2} \nabla v(z+\cu_{n-2}) - (\ab_{n-2}^{-1}q - p) \Rr) \, \d \mu \Rr\vert^2 \Rr]\Rr)^{\frac{1}{2}}.
\end{align*}

The term \cref{eq:mainVarianceDecom}-a can be controlled by \cref{eq:ErrorABN}:
\begin{align}\label{eq:mainVarianceDecomBoundA}
\text{\cref{eq:mainVarianceDecom}-a} \leq C(\tau_{n-2} + \tau_{n-1})^{\frac{1}{2}}.
\end{align}

For the term \cref{eq:mainVarianceDecom}-b, recalling \cref{eq:SnkMt} and \cref{eq:defSnk}, we use Jensen's inequality 
and the two-scale comparison  \cref{eq:JTwoScale} to get 
\begin{equation}\label{eq:mainVarianceDecomBoundB}
\begin{split}
\text{\cref{eq:mainVarianceDecom}-b} &\leq  \Ll(\Er\Ll[\frac{1}{\rho \vert \cu_n \vert} \sum_{z \in \Z_{n,n-2}}\int_{z+\cu_{n-2}} \vert\S_{n,n} \nabla v(\cu_n) - \S_{n,n-2} \nabla v(z+\cu_{n-2})\vert^2 \, \d \mu \Rr]\Rr)^{\frac{1}{2}}\\
&\leq   \Ll(\Er\Ll[\frac{1}{\rho \vert \cu_n \vert} \sum_{z \in \Z_{n,n-2}}\int_{z+\cu_{n-2}} \vert \nabla v(\cu_n) -  \nabla v(z+\cu_{n-2})\vert^2 \, \d \mu \Rr]\Rr)^{\frac{1}{2}}\\
&\leq (\tau_{n-2} + \tau_{n-1})^{\frac{1}{2}}.
\end{split}
\end{equation}

The term \cref{eq:mainVarianceDecom}-c is the key for our result. To simplify a little more the notation, we write
\begin{equation}\label{eq:defXm}
\Ll\{
\begin{array}{ll}
X_{z} :=  \S_{n,n-2} \nabla v(z+\cu_{n-2})(\mu, z) - (\ab_{n-2}^{-1}q - p) ,\\
m_z := \mu(z+\cu_{n-2}).
\end{array}
\Rr.
\end{equation} 
Notice that $X_z, m_z$ are $\mcl F_{z+\cu_{n-1}}$-measurable. With this notation in place, we have
\begin{align*}
\int_{z+\cu_{n-2}} \Ll(\S_{n,n-2} \nabla v(z+\cu_{n-2}) - (\ab_{n-2}^{-1}q - p) \Rr)\, \d \mu &= m_z X_z,
\end{align*}
and by \cref{eq:SlopABN}, 
\begin{equation*}  
\Er[m_z X_z] = 0.
\end{equation*}
The term \cref{eq:mainVarianceDecom}-c we want to estimate can be rewritten as
\begin{align*}
\text{\cref{eq:mainVarianceDecom}-c} &=  \Ll(\Er\Ll[\frac{1}{\rho \vert \cu_n \vert}\frac{\Ll(\sum_{z \in \Z_{n,n-2}} m_z X_z \Rr)^2}{\sum_{z \in \Z_{n,n-2} }m_z} \Rr]\Rr)^{\frac{1}{2}}. 
\end{align*}
If the coefficients $m_z$ were deterministic, then we would be able to leverage on the finite range of dependence of $X_z$ in this variance term.
However, since the number of particles $m_z$ is random, we introduce the event
\begin{align}\label{eq:defGoodConfiguration}
\mcl C_{n,\rho,\delta} := \Ll\{\mu \in \mmd(\Rd): \forall z \in \Z_{n,n-2}, \Ll\vert\frac{\mu(z+\cu_{n-2})}{\rho\vert \cu_{n-2} \vert} - 1\Rr\vert \leq \delta, \text{ and } \Ll\vert\frac{\mu(\cu_{n})}{\rho\vert \cu_{n} \vert} - 1\Rr\vert \leq \delta\Rr\},
\end{align}
thus we can divide \cref{eq:mainVarianceDecom}-c into two terms 
\begin{align*}
\text{\cref{eq:mainVarianceDecom}-c} &\leq \text{\cref{eq:mainVarianceDecom}-c1} + \text{\cref{eq:mainVarianceDecom}-c2},\\
\text{\cref{eq:mainVarianceDecom}-c1} &=  \Ll(\Er\Ll[\frac{\Ind{(\mcl C_{n,\rho,\delta})^c}}{\rho \vert \cu_n \vert}\frac{\Ll(\sum_{z \in \Z_{n,n-2}} m_z X_z \Rr)^2}{\sum_{z \in \Z_{n,n-2} }m_z}  \Rr]\Rr)^{\frac{1}{2}}, \\
\text{\cref{eq:mainVarianceDecom}-c2} &= \Ll(\Er\Ll[\frac{\Ind{\mcl C_{n,\rho,\delta}}}{\rho \vert \cu_n \vert}\frac{\Ll(\sum_{z \in \Z_{n,n-2}} m_z X_z \Rr)^2}{\sum_{z \in \Z_{n,n-2} }m_z}  \Rr]\Rr)^{\frac{1}{2}}.
\end{align*} 
For the term \cref{eq:mainVarianceDecom}-c1, we know that $(\mcl C_{n,\rho,\delta})^c$ is not typical in large scales, and we have the Chernoff bound 
\begin{align*}
\Pr[\mu \notin \mcl C_{n,\rho,\delta}] \leq 3^{2d+1} \exp\Ll(-\frac{\rho \vert\cu_{n-2}\vert \delta^2}{4}\Rr).
\end{align*}
Moreover, by the Cauchy-Schwarz inequality,
\begin{align*}
\frac{\Ll(\sum_{z \in \Z_{n,n-2}} m_z X_z \Rr)^2}{\sum_{z \in \Z_{n,n-2} }m_z} \leq \sum_{z \in \Z_{n,n-2}} m_z \vert X_z \vert^2.
\end{align*}
We need a bound for the term $\vert X_z \vert^2$: recalling the definition in  \cref{eq:defSnk} and \cref{eq:JSlope}, 
\begin{align*}
\S_{n-2,n-2}^z \nabla v(z+\cu_{n-2})(\mu, z) &= \Er\Ll[\fint_{z+\cu_{n-2}} \nabla v(z+\cu_{n-2}) \, \d\mu \ \Bigg\vert \ \G_{n-2,n-2}^z  \Rr]\\
&= \a(z+\cu_{n-2}; \G_{z+\cu_{n-2}})^{-1}q - p.
\end{align*}
Using the martingale structure of \cref{eq:SnkMt}, we have
\begin{align*}
X_z &= \S_{n,n-2} \nabla v(z+\cu_{n-2})(\mu, z) - (\ab_{n-2}^{-1}q - p) \\
&= \Er\Ll[\fint_{z+\cu_{n-2}} \S_{n-2,n-2}^z \nabla v(z+\cu_{n-2}) \, \d\mu \ \Bigg\vert \ \G_{n,n-2}  \Rr] - (\ab_{n-2}^{-1}q - p)\\
&= \Er\Ll[ \a(z+\cu_{n-2}; \G_{z+\cu_{n-2}})^{-1} - \ab_{n-2}^{-1} \ \Bigg\vert \ \G_{n,n-2}  \Rr]q.
\end{align*} 
Then we use Jensen's inequality and the bound of ${\id \leq \a(z+\cu_{n-2}; \G_{z+\cu_{n-2}}) \leq \Lambda \id}$ 
\begin{equation}\label{eq:XmTrivialBound}
\begin{split}
\vert X_z \vert^2 &= \vert\S_{n,n-2} \nabla v(z+\cu_{n-2}) - (\ab_{n-2}^{-1}q - p) \vert^2 \\
&= \Er\Ll[ \vert\a(z+\cu_{n-2}; \G_{z+\cu_{n-2}}) - \ab_{n-2}^{-1}\vert^2 \, \vert \, \G_{n,n-2}\Rr] \leq \Lambda^2.
\end{split}
\end{equation}
This concludes that 
\begin{equation}\label{eq:mainVarianceDecomBoundC1}
\begin{split}
\text{\cref{eq:mainVarianceDecom}-c1} \leq \Lambda^2\Er\Ll[\frac{\Ind{(\mcl C_{n,\rho,\delta})^c}}{\rho \vert \cu_n \vert}\mu(\cu_n)\Rr] &\leq C(d,\Lambda)\frac{1}{\rho \vert\cu_{n-2}\vert}\exp\Ll(-\frac{\rho \vert\cu_{n-2}\vert \delta^2}{4}\Rr)\\
&\leq C(d, \Lambda, \rho) 3^{-dn}.
\end{split}
\end{equation}

Finally, we treat \cref{eq:mainVarianceDecom}-c2. We calculate \cref{eq:mainVarianceDecom}-c2 at first with the conditional expectation with respect to $\G_{n,n-2}$. Clearly, $\mcl C_{n,\rho,\delta}$ is $\G_{n,n-2}$-measurable, and under this condition ${\mu(\cu_n) \geq (1-\delta)\rho \vert \cu_n \vert}$, so we have 
\begin{equation}\label{eq:mainVarianceDecomC2}
\begin{split}
\vert\text{\cref{eq:mainVarianceDecom}-c2}\vert^2 &= \frac{1}{\rho \vert \cu_n \vert}\Er\Ll[\frac{\Ind{\mcl C_{n,\rho,\delta}}}{\mu(\cu_{n})} \Er\Ll[\Ll(\sum_{z \in \Z_{n,n-2}} m_z X_z \Rr)^2 \ \Bigg \vert \ \G_{n,n-2} \Rr]\Rr] \\
&\leq \frac{1}{\rho \vert \cu_n \vert}\Er\Ll[\frac{1}{(1-\delta)\rho \vert \cu_n \vert} \Er\Ll[\Ind{\mcl C_{n,\rho,\delta}} \Ll(\sum_{z \in \Z_{n,n-2}} m_z X_z \Rr)^2 \ \Bigg \vert \ \G_{n,n-2} \Rr]\Rr].
\end{split}
\end{equation}
We would like to develop the term $\vert\sum_{z \in \Z_{n,n-2}} m_z X_z \vert^2$ and also drop out the indicator term. The argument here is deterministic
\begin{align*}
\Ll\vert\sum_{z \in \Z_{n,n-2}} m_z X_z \Rr\vert^2 &= \sum_{\substack{z,z' \in \Z_{n,n-2}\\
\vert z - z'\vert_{\infty} < 3^{n-1}}}  m_z m_{z'} X_z \cdot X_{z'} +  \sum_{\substack{z,z' \in \Z_{n,n-2}\\
\vert z - z'\vert_{\infty} \geq 3^{n-1}}}   m_z m_{z'} X_z \cdot X_{z'}\\
&\leq \frac{1}{2}\sum_{\substack{z,z' \in \Z_{n,n-2}\\
\vert z - z'\vert_{\infty} < 3^{n-1}}} \Ll( (m_z)^2\vert X_z\vert^2 + (m_{z'})^2\vert X_{z'}\vert^2 \Rr) +  \sum_{\substack{z,z' \in \Z_{n,n-2}\\
\vert z - z'\vert_{\infty} \geq 3^{n-1}}}  m_z m_{z'} X_z \cdot X_{z'},
\end{align*} 
where $\vert z - z'\vert_{\infty} := \max_{1 \leq i \leq d} \vert z_i - z'_i\vert$. We now add back the indicator $\Ind{\mcl C_{n,\rho,\delta}}$ and develop it
\begin{equation}\label{eq:mainVarianceTrick}
\begin{split}
&\Ind{\mcl C_{n,\rho,\delta}}\Ll\vert\sum_{z \in \Z_{n,n-2}} m_z X_z \Rr\vert^2 \\
&\leq \Ind{\mcl C_{n,\rho,\delta}}\Ll( \frac{(1+\delta)\rho \vert \cu_{n-2} \vert}{2}\sum_{\substack{z,z' \in \Z_{n,n-2}\\
\vert z - z'\vert < 3^{n-1}}} \Ll(m_z \vert X_z\vert^2 + m_{z'}\vert X_{z'}\vert^2\Rr) + \sum_{\substack{z,z' \in \Z_{n,n-2}\\
\vert z - z'\vert \geq 3^{n-1}}}  m_z m_{z'} X_z \cdot X_{z'}\Rr) \\
&\leq \frac{(1+\delta)\rho \vert \cu_{n-2} \vert}{2}\sum_{\substack{z,z' \in \Z_{n,n-2}\\
\vert z - z'\vert < 3^{n-1}}} \Ll(m_z \vert X_z\vert^2 + m_{z'}\vert X_{z'}\vert^2\Rr) + \sum_{\substack{z,z' \in \Z_{n,n-2}\\
\vert z - z'\vert \geq 3^{n-1}}}  m_z m_{z'} X_z \cdot X_{z'}.    
\end{split}
\end{equation} 
From the first line to the second line above, we use that $m_z \leq (1+\delta)\rho \vert \cu_{n-2} \vert$ under the event $\mcl C_{n,\rho,\delta}$. We then keep in mind that the quantity in $\Big( \cdots\Big)$ on the second line of \cref{eq:mainVarianceTrick} is always larger than $\Ll\vert\sum_{z \in \Z_{n,n-2}} m_z X_z \Rr\vert^2$, so it is nonnegative. Therefore, from the second line to the third line, we can drop the indicator function in front. Inserting this estimate into \cref{eq:mainVarianceDecomC2}, we obtain that 
\begin{align*}
\vert\text{\cref{eq:mainVarianceDecom}-c2}\vert^2 \leq \frac{1}{\rho \vert \cu_n \vert} \frac{(1+\delta) \vert \cu_{n-2} \vert}{(1-\delta) \vert \cu_n \vert} \sum_{\substack{z,z' \in \Z_{n,n-2}\\
\vert z - z'\vert_{\infty} < 3^{n-1}}}\Er\Ll[  \frac{1}{2}\Ll(m_z \vert X_z\vert^2 + m_{z'}\vert X_{z'}\vert^2\Rr)\Rr] \\
+ \frac{1}{\rho \vert \cu_n \vert} \frac{1}{(1-\delta) \vert \cu_n \vert} \sum_{\substack{z,z' \in \Z_{n,n-2}\\
\vert z - z'\vert_{\infty} \geq 3^{n-1}}} \Er\Ll[ m_z m_{z'} X_z \cdot X_{z'}\Rr].
\end{align*}
The sum in the second line is $0$, because for $\vert z - z'\vert_{\infty} \geq 3^{n-1}$, $m_z X_z$ and $m_{z'}X_{z'}$ are independent, 
\begin{align*}
\Er\Ll[ m_z m_{z'} X_z \cdot X_{z'}\Rr] = \Er\Ll[ m_z X_z \Rr] \cdot \Er\Ll[ m_{z'} X_{z'}\Rr] = 0.
\end{align*}
For the sum in the first line, $\Er[m_z \vert X_z\vert^2]$ is nothing but
\begin{align*}
\Er\Ll[{\int_{z+\cu_{n-2}} \vert \S_{n,n-2} \nabla v(z+\cu_{n-2}) - (\ab_{n-2}^{-1}q - p) \vert^2 \, d\mu}\Rr].
\end{align*} 
We use Jensen's inequality 
to shrink the operator to $\S_{n-2,n-2}^z$ that 
\begin{align*}
&\Er\Ll[{\int_{z+\cu_{n-2}} \vert \S_{n,n-2} \nabla v(z+\cu_{n-2}) - (\ab_{n-2}^{-1}q - p) \vert^2 \, d\mu}\Rr]\\ 
& \quad \leq \Er\Ll[{\int_{z+\cu_{n-2}} \vert \S_{n-2,n-2}^z \nabla v(z+\cu_{n-2}) - (\ab_{n-2}^{-1}q - p) \vert^2 \, d\mu}\Rr] \\
& \quad = \Er\Ll[{\int_{\cu_{n-2}} \vert \S_{n-2} \nabla v(\cu_{n-2}) - (\ab_{n-2}^{-1}q - p) \vert^2 \, d\mu}\Rr].
\end{align*}
There are at most $9^d \times 5^d$ pairs $z,z' \in \Z_{n,n-2}$ such that $\vert z - z'\vert_{\infty} < 3^{n-1}$; see \Cref{fig:variance} for an illustration. Therefore, we obtain 
\begin{align*}
\vert\text{\cref{eq:mainVarianceDecom}-c2}\vert^2 &\leq \Ll(\frac{5}{9}\Rr)^d \Ll(\frac{1+\delta}{1-\delta}\Rr) \Er\Ll[\frac{1}{\rho \vert \cu_{n-2}\vert}\int_{\cu_{n-2}} \vert \S_{n-2} \nabla v(\cu_{n-2}) - (\ab_{n-2}^{-1}q - p) \vert^2 \, d\mu\Rr]\\
&=  \Ll(\frac{5}{9}\Rr)^d \Ll(\frac{1+\delta}{1-\delta}\Rr) V_{n-2},
\end{align*}
where we recall that $V_n$ is the left hand side of \cref{eq:mainVariance}. We put this estimate together with \cref{eq:mainVarianceDecomBoundA}, \eqref{eq:mainVarianceDecomBoundB}, \eqref{eq:mainVarianceDecomBoundC1} back to \cref{eq:mainVarianceDecom} to obtain the recurrence relation
\begin{align*}
(V_{n})^{\frac{1}{2}} \leq \Ll(\frac{5}{9}\Rr)^\frac{d}{2} \Ll(\frac{1+\delta}{1-\delta}\Rr)^{\frac{1}{2}} (V_{n-2})^{\frac{1}{2}} + C(\tau_{n-2} + \tau_{n-1})^{\frac{1}{2}} + C3^{-dn}.
\end{align*}
By choosing $\delta(d)> 0$ sufficiently small, we obtain the desired result \cref{eq:mainVariance}.
\end{proof}
\begin{figure}[h!]
\centering
\includegraphics[scale=0.5]{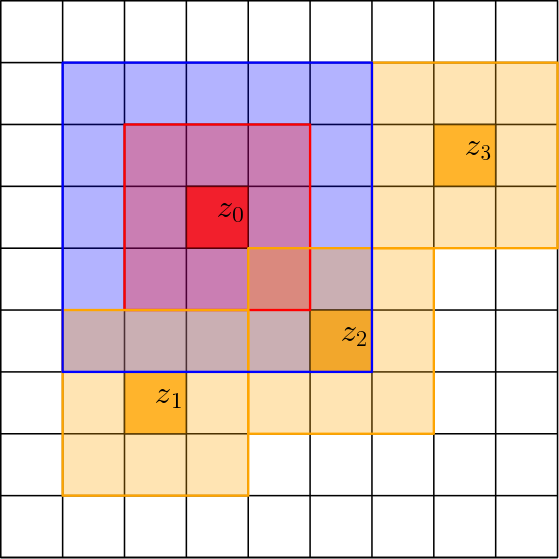}
\caption{In the cube $\square_n$ and all its sub-cubes $\{z+\square_{n-2}\}_{z \in \Z_{n,n-2}}$, for a chosen sub-cube $z_0 + \square_{n-2}$ (the cube in dark red), the support of ${v(z_0 + \square_{n-2})}$ is in $z_0 + \square_{n-1}$ (the cube in light red), so it has at most $5^d$ cubes of scale $3^{n-2}$ whose associated function has a support intersecting with $z_1 + \square_{n-1}$ (the cube in blue). For example, $v(z_2+\square_{n-2})$ has correlation with $v(z_0+\square_{n-2})$, while ${v(z_1+\square_{n-2})}, {v(z_3+\square_{n-2})}$ do not. This gives us the contraction factor $\Ll(\frac{5}{9}\Rr)^{d}$.}\label{fig:variance}
\end{figure}

\subsection{Step 4: iterations} Once we obtain the estimate \cref{eq:JamTau}, it remains to do some numerical iterations, similarly to  \cite[Page 59-60]{AKMbook}. For the reader's convenience, we recall the main steps here. Let $\{\e_i\}_{1 \leq i \leq d}$ denote the canonical basis in $\Rd$, and define 
\begin{align*}
F_m := \sum_{i=1}^d J(\cu_m, \e_i, \ab_m \e_i).
\end{align*}
In order to obtain an exponential decay for $(F_m)_{m \geq 0}$, we first introduce a weighted version of this quantity:
\begin{align*}
\tilde{F}_m := \sum_{n=0}^{m}3^{-\frac{\beta}{2}(m-n)}F_n.
\end{align*}  
Here the exponent $\beta$ is the same as in \cref{eq:JamTau}. It is clear that $F_m \leq \tilde{F}_m$, so it suffices to prove an exponential decay for $(\tilde{F}_m)_{m \geq 0}$. We will do so by proving a recurrence equation of type $ \tilde{F}_{m+1} \le C(\tilde{F}_m - \tilde{F}_{m+1})$ for some constant $C(d,\Lambda) < \infty$. Thus in the following, we calculate some bounds for $(\tilde{F}_m - \tilde{F}_{m+1})$ and $\tilde{F}_{m+1}$.

Starting with $(\tilde{F}_m - \tilde{F}_{m+1})$, we write
\begin{align*}
\tilde{F}_m - \tilde{F}_{m+1} \geq \sum_{n=0}^n 3^{-\frac{\beta }{2}(m-n)} (F_n - F_{n+1}) - C3^{-\frac{\beta m}{2}}.
\end{align*} 
Noticing that $\ab_{n+1} p$ is the minimizer for the mapping ${q \mapsto J(\cu_{n+1}, p, q)}$ in \cref{eq:defJ}, we have  
\begin{align}\label{eq:Fbound}
F_{n+1} = \sum_{i=1}^d J(\cu_{n+1}, \e_i, \ab_{n+1} \e_i) \leq \sum_{i=1}^d J(\cu_{n+1}, \e_i, \ab_n \e_i).
\end{align} 
Using also \cref{eq:defJ}, that ${\id \le \ab_n \le \Lambda \id}$, and that $p \mapsto \nu(\cu_n,p) -\nu(\cu_{n+1},p)$ and $q \mapsto \nu^*(\cu_n,q) -\nu^*(\cu_{n+1},q)$ are positive semidefinite quadratic forms, we get
\begin{align*}
F_n - F_{n+1} 
& \geq \sum_{i=1}^d (J(\cu_n, \e_i, \ab_n \e_i) - J(\cu_{n+1}, \e_i, \ab_n \e_i)) \\
& = \sum_{i = 1}^d (\nu(\cu_n,\e_i) - \nu(\cu_{n+1},\e_i)) + \sum_{i = 1}^d (\nu^*(\cu_n,\ab_n \e_i) - \nu^*(\cu_{n+1},\ab_n \e_i))\\
& \ge C^{-1} \Ll( \sup_{p \in B_1} \Ll( \nu(\cu_n,p) - \nu(\cu_{n+1},p) \Rr)  + \sup_{q \in B_1} \Ll( \nu^*(\cu_n,q) - \nu^*(\cu_{n+1},q) \Rr) \Rr) \\
& \geq C^{-1} \tau_n,
\end{align*}
and thus 
\begin{equation}\label{eq:FLower}
\tilde{F}_m - \tilde{F}_{m+1} \geq C^{-1}\sum_{n=0}^m 3^{-\frac{\beta}{2}(m-n)} \tau_n - C3^{-\frac{\beta m}{2}}.
\end{equation}
For the upper bound of $\tilde{F}_{m+1}$, we use \cref{eq:Fbound} to see that $F_n \leq F_{n+1}$, so 
\begin{align*}
\tilde{F}_{m+1} &= 3^{-\frac{\beta}{2}(m+1)} F_0 + \sum_{n=0}^{m}3^{-\frac{\beta}{2}(m-n)}F_{n+1} \\
&\leq C3^{-\frac{\beta m}{2}} + \sum_{n=0}^{m}3^{-\frac{\beta}{2}(m-n)}F_{n}.
\end{align*}
Then we apply \cref{eq:JamTau} into the result above to get 
\begin{equation}\label{eq:FUpper}
\begin{split}
\tilde{F}_{m+1} &\leq  C3^{-\frac{\beta m}{2}} + \sum_{n=0}^{m}3^{-\frac{\beta}{2}(m-n)}\Ll(3^{-\beta n} +  \sum_{k=0}^n 3^{-\beta(n-k)}\tau_k\Rr) \\
&\leq C3^{-\frac{\beta m}{2}} + 3^{-\frac{\beta}{2}m}\sum_{k=0}^m \tau_k \sum_{n=k}^m 3^{\frac{\beta}{2}(2k - n)} \\
&\leq C3^{-\frac{\beta m}{2}} + C\sum_{k=0}^m 3^{-\frac{\beta}{2}(m-k)}\tau_k.  
\end{split}
\end{equation}
We combine \cref{eq:FLower} and \cref{eq:FUpper}, to obtain $C(\tilde{F}_m - \tilde{F}_{m+1} + \tilde{C}3^{-\frac{\beta m}{2}}) \geq \tilde{F}_{m+1}$, which implies 
\begin{align*}
\tilde{F}_{m+1} \leq \theta \tilde{F}_m + C 3^{-\frac{\beta m}{2}},
\end{align*}
for some $\theta(d,\Lambda) \in (0,1)$. We thus conclude for the exponential decay of $(\tilde{F}_m)_{m \geq 0}$, and thus also of $F_m$, since $F_m \le \tilde{F}_m$. By \cref{eq:JBoundError}, this completes the proof of \Cref{thm:main}.

%
%
%
%
%
%
%
%

\appendix
\section{Some elementary properties of the function spaces}\label{sec:Appendix}

\begin{lemma}[Canonical projection]\label{lem:AppendixProjection}
Let $f : \mmd(\Rd) \to \R$ be a function, and for every Borel set $U$, measure $\mu \in \mmd(\Rd)$, and $n \in \N$, let $f_n(\cdot, \mu \mres U^c)$ denote the (permutation-invariant) function
\begin{equation*}  
f_n(\cdot, \mu \mres U^c) :
\Ll\{
\begin{array}{rcl}  
U^n  & \to & \R \\
(x_1, \ldots, x_n) & \mapsto & f \Ll( \sum_{i = 1}^n \de_{x_i} + \mu \mres U^c \Rr). 
\end{array}
\Rr.
\end{equation*}

The following statements are equivalent. 

(1) The function $f$ is $\mcl F$-measurable. 

(2) For every $n \in \N$, the function $f_n$ is $\mcl B_U^{\otimes n} \otimes \mcl F_{U^c}$-measurable. 

\end{lemma}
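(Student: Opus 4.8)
The plan is to establish the two implications separately. The implication $(1)\Rightarrow(2)$ is a routine composition argument, while $(2)\Rightarrow(1)$ carries the real content: I would partition $\mmd(\Rd)$ according to the number of points that $\mu$ places in $U$ and, on each piece, reconstruct $f$ from the relevant $f_n$ via a measurable enumeration of the atoms of $\mu\mres U$. I expect that enumeration --- a measurable selection for the action of the symmetric group on $U^n$ --- to be the only genuinely technical point.

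For $(1)\Rightarrow(2)$, I would introduce, for each $n\in\N$, the insertion map $T_n\colon U^n\times\mmd(\Rd)\to\mmd(\Rd)$, $T_n(x_1,\dots,x_n,\nu):=\sum_{i=1}^n\delta_{x_i}+\nu\mres U^c$, so that by definition $f_n(x_1,\dots,x_n,\mu\mres U^c)=f\bigl(T_n(x_1,\dots,x_n,\mu)\bigr)$. One checks that $T_n$ is measurable from $\bigl(U^n\times\mmd(\Rd),\ \mcl B_U^{\otimes n}\otimes\mcl F_{U^c}\bigr)$ to $(\mmd(\Rd),\mcl F)$ by testing against the generators $\mu\mapsto\mu(V)$ of $\mcl F$: for Borel $V\subset\Rd$ one has $T_n(x_1,\dots,x_n,\nu)(V)=\sum_{i=1}^n\1_V(x_i)+\nu(V\setminus U)$, whose first term is $\mcl B_U^{\otimes n}$-measurable in $(x_1,\dots,x_n)$ and whose second term is $\mcl F_{U^c}$-measurable in $\nu$, since $V\setminus U\subset U^c$. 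As $f$ is $\mcl F$-measurable, $f_n=f\circ T_n$ is then $\mcl B_U^{\otimes n}\otimes\mcl F_{U^c}$-measurable; the $\Pr$-null sets adjoined in completing $\mcl F$ pull back to null sets for the corresponding reference measure and require no separate treatment.

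For $(2)\Rightarrow(1)$, I would set $A_j:=\{\mu\in\mmd(\Rd):\mu(U)=j\}$ for $j\in\{0,1,2,\dots\}$ and $A_\infty:=\{\mu(U)=\infty\}$, which are $\mcl F$-measurable and partition $\mmd(\Rd)$, so that it suffices to show $f\1_{A_j}$ is $\mcl F$-measurable for each $j$. The central step is to produce, for each finite $j$, $\mcl F$-measurable maps $\xi^{(j)}_1,\dots,\xi^{(j)}_j\colon A_j\to U$ with $\mu\mres U=\sum_{i=1}^j\delta_{\xi^{(j)}_i(\mu)}$ on $A_j$, multiplicities included. I would build these in the classical way: fix for each $\ell\in\N$ a countable Borel partition $(C_{\ell,m})_{m\in\N}$ of $U$ into sets of diameter at most $2^{-\ell}$, refining the partition at level $\ell-1$, and locate the atoms one at a time by descending the resulting tree of cells, at each level following the integer counts $\mu(C_{\ell,m})$ so as to peel off atoms with the correct multiplicities; since these counts depend $\mcl F$-measurably on $\mu$, so do the maps $\xi^{(j)}_i$ (this is the measurable-enumeration statement behind point-process representations, cf.\ \cite{bookPoisson}). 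On $A_j$ one then has, directly from the definition of $f_j$, the identity $f_j\bigl(\xi^{(j)}_1(\mu),\dots,\xi^{(j)}_j(\mu),\mu\mres U^c\bigr)=f(\mu)$, since $\sum_{i=1}^j\delta_{\xi^{(j)}_i(\mu)}+\mu\mres U^c=\mu$ (and by permutation-invariance of $f_j$ this is independent of the chosen enumeration rule). Now $\mu\mapsto\mu\mres U^c$ is $\mcl F_{U^c}$-measurable and each $\xi^{(j)}_i$ is $\mcl F$-measurable, so $\mu\mapsto\bigl(\xi^{(j)}_1(\mu),\dots,\xi^{(j)}_j(\mu),\mu\mres U^c\bigr)$ is measurable from $(A_j,\mcl F)$ into $\bigl(U^j\times\mmd(\Rd),\ \mcl B_U^{\otimes j}\otimes\mcl F_{U^c}\bigr)$; composing with $f_j$, measurable by hypothesis, gives that $f\1_{A_j}$ is $\mcl F$-measurable. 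On $A_\infty$ the family $(f_n)_{n\in\N}$ carries no information, but this case is harmless in the setting of the paper --- for bounded $U$ one has $\Pr[A_\infty]=0$, hence $f\1_{A_\infty}$ is $\mcl F$-measurable since $\mcl F$ contains the $\Pr$-null sets, and for general Borel $U$ the statement is to be read on $\{\mu(U)<\infty\}$. Summing the countably many measurable functions $f\1_{A_j}$ then shows that $f$ is $\mcl F$-measurable.

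The step I expect to be the main obstacle is the measurable enumeration of the atoms of $\mu\mres U$, i.e.\ exhibiting a measurable right inverse of the symmetrization map $U^j\to\mmd(\Rd)$, $(x_1,\dots,x_j)\mapsto\sum_{i=1}^j\delta_{x_i}$, onto its image; both the reconstruction of $f$ on $A_j$ and the passage from the symmetric functions $f_j$ to an $\mcl F$-measurable function rest on it. The remaining ingredients --- measurability of $T_n$, of the restriction maps $\mu\mapsto\mu\mres U$ and $\mu\mapsto\mu\mres U^c$, and of the sets $A_j$, together with the bookkeeping for the completion --- are routine once the enumeration is available.
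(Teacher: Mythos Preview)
Your proof is correct, and it takes a genuinely different route from the paper's argument.

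For $(1)\Rightarrow(2)$, the paper proceeds by checking the claim on a generating class of indicator functions of the form $\1_{\{\mu(V_1)=n_1\}}\1_{\{\mu(V_2)=n_2\}}\1_{\{\mu(U)=n\}}$ with $V_1\subset U$, $V_2\subset U^c$, and computes $\{f_n=1\}$ explicitly as a union over permutations. Your composition argument via the insertion map $T_n$ is more conceptual and arguably cleaner: once $T_n$ is shown to be measurable on generators, the conclusion is immediate for arbitrary $\mcl F$-measurable $f$ without any monotone-class bookkeeping.

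For $(2)\Rightarrow(1)$, the approaches diverge more substantially. The paper stays on the side of generators: it observes that permutation-invariant $\mcl B_U^{\otimes n}\otimes\mcl F_{U^c}$-measurable functions are generated by symmetrized products $\sum_{\sigma\in S_n}\prod_i\1_{\{x_{\sigma(i)}\in V_i\}}$, then reduces by an inclusion--exclusion step to the case where the $V_i$ are pairwise disjoint or equal, in which case the symmetrized indicator is exactly $\prod_j\1_{\{\mu(\tilde V_j)=n_j\}}$, visibly $\mcl F$-measurable. You instead invoke a measurable enumeration of the atoms of $\mu\mres U$ and write $f$ on each stratum $\{\mu(U)=j\}$ as $f_j$ composed with the enumeration and the restriction $\mu\mapsto\mu\mres U^c$. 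Your approach is the structural one standard in point-process theory and generalises readily; the paper's is more self-contained, avoiding any appeal to measurable selection. Your remark on the case $\mu(U)=\infty$ is also a useful clarification that the paper leaves implicit.
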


\begin{proof}
We start from $(1) \Rightarrow (2)$. Because ${\mcl F = \mcl F_U \otimes \mcl F_{U^c}}$, it suffices to study the product function
\begin{align*}
{f = \Ind{\mu(V_1)=n_1}\Ind{\mu(V_2)=n_2}\Ind{\mu(U)=n}},    
\end{align*}
for some Borel sets ${V_1 \subset U}, {V_2 \subset U^c}$. In this case, we have 
\begin{align*}
\{f_n = 1\} &= \{\mu(V_1)=n_1\} \cap \{\mu(V_2)=n_2\} \cap\{\mu(U)=n\} \\
&= \bigcup_{\sigma \in S_n} \Ll(\bigcap_{i=1}^{n_1} \{x_{\sigma(i) } \in V_1\} \bigcap_{j={n_1 + 1}}^n \{x_{\sigma(j)} \in (U \backslash V_1)\} \bigcap  \{\mu(V_2)=n_2\} \Rr),
\end{align*}
where $S_n$ is the symmetric group. This proves that $f_n$ is $\mcl B_U^{\otimes n} \otimes \mcl F_{U^c}$-measurable.

\smallskip
We turn to $(2) \Rightarrow (1)$. Let us pick a suitable $f_n$ and $\mu \mres U = \sum_{i=1}^n \delta_{x_i}$, then the main point is to establish the $\mcl F$-measurable property. Since $f_n$ is $\mcl B_U^{\otimes n} \otimes \mcl F_{U^c}$-measurable and permutation-invariant, it suffices to study the function of type 
\begin{align}\label{eq:ProjectionBasis}
f_n = \sum_{\sigma \in S_n} \Ll(\prod_{i=1}^n \Ind{x_{\sigma(i)} \in V_i}\Rr) \Ind{\mu \mressmall U^c(V_0) = n_0}\Ind{\mu(U)=n},
\end{align}
for $\{V_i\}_{0 \leq i\leq n}$ Borel sets. This is still a complicated function, but we can add one more condition
\begin{align}\label{eq:ProjectionHypo}
\forall 1 \leq i,j \leq n, \quad V_i = V_j \text{ or } V_i \cap V_j = \emptyset.
\end{align}
For example, let $\{\tilde{V}_j\}_{0 \leq j\leq m}$ be all the different elements in $\{V_i\}_{0 \leq i\leq n}$, and $\tilde{V}_j$ appears $n_j$ times. For the functions of type \cref{eq:ProjectionBasis} satisfying the condition \cref{eq:ProjectionHypo}, the $\mcl F$-measurable property is easy to treat since we have 
\begin{multline*}
\sum_{\sigma \in S_n} \Ll(\prod_{i=1}^n \Ind{x_{\sigma(i)} \in V_i}\Rr) \Ind{\mu \mressmall U^c(V_0) = n_0}\Ind{\mu(U)=n} 
\\
= \Ll(\prod_{j=1}^m \Ind{\mu(\tilde{V}_j) = n_j}\Rr)\Ind{\mu \mressmall U^c(V_0) = n_0}\Ind{\mu(U)=n},
\end{multline*}
which is an $\mcl F$-measurable function.

Finally, let us conclude that for a general $f_n$ in \cref{eq:ProjectionBasis}, they can be decomposed into the sum of the functions with the propriety \cref{eq:ProjectionHypo}. Let us see the case $n = 2$, where we have the following decomposition
\begin{align*}
\Ind{x_1 \in V_1}\Ind{x_2 \in V_2} &= (\Ind{x_1 \in (V_1 \backslash V_2)} + \Ind{x_1 \in (V_1 \cap V_2)})(\Ind{x_2 \in (V_2 \backslash V_1)} + \Ind{x_2 \in (V_1 \cap V_2)})\\
&= \Ind{x_1 \in (V_1 \backslash V_2)}\Ind{x_2 \in (V_2 \backslash V_1)} + \Ind{x_1 \in (V_1 \backslash V_2)}\Ind{x_2 \in (V_1 \cap V_2)} \\
& \qquad + \Ind{x_1 \in (V_1 \cap V_2)}\Ind{x_2 \in (V_2 \backslash V_1)} + \Ind{x_1 \in (V_1 \cap V_2)}\Ind{x_2 \in (V_1 \cap V_2)}.
\end{align*}
For a general $n$, one can use induction and this concludes the proof.  
\end{proof}

\begin{proposition}\label{prop:AppendixLocalization}
For every $s > 0$ and $f \in \cH^1(Q_s)$, we have $\A_s f \in \cH^1(Q_s)$, and for every $x \in \supp(\mu) \cap Q_s$
\begin{align}\label{eq:Commute2}
\nabla (\A_s f)(\mu, x) = \A_s (\nabla f)(\mu, x).
\end{align}
Moreover, if $s > 2$ and $f \in \mcl A(Q_s)$, then  $\A_s f \in \mcl A(Q_{s-2})$.
\end{proposition}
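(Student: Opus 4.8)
The plan is to verify the three assertions in turn, all of them ultimately reducing to the explicit formula for $\nabla \A_{s,\ep}f$ (specialized to the case at hand) together with the projection and commutation properties already recorded in the excerpt.

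\textbf{Step 1: $\A_s f \in \cH^1(Q_s)$ and the commutation identity.} First I would recall that $\A_s f = \Er[f \mid \mcl F_{\bar Q_s}]$ is an averaging of $\mu\mres \bar Q_s^c$ keeping $\mu\mres \bar Q_s$ fixed, so by \Cref{lem:Projection} the functions $(\A_s f)_n(\cdot, \mu\mres Q_s^c)$ are obtained from $f_n$ by integrating out the outside configuration against the Poisson law; since $f \in \cH^1(Q_s)$, each $f_n(\cdot,\mu\mres Q_s^c)$ lies in $H^1((Q_s)^n)$ for $\Pr$-a.e.\ configuration, and averaging preserves this membership by Jensen. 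For any $x \in \supp(\mu)\cap Q_s$, differentiating under the conditional expectation is legitimate because moving the point $x$ within $Q_s$ does not alter the $\sigma$-algebra $\mcl F_{\bar Q_s}$; this gives \cref{eq:Commute2}, i.e.\ $\nabla(\A_s f) = \A_s(\nabla f)$ on $Q_s\cap\supp(\mu)$. Combining with the quantitative bound $\Er[\int_{Q_s}|\A_s(\nabla f)|^2\,\d\mu] \le \Er[\int_{Q_s}|\nabla f|^2\,\d\mu]$ (Jensen again, using that conditioning commutes with integration over $Q_s\cap\supp(\mu)$) and $\Er[(\A_s f)^2]\le\Er[f^2]$, we get $\|\A_s f\|_{\cH^1(Q_s)}\le\|f\|_{\cH^1(Q_s)}<\infty$, so $\A_s f\in\cH^1(Q_s)$.

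\textbf{Step 2: $\A_s f \in \mcl A(Q_{s-2})$ when $f\in\mcl A(Q_s)$ and $s>2$.} Here the point is that for $\varphi\in\cH^1_0(Q_{s-2})$, we want $\Er[\int_{Q_{s-2}}\nabla(\A_s f)\cdot\a\nabla\varphi\,\d\mu]=0$. On $Q_{s-2}$ the coefficient $\a(\mu,x) = \a_\circ(\tau_{-x}\mu)$ is $\mcl F_{B_1(Q_{s-2})}$-measurable, hence $\mcl F_{\bar Q_s}$-measurable (since $B_1(Q_{s-2})\subset Q_s$). Thus, using Step 1's commutation identity and the tower property of conditional expectation,
\begin{align*}
\Er\Ll[\int_{Q_{s-2}}\nabla(\A_s f)\cdot\a\nabla\varphi\,\d\mu\Rr]
&= \Er\Ll[\int_{Q_{s-2}}\A_s(\nabla f)\cdot\a\nabla\varphi\,\d\mu\Rr]\\
&= \Er\Ll[\Er\Ll[\int_{Q_{s-2}}\nabla f\cdot\a\nabla\varphi\,\d\mu \ \Big\vert\ \mcl F_{\bar Q_s}\Rr]\Rr]
= \Er\Ll[\int_{Q_{s-2}}\nabla f\cdot\a\nabla\varphi\,\d\mu\Rr],
\end{align*}
where in the middle step I pull $\a$ and $\nabla\varphi$ (both $\mcl F_{\bar Q_s}$-measurable, the latter since $\varphi$ depends only on $\mu\mres\bar Q_{s-2}$) inside the conditional expectation and use that $\mu\mres Q_{s-2}$ is $\mcl F_{\bar Q_s}$-measurable so the spatial integral commutes with the conditioning. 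Finally, since $\cH^1_0(Q_{s-2})\subset\cH^1_0(Q_s)$ and $f\in\mcl A(Q_s)$, the last expression vanishes, giving $\A_s f\in\mcl A(Q_{s-2})$.

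\textbf{Main obstacle.} The routine-looking parts (Jensen bounds, density reductions) are genuinely routine, but the one delicate issue is justifying the interchange of $\nabla$ with $\A_s=\Er[\,\cdot\mid\mcl F_{\bar Q_s}]$, i.e.\ \cref{eq:Commute2}, with full rigor: one must argue that the difference quotients defining $\nabla f$ converge in $\cL^2$ after conditioning (so that the conditional expectation of the limit is the limit of conditional expectations), and handle the distinction between interior points $x\in Q_s$ and points approaching $\partial Q_s$. For the statement as phrased only $x\in\supp(\mu)\cap Q_s$ is needed, so boundary effects of the type visible in \cref{eq:ADerivative} do not enter; this is why the identity is clean. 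The cleanest route is to reduce everything, via \Cref{lem:Projection}, to the classical fact that conditional expectation (here, integrating out finitely many outside coordinates against a smooth Poisson density) commutes with weak differentiation in the inside coordinates on $(Q_s)^n$, which is standard. I would also remark, as the excerpt already indicates, that this proposition is exactly what licenses the use of $\A_{3^m+2}v(\cu_{m+1})$ as an element of $\mcl A(\cu_m)$ in the proof of \Cref{thm:main}, since $\A_{s,\ep}$ is a weighted average of operators $\A_{s'}$ with $s'\ge s$.
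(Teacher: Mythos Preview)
Your proposal is correct and follows essentially the same route as the paper: both reduce via the canonical projection to the classical $H^1((Q_s)^n)$ setting, use Jensen for the $\cH^1$ bound, and for the harmonic part exploit that $\a$ and $\nabla\varphi$ are $\mcl F_{\bar Q_s}$-measurable on $Q_{s-2}$ to push the conditional expectation through. The one place where the paper is more explicit than you is precisely the point you flag as the ``main obstacle'': to justify \cref{eq:Commute2} rigorously, the paper first restricts to $f\in\cC^\infty(Q_s)\cap\cH^1(Q_s)$, then introduces a truncation $f^{n,M}:=f\,\Ind{\mu(\bar Q_s)=n}\Ind{\|\nabla f_n\|_{L^\infty((Q_s)^n)}\le M}$ (the $L^\infty$ bound on $\nabla f_n$ being $\mcl F_{\bar Q_s^c}$-measurable), applies the mean value theorem plus dominated convergence to swap the limit and the outside-integration, and finally removes the cutoff by showing $\{\A_s f^{n,M}\}_M$ is Cauchy in $\cH^1(Q_s)$---a concrete execution of exactly the density-plus-DCT strategy you outline.
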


\begin{proof}
At first, we should remark the well-definedness of the right side of \cref{eq:Commute2}. Notice that the Poisson measure can be decomposed as a sum of the independent parts ${\mu = \mu \mres \ov{Q}_s + \mu \mres \ov{Q}_s^c}$, we have 
\begin{align*}
\A_s f = \int_{\mmd(\Rd)} f(\mu \mres \ov{Q}_s + \mu' \mres \ov{Q}_s^c) \, \d \Pr(\mu').
\end{align*}
Thus the right-hand side of \cref{eq:Commute2} is defined as 
\begin{align}\label{eq:Commute2Right}
\A_s (\nabla f)(\mu, x) := \int_{\mmd(\Rd)} \nabla f(\mu \mres \ov{Q}_s + \mu' \mres \ov{Q}_s^c, x)\, \d \Pr(\mu').
\end{align}
%
We prove \cref{eq:Commute2} and $\A_s f \in \cH^1(Q_s)$ for the functions in ${\cC^{\infty}(Q_s) \cap \cH^1(Q_s)}$ as they are dense, and we can focus on the case $\mu(\ov{Q}_s) = n$ fixed. We use Lemma~\ref{lem:AppendixProjection} to write 
$$
f\Ll(\sum_{i = 1}^n \de_{x_i} + \mu \mres \ov{Q}_s^c\Rr)= f_n(x_1, \cdots, x_n, \mu \mres \ov{Q}_s^c).
$$ 
Following the property of product measure, for every $(x_1, x_2, \cdots, x_n) \in (\ov{Q}_s)^n$, the mapping 
\begin{align*}
{\mu \mres \ov{Q}_s^c \mapsto f_n(x_1, \cdots, x_n, \mu \mres \ov{Q}_s^c)},    
\end{align*}
is $\mcl F_{\ov{Q}_s^c}$-measurable. Thus for every $(x_1, x_2, \cdots, x_n) \in (Q_s)^n$, the mapping 
\begin{align*}
{\mu \mres \ov{Q}_s^c \mapsto \nabla_{x_k} f_n(x_1, \cdots, x_n, \mu \mres \ov{Q}_s^c)},    
\end{align*}
is also $\mcl F_{\ov{Q}_s^c}$-measurable because it is the limit of $\mcl F_{\ov{Q}_s^c}$-measurable functions. Then we observe that
\begin{align*}
\norm{\nabla f_n}_{L^{\infty}((Q_s)^n)} &= \sup_{(Q_s)^n}\Ll(\sum_{k=1}^n\vert \nabla_{x_k}f_n(x_1, \cdots, x_n, \mu \mres \ov{Q}_s^c)\vert^2\Rr)^{\frac{1}{2}}\\
&= \sup_{(\Q \cap Q_s)^n}\Ll(\sum_{k=1}^n\vert \nabla_{x_k}f_n(x_1, \cdots, x_n, \mu \mres \ov{Q}_s^c)\vert^2\Rr)^{\frac{1}{2}},
\end{align*}
as a supremum of a countable number of $\mcl F_{\ov{Q}_s^c}$-measurable functions, is finite and $\mcl F_{\ov{Q}_s^c}$-measurable. Thus we can define a cut-off version of $f$ that 
\begin{align*}
f^{n,M} = f \Ind{\mu(\ov{Q}_s)=n}\Ind{\norm{\nabla f_n}_{L^{\infty}((Q_s)^n)} \leq M},
\end{align*}
and we can establish \cref{eq:Commute2} at first for $f^{n,M}$. For every $x \in Q_s \cap \supp(\mu)$, we have 
\begin{align*}
&\partial_k (\A_s f^{n,M})(\mu, x)  \\
&= \lim_{h \to 0} \int_{\mmd(\Rd)}\frac{f((\mu - \delta_x + \delta_{x+h\e_k})\mres \ov{Q}_s + \mu' \mres \ov{Q}_s^c) - f(\mu \mres \ov{Q}_s + \mu' \mres \ov{Q}_s^c)}{h}\\
& \qquad \qquad \times \Ind{\norm{\nabla f_n}_{L^{\infty}((Q_s)^n)} \leq M}  \, \d \Pr(\mu')\Ind{\mu(\ov{Q}_s)=n},
\end{align*}
for $h$ small enough such that $x + h \e_k \in Q_s$. Since $f \in \cC^{\infty}(Q_s)$, we use \Cref{lem:AppendixProjection} and the mean value theorem 
\begin{align*}
\frac{f(\mu - \delta_x + \delta_{x+h\e_k}) - f(\mu)}{h} = \partial_k f(\mu  - \delta_x + \delta_{x+ \theta\e_k}, x+  \theta \e_k),
\end{align*}
for some $\theta \in (0,1)$. With the indicator $\Ind{\norm{\nabla f_n}_{L^{\infty}((Q_s)^n)} \leq M}$, this term is bounded by $M$, so we can use the dominated convergence theorem that 
\begin{align*}
&\partial_k (\A_s f^{n,M})(\mu, x)\\
&= \int_{\mmd(\Rd)}\lim_{h \to 0}\frac{f((\mu - \delta_x + \delta_{x+h\e_k})\mres \ov{Q}_s + \mu' \mres \ov{Q}_s^c) - f(\mu \mres \ov{Q}_s + \mu' \mres \ov{Q}_s^c)}{h}\\
& \qquad \qquad \times \Ind{\norm{\nabla f_n}_{L^{\infty}((Q_s)^n)} \leq M}  \, \d \Pr(\mu')\Ind{\mu(\ov{Q}_s)=n}\\
&=\int_{\mmd(\Rd)}\partial_k f(\mu \mres \ov{Q}_s + \mu' \mres \ov{Q}_s^c, x)\Ind{\norm{\nabla f_n}_{L^{\infty}((Q_s)^n)} \leq M}  \, \d \Pr(\mu')\Ind{\mu(\ov{Q}_s)=n},
\end{align*}
which establishes the \cref{eq:Commute2} in the sense \cref{eq:Commute2Right}. By Jensen's inequality and Fubini's lemma, we observe that 
\begin{align*}
&\Er\Ll[\int_{Q_s} \vert\nabla (\A_s f^{n,M}) \vert^2(\mu, x) \, \d \mu(x)\Rr]\\
& = \Er\Ll[\int_{Q_s} \Ll\vert\int_{\mmd(\Rd)}\nabla  f^{n,M}(\mu \mres \ov{Q}_s + \mu' \mres \ov{Q}_s^c, x) \, \d \Pr(\mu')\Rr\vert^2 \, \d \mu(x)\Rr]\\
& \leq \Er\Ll[\int_{Q_s} \int_{\mmd(\Rd)} \Ll\vert\nabla  f^{n,M} (\mu \mres \ov{Q}_s + \mu' \mres \ov{Q}_s^c, x) \Rr\vert^2\, \d \Pr(\mu') \, \d \mu(x)\Rr]\\
& = \Er\Ll[\int_{Q_s} \Ll\vert \nabla  f^{n,M} \Rr\vert^2 (\mu, x)  \, \d \mu(x)\Rr],
\end{align*}
which implies that $\A_s f^{n,M} \in \cH^1(Q_s)$. Then we use once again Jensen's inequality for $f^{n,M}$ and $f^{n,M'}$ with $M < M'$
\begin{align*}
&\Er\Ll[\int_{Q_s} \vert\nabla (\A_s f^{n,M}) - \nabla (\A_s f^{n,M'})\vert^2(\mu, x) \, \d \mu(x)\Rr] \\
& \leq \Er\Ll[\int_{Q_s} \Ll\vert \nabla  f^{n,M} - \nabla f^{n,M'}\Rr\vert^2 (\mu, x)  \, \d \mu(x)\Rr]\\
& = \Er\Ll[\int_{Q_s} \Ll\vert \nabla  f \Rr\vert^2 (\mu, x)  \, \d \mu(x) \Ind{\mu(\ov{Q}_s) = n}\Ind{M < \norm{\nabla f_n}_{L^{\infty}((Q_s)^n)} \leq M'}\Rr].
\end{align*}
So $\{f^{n,M}\}_{M \geq 0}$ gives a Cauchy sequence in $\cH^1(Q_s)$, and the only candidate is $f\Ind{\mu(\ov{Q}_s) = n}$ because it is the limit in $\cL^2$. By this and a linear combination, we establish \cref{eq:Commute2} for $f$ in ${\cC^{\infty}(Q_s) \cap \cH^1(Q_s)}$, and we can then extend to a general function in $\cH^1(Q_s)$ by the density argument.  

\smallskip
For the part of $\a$-harmonic function, we suppose $f \in \mcl A(Q_s)$ and test $\phi \in \cH^1_0(Q_{s-2})$ with \cref{eq:Commute2}, 
\begin{align*}
&\Er\Ll[\int_{Q_{s-2}} (\nabla \A_s f)(\mu, x) \cdot \a(\mu, x) \nabla \phi(\mu, x) \, \d \mu(x)\Rr]\\
&=\Er\Ll[\int_{Q_{s-2}}  \A_s (\nabla f)(\mu, x) \cdot \a(\mu, x) \nabla \phi(\mu, x) \, \d \mu(x)\Rr]\\
&=\Er\Ll[\int_{Q_{s-2}}  \Ll(\int_{\mmd(\Rd)} \nabla f(\mu \mres \ov{Q}_s + \mu' \mres \ov{Q}_s^c, x) \, \d \Pr(\mu')\Rr) \cdot \a(\mu, x) \nabla \phi(\mu, x) \, \d \mu(x)\Rr]. 
\end{align*}
Restricted on $x \in Q_{s-2}$, we have $\a(\mu, x), \nabla \phi(\mu, x)$ are $\mcl F_{Q_s} \otimes \mcl B_{Q_s}$-measurable,  so we have 
\begin{align*}
\forall x \in \supp(\mu) \cap Q_{s-2}, \qquad \a(\mu, x) \nabla \phi(\mu, x) =  \a(\mu \mres \ov{Q}_s, x) \nabla \phi(\mu \mres \ov{Q}_s, x).    
\end{align*}
We can enter the part in the integration, and then use Fubini's lemma
\begin{align*}
&\Er\Ll[\int_{Q_{s-2}} (\nabla \A_s f)(\mu, x) \cdot \a(\mu, x) \nabla \phi(\mu, x) \, \d \mu(x)\Rr]\\
&=\Er\Ll[\int_{Q_{s-2}}  \Ll(\int_{\mmd(\Rd)} \nabla f(\mu \mres \ov{Q}_s + \mu' \mres \ov{Q}_s^c, x) \cdot  \a(\mu \mres \ov{Q}_s, x) \nabla \phi(\mu \mres \ov{Q}_s, x) \, \d \Pr(\mu')\Rr)  \, \d \mu(x)\Rr] \\
&=\Er\Ll[\int_{Q_{s-2}}  \nabla f(\mu, x) \cdot \a(\mu, x) \nabla \phi(\mu, x)   \, \d \mu(x)\Rr] \\
&= 0.
\end{align*}
In the last step, we use $f \in \mcl A(Q_s)$ and this finishes the proof.
\end{proof}

\section{Equivalent definitions of the effective diffusion matrix}
\label{sec:app.equiv.def}

Recall that we defined $\ab(U)$ and $\ab_*(U)$ according to \cref{eq:defAffine}-\cref{eq:NuMatrix}. The proof of Theorem~\ref{thm:main} ensures the existence of a constant $C < \infty$ and an exponent $\al > 0$ such that for every $m \in \N$,
\begin{equation}
\label{eq:rate}
\Ll| \ab(\cu_m) - \ab \Rr| + \Ll| \ab_*(\cu_m) - \ab \Rr|  \leq C 3^{-\alpha m}.
\end{equation}
Throughout this appendix, we will only rely on the qualitative statement that
\begin{equation}  
\label{e.identities.ab}
\ab = \lim_{m \to \infty} \ab(\cu_m) = \lim_{m \to \infty} \ab_*(\cu_m).
\end{equation}

The first main goal of this appendix is to demonstrate that the definition we chose for the bulk diffusion matrix indeed coincides with the ``stationary'' definition appearing in works such as \cite{varadhanII,fuy}. Adapted to our context, this alternative definition takes the following form. For
\begin{align}
\label{eq:defGamma}
\Gamma := \Ll\{\mu \mapsto \int_\Rd  \tau_x g(\mu) \, \d x, \quad g \in \cC_c^\infty(\Rd) \cap \cH^1_0(\Rd) \Rr\},
\end{align}
we let $\at$ be the $d$-by-$d$ matrix such that for every $p \in \Rd$, 
\begin{align}
\label{eq:defab2}
p \cdot \at p := \inf_{u \in \Gamma} \Er \Ll[ (p + \nabla u(\mu + \delta_0, 0)) \cdot \a(\mu + \delta_0, 0) (p + \nabla u(\mu+\delta_0, 0)) \Rr],
\end{align}
where in \cref{eq:defGamma}, we used the notation $\tau_x g(\mu) := g(\tau_{-x} \mu)$. Notice that for $g \in \cC_c^\infty(\Rd) \cap \cH^1_0(\Rd)$ and $u  : \mu \mapsto  \int_\Rd  \tau_x g(\mu) \, \d x\in \Gamma$, the function $\mu \mapsto u(\mu)$ is typically not well-defined unless $\mu$ is of finite support. However, the quantity $\nabla u(\mu,\cdot)$ makes sense whenever the measure $\mu$ is $\sigma$-finite, since in the sum $\nabla u(\mu, y) = \int_\Rd  \nabla (\tau_x g)(\mu, y) \, \d x$, the function~$g$ is local, and thus the integrand $\nabla (\tau_x g)(\mu, y)$ is non-zero only for $x$ in a bounded set. With this interpretation of $\nabla u$, the right side of \cref{eq:defab2} is well-defined.

\begin{theorem}
\label{thm:TwoAb}
We have $\ab = \at$. 
\end{theorem}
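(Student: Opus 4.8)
The plan is to show the two definitions of $\ahom$ agree by exhibiting a common approximation argument. The key observation is that $\at$ is defined by an infinite-volume variational problem, while $\ahom = \lim_m \ahom(\cu_m)$ comes from finite-volume problems; both are convex-dual-type quantities, so it suffices to compare the corresponding energies. Concretely, I would fix $p \in \Rd$ and prove $p \cdot \at p = p \cdot \ahom p$ by establishing the two inequalities $p \cdot \at p \le p \cdot \ahom p$ and $p \cdot \at p \ge p \cdot \ahom p$ separately.

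For the inequality $p \cdot \at p \ge p \cdot \ahom p$ (the ``$\Gamma$-minimizers are admissible test objects'' direction): given any $g \in \cC_c^\infty(\Rd) \cap \cH^1_0(\Rd)$ and the associated $u : \mu \mapsto \int_\Rd \tau_x g(\mu) \, \d x \in \Gamma$, I would restrict the periodized function $u$ to a large cube $\cu_m$ and use it to build a competitor for the variational problem $\nu(\cu_m, p)$, or rather for $\frac12 p \cdot \ahom(\cu_m) p$ as given by \eqref{eq:defVariation}. The point is that $\nabla u(\mu, \cdot)$ on $\cu_m$ differs from a genuine element of $\ell_{p,\cu_m} + \cH^1_0(\cu_m)$ only near $\partial \cu_m$, in a boundary layer whose width is controlled by $\diam(\supp g)$; the corresponding error in the energy is $O(3^{-m})$ by the uniform ellipticity bound \eqref{e.unif.ell} and the fact that the Poisson process has bounded intensity. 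Passing to the limit $m \to \infty$ using \eqref{e.identities.ab} and then taking the infimum over $g$ yields the claim. A small technical point here is the translation-invariance/stationarity identity relating $\Er[(p + \nabla u(\mu + \delta_0, 0)) \cdots]$ to the spatial average $\Er[\frac{1}{\rho|\cu_m|}\int_{\cu_m}(p + \nabla u)\cdot \a (p+\nabla u)\,\d\mu]$, which follows from the Mecke/Palm formula for the Poisson point process.

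For the reverse inequality $p \cdot \at p \le p \cdot \ahom p$: I would take (near-)minimizers $v(\cdot, \cu_m, p)$ of the finite-volume problem, whose gradients have the right average slope $p$ by the slope identity \eqref{eq:Slope1}, and use them to produce admissible elements of $\Gamma$. The natural device is to take $g_m$ supported in $\cu_m$ built from $v(\cdot,\cu_m,p) - \ell_{p,\cu_m}$ (which lies in $\cH^1_0(\cu_m) \subset \cC^\infty_c$-closure), periodize/translate-average it to form $u_m \in \Gamma$, and check that the resulting energy converges to $\frac12 p \cdot \ahom p$. The subtlety is that the definition of $\Gamma$ requires $g \in \cC_c^\infty(\Rd)\cap \cH^1_0(\Rd)$, so I may need a density/mollification step to approximate $v(\cdot,\cu_m,p) - \ell_{p,\cu_m}$; and I need the ``$+\delta_0$'' insertion in \eqref{eq:defab2} to match the $\frac{1}{\rho|\cu_m|}\int_{\cu_m}\,\d\mu$ normalization, again via the Palm formula. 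As usual one loses nothing in the limit because the boundary/normalization corrections are $o(1)$.

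The main obstacle I anticipate is the careful bookkeeping around the boundary-layer errors and, more delicately, the translation between the ``size-biased'' expectation $\Er[\,\cdot\,(\mu+\delta_0)]$ appearing in \eqref{eq:defab2} and the volume-normalized spatial integral $\Er[\frac{1}{\rho|U|}\int_U \cdots \d\mu]$ appearing in \eqref{eq:defVariation}: this is exactly the Mecke equation for Poisson processes, $\Er[\int_U h(\mu,x)\,\d\mu(x)] = \rho\int_U \Er[h(\mu+\delta_x, x)]\,\d x = \rho|U|\,\Er[h(\mu+\delta_0,0)]$ by stationarity, but one must make sure the gradient operator $\nabla$ and this insertion interact correctly (in particular that $\nabla u(\mu+\delta_0,0)$ is the right object). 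Once that dictionary is set up, both inequalities reduce to comparing energies of nearly-identical test functions on $\cu_m$ and invoking \eqref{e.identities.ab}; I expect no further essential difficulty, since the symmetry of $\ahom$ and the quadratic structure make the one-dimensional comparison $p \cdot \at p$ versus $p \cdot \ahom p$ sufficient.
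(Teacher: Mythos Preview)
Your proposal is essentially correct, and for the inequality $\at \le \ab$ your plan matches the paper's: translate-average the finite-volume corrector $\phi(\cdot,\cu_m,p) \in \cH^1_0(\cu_m)$ to produce an element of (the closure of) $\Gamma$, and use the comparison inequality recorded in Proposition~\ref{prop:GammaProp}(3) together with the Mecke identity to conclude. Your identification of the Mecke/Palm dictionary as the key technical device is also right on target.

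For the opposite inequality $\at \ge \ab$, your route differs from the paper's in a meaningful way. You propose to truncate $u \in \Gamma$ and test it as a competitor in the \emph{primal} problem $\nu(\cu_m,p)$, which requires landing in $\ell_{p,\cu_m} + \cH^1_0(\cu_m)$ and hence forces the boundary-layer construction you describe; with care (restricting the translation integral to a compact set $K_m$ with $K_m + \supp(g)$ compactly contained in $\cu_m$) this does work and gives an error that is $O(3^{-m})$ for fixed $g$. The paper instead tests $p + \nabla u$ in the \emph{dual} problem $\nu^*(\cu_m,q)$, whose admissible class is all of $\cH^1(\cu_m)$ with no boundary constraint: one only needs a compact truncation of the defining integral to make $u$ well-defined as an element of $\cH^1(\cu_m)$, after which the gradient on $\cu_m$ agrees exactly with $\nabla u$ and no boundary error appears at all. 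Optimising the resulting inequality over $q$ (taking $q = \ab_*(\cu_m)p$) yields $p \cdot \at p \ge p \cdot \ab_*(\cu_m) p$, and the limit uses $\ab_*(\cu_m) \to \ab$ from \eqref{e.identities.ab}. The payoff of the paper's route is that it sidesteps the boundary-layer estimate entirely; the payoff of yours is that it avoids invoking the dual quantity $\ab_*$ and its convergence, relying only on $\ab(\cu_m) \to \ab$.
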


The second main goal of this appendix is to demonstrate that the infimum in~\cref{eq:defab2} is achieved in a suitable completion of the space $\Gamma$. We also show that the optimizer, which we call the (stationary) corrector, can be obtained as a limit of approximations based on the finite-volume optimizers for $\nu$ or $\nu^*$. Denoting
\begin{equation*}  
\mmb(\Rd) := \Ll\{ (\mu,x) \in \mmd(\Rd) \times \Rd \ : \ x \in \supp \mu \Rr\} ,
\end{equation*}
we introduce the space
\begin{equation*}
\cL^2_\bullet 
:= \Ll\{f: \mmb(\Rd)\to \R \ : \ f \text{ is measurable and } \Er\Ll[\int_\Rd \vert f(\mu, x) \vert^2 \, \d \mu(x)\Rr] < \infty \Rr\},
\end{equation*}
and its local version
\begin{multline}
\cL^2_{\bullet,\mathrm{loc}}  := \Big\{f: \mmb(\Rd)\to \R \ : \ \mbox{$f$ is measurable and }
\\
\mbox{for every compact $K \subset \Rd$, } \ 
 \Er\Ll[\int_K \vert f(\mu, x) \vert^2 \, \d \mu(x)\Rr] < \infty\Big\}.
\end{multline}
In these definitions, we say that $f : \mmb(\Rd) \to \R$ is measurable provided that the mapping
\begin{equation*}  
\Ll\{
\begin{array}{rcl}  
\mmd(\Rd) \times \Rd & \to & \R \\
(\mu,x) & \mapsto & f(\mu,x) \1_{\{x \in \supp \mu\}},
\end{array}
\Rr.
\end{equation*}
is $\mcl F \otimes \mcl B$-measurable. The space $\cL^2_{\bullet,\mathrm{loc}}$ is naturally endowed with the family of seminorms
\begin{equation*}  
\Ll\{
\begin{array}{rcl}  
\cL^2_{\bullet,\mathrm{loc}} & \to & \R \\
f & \mapsto & \Er\Ll[\int_K \vert f(\mu, x) \vert^2 \, \d \mu(x)\Rr]^\frac 1 2 ,
\end{array}
\Rr.
\end{equation*}
indexed by all the compact sets $K \subset \Rd$. This family of seminorms turns $\cL^2_{\bullet,\mathrm{loc}}$ into a complete space. 

For every $p \in \Rd$ and $m \in \N$, we let $\phi(\cdot, \cu_m,p)$ be such that the minimizer in the definition of $\nu(\cu_m,p)$ is $\ell_{p,\cu_m} + \phi(\cdot,\cu_m,p)$, where we recall that $\ell_{p,\cu_m}$ was defined in~\cref{eq:defAffine}. Similarly, we let $\phi^*(\cdot,\cu_m,p)$ be such that $\ell_{p,\cu_m} + \phi^*(\cdot,\cu_m,p)$ is the maximizer in the definition of $\nu^*(\cu_m,\ab_*(\cu_m) p)$. More precisely, using the notation introduced in Proposition~\ref{prop:NuBase}, we have
\begin{equation*}  
v(\cdot,\cu_m,p) = \ell_{p,U} + \phi(\cdot,\cu_m,p),
\end{equation*}
and
\begin{equation*}  
u(\cdot,\cu_m, \ab_*(\cu_m) p) = \ell_{p,\cu_m} +\phi^*(\cdot,\cu_m,p) .
\end{equation*}
Finally, we define $\nabla \td{\phi}_{p,m}$ according to the formula
\begin{equation}
\label{e.def.tdphi}
\tilde{\phi}_{p,m} : \mu \mapsto \frac{1}{ \vert \cu_m \vert} \int_{\Rd} \tau_x \phi(\mu, \cu_m, p) \, \d x 
= \frac{1}{ \vert \cu_m \vert} \int_{\Rd} \phi(\mu, x + \cu_m, p) \, \d x.
\end{equation}
Notice that, while $\tilde{\phi}_{p,m}(\mu)$ is ill-defined when $\mu \sim \poi(\rho)$, the quantity $\nabla\tilde{\phi}_{p,m}(\mu,\cdot)$ is still well-defined, for the same reason as in the discussion following \cref{eq:defab2}. Our second main result is as follows.
\begin{theorem}
\label{t.local.conv}
The following statements hold for every $p \in \Rd$. 

(1) The sequence $(\nabla \td{\phi}_{p,m})_{m \in \N}$ is a Cauchy sequence in $(\cL^2_{\bullet,\mathrm{loc}})^d$. Its limit, which we denote by $\nabla \phi_p$, satisfies
\begin{align}
\label{eq:CorrectorHarmonic}
\forall v \in \cH^1_0(\Rd), \qquad \Er\Ll[\int_{\Rd} \nabla v \cdot \a (p + \nabla \phi_{p}) \, \d \mu \Rr] = 0.
\end{align}

(2) We have
\begin{align}\label{eq:AppCorrector}
\lim_{m \to \infty}\Er\Ll[ \frac{1}{\rho \vert \cu_m \vert} \int_{\cu_m} \vert \nabla \phi(\mu, \cdot, \cu_m, p) -  \nabla \phi_{p}(\mu, \cdot)\vert^2 \, \d \mu \Rr] = 0,
\end{align}
as well as
\begin{align}\label{eq:AppCorrectorDual}
\lim_{m \to \infty}\Er\Ll[ \frac{1}{\rho \vert \cu_m \vert} \int_{\cu_m} \vert \nabla \phi^*(\mu, \cdot, \cu_m, p) -  \nabla \phi_{p}(\mu, \cdot)\vert^2 \, \d \mu \Rr] = 0.
\end{align}

(3) The effective diffusion matrix $\ab$ satisfies
\begin{equation}\label{eq:defab4}
 p \cdot \ab p = \Er \Ll[ (p + \nabla \phi_p(\mu + \delta_0, 0)) \cdot \a(\mu + \delta_0, 0) (p + \nabla \phi_p(\mu+\delta_0, 0)) \Rr],
\end{equation}
as well as
\begin{equation}
\label{eq:defab5}
 \ab p = \Er \Ll[ \a(\mu + \delta_0, 0) (p + \nabla \phi_p(\mu+\delta_0, 0)) \Rr].
\end{equation}
\end{theorem}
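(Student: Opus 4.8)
The statement to prove is \Cref{t.local.conv}, comprising three parts: the Cauchy property of $(\nabla\td\phi_{p,m})$ in $(\cL^2_{\bullet,\mathrm{loc}})^d$ with the variational characterization \eqref{eq:CorrectorHarmonic}, the two-sided $\cL^2$-convergence of finite-volume correctors \eqref{eq:AppCorrector}--\eqref{eq:AppCorrectorDual}, and the corrector representations \eqref{eq:defab4}--\eqref{eq:defab5}. Let me sketch the approach.

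\textbf{Overall strategy and the key two-scale estimate.} The plan is to deduce the three assertions from two facts that come for free from the proof of \Cref{thm:main}: that the defect sequence $(\tau_n)$ of \eqref{eq:defErrorJ} is summable (indeed it decays exponentially, since the final step of that proof gives exponential decay of $F_m$ and $\tau_n \le C(F_n - F_{n+1})$), and that $J(\cu_m,p,\ab_*(\cu_m)p)\to 0$ for $p\in B_1$ (this is \eqref{eq:JamTau}), together with the qualitative identity \eqref{e.identities.ab}. The bridge to the finite-volume correctors is \eqref{eq:JVExpression} with $q=0$: since $u(\cdot,\cu_m,0)=0$ and the $\G_U$-conditional-expectation term in \eqref{eq:JVExpression} has vanishing gradient on $U$, one gets $\nabla v(\mu,x,U,p,0) = -p-\nabla\phi(\mu,x,U,p)$ for $x\in\supp\mu\cap U$. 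As $v(\cdot,\cu_n,p,0)\in\mcl A(\cu_n)\subset\mcl A(z+\cu_k)$ and $J(\cu_k,p,0)=\nu(\cu_k,p)$, \Cref{lem:JTwoScale} applied with parameters $(p,0)$ becomes, for every $k\le n$,
\begin{equation}\label{e.plan.twoscale}
\frac{1}{|\Z_{n,k}|}\sum_{z\in\Z_{n,k}} \Er\!\left[\frac{1}{\rho|\cu_k|}\int_{z+\cu_k}\tfrac12\bigl|\nabla\phi(\mu,\cdot,\cu_n,p) - \nabla\phi(\mu,\cdot,z+\cu_k,p)\bigr|^2\,\d\mu\right] \le \nu(\cu_k,p) - \nu(\cu_n,p) ,
\end{equation}
whose right-hand side is at most $C|p|^2\sum_{j\ge k}\tau_j$ by subadditivity \eqref{eq:Subadditive} and \eqref{eq:defErrorJ}. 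Likewise, taking $q=\ab_*(\cu_m)p$ in \eqref{eq:JVExpression} gives $\nabla v(\mu,x,\cu_m,p,\ab_*(\cu_m)p) = \nabla\phi^*(\mu,x,\cu_m,p) - \nabla\phi(\mu,x,\cu_m,p)$, so by \eqref{eq:JEnergy} and ellipticity, $\Er[\tfrac{1}{\rho|\cu_m|}\int_{\cu_m}|\nabla\phi^* - \nabla\phi|^2\,\d\mu]\le 2J(\cu_m,p,\ab_*(\cu_m)p)\to 0$.

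\textbf{Construction of the corrector and part (1).} Since $\nabla\td\phi_{p,m}$ is a stationary field (immediate from \eqref{e.def.tdphi} and the stationarity of $\a$), the Mecke formula reduces $(\cL^2_{\bullet,\mathrm{loc}})^d$-convergence to convergence of $\nabla\td\phi_{p,m}(\mu+\delta_0,0)$ in $L^2(\Pr)$. Writing $\nabla\td\phi_{p,m'}-\nabla\td\phi_{p,m}$ as a telescoping sum over consecutive scales and applying Jensen's inequality, each increment $\nabla\td\phi_{p,j+1}-\nabla\td\phi_{p,j}$ is bounded in $L^2(\Pr)$ by $C\bigl(\nu(\cu_j,p)-\nu(\cu_{j+1},p)\bigr)^{1/2}$ — by \eqref{e.plan.twoscale} with $n=j+1,k=j$ and stationarity — plus an $O(3^{-j})$ ``edge'' term that accounts for subcubes that protrude outside the next cube up and is controlled via the uniform energy bound $\Er[\tfrac{1}{\rho|\cu_m|}\int_{\cu_m}|\nabla\phi(\cdot,\cdot,\cu_m,p)|^2\d\mu]\le C|p|^2$ (itself a consequence of $2\nu(\cu_m,p)=p\cdot\ab(\cu_m)p\le\Lambda|p|^2$). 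Both bounds are summable, so $(\nabla\td\phi_{p,m})$ is Cauchy, with limit $\nabla\phi_p$. For \eqref{eq:CorrectorHarmonic}, fix $v\in\cH^1_0(\Rd)$, say $\mcl F_K$-measurable; then $v(\cdot,x+\cu_m,p)\in\mcl A(x+\cu_m)$ gives $\Er[\int_\Rd\nabla v\cdot\a(p+\nabla\phi(\cdot,\cdot,x+\cu_m,p))\,\d\mu]=0$ for every $x$ with $v\in\cH^1_0(x+\cu_m)$, a set of $x$ of relative Lebesgue measure $1-O(3^{-m})$ in any unit cube; averaging this identity over such $x$ and using Fubini recognises the average of $\nabla\phi(\cdot,\cdot,x+\cu_m,p)$ as $\nabla\td\phi_{p,m}$ up to an edge error tending to $0$, and letting $m\to\infty$ with the $\cL^2_{\bullet,\mathrm{loc}}$-convergence on the (compact) support of $\nabla v$ yields \eqref{eq:CorrectorHarmonic}.

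\textbf{Parts (2) and (3).} For \eqref{eq:AppCorrector}: by stationarity, \eqref{e.plan.twoscale} run with $n=m'\ge m$, $k=m$ and the cube $\cu_{m'}$ translated over a continuum gives $\Er[\tfrac{1}{\rho|\cu_m|}\int_{\cu_m}|\nabla\phi(\cdot,\cdot,\cu_m,p)-\nabla\td\phi_{p,m'}|^2\,\d\mu] \le C|p|^2\sum_{j\ge m}\tau_j + o_{m'}(1)$, where the $o_{m'}(1)$ again comes from subcubes not wholly inside $\cu_{m'}$ and is killed by the energy bound; letting $m'\to\infty$ and using part~(1) identifies the limit as $\nabla\phi_p$ and proves \eqref{eq:AppCorrector}. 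Then \eqref{eq:AppCorrectorDual} follows from the triangle inequality together with $\Er[\tfrac{1}{\rho|\cu_m|}\int_{\cu_m}|\nabla\phi^*-\nabla\phi|^2\,\d\mu]\le 2J(\cu_m,p,\ab_*(\cu_m)p)\to 0$. For part~(3): by the definition of $\ab(\cu_m)$ and $v(\cdot,\cu_m,p)=\ell_{p,\cu_m}+\phi(\cdot,\cu_m,p)$, we have $p\cdot\ab(\cu_m)p = \Er[\tfrac{1}{\rho|\cu_m|}\int_{\cu_m}(p+\nabla\phi(\cdot,\cdot,\cu_m,p))\cdot\a(p+\nabla\phi(\cdot,\cdot,\cu_m,p))\,\d\mu]$; by \eqref{eq:AppCorrector}, the uniform energy bounds and $|\a|\le\Lambda$, this converges to $\Er[\tfrac{1}{\rho|\cu_m|}\int_{\cu_m}(p+\nabla\phi_p)\cdot\a(p+\nabla\phi_p)\,\d\mu]=\Er[(p+\nabla\phi_p(\mu+\delta_0,0))\cdot\a(\mu+\delta_0,0)(p+\nabla\phi_p(\mu+\delta_0,0))]$ (Mecke and stationarity of $\nabla\phi_p$), while $p\cdot\ab(\cu_m)p\to p\cdot\ab p$ by \eqref{e.identities.ab}; this is \eqref{eq:defab4}. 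Similarly the flux identity \eqref{eq:flux1} reads $p'\cdot\ab(\cu_m)p = \Er[\tfrac{1}{\rho|\cu_m|}\int_{\cu_m}p'\cdot\a(p+\nabla\phi(\cdot,\cdot,\cu_m,p))\,\d\mu]$, and passing to the limit via \eqref{eq:AppCorrector} and Mecke gives $p'\cdot\ab p = p'\cdot\Er[\a(\mu+\delta_0,0)(p+\nabla\phi_p(\mu+\delta_0,0))]$ for every $p'\in\Rd$, which is \eqref{eq:defab5}.

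\textbf{Main obstacle.} Once \eqref{e.plan.twoscale}, the summability of $(\tau_n)$ and $J(\cu_m,p,\ab_*(\cu_m)p)\to 0$ are in hand, everything else is soft; the genuinely delicate part is the passage (in parts (1) and (2)) from the \emph{cube-indexed} estimate \eqref{e.plan.twoscale} to statements about the \emph{stationary} field $\nabla\td\phi_{p,m}$. One must carefully account for the discrepancy between the family of all translated cubes of a given size containing a point and the sub-family of those lying wholly inside a larger cube, and show these ``edge'' contributions are negligible using only the $\cL^2$-average energy control on $\nabla\phi(\cdot,\cdot,\cu_m,p)$ — no pointwise bound being available — together with Mecke's formula and Jensen's inequality.
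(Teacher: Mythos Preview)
Your overall architecture is right and close to the paper's: everything hinges on the two–scale estimate \eqref{e.plan.twoscale} (the paper also uses it, through the quadratic response and the function $\phi_{p,m,n}:=\sum_{z\in\Z_{m,n}}\phi(\cdot,z+\cu_n,p)$), together with Mecke's identity and stationarity. Parts~(2) and~(3) are essentially as in the paper; for~\eqref{eq:AppCorrectorDual} you use exactly the same reduction via $J(\cu_m,p,\ab_*(\cu_m)p)$.

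Two things are worth flagging. First, for the Cauchy property in part~(1), the paper avoids telescoping altogether: it observes the exact identity
\[
\nabla\td\phi_{p,n}=\frac{1}{|\cu_m|}\int_{\Rd}\nabla\tau_x\phi_{p,m,n}\,\d x,
\]
and then applies the comparison inequality (Proposition~\ref{prop:GammaProp}, eq.~\eqref{eq:GammaComparison}) to $\td\phi_{p,m}-\td\phi_{p,n}$ directly, obtaining
\[
\Er\!\left[\int_K|\nabla\td\phi_{p,m}-\nabla\td\phi_{p,n}|^2\,\d\mu\right]\le \rho|K|\bigl(\nu(\cu_n,p)-\nu(\cu_m,p)\bigr).
\]
No edge term appears, and only the qualitative monotone convergence of $\nu(\cu_n,p)$ is used; you do not need summability of $\tau_n^{1/2}$.

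Second—and this is a genuine gap in your sketch—the claim that the ``edge'' contributions in parts~(1) and~(2) are ``controlled via the uniform energy bound'' is not correct as stated. The uniform bound $\Er[\tfrac{1}{\rho|\cu_m|}\int_{\cu_m}|\nabla\phi(\cdot,\cdot,\cu_m,p)|^2\,\d\mu]\le C$ is an $L^1$ bound on $|\nabla\phi|^2$ over the \emph{whole} cube; it gives no smallness whatsoever for its restriction to a thin shell near $\partial\cu_m$ (or $\partial\cu_{m'}$). When you unwind your part~(2) argument carefully, the edge term becomes precisely
\[
\Er\!\left[\frac{1}{\rho|\cu_{m'}|}\int_{\{\dist(y,\partial\cu_{m'})\le C3^m\}}|p+\nabla\phi(\cdot,y,\cu_{m'},p)|^2\,\d\mu(y)\right],
\]
which is the content of the paper's boundary-layer Lemma~\ref{lem:BoundaryLayer}. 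Its proof is not ``the uniform energy bound'' but another mesoscopic two-scale comparison: one replaces $\phi(\cdot,\cu_{m'},p)$ near the boundary by the patchwork $\phi_{p,m',n}$ at a mesoscopic scale $n$, paying $\nu(\cu_n,p)-\nu(\cu_{m'},p)$ for the substitution and a genuine volume factor $O(3^{-(m'-n)})$ for the patchwork piece. You should either import that lemma or replace your ``uniform energy bound'' justification by this argument.
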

As a preparation towards the proof of these results, we state in the following proposition a number of elementary properties about the function space $\Gamma$.
\begin{proposition}\label{prop:GammaProp}
Let $g \in \cC_c^\infty(\Rd) \cap \cH^1_0(\Rd)$ be an $\mcl F_{\cu_n}$-measurable function, and let $u := \int_\Rd  \tau_x g \, \d x \in \Gamma$. The following properties hold:
\begin{enumerate}
\item $y \mapsto \nabla u(\mu + \delta_y, y)$ is a stationary field, i.e. $\nabla u(\mu + \delta_y, y) = \nabla u(\tau_{-y}\mu + \delta_0, 0)$.
\item $\nabla u$ has mean zero, that is, 
\begin{align}\label{eq:meanZero}
\Er\Ll[\nabla u(\mu + \delta_0, 0)\Rr] = 0.
\end{align}
\item $\nabla u$ satisfies the estimate 
\begin{multline}\label{eq:GammaComparison}
\Er \Ll[ \nabla u(\mu + \delta_0, 0) \cdot \a(\mu + \delta_0, 0) \nabla u(\mu+\delta_0, 0) \Rr] \\
\leq 3^{2dn}\Er\Ll[\frac{1}{\rho \vert \cu_n \vert}\int_{\cu_n} \nabla g(\mu, y) \cdot \a(\mu, y)  \nabla g(\mu, y) \, \d \mu(y) \Rr].
\end{multline} 
\end{enumerate}
\end{proposition}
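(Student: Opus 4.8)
The plan is to reduce all three items to two elementary observations. First, translating $g$ translates its gradient: from the definition \eqref{e.def.deriv} together with $\tau_{-x}\delta_z=\delta_{z-x}$ one checks that, for $y\in\supp\mu$,
\[ \nabla(\tau_x g)(\mu,y)=\nabla g(\tau_{-x}\mu,\,y-x). \]
Second, since $g$ is $\mcl F_{\cu_n}$-measurable, $\tau_x g$ is $\mcl F_{x+\cu_n}$-measurable, so $\nabla(\tau_x g)(\mu,\cdot)$ vanishes outside $x+\ov{\cu_n}$; hence $\nabla u(\mu,y)=\int_{\Rd}\nabla(\tau_x g)(\mu,y)\,\d x=\int_{y+\ov{\cu_n}}\nabla(\tau_x g)(\mu,y)\,\d x$ is a genuine finite integral, which is all we need (the function $u$ itself being ill-defined on $\mmd(\Rd)$). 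I will freely use the translation invariance of $\Pr$ (so that $\tau_{-x}\mu$ has the same law as $\mu$), the identity $\a(\mu,y)=\a_\circ(\tau_{-y}\mu)$, and the Mecke formula $\Er[\int_{\Rd}h(\mu,y)\,\d\mu(y)]=\rho\int_{\Rd}\Er[h(\mu+\delta_y,y)]\,\d y$. For part (1), plugging the first observation into $\nabla u(\mu+\delta_y,y)=\int_{\Rd}\nabla g(\tau_{-x}\mu+\delta_{y-x},y-x)\,\d x$ and changing variables $x\mapsto x+y$, using $\tau_{-(x+y)}\mu+\delta_{-x}=\tau_{-x}(\tau_{-y}\mu+\delta_0)$, identifies the right-hand side as $\nabla u(\tau_{-y}\mu+\delta_0,0)$; the same computation with $\a_\circ$ in place of $g$ gives $\a(\mu+\delta_y,y)=\a(\tau_{-y}\mu+\delta_0,0)$, which is used in part (3).

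For part (2), we write $\Er[\nabla u(\mu+\delta_0,0)]=\int_{\Rd}\Er[\nabla(\tau_x g)(\mu+\delta_0,0)]\,\d x=\int_{\Rd}\Er[\nabla g(\tau_{-x}\mu+\delta_{-x},-x)]\,\d x$. The integrand depends on $\mu$ only through $\tau_{-x}\mu$, so translation invariance of $\Pr$ turns this into $\int_{\Rd}\Er[\nabla g(\mu+\delta_y,y)]\,\d y$, which by the Mecke formula equals $\rho^{-1}\Er[\int_{\Rd}\nabla g(\mu,y)\,\d\mu(y)]=\rho^{-1}\Er[\int_{\cu_n}\nabla g\,\d\mu]$ because $g$ is $\mcl F_{\cu_n}$-measurable; this vanishes by \Cref{lem:Integral0}, since $g\in\cH^1_0(\cu_n)$ and $\cu_n$ has Lipschitz boundary.

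For part (3), first combine the stationarity of both $\nabla u$ and $\a$ from part (1), translation invariance, and the Mecke formula to get, for every $m\in\N$,
\[ \Er\bigl[\nabla u(\mu+\delta_0,0)\cdot\a(\mu+\delta_0,0)\nabla u(\mu+\delta_0,0)\bigr]=\frac{1}{\rho\vert\cu_m\vert}\Er\Bigl[\int_{\cu_m}\nabla u(\mu,y)\cdot\a(\mu,y)\nabla u(\mu,y)\,\d\mu(y)\Bigr]. \]
Next, a pointwise Cauchy--Schwarz inequality for the positive semidefinite form $\a(\mu,y)$, applied to $\nabla u(\mu,y)=\int_{y+\ov{\cu_n}}\nabla(\tau_x g)(\mu,y)\,\d x$ with $\vert\cu_n\vert=3^{nd}$, gives $\nabla u\cdot\a\nabla u(\mu,y)\le 3^{nd}\int_{\Rd}\nabla(\tau_x g)(\mu,y)\cdot\a(\mu,y)\nabla(\tau_x g)(\mu,y)\,\d x$. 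Integrating over $y\in\cu_m$ against $\d\mu$, taking $\Er$, exchanging $\int\,\d x$ with $\Er\int\,\d\mu$, and, for fixed $x$, using the first observation, $\a(\mu,y)=\a(\tau_{-x}\mu,y-x)$, the substitution $y\mapsto y+x$ under $\d\mu$ (via $\mu(x+\cdot)=\tau_{-x}\mu(\cdot)$), and translation invariance of $\Pr$ once more, collapses each inner term to $E_g:=\Er[\int_{\cu_n}\nabla g\cdot\a\nabla g\,\d\mu]$, and this term vanishes unless $(x+\ov{\cu_n})\cap\cu_m\ne\emptyset$, i.e.\ unless $x$ belongs to a set of measure $(3^m+3^n)^d$. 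Hence the left-hand side above is $\le 3^{nd}(3^m+3^n)^d(\rho\,3^{md})^{-1}E_g=3^{nd}(1+3^{n-m})^d\rho^{-1}E_g$; letting $m\to\infty$ (the left-hand side does not depend on $m$) yields $\le 3^{nd}\rho^{-1}E_g=3^{2dn}\Er[\tfrac{1}{\rho\vert\cu_n\vert}\int_{\cu_n}\nabla g\cdot\a\nabla g\,\d\mu]$, which is \eqref{eq:GammaComparison}.

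The translation bookkeeping, the Mecke formula, and \Cref{lem:Integral0} do all the work in parts (1) and (2); the only point requiring a little care is the boundary layer in part (3), handled cleanly by sending $m\to\infty$ so that the crude estimate produces exactly the stated constant $3^{2dn}$ without any loss. Throughout, one must remember that $u$ is not itself a legitimate function on $\mmd(\Rd)$, so every manipulation is performed at the level of $\nabla u$.
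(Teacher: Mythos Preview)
Your proof is correct and follows essentially the same strategy as the paper: the translation identity $\nabla(\tau_x g)(\mu,y)=\nabla g(\tau_{-x}\mu,y-x)$ and the $\mcl F_{\cu_n}$-localization for part (1), and for part (3) the passage to a volume average via Mecke and stationarity, Jensen's inequality for the quadratic form $\a$, Fubini with a boundary layer of relative size $(1+3^{n-m})^d$, and the limit $m\to\infty$ to recover the exact constant $3^{2dn}$.

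The one genuine difference is in part (2). The paper argues \emph{pointwise}: after stationarity one reaches $\Er\bigl[\int_{\Rd}\nabla g(\mu+\delta_z,z)\,\d z\bigr]$, and then observes that for each fixed $\mu$ the map $z\mapsto g(\mu+\delta_z)$ is smooth and constant outside $\ov{\cu_n}$, so $\int_{\Rd}\nabla_z[g(\mu+\delta_z)]\,\d z=0$ before taking any expectation. You instead convert $\int_{\Rd}\Er[\nabla g(\mu+\delta_y,y)]\,\d y$ via Mecke into $\rho^{-1}\Er[\int_{\cu_n}\nabla g\,\d\mu]$ and invoke \Cref{lem:Integral0}. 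Both are valid; the paper's route avoids appealing to \Cref{lem:Integral0} and gives the slightly stronger pointwise statement, while yours stays entirely within the toolbox already developed in the paper.
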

\begin{proof}
(1)  We use the definition to write 
\begin{align*}
\partial_k u(\mu + \delta_y,y) &= \lim_{h \to 0}\frac{1}{h} \Ll(\int_\Rd  \tau_x g(\mu + \delta_{y + h \e_k}) - \tau_x g(\mu + \delta_{y}) \, \d x\Rr) \\
&=  \lim_{h \to 0}\frac{1}{h} \Ll(\int_{y + \cu_n}  \tau_x g(\mu + \delta_{y + h \e_k}) - \tau_x g(\mu + \delta_{y}) \, \d x\Rr) \\
&= \lim_{h \to 0}\frac{1}{h} \Ll(\int_{y + \cu_n}  \tau_{x-y} g(\tau_{-y}\mu + \delta_{h \e_k}) - \tau_{x-y} g(\tau_{-y}\mu + \delta_{0}) \, \d x\Rr).
\end{align*}
From the first line to the second line, we used the fact that $g$ is $\mcl F_{\cu_n}$-measurable, so if the transport vector $x$ does not belong to $y + \cu_n$, then the integrand vanishes (up to a boundary layer that vanishes in the limit $h \to 0$). We then do the change of variables $z = x-y$ to get
\begin{align*}
\partial_k u(\mu + \delta_y,y) &= \lim_{h \to \infty}\frac{1}{h} \Ll(\int_{ \cu_n}  \tau_{z} g(\tau_{-y}\mu + \delta_{h \e_k}) - \tau_{z} g(\tau_{-y}\mu + \delta_{0}) \, \d z\Rr) \\
&=  \partial_k u(\tau_{-y}\mu + \delta_0,0),
\end{align*}
which means that $y \mapsto \nabla u(\mu + \delta_y,y)$ is a stationary gradient field.

(2) We use the equations developed in the last question. Since $g \in \cC_c^\infty(\Rd)$, we can exchange the integration and derivative and get 
\begin{align}\label{eq:DeriGamma}
\nabla u(\mu + \delta_y,y) &= \int_{\Rd}  \nabla \tau_{z} g(\tau_{-y}\mu + \delta_0, 0)  \, \d z . 
\end{align}
We evaluate this gradient at $y=0$
\begin{align*}
\Er\Ll[\nabla u(\mu + \delta_0, 0)\Rr] &= \Er\Ll[\int_{\Rd}  \nabla \tau_{z} g(\mu + \delta_0, 0)  \, \d z\Rr] \\
&= \Er\Ll[\int_{\Rd}  \nabla  g(\tau_{-z}\mu + \delta_{-z}, -z)  \, \d z\Rr]\\
&= \Er\Ll[\int_{\Rd}  \nabla  g(\mu + \delta_{z}, z)  \, \d z\Rr] \\
&= 0.
\end{align*}
From the second line to the third line, we used the stationarity of the Poisson point process. Because $g \in \cC_c^\infty(\Rd)$,  then $\int_{\Rd}  \nabla  g(\mu + \delta_{z}, z)  \, \d z = 0$ and the integration in the third line vanishes.

(3) We pick a cube $Q_L = \Ll(-\frac{L}{2}, \frac{L}{2}\Rr)^d$ with $L > 0$, make use of the stationarity of ${y \mapsto \nabla u(\mu + \delta_y, y)}$ and Mecke's identity (see \cite[Theorem 4.1]{bookPoisson})
\begin{align*}
&\Er \Ll[  \nabla u(\mu + \delta_0, 0) \cdot \a(\mu + \delta_0, 0)  \nabla u(\mu+\delta_0, 0) \Rr]\\
& = \Er \Ll[ \frac{1}{ \vert Q_L \vert} \int_{Q_L} \nabla u(\mu + \delta_y, y) \cdot \a(\mu + \delta_y, y)  \nabla u(\mu+\delta_y, y) \, \d y \Rr] \\
& = \Er \Ll[ \frac{1}{ \rho \vert Q_L \vert} \int_{Q_L} \nabla u(\mu, y) \cdot \a(\mu, y) \nabla u(\mu, y) \, \d \mu(y) \Rr].
\end{align*}
We put the definition $u = \int_\Rd  \tau_x g \, \d x$ into the equation. For the gradient at $y$, as $g$ is $\mcl F_{\cu_n}$-measurable, thus only the term $\nabla (\tau_x g)(\mu, y)$ for $x \in y + \cu_n$ contributes. This gives  
\begin{multline*}
\Er \Ll[ \nabla u(\mu + \delta_0, 0) \cdot \a(\mu + \delta_0, 0) \nabla u(\mu+\delta_0, 0) \Rr] \\
= \Er \Ll[ \frac{1}{ \rho \vert Q_L \vert} \int_{Q_L} \Ll(\int_{y + \cu_n}  \nabla (\tau_x g)(\mu, y) \, \d x\Rr) \cdot \a(\mu, y) \Ll(\int_{y + \cu_n}  \nabla(\tau_x g)(\mu, y) \, \d x\Rr) \, \d \mu(y) \Rr].
\end{multline*}
We next apply Jensen's inequality to obtain that
\begin{equation}\label{eq:TrickJensen}
\begin{split}
&\Er \Ll[ \nabla u(\mu + \delta_0, 0) \cdot \a(\mu + \delta_0, 0)  \nabla u(\mu+\delta_0, 0) \Rr] \\
&= \Er \Ll[ \frac{\vert \cu_n \vert^2}{ \rho \vert Q_L \vert} \int_{Q_L} \Ll(\fint_{y + \cu_n}  \nabla (\tau_x g)(\mu, y) \, \d x\Rr) \cdot \a(\mu, y) \Ll(\fint_{y + \cu_n}  \nabla (\tau_x g)(\mu, y) \, \d x\Rr) \, \d \mu(y) \Rr]\\
&\leq \Er \Ll[ \frac{\vert \cu_n \vert^2}{ \rho \vert Q_L \vert} \int_{Q_L} \Ll(\fint_{y + \cu_n}  \nabla (\tau_x g)(\mu, y) \cdot \a(\mu, y) \nabla (\tau_x g)(\mu, y)\, \d x\Rr)   \, \d \mu(y) \Rr]\\
&\leq \Er \Ll[ \frac{\vert \cu_n \vert}{ \rho \vert Q_L \vert} \int_{Q_{L+3^n}} \Ll(\int_{x + \cu_n}  \nabla (\tau_x g)(\mu, y) \cdot \a(\mu, y) \nabla (\tau_x g)(\mu, y) \, \d \mu(y)\Rr)   \, \d x \Rr].
\end{split}
\end{equation}
In the last line, we use Fubini's lemma and exchange ${\int (\cdots) \, \d x}$ with ${\int (\cdots) \, \d \mu(y)}$. In this procedure, we have to enlarge the domain from $Q_L$ to $Q_{L+3^n}$, because for the gradient at $y \in Q_L$, $\nabla (\tau_x g)(\mu, y )$ contributes for the transport $x \in Q_{L+3^n}$ (see \Cref{fig:cubeContributes1} as an illustration). Using the stationarity of the Poisson point process, we have 
\begin{multline*}
\Er \Ll[ \int_{x + \cu_n}  \nabla (\tau_x g)(\mu, y) \cdot \a(\mu, y) \nabla (\tau_x g)(\mu, y) \, \d \mu(y) \Rr] \\
= \Er \Ll[ \int_{\cu_n}  \nabla g(\mu, y) \cdot \a(\mu, y) \nabla g(\mu, y) \, \d \mu(y) \Rr],
\end{multline*}
which helps us conclude that 
\begin{align*}
&\Er \Ll[ (\xi + \nabla u(\mu + \delta_0, 0)) \cdot \a(\mu + \delta_0, 0) (\xi + \nabla u(\mu+\delta_0, 0)) \Rr] \\
& \leq 3^{2dn} \frac{\vert Q_{L+3^n} \vert}{  \vert Q_L \vert} \Er \Ll[  \frac{1}{\rho \vert \cu_n \vert}\int_{\cu_n}  \nabla g(\mu, y) \cdot \a(\mu, y) \nabla  g(\mu, y) \, \d \mu(y) \Rr].
\end{align*}
We take $L \to \infty$ and obtain the desired result. 
\end{proof}
\begin{remark}\label{rmk:GammaComparison}
The inequality \cref{eq:GammaComparison} is essentially sharp when $\nabla g$ itself is close to a stationary field. Indeed, if $g$ is close to a stationary field, then 
\begin{align*}
\nabla \tau_x g(\mu, y) = \nabla g(\tau_{-x}\mu, y-x) \simeq \nabla g(\mu, y),
\end{align*}
which implies that the application of Jensen's inequality in \cref{eq:TrickJensen} is essentially sharp. The error introduced by a boundary layer in a subsequent step of the proof disappears as we take $L \to \infty$ at the end.
\end{remark}

As a corollary of \Cref{prop:GammaProp}, we can also propose the following equivalent definition of $\at$.
\begin{corollary}
For any open set $U \subset \Rd$, we have 
\begin{align}\label{eq:defab3}
\xi \cdot \at \xi = \inf_{u \in \Gamma} \Er \Ll[ \frac{1}{\rho \vert U \vert} \int_{U} (\xi + \nabla u) \cdot \a (\xi + \nabla u) \, \d \mu \Rr].
\end{align}
\end{corollary}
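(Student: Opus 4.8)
The plan is to show that, once the test functions are restricted to the stationary-type class $\Gamma$, the finite-volume energy appearing in \cref{eq:defab3} does not depend on the domain $U$ at all and in fact equals the single-point energy defining $\at$ in \cref{eq:defab2}. We fix $\xi \in \Rd$, and take $U$ to be a bounded nonempty open set (the only case for which the left side of \cref{eq:defab3} is literally meaningful; this covers all cases we use). The key mechanism is the Mecke identity combined with the stationarity of $\nabla u(\mu+\delta_{\,\cdot},\cdot)$ recorded in \Cref{prop:GammaProp}(1).

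First I would fix $u = \int_\Rd \tau_x g\,\d x \in \Gamma$, with $g \in \cC_c^\infty(\Rd) \cap \cH^1_0(\Rd)$ an $\mcl F_{\cu_n}$-measurable function, and set
\[
h(\mu,y) := (\xi + \nabla u(\mu + \delta_y, y)) \cdot \a(\mu + \delta_y, y)\,(\xi + \nabla u(\mu + \delta_y, y)) \ge 0,
\]
a jointly measurable function of $(\mu,y)$ by the construction of elements of $\Gamma$. Applying the Mecke equation \cite[Theorem~4.1]{bookPoisson} to the nonnegative function $(\mu,y)\mapsto (\xi + \nabla u(\mu,y))\cdot \a(\mu,y)(\xi + \nabla u(\mu,y))$ --- legitimate by Tonelli, with no integrability hypothesis needed --- yields
\[
\Er\Ll[\frac{1}{\rho\vert U\vert}\int_U (\xi + \nabla u)\cdot \a\,(\xi + \nabla u)\,\d\mu\Rr] = \frac{1}{\vert U\vert}\int_U \Er\Ll[h(\mu,y)\Rr]\,\d y.
\]

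Next I would invoke part~(1) of \Cref{prop:GammaProp} together with the stationarity of $\a$ to write $h(\mu,y) = h(\tau_{-y}\mu,0)$, and then the translation invariance of the Poisson law $\Pr$ to deduce $\Er[h(\mu,y)] = \Er[h(\mu,0)]$ for every $y \in \Rd$. The spatial average then collapses, and one obtains, for every $u \in \Gamma$ and every such $U$,
\[
\Er\Ll[\frac{1}{\rho\vert U\vert}\int_U (\xi + \nabla u)\cdot \a\,(\xi + \nabla u)\,\d\mu\Rr] = \Er\Ll[(\xi + \nabla u(\mu+\delta_0,0))\cdot\a(\mu+\delta_0,0)(\xi + \nabla u(\mu+\delta_0,0))\Rr].
\]
Taking the infimum over $u\in\Gamma$ on both sides and recalling the definition \cref{eq:defab2} gives $\xi\cdot\at\xi$ on the right, which is precisely \cref{eq:defab3}. (Finiteness of the common value for each fixed $u$ follows from \cref{eq:GammaComparison} and $\a\le\Lambda\id$, though it plays no role in the argument.)

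There is no substantial obstacle: the statement is an immediate consequence of the Mecke identity and the stationarity in \Cref{prop:GammaProp}(1). The only mildly delicate points are the joint measurability of $h$ and the exchange of $\Er$ with the spatial integral $\int_U$; both are routine, the latter because the integrand is nonnegative, so that Tonelli applies without any integrability assumption.
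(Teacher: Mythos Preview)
Your proposal is correct and follows exactly the approach of the paper, which states in one line that the corollary is a direct consequence of Mecke's identity and the stationarity of $y \mapsto \nabla u(\mu+\delta_y,y)$ from \Cref{prop:GammaProp}(1); you have simply fleshed out those two ingredients explicitly.
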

\begin{proof}
It is a direct result of Mecke's identity (see \cite[Theorem 4.1]{bookPoisson}) and the stationarity of ${y \mapsto \nabla u(\mu + \delta_y, y)}$.
\end{proof}

\begin{figure}[h!]
\centering
\includegraphics[scale=0.5]{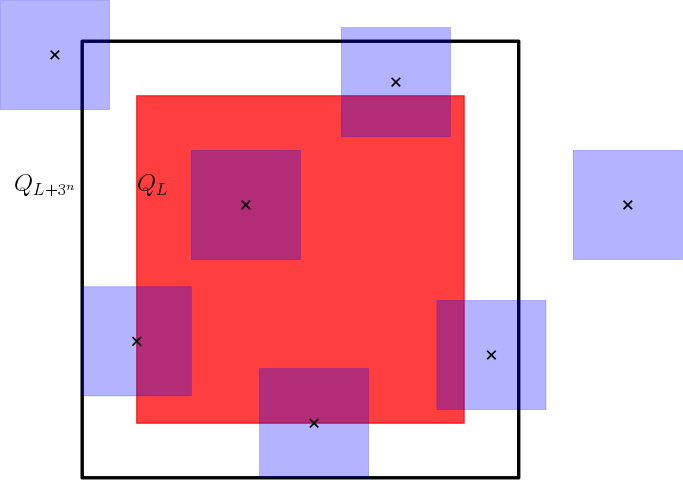}
\caption{The red cube represents the cube $Q_L$ and the blue cubes represent the transport of small cubes $x + \square_n$. It is clear that for $x \in Q_{L + 3^n}$, the blue cubes and the red cube intersect.}\label{fig:cubeContributes1}
\end{figure}

With the help of \Cref{prop:GammaProp}, we can now prove the first main theorem of this appendix. 
\begin{proof}[Proof of Theorem~\ref{thm:TwoAb}]
We decompose the proof into two steps. 

\textit{Step 1: Bound from below $\at \geq \ab$.} We fix $m \in \N$ and a sequence of approximate minimizers $\{\phi_{{p}}^{(i)}\}_{i \geq 1}$ for the variational problem in \cref{eq:defab2}, which we write in the form
\begin{align*}
\phi_{{p}}^{(i)}(\mu) = \int_{\Rd} \tau_x g_i(\mu) \, \d x
\end{align*}
for some $g_i \in \cC_c^\infty(\Rd) \cap \cH^1_0(\Rd)$. 
Now we propose a modified version in $\cH^1(\cu_m)$ defined by
\begin{align*}
\tilde{\phi}_{{p}}^{(i)}(\mu) := \int_{K_i} \tau_x g_i(\mu) \, \d x,
\end{align*}
with $K_i \subset \Rd$ a large compact set so that $\tilde{\phi}_{{p}}^{(i)} \in \cH^1(\cu_m)$ and 
\begin{align*}
\forall y \in \cu_m, \qquad \nabla \tilde{\phi}_{{p}}^{(i)}(\mu, y) = \nabla \phi_{{p}}^{(i)}(\mu, y).
\end{align*}
Then we test ${p} + \nabla \tilde{\phi}_{{p}}^{(i)}$ in the optimization problem for $\nu^*(\cu_m, q)$ to get that 
\begin{align*}
\frac{1}{2} q \cdot \ab_*^{-1}(\cu_m) q &\geq \Er \Ll[\frac{1}{\rho \vert \cu_m \vert}\int_{\cu_m} \Ll( -\frac 1 2 ({p} + \nabla \tilde{\phi}_{{p}}^{(i)}) \cdot \a ({p} + \nabla \tilde{\phi}_{{p}}^{(i)}) + q \cdot ({p} + \nabla \tilde{\phi}_{{p}}^{(i)}) \Rr) \, \d \mu \Rr]\\
&= \Er \Ll[\frac{1}{\rho \vert \cu_m \vert}\int_{\cu_m} \Ll( -\frac 1 2 ({p} + \nabla \phi_{{p}}^{(i)}) \cdot \a ({p} + \nabla \phi_{{p}}^{(i)}) + q \cdot ({p} + \nabla \phi_{{p}}^{(i)}) \Rr) \, \d \mu \Rr].
\end{align*}
We use the stationarity of $y \mapsto  \nabla \phi_{{p}}^{(i)}(\mu + \delta_y, y)$, \cref{eq:meanZero} and let $i \to \infty$ to obtain  
\begin{align*}
\frac{1}{2} q \cdot \ab_*^{-1}(\cu_m) q \geq - \frac{1}{2} {p} \cdot \at {p} + {p} \cdot q.
\end{align*}
Taking $q = \ab_*(\cu_m) {p}$ leads to
\begin{align*}
{p} \cdot \at {p} \geq {p} \cdot \ab_*(\cu_m) {p}.
\end{align*}
Finally, we let $m \to \infty$ and conclude that $\at \geq \ab$.

\textit{Step 2: Bound from above $\at \leq \ab$.} We hope to prove $\at \leq \ab$ by testing the variational formula \cref{eq:defab2} with a suitable candidate, namely the function $\td \phi_{p,m}$ introduced in \cref{e.def.tdphi}.
Since $\phi(\cdot,\cu_m,p) \in \cH^1_0(\cu_m)$, and using \cref{eq:GammaComparison}, we can approximate $\nabla \td \phi_{p,m}$ in $(\cL^2_{\bullet,\mathrm{loc}})^d$ arbitrarily closely with elements of $\Gamma$. It thus follows that we can use $\td \phi_{p,m}$ as a candidate in the variational problem in \cref{eq:defab2}, and use the comparison inequality \cref{eq:GammaComparison} to get that   
\begin{align}
\label{e.ineq.at.ab}
{p} \cdot \at {p} & \leq  \Er \Ll[ ({p} + \nabla \tilde{\phi}_{{p},m}(\mu + \delta_0, 0)) \cdot \a(\mu + \delta_0, 0) ({p} + \nabla \tilde{\phi}_{{p},m}(\mu+\delta_0, 0)) \Rr] \\
\notag
& \leq \Er\Ll[\frac{1}{\rho \vert \cu_m \vert}\int_{\cu_m} ({p} + \nabla\phi(\mu, y, \cu_m, {p})) \cdot \a  ({p} + \nabla\phi(\mu, y, \cu_m, {p})) \, \d \mu(y) \Rr]\\
\notag
& = {p} \cdot \ab(\cu_m) {p}.
\end{align}
Finally, we let $m \to \infty$ and conclude that $\at \leq \ab$.
\end{proof}

In the proof above, we used $\{\tilde{\phi}_{{p},m}\}_{m \geq 1}$ as a sequence of approximate minimizers for the variational problem in \cref{eq:defab2}. This already gives us a good hint for the validity of at least some of the statements in Theorem~\ref{t.local.conv}. We now turn to the proof of the first part of this result.

\begin{proof}[Proof of part (1) of Theorem~\ref{t.local.conv}]
We decompose the proof into four steps.

\textit{Step 1: $\{\nabla \tilde{\phi}_{{p},m}\}_{m \geq 1}$ is a Cauchy sequence in ${(\cL^2_{\bullet,\mathrm{loc}})^d}$.}
We fix $n < m$ and, recalling the notation $\Z_{m,n} := 3^n \Zd \cap \cu_m$, we observe that 
\begin{align*}
\nabla \tilde{\phi}_{{p},n}(\mu, y) &=  \frac{1}{ \vert \cu_n \vert} \int_{\Rd} \nabla \phi(\mu, y, x + \cu_n, {p}) \, \d x \\
&= \frac{1}{ \vert \cu_m \vert} \int_{\Rd} \sum_{z \in \Z_{m,n}} \nabla \phi(\mu, y, x + z + \cu_n, {p}) \, \d x \\
&= \frac{1}{ \vert \cu_m \vert} \int_{\Rd} \nabla \tau_x \phi_{{p}, m, n}(\mu, y) \, \d x ,
\end{align*}
where the function $\phi_{{p}, m, n}$ is defined as 
\begin{align}
\label{e.def.phi.pmn}
\phi_{{p}, m, n}(\mu) := \sum_{z \in \Z_{m,n}} \phi(\mu, z+\cu_n, {p}).
\end{align}
For any compact set $K$, we use Mecke's identity (see \cite[Theorem 4.1]{bookPoisson}) and the stationarity of $\nabla \tilde{\phi}_{{p},m}$ 
\begin{align*}
&\Er\Ll[\int_K \vert \nabla \tilde{\phi}_{{p},m} - \nabla \tilde{\phi}_{{p},n}\vert^2(\mu, y)  \, \d \mu(y)\Rr] \\
&= \rho \Er\Ll[\int_K \vert \nabla \tilde{\phi}_{{p},m} - \nabla \tilde{\phi}_{{p},n}\vert^2(\mu+\delta_y, y)  \, \d y\Rr] \\
&= \rho \vert K \vert \Er\Ll[\vert \nabla \tilde{\phi}_{{p},m} - \nabla \tilde{\phi}_{{p},n}\vert^2(\mu+\delta_0, 0) \Rr].
\end{align*}
Then we use the comparison inequality \cref{eq:GammaComparison} and obtain that 
\begin{align*}
&\Er\Ll[\int_K \vert \nabla \tilde{\phi}_{{p},m} - \nabla \tilde{\phi}_{{p},n}\vert^2(\mu, y)  \, \d \mu(y)\Rr]\\
&\leq \rho \vert K \vert \Er\Ll[\frac{1}{\rho \vert \cu_m \vert}\int_{\cu_m}\Ll \vert \nabla \phi(\mu, y, \cu_m, {p}) - \nabla \phi_{{p},m,n}(\mu, y) \Rr\vert^2 \, \d \mu(y) \Rr]\\
&\leq \rho \vert K \vert (\nu(\cu_n, {p}) - \nu(\cu_m, {p})).
\end{align*}
By \cref{e.identities.ab}, this shows that $\{\nabla \tilde{\phi}_{{p},m}\}_{m \geq 1}$ is a Cauchy sequence in ${(\cL^2_{\bullet,\mathrm{loc}})^d}$. 

\textit{Step 2: Harmonic property - setting up.} Denote the limit by $\nabla \phi_{p}$, we set things up to prove the harmonic property \cref{eq:CorrectorHarmonic} by approximation. We fix $s > 0$ and ${v \in \cC^\infty_c(\Rd) \cap \cH^1_0(\Rd)}$ which is $\mcl F_{Q_s}$-measurable, and observe that
\begin{align*}
& \Er\Ll[\int_{\Rd} \nabla v \cdot \a ({p} + \nabla \phi_{{p}}) \, \d \mu \Rr]\\
& = \lim_{m \to \infty}\Er\Ll[\int_{Q_s} \nabla v \cdot \a ({p} + \nabla \tilde{\phi}_{{p},m}) \, \d \mu \Rr]\\
& = \lim_{m \to \infty}\Er\Ll[\int_{Q_s} \nabla v(\mu, y) \cdot \a(\mu, y) \Ll( \fint_{y + \cu_m} {p} + \nabla \phi(\mu, y, x + \cu_m, {p}) \, \d x\Rr) \, \d \mu(y) \Rr].
\end{align*}
We then use Fubini's lemma to exchange the order of integration, 
\begin{align*}
& \Er\Ll[\int_{\Rd} \nabla v \cdot \a ({p} + \nabla \phi_{{p}}) \, \d \mu \Rr]   = \lim_{m \to \infty}\frac{1}{\vert \cu_m \vert} \\
&\times \Er\Ll[\int_{Q_{3^m + s}} \Ll(\int_{x+\cu_m} \nabla v(\mu, y) \cdot \a(\mu, y) \Ll({p} + \nabla \phi(\mu, y, x + \cu_m, {p})\Rr)  \Ind{y \in Q_s}  \, \d \mu(y)\Rr) \, \d x \Rr].
\end{align*}
For $m$ sufficiently large, we can decompose the domain of integration in $x$ in the expression above into $Q_{3^m - s}$ and $Q_{3^m + s} \backslash Q_{3^m - s}$. We analyse the contribution of each of these quantities in each of the following two steps. 

\textit{Step 3: Integration in $Q_{3^m - s}$.} Notice that for $x \in Q_{3^m - s}$, we have $Q_s \subset x + \cu_m$ (see \Cref{fig:cubeContributes2} for an illustration),  thus we can drop the indicator $\Ind{y \in Q_s}$ in the inner integral and use the $\a$-harmonic property of ${{p} + \nabla \phi(x + \cu_m, {p})}$ to get that
\begin{align*}
&\frac{1}{\vert \cu_m \vert}\Er\Ll[\int_{Q_{3^m - s}} \Ll(\int_{x+\cu_m} \nabla v(\mu, y) \cdot \a(\mu, y) \Ll({p} + \nabla \phi(\mu, y, x + \cu_m, {p})\Rr) \, \d \mu(y)\Rr) \, \d x \Rr] \\
&\quad = \frac{1}{\vert \cu_m \vert}\int_{Q_{3^m - s}} \Er\Ll[\int_{x+\cu_m} \nabla v(\mu, y) \cdot \a(\mu, y) \Ll({p} + \nabla \phi(\mu, y, x + \cu_m, {p})\Rr) \, \d \mu(y) \Rr] \, \d x  \\
&\quad = 0.
\end{align*} 
\begin{figure}[h!]
\centering
\includegraphics[scale=0.5]{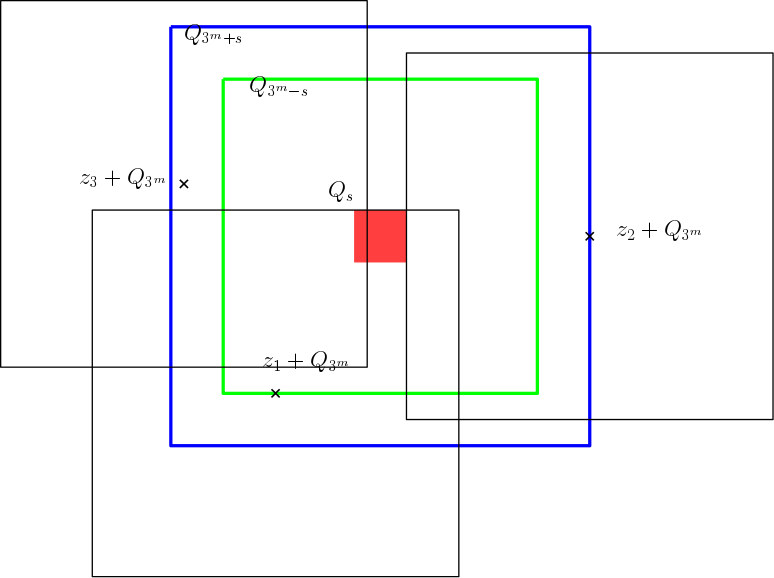}
\caption{The red cube represents $Q_s$, and the blue and green cubes represent respectively $Q_{3^m + s}$ and $Q_{3^m - s}$. For $x \in Q_{3^m - s}$, we have $Q_s \subset x + \square_m$; for $x \notin Q_{3^m + s}$, ${(x + \square_m) \cap Q_s = \emptyset}$; for $x \in Q_{3^m + s} \backslash Q_{3^m - s}$, $x + \square_m$ and $Q_s$ have non-empty intersection but $Q_s$ is not totally contained in $x + \square_m$. These three cases are represented by $z_1, z_2, z_3$.}\label{fig:cubeContributes2}
\end{figure}

\textit{Step 4: Boundary layer $Q_{3^m + s} \backslash Q_{3^m - s}$.} We use Young's inequality to bound the term with the integral over $x \in Q_{3^m + s} \backslash Q_{3^m - s}$ by
\begin{align*}
&\frac{\Lambda}{\vert \cu_m \vert}\Er\Ll[\int_{Q_{3^m + s} \backslash Q_{3^m - s}} \Ll(\int_{(x+\cu_m) \cap Q_s} \vert\nabla v(\mu, y)\vert^2 + \Ll \vert {p} + \nabla \phi(\mu, y, x + \cu_m, {p})\Rr\vert^2  \, \d \mu(y)\Rr) \, \d x \Rr] \\
&\leq \underbrace{\frac{ \Lambda \vert Q_{3^m + s} \backslash Q_{3^m - s} \vert}{\vert \cu_m\vert} \Er\Ll[\int_{Q_s} \vert\nabla v(\mu, y)\vert^2  \, \d \mu(y) \Rr]}_{\mathbf{(A)}}\\
& \quad + \underbrace{\frac{\Lambda}{\vert \cu_m \vert}\Er\Ll[\int_{Q_{3^m + s} \backslash Q_{3^m - s}} \Ll(\int_{(x+\cu_m) \cap Q_s} \Ll \vert {p} + \nabla \phi(\mu, y, x + \cu_m, {p})\Rr\vert^2  \, \d \mu(y)\Rr) \, \d x \Rr]}_{\mathbf{(B)}}.
\end{align*}
For the term $\mathbf{(A)}$, we have, for a constant $C$ that may depend on $s$,
\begin{align*}
\mathbf{(A)} \le   C \Lambda 3^{-m}\Er\Ll[\int_{Q_s} \vert\nabla v(\mu, y)\vert^2  \, \d \mu(y) \Rr] \xrightarrow[m \to \infty]{} 0.
\end{align*}
For the term $\mathbf{(B)}$, we use the stationarity to observe that 
\begin{multline*}
\Er\Ll[\int_{(x+\cu_m) \cap Q_s} \Ll \vert {p} + \nabla \phi(\mu, y, x + \cu_m, {p})\Rr\vert^2  \, \d \mu(y) \Rr] \\
= \Er\Ll[\int_{\cu_m \cap (-x + Q_s)} \Ll \vert {p} + \nabla \phi(\mu, y, \cu_m, {p})\Rr\vert^2  \, \d \mu(y) \Rr].
\end{multline*}
We apply once again Fubini's lemma to $\mathbf{(B)}$ and get that 
\begin{align*}
\mathbf{(B)} & = \frac{\Lambda}{\vert \cu_m \vert}\Er\Ll[\int_{Q_{3^m + s} \backslash Q_{3^m - s}} \Ll(\int_{\cu_m} \Ll \vert {p} + \nabla \phi(\mu, y, \cu_m, {p})\Rr\vert^2 \Ind{y \in (-x + Q_s)} \, \d \mu(y) \Rr) \, \d x \Rr] \\
&= \frac{\Lambda}{\vert \cu_m \vert}\Er\Ll[\int_{\cu_m} \Ll \vert {p} + \nabla \phi(\mu, y, \cu_m, {p})\Rr\vert^2 \Ll(\int_{Q_{3^m + s} \backslash Q_{3^m - s}} \Ind{y \in (-x + Q_s)} \, \d x \Rr)\, \d \mu(y)   \Rr] \\
&\leq \frac{\Lambda \vert Q_s \vert}{\vert \cu_m \vert}\Er\Ll[\int_{\cu_m} \Ll \vert {p} + \nabla \phi(\mu, y, \cu_m, {p})\Rr\vert^2 \Ind{\dist(y, \partial \cu_m) \leq s}\, \d \mu(y)   \Rr].
\end{align*}
That this term converges to zero is a consequence of the stronger estimate given by \Cref{lem:BoundaryLayer} below. This concludes the proof for ${v \in \cC^\infty_c(\Rd) \cap \cH^1_0(\Rd)}$, and then we can use density argument to extend to general ${v \in \cH^1_0(\Rd)}$.
\end{proof}

In the proof above, we appealed to the following boundary layer estimate, which we state as a separate lemma for future reference (and which is stronger than what was needed for the purpose of the proof above, since the boundary layer size is allowed to increase with $m$).

\begin{lemma}[Boundary layer estimate]
For every sequence $(s_m)_{m \in \N}$ such that $s_m \leq 3^m$ and $\lim_{m \to \infty} 3^{-m}s_m =0$, we have  
\label{lem:BoundaryLayer}
\begin{align}\label{eq:BoundaryLayer}
\lim_{m \to \infty}\Er\Ll[\frac{1}{\rho \vert \cu_m \vert} \int_{\cu_m} \Ll \vert {p} + \nabla \phi(\mu, y, \cu_m, {p})\Rr\vert^2 \Ind{\dist(y, \partial \cu_m) \leq s_m}\, \d \mu(y)  \Rr] = 0.
\end{align}
\end{lemma}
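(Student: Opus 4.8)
The plan is to replace the minimizer $v(\cdot,\cu_m,p)=\ell_{p,\cu_m}+\phi(\cdot,\cu_m,p)$ by the ``glued'' candidate associated to a mesoscopic partition of $\cu_m$: by stationarity the gradient energy of this candidate is equidistributed over the subcubes, so a boundary layer of vanishing volume fraction carries only a vanishing fraction of it, while the error introduced by the substitution is controlled by the defect in the subadditivity of $\nu$.

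First I would fix a mesoscale $k_m:=\lfloor m/2\rfloor$, so that $k_m\to\infty$ and $3^{k_m-m}\to 0$, and set
\[
v'_m:=\sum_{z\in\Z_{m,k_m}}v(\cdot,z+\cu_{k_m},p)=\ell_{p,\cu_m}+\sum_{z\in\Z_{m,k_m}}\phi(\cdot,z+\cu_{k_m},p)\in\ell_{p,\cu_m}+\cH^1_0(\cu_m),
\]
exactly as in the proof of \Cref{prop:NuBase}(5). Since $\phi(\cdot,z+\cu_{k_m},p)\in\cH^1_0(z+\cu_{k_m})$, on each subcube $z+\cu_{k_m}$ one has $\nabla v'_m=\nabla v(\cdot,z+\cu_{k_m},p)$; combined with the stationarity of $\a$ and of $\Pr$ this gives $\Er\Ll[\frac{1}{\rho|\cu_m|}\int_{\cu_m}\tfrac12\nabla v'_m\cdot\a\nabla v'_m\,\d\mu\Rr]=\nu(\cu_{k_m},p)$, so the quadratic response \eqref{eq:QuadraRep1} yields
\[
\Er\Ll[\frac{1}{\rho|\cu_m|}\int_{\cu_m}\tfrac12\nabla(v'_m-v(\cdot,\cu_m,p))\cdot\a\nabla(v'_m-v(\cdot,\cu_m,p))\,\d\mu\Rr]=\nu(\cu_{k_m},p)-\nu(\cu_m,p).
\]
The right side tends to $0$ as $m\to\infty$, since $\{\ab(\cu_m)\}$ converges (see \eqref{e.identities.ab}) and $k_m\to\infty$; using $\a\ge\id$, the same quantity with $\a$ and the factor $\tfrac12$ removed is bounded by $4\bigl(\nu(\cu_{k_m},p)-\nu(\cu_m,p)\bigr)=o(1)$.

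Next, abbreviating $\nabla w_m:=p+\nabla\phi(\cdot,\cu_m,p)=\nabla v(\cdot,\cu_m,p)$ and $L_m:=\{y:\dist(y,\partial\cu_m)\le s_m\}$, I would bound the quantity in \eqref{eq:BoundaryLayer} by $2\Er\Ll[\frac{1}{\rho|\cu_m|}\int_{\cu_m}|\nabla w_m-\nabla v'_m|^2\,\d\mu\Rr]+2\Er\Ll[\frac{1}{\rho|\cu_m|}\int_{\cu_m}|\nabla v'_m|^2\Ind{y\in L_m}\,\d\mu\Rr]$. The first term is $o(1)$ by the previous step. For the second, on $z+\cu_{k_m}$ one has $\nabla v'_m=\nabla v(\cdot,z+\cu_{k_m},p)$ and $\Ind{y\in L_m}\le\Ind{(z+\cu_{k_m})\cap L_m\neq\emptyset}$ (a deterministic indicator), so stationarity gives
\[
\Er\Ll[\frac{1}{\rho|\cu_m|}\int_{\cu_m}|\nabla v'_m|^2\Ind{y\in L_m}\,\d\mu\Rr]\le 2\nu(\cu_{k_m},p)\,\frac{\#\{z\in\Z_{m,k_m}:(z+\cu_{k_m})\cap L_m\neq\emptyset\}}{3^{(m-k_m)d}}.
\]
Here $\nu(\cu_{k_m},p)\le\nu(\cu_0,p)$ is uniformly bounded, and a subcube meets $L_m$ only if its center lies within distance $s_m+\sqrt d\,3^{k_m}$ of $\partial\cu_m$, so the number of such $z$ is at most $C(d)\bigl(s_m3^{-k_m}+1\bigr)3^{(m-k_m)(d-1)}$; hence the displayed fraction is at most $C(d)\bigl(s_m3^{-m}+3^{k_m-m}\bigr)\to 0$ by the hypothesis $3^{-m}s_m\to 0$ together with $3^{k_m-m}\to 0$. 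Combining the two bounds proves \eqref{eq:BoundaryLayer}.

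The only delicate point is the calibration of the intermediate scale $k_m$: it must grow so that the subadditivity defect $\nu(\cu_{k_m},p)-\nu(\cu_m,p)$ vanishes (controlling the substitution error), while $m-k_m$ must also tend to infinity so that fattening $L_m$ to a union of whole mesoscopic subcubes still leaves a vanishing volume fraction; any $k_m$ with $k_m\to\infty$ and $m-k_m\to\infty$ does the job, and it is exactly here that the assumption $3^{-m}s_m\to 0$ is used.
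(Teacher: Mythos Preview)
Your proof is correct and follows essentially the same strategy as the paper's: replace the minimizer by the glued candidate at a mesoscale, control the substitution error via the quadratic response \eqref{eq:QuadraRep1} and the subadditivity defect $\nu(\cu_{k_m},p)-\nu(\cu_m,p)$, and observe that the glued candidate's energy is equidistributed so the boundary layer contributes only its volume fraction. The only cosmetic difference is the calibration of the mesoscale: the paper picks $n$ with $s_m\le 3^n$ so that the layer $L_m$ is contained in a single shell of subcubes (avoiding any counting), whereas you fix $k_m=\lfloor m/2\rfloor$ independently of $s_m$ and instead count the subcubes meeting $L_m$; both choices work for the same reason you identify in your last paragraph.
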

\begin{proof}
The idea is to make use of the renormalization argument. We define a mesoscopic scale $n$ associated to $m$ such that $s_m \leq 3^n$, $n \to \infty$ and $m-n \to \infty$. Then we immediately have
\begin{multline*}
\Er\Ll[\frac{1}{\rho \vert \cu_m \vert} \int_{\cu_m} \Ll \vert {p} + \nabla \phi(\mu, y, \cu_m, {p})\Rr\vert^2 \Ind{\dist(y, \partial \cu_m) \leq s_m}\, \d \mu(y)  \Rr] \\
 \leq \Er\Ll[\frac{1}{\rho \vert \cu_m \vert} \int_{\cu_m} \Ll \vert {p} + \nabla \phi(\mu, y, \cu_m, {p})\Rr\vert^2 \Ind{\dist(y, \partial \cu_m) \leq 3^n}\, \d \mu(y)  \Rr]. 
\end{multline*}
We propose to compare $\phi(\cdot, \cu_m,p)$ with $\phi_{{p},m,n} \in \cH^1_0(\cu_m)$ defined as in \cref{e.def.phi.pmn}:
\begin{align*}
\phi_{{p},m,n}(\mu) = \sum_{z \Z_{m,n}} \phi(\mu, z + \cu_n, {p}). 
\end{align*}
Then we have 
\begin{equation}\label{eq:BdDecom}
\begin{split}
&\Er\Ll[\frac{1}{\rho \vert \cu_m \vert} \int_{\cu_m} \Ll \vert {p} + \nabla \phi(\mu, y, \cu_m, {p})\Rr\vert^2 \Ind{\dist(y, \partial \cu_m) \leq 3^n}\, \d \mu(y)  \Rr]\\
& \leq \underbrace{ 2\Er\Ll[\frac{1}{\rho \vert \cu_m \vert} \int_{Q_{3^m} \backslash Q_{3^m-2 \times 3^n}} \Ll \vert {p} + \nabla \phi_{{p},m,n}(\mu, y)\Rr\vert^2 \, \d \mu(y)  \Rr]}_{\text{\cref{eq:BdDecom}-a}} \\
&\qquad + \underbrace{2\Er\Ll[\frac{1}{\rho \vert \cu_m \vert} \int_{Q_{3^m} \backslash Q_{3^m-2 \times 3^n}} \Ll \vert \nabla \phi_{{p},m,n}(\mu, y)- \nabla \phi(\mu, y, \cu_m, {p}) \Rr\vert^2 \, \d \mu(y)  \Rr]}_{\text{\cref{eq:BdDecom}-b}}.
\end{split}
\end{equation}
For the first term \cref{eq:BdDecom}-a, we do partition of sum into cubes of size $3^n$, then we have 
\begin{align*}
&\Er\Ll[\frac{1}{\rho \vert \cu_m \vert} \int_{Q_{3^m} \backslash Q_{3^m-2 \times 3^n}} \Ll \vert {p} + \nabla \phi_{{p},m,n}(\mu, y)\Rr\vert^2 \, \d \mu(y)  \Rr]\\
& = \frac{\vert \cu_n \vert}{\vert \cu_m \vert}\sum_{z \in \Z_{m,n} \cap (Q_{3^m} \backslash Q_{3^m-2 \times 3^n})} \Er\Ll[\frac{1}{\rho \vert \cu_n \vert} \int_{z + \cu_n} \Ll \vert {p} + \nabla \phi(\mu, y, z + \cu_n, {p})\Rr\vert^2 \, \d \mu(y)  \Rr] \\
& = \frac{\vert Q_{3^m} \backslash Q_{3^m-2 \times 3^n}  \vert}{\vert Q_{3^m} \vert} \nu(\cu_n, {p}) \\
& \leq  3^{-(m-n)}\Lambda \vert {p} \vert^2.
\end{align*}
For the second term \cref{eq:BdDecom}-b, we have 
\begin{align*}
&\Er\Ll[\frac{1}{\rho \vert \cu_m \vert} \int_{Q_{3^m} \backslash Q_{3^m-2 \times 3^n}} \Ll \vert \nabla \phi_{{p},m,n}(\mu, y)- \nabla \phi(\mu, y, \cu_m, {p}) \Rr\vert^2 \, \d \mu(y)  \Rr] \\
& \leq \Er\Ll[\frac{1}{\rho \vert \cu_m \vert} \int_{Q_{3^m}} \Ll \vert \nabla \phi_{{p},m,n}(\mu, y)- \nabla \phi(\mu, y, \cu_m, {p}) \Rr\vert^2 \, \d \mu(y)  \Rr]  \\
& = \nu(\cu_n, {p})-\nu(\cu_m, {p}).
\end{align*}
Therefore, when we take $m,n \to \infty$, both  \cref{eq:BdDecom}-a and  \cref{eq:BdDecom}-b go to $0$, so the boundary layer in a mecroscopic scale can be neglected.
\end{proof}

Now that the gradient of the whole-space corrector $\nabla \phi_{p}$ is well-defined, we can proceed to complete the proof of Theorem~\ref{t.local.conv}. 
\begin{proof}[Proof of parts (2) and (3) of Theorem~\ref{t.local.conv}]
We start by discussing the validity of the identities \cref{eq:defab4}  and \cref{eq:defab5}. We use the stationary approximate corrector~$\tilde{\phi}_{{p},n}$ defined in \cref{e.def.tdphi}, and observe from \cref{e.ineq.at.ab} and Theorem~\ref{thm:TwoAb} that 
\begin{align*}
{p} \cdot \ab {p} & 
= \lim_{m \to \infty}\Er \Ll[ ({p} + \nabla \tilde{\phi}_{{p},m}(\mu + \delta_0, 0)) \cdot \a(\mu + \delta_0, 0) ({p} + \nabla \tilde{\phi}_{{p},m}(\mu+\delta_0, 0)) \Rr]
\\
&  = \lim_{m \to \infty}\Er \Ll[ \frac{1}{\rho \vert \cu_0 \vert} \int_{\cu_0} ({p} + \nabla \tilde{\phi}_{{p},m}) \cdot \a ({p} + \nabla \tilde{\phi}_{{p},m}) \, \d \mu \Rr].
\end{align*} 
The identity \cref{eq:defab4} then follows from the convergence of ${\nabla \tilde{\phi}_{{p},m}}$ to ${\nabla \phi_{{p}}}$ in $(\cL^2_{\bullet,\mathrm{loc}})^d$. For the second identity, we can use the fact that
\begin{align*}
\lim_{m \to \infty}\Er\Ll[ \frac{1}{\rho \vert \cu_m \vert} \int_{\cu_m} \a \Ll({p} + \nabla \phi(\mu, \cdot, \cu_m, {p})\Rr)\, \d \mu \Rr] = \ab {p},
\end{align*}
the estimate \cref{eq:AppCorrector}, and the stationarity of $\nabla \phi_p$. The identity \cref{eq:AppCorrectorDual} can also be deduced from \cref{eq:AppCorrector} because
\begin{align*}
&\Er\Ll[ \frac{1}{\rho \vert \cu_m \vert} \int_{\cu_m} \vert \nabla \phi^*(\mu, \cdot, \cu_m, {p}) -  \nabla \phi_{{p}}(\mu, \cdot)\vert^2 \, \d \mu \Rr] \\
& \leq 2\Er\Ll[ \frac{1}{\rho \vert \cu_m \vert} \int_{\cu_m} \vert \nabla \phi(\mu, \cdot, \cu_m, {p}) -  \nabla \phi_{{p}}(\mu, \cdot)\vert^2 \, \d \mu \Rr] \\
& \qquad + 2\Er\Ll[ \frac{1}{\rho \vert \cu_m \vert} \int_{\cu_m} \vert \nabla \phi^*(\mu, \cdot, \cu_m, {p}) -  \nabla \phi(\mu, \cdot, \cu_m, {p})\vert^2 \, \d \mu \Rr].
\end{align*}
By \cref{eq:JVExpression} and \cref{eq:JEnergy}, the second term can be bounded by  $J(\cu_m, {p}, \ab_*(\cu_m){p})$; and by \cref{eq:defJ} and \cref{e.identities.ab}, this quantity converges to $0$ as $m \to \infty$. From now on, we thus focus on the proof of \cref{eq:AppCorrector}.

The idea of the proof of \cref{eq:AppCorrector} is very close to that for \cref{eq:CorrectorHarmonic} and \cref{eq:BoundaryLayer}. We fix a mescroscopic scale $n = \lfloor \frac{m}{3}\rfloor$, and then use the decomposition 
\begin{equation}\label{eq:AppCorrectorDecom}
\begin{split}
&\Er\Ll[ \frac{1}{\rho \vert \cu_m \vert} \int_{\cu_m} \vert  \nabla \phi_{{p}}(\mu, \cdot) - \nabla \phi(\mu, \cdot, \cu_m, {p}) \vert^2 \, \d \mu \Rr] \\
& \leq \underbrace{2\Er\Ll[ \frac{1}{\rho \vert \cu_m \vert} \int_{\cu_m} \vert \nabla \phi_{{p}}(\mu, \cdot) - \nabla \tilde{\phi}_{{p},n}(\mu, \cdot)  \vert^2 \, \d \mu \Rr]}_{\text{\cref{eq:AppCorrectorDecom}-a}} \\
& \qquad +  \underbrace{2\Er\Ll[ \frac{1}{\rho \vert \cu_m \vert} \int_{\cu_m} \vert  \nabla \tilde{\phi}_{{p},n}(\mu, \cdot) - \nabla \phi(\mu, \cdot, \cu_m, {p})\vert^2 \, \d \mu \Rr]}_{\text{\cref{eq:AppCorrectorDecom}-b}}.
\end{split}
\end{equation} 
For the first term \cref{eq:AppCorrectorDecom}-a, we use the stationarity to transform to the integration on unit cube $\cu_0$, and then use the fact that ${\nabla \tilde{\phi}_{{p},n}}$ converges to ${\nabla \phi_{{p}}}$ in $(\cL^2_{\bullet,\mathrm{loc}})^d$ to get that 
\begin{align*}
\lim_{m \to \infty}\text{\cref{eq:AppCorrectorDecom}-a} = \lim_{n \to \infty}\Er\Ll[ \frac{1}{\rho \vert \cu_0 \vert} \int_{\cu_0} \vert \nabla \tilde{\phi}_{{p},n}(\mu, \cdot) -  \nabla \phi_{{p}}(\mu, \cdot)\vert^2 \, \d \mu \Rr] = 0.
\end{align*}
Thus it suffices to finish the second term \cref{eq:AppCorrectorDecom}-b. We use the definition in \cref{e.def.tdphi} and Jensen's inequality to get that
\begin{align*}
&\text{\cref{eq:AppCorrectorDecom}-b} \\
&= 2\Er\Ll[ \frac{1}{\rho \vert \cu_m \vert} \int_{\cu_m} \Ll\vert  \Ll( \fint_{y + \cu_n}\nabla \phi(\mu, y, x +\cu_n, {p}) \, \d x\Rr) - \nabla \phi(\mu, y, \cu_m, {p})\Rr\vert^2 \, \d \mu(y) \Rr]\\
&\leq 2\Er\Ll[ \frac{1}{\rho \vert \cu_m \vert} \int_{\cu_m} \fint_{y + \cu_n}  \Ll\vert \nabla \phi(\mu, y, x +\cu_n, {p})  - \nabla \phi(\mu, y, \cu_m, {p})\Rr\vert^2  \, \d x \, \d \mu(y) \Rr]\\
&\leq 2 \times 3^{-dn}\Er\Ll[ \frac{1}{\rho \vert \cu_m \vert} \int_{Q_{3^m + 3^n}} \int_{x + \cu_n}  \Ll\vert \nabla \phi(\mu, y, x +\cu_n, {p})  - \nabla \phi(\mu, y, \cu_m, {p})\Rr\vert^2  \, \d \mu(y)  \, \d x  \Rr]\\
& \leq 2 \times \Ll(\text{\cref{eq:AppCorrectorDecom}-b1} + \text{\cref{eq:AppCorrectorDecom}-b2} + \text{\cref{eq:AppCorrectorDecom}-b3}\Rr),
\end{align*}
where in the last line we decompose once again the integration into three terms with respect to the domain
\begin{align*}
\text{\cref{eq:AppCorrectorDecom}-b1} &=  3^{-dn}\Er\Ll[ \frac{1}{\rho \vert \cu_m \vert} \int_{Q_{3^m - 10 \times 3^n}} \int_{x + \cu_n}  \Ll\vert \nabla  \phi(\mu, y, x +\cu_n, {p})  - \nabla \phi(\mu, y, \cu_m, {p})\Rr\vert^2  \, \d \mu(y)  \, \d x  \Rr], \\ 
\text{\cref{eq:AppCorrectorDecom}-b2} &=  3^{-dn}\Er\Ll[ \frac{1}{\rho \vert \cu_m \vert} \int_{Q_{3^m + 3^n} \backslash Q_{3^m - 10 \times 3^n}} \int_{x + \cu_n}  \Ll\vert {p} + \nabla  \phi(\mu, y, x +\cu_n, {p})  \Rr\vert^2  \, \d \mu(y)  \, \d x  \Rr], \\
\text{\cref{eq:AppCorrectorDecom}-b3} &=  3^{-dn}\Er\Ll[ \frac{1}{\rho \vert \cu_m \vert} \int_{Q_{3^m + 3^n} \backslash Q_{3^m - 10 \times 3^n}} \int_{x + \cu_n}  \Ll\vert {p} + \nabla \phi(\mu, y, \cu_m, {p}) \Rr\vert^2  \, \d \mu(y)  \, \d x  \Rr].
\end{align*}
The terms \cref{eq:AppCorrectorDecom}-b2 and \cref{eq:AppCorrectorDecom}-b3 are easy to treat as they are boundary layer terms. For \cref{eq:AppCorrectorDecom}-b2 we can use the energy bound
\begin{align*}
\text{\cref{eq:AppCorrectorDecom}-b2} \leq C 3^{-(m-n)} \nu(\cu_n, {p}) \xrightarrow[m \to \infty]{} 0.
\end{align*}
For \cref{eq:AppCorrectorDecom}-b3, since the function $\Ll\vert {p} + \nabla \phi(\mu, y, \cu_m, {p}) \Rr\vert^2$ does not involve $x$, we use Fubini's lemma that 
\begin{align*}
&\text{\cref{eq:AppCorrectorDecom}-b3} \\
&=  3^{-dn}\Er\Ll[ \frac{1}{\rho \vert \cu_m \vert} \int_{Q_{3^m + 3^n}} \Ll(\int_{Q_{3^m + 3^n} \backslash Q_{3^m - 10 \times 3^n}} \Ind{x \in y + \cu}  \, \d x\Rr)  \Ll\vert {p} + \nabla \phi(\mu, y, \cu_m, {p}) \Rr\vert^2  \, \d \mu(y)   \Rr] \\
&\leq  \Er\Ll[ \frac{1}{\rho \vert \cu_m \vert} \int_{Q_{3^m + 3^n} \backslash Q_{3^m - 20 \times 3^n}}   \Ll\vert {p} + \nabla \phi(\mu, y, \cu_m, {p}) \Rr\vert^2  \, \d \mu(y)   \Rr]\\
&= \Er\Ll[ \frac{1}{\rho \vert \cu_m \vert} \int_{Q_{3^m} \backslash Q_{3^m - 20 \times 3^n}}   \Ll\vert {p} + \nabla \phi(\mu, y, \cu_m, {p}) \Rr\vert^2  \, \d \mu(y)   \Rr] + \Er\Ll[ \frac{1}{\rho \vert \cu_m \vert} \int_{Q_{3^m+3^n} \backslash Q_{3^m}}   \Ll\vert p \Rr\vert^2  \, \d \mu(y)   \Rr]\\
&\xrightarrow[m \to \infty]{} 0.
\end{align*}
Here from the second line to the third line, we use the fact that the gradient contributes only on $Q_{3^m+3^n} \backslash Q_{3^m - 20 \times 3^n}$. Then we do a decomposition: the integration on $Q_{3^m+3^n} \backslash Q_{3^m}$ can be calculated directly, since $\phi(\mu, \cu_m, {p})$ is $\mcl F_{\cu_m}$-measurable and the gradient vanishes; the integration on $Q_{3^m + 3^n} \backslash Q_{3^m - 20 \times 3^n}$ can be bounded by the boundary layer estimate in \Cref{lem:BoundaryLayer}.

Finally, we focus on \cref{eq:AppCorrectorDecom}-b1. We rewrite the integration ${\int_{Q_{3^m - 10 \times 3^n}}}$ as
\begin{multline*}
\text{\cref{eq:AppCorrectorDecom}-b1} \\
\leq \Er\Ll[ \frac{1}{\rho \vert \cu_m \vert}  \fint_{\cu_n} \Ll(\sum_{\substack{z \in \Z_{m,n} \\ \dist(z, \partial \cu_m) \geq 5 \times 3^n}}\int_{x + z + \cu_n}  \Ll\vert \nabla  \phi(\mu, y, x + z +\cu_n, {p})  - \nabla \phi(\mu, y, \cu_m, {p})\Rr\vert^2  \, \d \mu(y)  \, \Rr)\d x  \Rr].
\end{multline*} 
For each fixed $x \in \cu_n$, we can propose a sub-minimizer $\ell_{{p}, \cu_m} + w_x$ for $\nu(\cu_m, {p})$ defined by (see \Cref{fig:cubeContributes4} for an illustration)
\begin{align*}
w_x :=  \phi(\cdot, \cu_m \backslash U_x, {p}) + \sum_{\substack{z \in \Z_{m,n} \\ \dist(z, \partial \cu_m) \geq 5 \times 3^n}}  \phi(\cdot, x + z +\cu_n, {p}) ,
\end{align*}
where
\begin{align*}
U_x := \overline{\bigcup_{\substack{z \in \Z_{m,n} \\ \dist(z, \partial \cu_m) \geq 5 \times 3^n} }(x + z +\cu_n)}.  
\end{align*}
The gradient of $w_x$ and $ \phi(\cdot, x + z +\cu_n, {p})$ coincides on every cube $x+z+\cu_n$, so we can write
\begin{align*}
& \Er\Ll[\frac{1}{\rho \vert \cu_m \vert} \Ll(\sum_{\substack{z \in \Z_{m,n} \\ \dist(z, \partial \cu_m) \geq 5 \times 3^n}}\int_{x + z + \cu_n}  \Ll\vert \nabla  \phi(\mu, y, x + z +\cu_n, {p})  - \nabla \phi(\mu, y, \cu_m, {p})\Rr\vert^2  \, \d \mu(y)  \, \Rr)\Rr] \\
& \leq \Er\Ll[\frac{1}{\rho \vert \cu_m \vert} \int_{\cu_m} \vert \nabla w_x(\mu, y) - \nabla \phi(\mu, y, \cu_m, {p}) \vert^2  \, \d \mu(y)  \Rr] \\
& \leq \Ll( \sum_{\substack{z \in \Z_{m,n} \\ \dist(z, \partial \cu_m) \geq 5 \times 3^n}} \frac{\vert \cu_n \vert}{\vert \cu_m \vert} \nu(\cu_n, {p}) +  \frac{\vert  \cu_m \backslash U_x \vert}{\vert \cu_m \vert} \nu(\cu_m \backslash U_x, {p})\Rr) - \nu(\cu_m, {p}) \\
& \leq \nu(\cu_n, {p}) - \nu(\cu_m, {p}) + 5 \times 3^{-\frac{2m}{3}} \Lambda \vert {p} \vert^2,
\end{align*}
where we used the quadratic response \eqref{eq:QuadraRep1} from the second line to the third line. This implies that $\lim_{m \to \infty}\text{\cref{eq:AppCorrectorDecom}-b1} = 0$, and thus completes the proof of \cref{eq:AppCorrector}.
\end{proof}

\begin{figure}[h!]
\centering
\includegraphics[scale=0.5]{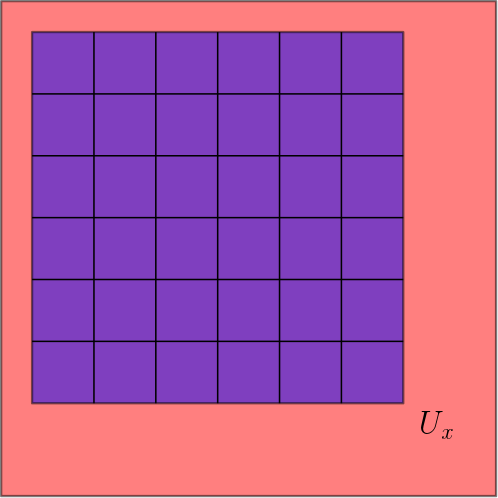}
\caption{The function $\ell_{{p}, \square_m} + w_x$ is a sub-minimizer for the problem $\nu(\square_m, {p})$, which combines the minimizer in cubes of scale $3^n$ biased by a vector $x$ (the cubes in blue), and a minimizer in $U_x$ (the domain in red).}\label{fig:cubeContributes4}
\end{figure}

\subsection*{Acknowledgements}
Part of this project was developed while AG was affiliated to the University of Bonn and supported through the CRC 1060 (The Mathematics of Emergent Effects) that is funded through the German Science Foundation (DFG), and the Hausdorff Center for Mathematics (HCM). CG was supported by a PhD scholarship from Ecole Polytechnique. Part of this project was developed while CG was an academic visitor at the Courant Institute, NYU. JCM was partially supported by the NSF grant DMS-1954357. Part of this project was developed while JCM was affiliated at CNRS and ENS Paris, PSL University, and was partially supported by the ANR grants LSD (ANR-15-CE40-0020-03) and Malin (ANR-16-CE93-0003).

\bibliographystyle{abbrv}
\bibliography{KawasakiRef}

\end{document}